\newcommand*{\mailto}[1]{\href{mailto:#1}{\nolinkurl{#1}}}
\numberwithin{equation}{section}
\newtheorem{theorem}{Theorem}[section]
\newtheorem{definition}[theorem]{Definition}
\newtheorem{lemma}[theorem]{Lemma}
\newtheorem{corollary}[theorem]{Corollary}
\newtheorem{proposition}[theorem]{Proposition}
\numberwithin{equation}{section}
\theoremstyle{definition}
\newtheorem{example}[theorem]{Example}
\begin{document}

\title[Complex analysis of symmetric operators. II]
{Complex analysis of symmetric operators. II: entire operators with deficiency index 1 }

\author[Yicao Wang]{Yicao Wang}
\address
{School of Mathematics, Hohai University, Nanjing 210098, China}

\baselineskip= 20pt
\begin{abstract}This paper is a continuation of our previous work \cite{wang2024complex}. It mainly deals with entire operators $T$ with deficiency index 1 \emph{systematically} from the complex-geometric viewpoint proposed in \cite{wang2024complex}. We pay special attention to the characteristic line bundle $F$ of $T$. We investigate its curvature in detail and demonstrate how it is connected to the height function of $T$ and to the distribution of zeros of elements in the canonical model Hilbert space which consists of certain holomorphic sections of $F$. This study is applied to an indeterminate Hamburger moment problem to show the growth property of the associated Jacobi operator coincides with that defined in terms of entries of the Nevanlinna matrix. We also show how various functional models for $T$ can be derived from our canonical model by restricting $F$ to certain subsets of $\mathbb{C}$ and choosing suitable trivializations. This makes the interrelationships among these models much more transparent. By introducing the mean type of a generic non-self-adjoint extension and using the de Branges-Rovnyak model, we show the mean type is the only obstruction to completeness of such an extension. We also prove that the measure of incomplete extensions is zero. Some other new results and new proofs of old results are also included.
\end{abstract}
\maketitle


\tableofcontents

\section{Introduction}
In \cite{wang2024complex}, the author has set up a complex geometric formalism to deal with the extension theory of symmetric operators initiated by von Neumann in late 1920s. A central object there is the Weyl curve $W_T(\lambda)$ of a simple symmetric operator $T$, encoding all the unitarily invariant information of $T$. If $T$ is of deficiency indices $(n_+,n_-)$ with $0<n_\pm<\infty$, loosely speaking, then $W_T(\lambda)$ is a holomorphic map from the upper and lower half-planes
$\mathbb{C}_+\cup \mathbb{C}_-$ to \[Gr(n_+, n_++n_-)\cup Gr(n_-,n_++n_-),\]
where $Gr(n,m)$ is the Grassmannian of $n$-dimensional subspaces in $\mathbb{C}^m$ ($1\leq n<m$). In particular if $n_+=n_-=n$, then $W_T(\lambda)$ maps $\mathbb{C}_+\cup \mathbb{C}_-$ into $Gr(n,2n)$. Consequently the author was motivated in \cite{wang2024complex} to single out a special kind of symmetric operators with deficiency indices $(n,n)$, whose Weyl curves have an analytic continuation on the whole complex plane $\mathbb{C}$. Such symmetric operators were called \emph{entire} simply because their Weyl curves are entire curves in $Gr(n,2n)$. For simplicity, we just call these entire operators with deficiency index $n$. One of the merits of this geometric framework is that we can interpret generic boundary value problems in terms of the value distribution theory of entire curves. The reader should also be warned that the word "entire operator" is also used in the existing literature in a narrower sense and our entire operators are traditionally called \emph{regular}.

However, the value distribution theory of entire curves itself is far from finished. One reason for that is the target manifold $M$ could be of a much higher dimension so that there is too much space for the curve to stretch\footnote{In our case, the target manifold is $Gr(n,2n)$, with complex dimension $n^2$. }. The only most well-understood case is $M=\mathbb{CP}^1=Gr(1,2)$, i.e., the projective line,\footnote{The theory works well for general compact Riemann surfaces, but the content for $\mathbb{C}P^1$ is the most abundant and interesting.} and the curve is then essentially a meromorphic function on $\mathbb{C}$. Our focus in this paper is thus entire operators with deficiency index 1, for which the full power of value distribution theory and the mechanism of function theory of one complex variable may be used to push-forward the study in \cite{wang2024complex}. Therefore the paper can be viewed as an elaboration of ideas presented in \cite{wang2024complex} for entire operators with deficiency index 1. This is of course also the first step towards the general case.

Though not typical in the above sense and without a systematic study, entire operators with deficiency index 1 are the most well-studied case in the community of spectral theorists. They are so intriguing for their connections with several other interesting topics, e.g., spectral analysis of 1-dimensional Schrodinger operators and canonical systems, the Hamburger moment problems, the theory of de Branges spaces of entire functions, the theory of model subspaces, sampling theory in signal processing, etc. We just refer the interested reader to \cite{gorbachuk2012mg, silva2007applications, silva2010spectral, silva2013branges, makarov2005meromorphic, martin2010symmetric} and the bibliography therein. Maybe it's also worth mentioning that, some people trying to prove the Riemann Conjecture are also interested in such operators, which may provide the conjectural Hilbert-Polya operator \cite{lagarias2006hilbert}. People often study these relevant topics from different motivations and with different tools. Thus the other purpose of the paper is to \emph{unify} certain materials scattered in the literature from the \emph{single} general viewpoint of \cite{wang2024complex}. We believe that by placing entire operators with deficiency index 1 front and center, our treatment has made the connections more compact and conceptually clearer. However completeness is beyond our ambition and our choice of subject matter is heavily affected by our attempt to extend the viewpoint presented in \cite{wang2024complex} further. In particular, we insist that generic extensions should be handled altogether whether they are self-adjoint or not and via this even self-adjoint extensions can be understood better. Sometimes we give new proofs of old results, but our emphasis is often their reformulation and how it can be viewed from our new formalism. We think this is necessary if we attempt to extend some of the results to the case with higher deficiency index.

The paper is organized as follows.

In \S~\ref{P} we specialize the general picture of \cite{wang2024complex} to the case of simple symmetric operators with deficiency indices $(1,1)$. Compared with the general study in \cite{wang2024complex}, some additional details are included and \S\S~\ref{bvp} also contains a few new immediate results.

\S~\ref{line} investigates the geometry of the characteristic line bundle $F$ (of the second kind in the sense of \cite{wang2024complex}). The focus is certain natural local frames of $F$ and the curvature function $\omega(\lambda)$ expressed in terms of them. The curvature function was only slightly touched in \cite{wang2024complex}, but here we consider it in some detail and in fact try to explore its spectral theoretical meaning in the whole paper. In particular, we investigate the restriction of $\omega(\lambda)$ on the real line and prove that it's a complete unitary invariant of $T$ (Thm.~\ref{complete}). We also demonstrate how this invariant controls the growth property of $T$, one of the main topics of the paper (Prop.~\ref{Schw}, Prop.~\ref{compare}, etc).

In \S~\ref{char}, we collect elements on the structure of three characteristic functions. To some extent, all these functions are only function-theoretic representations of the Weyl curve $W_T(\lambda)$ and as a consequence function theory becomes an efficient tool in the study of entire operators. We pay special attention to implications of these functions in our formalism. Thus several quick conclusions are derived. In particular, we provide an example of Nevanlinna exceptional boundary condition whose Nevanlinna defect lies in $(0,1)$ (Example \ref{defect}). Such an example was missing in \cite{wang2024complex}.

 \S~\ref{S4} is devoted to some functional models in the literature. We show how these models can be recovered from our canonical model in \S\S~\ref{model}. Thus our model has provided a unified perspective towards functional models, which makes the interconnections of these models much more transparent.

As an application of investigations in the previous two sections, in \S~\ref{com} we solve the completeness problem in the context of generic non-self-adjoint extensions of entire operators with deficiency index 1. We define the notion of mean type of such an extension and prove that it is the only obstruction to completeness (Thm.~\ref{mean}). Taking the complexity of the general completeness problem into account, this result is surprisingly simple. We also show that almost all (in the sense of measure theory) non-self-adjoint extensions are complete. Furthermore we investigate when the normalized eigenvectors of a non-self-adjoint extension form a Riesz basis under the assumption that all eigenvalues are simple (Thm.~\ref{Riesz}).

\S~\ref{growth} is devoted to the growth aspect of $T$. Unlike \cite{wang2024complex}, our attention here is to use a line bundle version of First Main Theorem in value distribtion theory to show how the curvature of the characteristic line bundle $F$ controls the distribution of zeros of elements in the canonical model space of $T$ (Prop.~\ref{first} and Prop.~\ref{esti}). We show that the characteristic function $\mathrm{T}_F(r)$ of $F$ is essentially the height function defined in \cite{wang2024complex} in terms of value distribution theory of entire curves (Thm.~\ref{equal}). The rest of the section mainly deals with entire operators whose Weyl order is $\leq 1$. Such operators are frequently encountered in practice, e.g., the indeterminate Hamburger moment problems.

In the last section \S~\ref{Hmoment}, as an application of ideas presented in the previous sections we revisit the operator-theoretic approach to indeterminate Hamburger moment problems. The growth property of such a moment problem has been noted already in M. Riesz's work, but it was not until the paper \cite{berg1994order} that people realized that the entries of the associated  Nevanlinna matrix all have the same order and type. These numbers are then defined to be the type and order of the moment problem. We prove that the characteristic function $\mathrm{T}_F(r)$ associated with the underlying Jacobi operator coincides with the Nevanlinna characteristic functions of the entries of the Nevanlinna matrix up to the term $O(\ln r)$ (Thm.~\ref{moment1} and Thm.~\ref{moment2}). Thus our $\mathrm{T}_F(r)$ actually provides a canonical and unitarily invariant quantity to measure the growth of the moment problem. This result not only is of its own interest, but also suggests how the growth of an indeterminate \emph{matrix} Hamburger moment problem may be measured. We shall turn to this question somewhere else. In the same section, we prove that the Jacobi operator is completely determined by its Weyl class (Thm.~\ref{unique}). There are also some other observations, shedding new light on the indeterminate Hamburger moment problems.
\begin{center}
\textbf{Conventions and notations}
\end{center}

The imaginary unit will be denoted by $\mathrm{i}$.

All complex Hilbert spaces $(H, (\cdot, \cdot))$ are separable. We adopt the convention that the inner product $(\cdot, \cdot)$ is linear in the first variable and conjugate-linear in the second. The induced norm will be written as $\|\cdot\|$. If necessary, the inner product is also denoted by $(\cdot, \cdot)_H$ to emphasize the underlying Hilbert space and to distinguish it from an ordered pair. We use $\oplus$ (resp.~$\oplus_\bot$) to denote a topological (resp.~orthogonal) direct sum. For a subspace $V$, its orthogonal complement is $V^\bot$. The zero vector space will be just denoted by $0$. For a collection of subspaces $V_\alpha\subset H$, $\bigvee_\alpha V_\alpha$ means the space of finite linear combinations of elements from these $V_\alpha$'s.

The identity operator on $H$ will be just denoted by $Id$ if the underlying space is clear from the background. For a constant $c$, $c\times Id$ is often written simply as $c$ if no confusion arises. $D(A)$ is the domain of an operator $A$, and $\textup{ker}A$ (resp. $\textup{Ran}A$) is the kernel (resp. range) of $A$. The spectrum of a (bounded or unbounded) operator $A$ is denoted by $\sigma(A)$ and the resolvent set of $A$ by $\rho(A)$.

On a Hermitian vector bundle $E$ over a manifold $M$, the Hermitian structure at $p\in M$ will be denoted by $(\cdot, \cdot)_p$. The length of $v\in E_p$ is denoted by $|v|_p$ and for brevity the subscript $p$ may sometimes be omitted.

If $\lambda\in \mathbb{C}$, then $\Re \lambda$ and $\Im \lambda$ are the real and imaginary parts of $\lambda$ respectively. If $f(x)$ is a function, $O(f)$ represents a function $g(x)$ such that $|g/f|$ is bounded w.r.t. the underlying limiting process of the independent variable $x$. We shall use the words "holomorphic" and "analytic" interchangeably.

\section{Preliminaries}\label{P}
In this section, we collect some material on simple symmetric operators with deficiency indices $(1,1)$ and introduce the special class of entire operators with deficiency index 1. The presentation follows closely our previous work \cite[\S~3, \S~4, \S~9]{wang2024complex}, which contains more details.
\subsection{Symmetric operator with deficiency indices (1,1) and its Weyl curve}
In a separable infinite-dimensional complex Hilbert space $H$, a symmetric operator $T$ in $H$ is a linear operator defined on a subspace $D(T)\subset H$ such that $(Tx,y)=(x,Ty)$ for all $x,y\in D(T)$. We allow $D(T)$ to be not dense in $H$ but always assume that $T$ is closed, i.e., the graph $A_T$ of $T$ is a closed subspace of $\mathbb{H}:=H\oplus_\bot H$. A symmetric operator can always be decomposed as the orthogonal direct sum of its self-adjoint part and its completely non-self-adjoint part. If the nontrivial self-adjoint part is absent, $T$ is called simple. Throughout the paper, we only consider simple symmetric operators.

$\mathbb{H}$ is a strong symplectic Hilbert space with a canonical strong symplectic structure, i.e., for $x=(x_1,x_2)\in \mathbb{H}$, $y=(y_1,y_2)\in \mathbb{H}$,
\[[x,y]_c=(x_2, y_1)_H-(x_1, y_2)_H.\]
For a simple symmetric operator $T$, $A_T$ is isotropic in $\mathbb{H}$, i.e., the restriction of $[\cdot, \cdot]_c$ on $A_T$ vanishes. The symplectic orthogonal complement $A_T^{\bot_s}$
is
\[A_T^{\bot_s}:=\{x\in \mathbb{H}|[x,y]_c=0,\ \forall y\in A_T\}.\]
Clearly, $A_T^{\bot_s}$ is closed and $A_T\subset A_T^{\bot_s}$. Then the quotient space $\mathcal{B}_T:=A_T^{\bot_s}/A_T$ acquires a canonical strong symplectic Hilbert space structure whose strong symplectic structure is defined via
\[[[x],[y]]_q=[x,y]_c,\ \forall\ [x],[y]\in \mathcal{B}_T.\]
$-\mathrm{i}[\cdot,\cdot]_q$ is actually an indefinite and non-degenerate inner product on $\mathcal{B}_T$. Let $(n_+,n_-)$ be its signature. Then $(n_+,n_-)$ are precisely the deficiency indices of $T$. From now on and for the rest of the paper, we assume $n_+=n_-=1$. Then $\mathcal{B}_T$ is necessarily of complex dimension $2$.

An extension of $T$ is an intermediate subspace between $A_T$ and $A_T^{\bot_s}$. Thus all extensions of $T$ are parameterized by the Grassmannian $\mathcal{M}_0$ of subspaces in $\mathcal{B}_T$. $\mathcal{M}_0$ consists of three topological components, i.e., two isolated points represent the two trivial extensions $A_T$ and $A_T^{\bot_s}$ respectively and the third component $\mathcal{M}$ parameterizes all extensions $\tilde{A}$ such that $\textup{dim}_\mathbb{C}(\tilde{A}/A_T)=1$. Thus $\mathcal{M}$ can be holomorphically identified with $\mathbb{CP}^1$. We call extensions parameterized by points in $\mathcal{M}$ \emph{generic}. In particular, extensions parameterized by Lagrangian subspaces are called self-adjoint extensions.

A generic extension $\tilde{A}$ of $T$ parameterized by $m\in \mathcal{M}$ is often the graph of an operator $\tilde{T}$, and we can identify $\tilde{A}$ with $\tilde{T}$. Actually, in this sense it is known that if $T$ is densely defined, then all generic extensions are operators; when $T$ is not densely defined, $\overline{D(T)}$ must have codimension 1 in $H$ and there is one and only one $m\in \mathcal{M}$ whose corresponding extension is not an operator but a self-adjoint relation. If we want to emphasize the underlying operator associated to $m\in \mathcal{M}$, we denote the extension by $T_m$.

For each $\lambda\in \mathbb{C}$, let $\mathcal{W}_\lambda:=\{(x,\lambda x)\in \mathbb{H}|x\in H\}$. Then for each $\lambda\in \mathbb{C}_\pm$, $W_\lambda:=\mathcal{W}_\lambda\cap A_T^\bot$ is of complex dimension 1\footnote{When $T$ is densely defined, $W_\lambda$ is essentially $\textup{ker}(T^*-\lambda)$.}. Let $W_T(\lambda)\subset \mathcal{B}_T$ be the image of $W_\lambda$ under the quotient map from $A_T^\bot$ to $\mathcal{B}_T$. $W_T(\lambda)$ is still of complex dimension 1 and thus we obtain a map $W_T(\lambda): \mathbb{C}_+\cup \mathbb{C}_-\rightarrow \mathcal{M}$,
called the (two-branched) \emph{Weyl curve} of $T$. $W_T(\lambda)$ is actually holomorphic. Even more, for $\lambda\in \mathbb{C}_+$ (resp. $\lambda\in \mathbb{C}_-$), the restriction of $-\mathrm{i}[\cdot, \cdot]_q$ (resp. $\mathrm{i}[\cdot, \cdot]_q)$ on $W_T(\lambda)$ is positive-definite (resp. negative-definite) and $W_T(\overline{\lambda})=(W_T(\lambda))^{\bot_s}$. If furthermore $W_T(\lambda)$ can be analytically extended to be a holomorphic map from $\mathbb{C}$ to $\mathcal{M}$, $T$ is then called \emph{entire}. For later convenience, the set of entire operators with deficiency index 1 in $H$ will be denoted by $\mathfrak{E}_1(H)$.

The Weyl curve $W_T(\lambda)$ can be described in more familiar terms by using proper "coordinate charts" on $\mathcal{M}$. We now explain the details.

Two strong sympletic Hilbert spaces with the same signature are always isomorphic in the sense that there is a topological linear isomorphism between them, preserving the strong symplectic structures. We can choose a certain "standard" strong symplectic Hilbert space $\mathbb{G}$ and an isomorphism $\Phi$ between $\mathcal{B}_T$ and $\mathbb{G}$. Then all objects defined in terms of the strong symplectic structure $[\cdot,\cdot]_q$ on $\mathcal{B}_T$ transfer to the level of $\mathbb{G}$.

We can view $\mathbb{C}$ as a 1-dimensional Hilbert space with the standard Euclidean inner product. Then there are two natural ways to equip $\mathbb{G}:=\mathbb{C}\oplus_\bot \mathbb{C}=\mathbb{C}^2$ with a strong symplectic structure. If $x=(x_1,x_2)\in \mathbb{G}$, $y=(y_1,y_2)\in \mathbb{G}$, then the first strong symplectic structure is defined by
\[[x,y]_1=\mathrm{i}x_1\overline{y_1}-\mathrm{i}x_2\overline{y_2}\]
while the second is defined by
\[[x,y]_2=x_2\overline{y_1}-x_1\overline{y_2}.\]
For $(x_1,x_2)\in \mathbb{G}$, the map $\beta((x_1,x_2))=(\frac{x_1-x_2}{\mathrm{i}\sqrt{2}}, \frac{x_1+x_2}{\sqrt{2}})\in \mathbb{G}$ is an isomorphism from $(\mathbb{G}, [\cdot, \cdot]_1)$ to $(\mathbb{G}, [\cdot, \cdot]_2)$. $\beta$ is called the Cayley transform on $\mathbb{G}$.

All positive-definite subspaces in $(\mathbb{G}, [\cdot, \cdot]_1)$ are easy to specify. They are all of the form
$$L_z:=\{(x,x z)\in\mathbb{G} |x\in \mathbb{C}\}$$
 where $z$ is a complex number with $|z|<1$. As a consequence
$L_z^{\bot_s}=\{(x\bar{z},x)\in\mathbb{G}|x\in \mathbb{C}\}$
 gives rise to all negative-definite subspaces. Besides, those $L_z$ with $|z|=1$ are Lagrangian, i.e., $\textup{dim}L_z=1$ and the restriction of $[\cdot, \cdot]_1$ on $L_z$ vanishes. These possibilities actually exhaust all 1-dimensional subspaces of $\mathbb{G}=\mathbb{C}^2$, i.e., in other words $\mathbb{CP}^1$ is partitioned into two copies of the open unit disc and a circle. Similarly, the positive-definite (resp. negative-definite) subspaces in $(\mathbb{G}, [\cdot, \cdot]_2)$ are of the form $\{(x,x z)\in\mathbb{G} |x\in \mathbb{C}\}$ where $z\in \mathbb{C}_+$ (resp. $z\in \mathbb{C}_-$). This can be seen directly or by using the Cayley transform.

 Now let $\Phi$ be a fixed isomorphism between $\mathcal{B}_T$ and $(\mathbb{G}, [\cdot, \cdot]_1)$. $\Phi$ can be extended to $A_T^{\bot_s}$ by setting its value at $x\in A_T^{\bot_s}$ to be $\Phi([x])$. This extension will be still denoted by $\Phi$. According to the decomposition $\mathbb{G}=\mathbb{C}\oplus_\bot \mathbb{C}$, now we have two maps $\Gamma_\pm: A_T^{\bot_s}\rightarrow \mathbb{C}$ such that $\Phi(x)=(\Gamma_+ x, \Gamma_-x)\in \mathbb{G}$ for any $x\in A_T^{\bot_s}$ and consequently we have the abstract Green's formula
 \begin{equation}[x,y]_c=\mathrm{i}(\Gamma_+x, \Gamma_+y)_\mathbb{C}-\mathrm{i}(\Gamma_-x,\Gamma_-y)_\mathbb{C},\ \forall\ x,y\in A_T^{\bot_s}.\label{green}\end{equation}
 Similarly, $\beta\circ \Phi$ is an isomorphism between $\mathcal{B}_T$ and $(\mathbb{G}, [\cdot, \cdot]_2)$ and we get two maps $\Gamma_{0/1}: A_T^{\bot_s}\rightarrow \mathbb{C}$ such that
 \begin{equation}[x,y]_c=(\Gamma_1x, \Gamma_0y)_\mathbb{C}-(\Gamma_0x,\Gamma_1y)_\mathbb{C},\ \forall\ x,y\in A_T^{\bot_s}.\label{green1}\end{equation}
 The triple $(\mathbb{C}, \Gamma_+, \Gamma_-)$ or $(\mathbb{C}, \Gamma_0, \Gamma_1)$ will be called a boundary triplet and the context will clarify which we really mean.

 Now it is fairly clear that when $\lambda\in \mathbb{C}_+$, \[\Phi(W_T(\lambda))=\{(x, xB(\lambda))\in \mathbb{G}|x\in \mathbb{C}\}\] for a holomorphic function $B(\lambda)$ on $\mathbb{C}_+$ such that $|B(\lambda)|<1$ and
 \[\beta\circ\Phi(W_T(\lambda))=\{(x, xM(\lambda))\in \mathbb{G}|x\in \mathbb{C}\}\] for a holomorphic function $M(\lambda)$ on $\mathbb{C}_+$ such that $\Im M(\lambda)>0$. $B(\lambda)$ and $M(\lambda)$ are related via
 $B(\lambda)=(M(\lambda)-\mathrm{i})(M(\lambda)+\mathrm{i})^{-1}$. As a consequence, for $\lambda\in \mathbb{C}_-$,
 \[\Phi(W_T(\lambda))=\{(x\overline{B(\bar{\lambda})}, x)\in \mathbb{G}|x\in \mathbb{C}\},\quad \beta\circ\Phi(W_T(\lambda))=\{(x, x\overline{M(\bar{\lambda})})\in \mathbb{G}|x\in \mathbb{C}\}.\]
 Following \cite{wang2024complex} we shall call $B(\lambda)$ the \emph{contractive Weyl function} of $T$ w.r.t. $(\mathbb{C}, \Gamma_+, \Gamma_-)$, while $M(\lambda)$ is traditionally called the Weyl function of $T$ w.r.t. $(\mathbb{C}, \Gamma_0, \Gamma_1)$.

 Since there are many choices of $\Phi$, $B(\lambda)$ and $M(\lambda)$ are not uniquely determined by $T$ itself. However, different $B(\lambda)$'s (resp.~$M(\lambda)$'s) are related by the action of the pseudo-unitary group $\mathbb{U}(1,1)$. In effect, they are connected by the automorphism group of the unit disc $\mathbb{D}$ (resp. the upper half-plane $\mathbb{C}_+$). For instance, if $B(\lambda)$ is a contractive Weyl function for $T\in \mathfrak{E}_1(H)$, then any other contractive Weyl function for $T$ should be of the form
 \begin{equation}\tilde{B}(\lambda)=\gamma\times\frac{B(\lambda)-\tau}{1-\bar{\tau}B(\lambda)}\label{mo}\end{equation}
 where $\gamma, \tau\in \mathbb{C}$ are constants such that $|\gamma|=1$ and $|\tau|<1$. $B(\lambda)$ and $\tilde{B}(\lambda)$ are said to be congruent. It is the congruence class of $B(\lambda)$ (so-called Weyl class of $T$) that really determines the unitary equivalence class of $T$.

It is of basic interest to learn how the unitarily invariant properties of $T$ are related to properties of its Weyl curve $W_T(\lambda)$. Here is an example. We note that $\varpi(w):=W_T(\mathrm{i}\frac{1+w}{1-w})$ maps the unit disc $\mathbb{D}$ in the $w$-plane to $\mathcal{M}$.
\begin{definition}We say $\varpi(w)$ has an \emph{angular tangent line} at $w=1$ in the sense of Caratheodory, if the non-tangential limits of $\varpi(w)$ and $\frac{d}{dw}\varpi(w)$ exist at $w=1$ and the non-tangential limit of $\varpi(w)$ at $w=1$ is a Lagrangian subspace of $\mathcal{B}_T$.
\end{definition}
\emph{Remark}. The definition is a geometric abstraction of the function-theoretic notion of angular derivative in the sense of Caratheodory. Note that here $\frac{d}{dw}\varpi(w)$ should be interpreted as the tangent map of $\varpi(\omega)$, which is a complex curve lying in the holomorphic tangent bundle of $\mathcal{M}$.
\begin{theorem}If $T$ is a simple symmetric operator in the Hilbert space $H$, then $T$ is densely defined if and only if $\varpi(w)$ doesn't have an angular tangent line at $w=1$ in the sense of Caratheodory.
\end{theorem}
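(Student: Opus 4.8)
The plan is to convert the geometric condition at $w=1$ into a Julia--Carath\'eodory condition for a scalar function and then match it against the classical criterion for density of the domain. Fix an isomorphism $\Phi\colon\mathcal B_T\to(\mathbb G,[\cdot,\cdot]_1)$, let $B(\lambda)$ be the associated contractive Weyl function, and put $b(w):=B\!\bigl(\mathrm i\tfrac{1+w}{1-w}\bigr)$; this is a holomorphic self-map of $\mathbb D$, strict since $|B|<1$ on $\mathbb C_+$. In the chart $z\mapsto L_z$ one has $\Phi(\varpi(w))=L_{b(w)}$, the Lagrangian subspaces of $(\mathbb G,[\cdot,\cdot]_1)$ are exactly the $L_z$ with $|z|=1$, and the tangent map $\tfrac d{dw}\varpi(w)$ is represented by $b'(w)$; hence $\varpi$ has an angular tangent line at $w=1$ if and only if $b$ has a non-tangential limit of modulus $1$ at $w=1$ and $b'$ has a non-tangential limit at $w=1$. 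By the Julia--Carath\'eodory theorem this is exactly the statement that $b$ has a finite angular derivative at $w=1$, equivalently $\liminf_{w\to1}\tfrac{1-|b(w)|}{1-|w|}<\infty$; and when this $\liminf$ is finite it is attained as a non-tangential limit. The resulting condition does not depend on $\Phi$: by \eqref{mo} (whose proof does not use entireness) changing $\Phi$ post-composes $b$ with a disc automorphism, which is smooth up to $\partial\mathbb D$ and hence preserves finiteness of the angular derivative at $w=1$.

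Next I would pass to the Herglotz function $M(\lambda)=\mathrm i\,\tfrac{1+B(\lambda)}{1-B(\lambda)}$, with representation $M(\lambda)=\alpha+\beta\lambda+\int_{\mathbb R}\bigl(\tfrac1{t-\lambda}-\tfrac t{1+t^2}\bigr)\,d\rho(t)$, $\alpha\in\mathbb R$, $\beta\ge0$, $\int\tfrac{d\rho(t)}{1+t^2}<\infty$, and invoke the classical fact that \emph{$T$ is densely defined if and only if $\beta=0$ and $\rho(\mathbb R)=\infty$}. (This is standard boundary-triplet/$Q$-function lore: in the model one has $\Im M(\mathrm iy)=y\|\gamma(\mathrm iy)\|^2$ for the $\gamma$-field, hence $y^2\|\gamma(\mathrm iy)\|^2\uparrow\beta\cdot\infty+\rho(\mathbb R)$, and a nonzero $\beta$ or a finite $\rho(\mathbb R)$ is precisely what produces a nonzero vector orthogonal to $D(T)$, i.e.\ the unique non-operator self-adjoint-relation extension of $T$.) Using $w=\tfrac{\lambda-\mathrm i}{\lambda+\mathrm i}$, $1-|w|^2=\tfrac{4\,\Im\lambda}{|\lambda+\mathrm i|^2}$ and $1-|B(\lambda)|^2=\tfrac{4\,\Im M(\lambda)}{|M(\lambda)+\mathrm i|^2}$, along the ray $\lambda=\mathrm iy$ the Julia quotient satisfies $\tfrac{1-|b(w)|}{1-|w|}\asymp Q(y):=\dfrac{y\,\Im M(\mathrm iy)}{|M(\mathrm iy)+\mathrm i|^2}$, so it remains to show that $Q(y)$ has a finite limit exactly when $\beta>0$ or $\rho(\mathbb R)<\infty$, and $Q(y)\to+\infty$ otherwise.

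The finite cases are immediate: if $\beta>0$ then $\Im M(\mathrm iy)\sim\beta y$ and $Q(y)\to1/\beta$; if $\beta=0$ and $\rho(\mathbb R)<\infty$ then $M(\mathrm iy)$ tends to some $c\in\mathbb R$ while $y\,\Im M(\mathrm iy)=\int\tfrac{y^2}{t^2+y^2}\,d\rho\uparrow\rho(\mathbb R)$, so $Q(y)\to\rho(\mathbb R)/(c^2+1)$; in either situation $\varpi$ has an angular tangent line and $T$ is not densely defined. The case $\beta=0$, $\rho(\mathbb R)=\infty$ is the delicate one and, I expect, the main obstacle, since one must rule out a conspiracy between $\Re M(\mathrm iy)$ and $y\,\Im M(\mathrm iy)$ (Cauchy--Schwarz on the Herglotz integral only yields $(\Re M(\mathrm iy))^2=O\bigl(y\,\Im M(\mathrm iy)\bigr)$, not $o$). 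The plan there is to exploit that $N:=-1/M$ is again Herglotz, with $y\,\Im N(\mathrm iy)\uparrow\beta_N\cdot\infty+\rho_N(\mathbb R)$, and to check that $\beta=0$ together with $\rho(\mathbb R)=\infty$ forces $\beta_N>0$ or $\rho_N(\mathbb R)=\infty$: otherwise $N(\mathrm iy)$ tends to some $c_N\in\mathbb R$, and comparing leading terms yields either $\rho(\mathbb R)=\rho_N(\mathbb R)/c_N^2<\infty$ (if $c_N\ne0$) or $\beta=1/\rho_N(\mathbb R)>0$ (if $c_N=0$), both impossible. Hence $y\,\Im N(\mathrm iy)\to\infty$, i.e.\ $|M(\mathrm iy)|^2=y\,\Im M(\mathrm iy)/\bigl(y\,\Im N(\mathrm iy)\bigr)=o\bigl(y\,\Im M(\mathrm iy)\bigr)$, and since also $\Im M(\mathrm iy)$ and $1$ are $o\bigl(y\,\Im M(\mathrm iy)\bigr)$, we get $|M(\mathrm iy)+\mathrm i|^2=o\bigl(y\,\Im M(\mathrm iy)\bigr)$ and $Q(y)\to+\infty$. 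Because a finite radial limit of $\tfrac{1-|b(w)|}{1-|w|}$ would necessarily equal the $\liminf$, the $\liminf$ here is $+\infty$, $\varpi$ has no angular tangent line, and $T$ is densely defined. Combining the two equivalences completes the argument.
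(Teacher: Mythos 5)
Your proposal is correct, but it takes a genuinely different route from the paper. The paper's own proof is essentially a two-line reduction: it invokes Thm.~3.1.2 of \cite{martin2011representation}, which characterizes dense definedness by the absence of an angular derivative (in Caratheodory's sense) of the Livsic characteristic function, and then observes that this property is invariant under the congruence transformations (\ref{mo}), so that the chart-free geometric statement follows. You instead reprove the analytic core of that theorem: you use Julia--Carath\'eodory to identify the angular-tangent-line condition with finiteness of $\liminf_{w\to 1}\frac{1-|b(w)|}{1-|w|}$, reduce this to the behaviour of the Julia quotient $Q(y)=y\,\Im M(\mathrm{i}y)/|M(\mathrm{i}y)+\mathrm{i}|^2$ along the imaginary axis, and settle the three cases of the Herglotz representation, with the $N=-1/M$ argument correctly disposing of the delicate case $\beta=0$, $\rho(\mathbb{R})=\infty$ (your two sub-cases $c_N\neq 0$ and $c_N=0$ do rule out the potential conspiracy between $\Re M(\mathrm{i}y)$ and $y\,\Im M(\mathrm{i}y)$). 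The one external ingredient you import, namely that $T$ is densely defined iff $\beta=0$ and $\rho(\mathbb{R})=\infty$, is exactly the Weyl-function characterization the paper itself records in the remark following the theorem, citing \cite[Coro.~3.7.3]{behrndt2020boundary}, so it is a legitimate black box here. What the two approaches buy: the paper's proof is shorter but leans entirely on Martin's result and still needs the invariance check for Eq.~(\ref{mo}); yours is essentially self-contained modulo the standard boundary-triplet criterion, makes the invariance issue moot (the geometric condition is chart-free, and you only need one fixed $\Phi$), and in effect gives an independent proof of the cited result of \cite{martin2011representation}. Only cosmetic quibbles remain: the identification of $\frac{d}{dw}\varpi$ with $b'$ deserves the one-sentence remark that the affine chart $z\mapsto L_z$ trivializes $T\mathcal{M}$ near the (Lagrangian) limit point, and ``comparing leading terms'' in the $c_N=0$ sub-case should be spelled out as the standard limit $-\mathrm{i}y\,(N(\mathrm{i}y)-c_N)\to\rho_N(\mathbb{R})$; neither affects correctness.
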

\begin{proof}This is actually a geometric reformulation of a result in  \cite{martin2011representation}, which characterizes the denseness property in terms of the Lifschitz characteristic function $\omega_T(\lambda)$ (a special contractive Weyl function). See Thm.~3.1.2 there. It was shown that $T$ is densely defined if and only if $\omega_T(\mathrm{i}\frac{1+w}{1-w})$ doesn't have an angular derivative at $w=1$ in the sense of Caratheodory. It can be seen easily that this property is invariant under the coordinate transformation (\ref{mo}) and thus our claim follows.
\end{proof}
\emph{Remark}. The denseness property can also be characterized in terms of the Weyl function $M(\lambda)$. See \cite[Coro.~3.7.3]{behrndt2020boundary}.

A point $\lambda\in \mathbb{C}$ is called a point of regular type for $T$ if there is a number $c_\lambda>0$ such that
 \[\|(T-\lambda)x\|\geq c_\lambda \|x\|, \ \forall x\in D(T).\]
 The set of points of regular type for $T$ necessarily contains $\mathbb{C}_+\cup \mathbb{C}_-$ and if it's actually the whole $\mathbb{C}$, $T$ is called \emph{regular}. Due to Prop.~9.3 in \cite{wang2024complex}, operators $T\in \mathfrak{E}_1(H)$ are precisely the regular ones. For $T\in \mathfrak{E}_1(H)$, given the boundary triplet, both $B(\lambda)$ and $M(\lambda)$ can be extended to be meromorphic functions on $\mathbb{C}$. $B(\lambda)$ is actually a meromorphic inner function, i.e., $|B(\lambda)|=1$ for all $\lambda\in \mathbb{R}$. Additionally $B(\lambda)\cdot\overline{B(\bar{\lambda})}\equiv1$. Zeros (resp. poles) of $B(\lambda)$ should lie on $\mathbb{C}_+$ (resp. $\mathbb{C}_-$). Zeros and poles of $M(\lambda)$ all lie on the real line $\mathbb{R}$ and are simple and interlaced.
 \subsection{A canonical functional model}\label{model}
A functional model for a simple symmetric operator $T$ is usually a concrete function space $\mathfrak{H}$ together with a model operator $\mathfrak{T}$ acting in it such that $\mathfrak{T}$ is unitarily equivalent to the original $T$. $\mathfrak{H}$ is often equipped with extra structures facilitating the study of $T$. In \cite[\S5.2]{wang2024complex} a canonical model associated to a general simple symmetric operator $T$ has been suggested. Here "canonical" means the model is constructed in terms of $T$ itself, without referring to any other artificial choices. In this subsection, we recall this model theory for $T\in \mathfrak{E}_1(H)$.

Associated to $T$ is a holomorphic line bundle $E$ over $\mathbb{C}$: For each $\lambda\in \mathbb{C}$, the fiber $E_\lambda$ at $\lambda$ is just $\pi_1(W_\lambda)$ where $\pi_1$ is the projection from $\mathbb{H}=H\oplus_\bot H$ onto its first component. $E_\lambda\subset H$ is 1-dimensional and equipped with a natural Hermitian structure by restricting the inner product of $H$ on $E_\lambda$. Exchange the fiber at $\lambda$ with that at $\bar{\lambda}$ (when $\lambda\in \mathbb{R}$, $E_\lambda$ is unchanged). This produces an anti-holomorphic line bundle $F^\dag$ over $\mathbb{C}$. Let $F$ be the conjugate-linear dual of $F^\dag$. $F$ is again a holomorphic line bundle with its natural induced Hermitian structure. The pairing between $F$ and $F^\dag$ will be denoted by $\langle\cdot,\cdot\rangle$, i.e., $\varphi(\omega)=\langle\varphi,\omega\rangle$ for $\varphi\in F_\lambda$, $\omega\in F^\dag_\lambda$.

To each $x\in H$ is associated a holomorphic section $\hat{x}$ of $F$: For any $\omega\in F_\lambda^\dag$, $\langle\hat{x}(\lambda), \omega\rangle:=(x, \omega)_H$,
where $\hat{x}(\lambda)\in F_\lambda$ is the value of $\hat{x}$ at $\lambda$. Since $T$ is simple, it is necessary that $\hat{x}\equiv 0$ if and only if $x=0$. Then we get a linear space $\mathfrak{H}:=\hat{H}$ of holomorphic sections of $F$. We can transfer the Hilbert space structure of $H$ onto $\mathfrak{H}$. With this structure, $\mathfrak{H}$ is actually a reproducing kernel Hilbert space in the sense of \cite{bertram1998reproducing}. The reproducing kernel on $F$ can be described easily. Let $\iota_\lambda$ be the canonical embedding of $F^\dag_\lambda$ into $H$. Then we can define $\iota_\lambda^\dag: H\rightarrow F_\lambda$ via
\[\langle \iota_\lambda^\dag x, \omega\rangle=(x, \iota_\lambda\omega)_H,\  \forall x\in H, \ \omega\in F^\dag_\lambda.\]
Thus $\hat{x}(\cdot)=\iota_{\cdot}x$. Then the reproducing kernel is simply $K(\lambda, \mu)=\iota_\lambda^\dag\iota_\mu: F^\dag_\mu\rightarrow F_\lambda$. $\mathfrak{H}$ is just our canonical model space.

Multiplication by the independent variable $\lambda$ certainly maps a holomorphic section $s$ of $F$ to another $\mathfrak{X}s$. Let $D(\mathfrak{T}):=\{s\in \mathfrak{H}|\mathfrak{X}s\in \mathfrak{H}\}$. Then the operator $\mathfrak{T}:D(\mathfrak{T})\rightarrow \mathfrak{H}, s\in D(\mathfrak{T})\mapsto \mathfrak{X}s$ is our model operator of $T$. More precisely, $x\in D(T)$ if and only if $\hat{x}\in D(\mathfrak{T})$ and in that case $\widehat{Tx}(\lambda)=\lambda \hat{x}(\lambda)$ for any $\lambda\in \mathbb{C}$.

Due to the famous Oka-Grauert principle, our line bundle $F$ over $\mathbb{C}$ is holomorphically trivial, i.e., there is a (not unique) global holomorphic section $s$ of $F$ not vanishing at any $\lambda\in \mathbb{C}$. In terms of this $s$, any $\hat{x}$ now determines an entire function $f_x$ via $\hat{x}=f_x\cdot s$ and $\mathfrak{H}$ can thus be identified as a Hilbert space of such entire functions. Different choices of $s$ may produce Hilbert spaces seemingly rather different. More on this respect will be presented in \S~\ref{S4}.

The canonical model can also be recovered from the Weyl curve $W_T(\lambda)$. More generally, let $\mathcal{B}$ be a 2-dimensional strong symplectic Hilbert space whose signature is $(1,1)$ and let $\mathcal{M}_\mathcal{B}$ be the Grassmanian of 1-dimensional subspaces. A holomorphic map $N(\lambda)$ from $\mathbb{C}$ to $\mathcal{M}_\mathcal{B}$ is called an entire Nevanlinna curve if $N(\lambda)$ for each $\lambda\in \mathbb{C}_+$ represents a positive-definite subspace in $\mathcal{B}$ and $N(\lambda)=N(\bar{\lambda})^{\bot_s}$ for each $\lambda\in \mathbb{C}_-$. For each entire Nevanlinna curve $N(\lambda)$ one can associate a canonical Hilbert space $\mathfrak{H}$ of holomorphic sections of a holomorphic line bundle over $\mathbb{C}$ and multiplication in this space by the independent variable is an entire operator with deficiency index 1, for which $N(\lambda)$ can be viewed as the corresponding Weyl curve. In this sense, all entire Nevanlinna curves stem from entire operators with deficiency index 1. For details of this argument, see \cite[\S~5.2]{wang2024complex}.
\subsection{Boundary value problems and value distribution theory}\label{bvp}
Given $T\in \mathfrak{E}_1(H)$, its generic extensions are parameterized by points in the projective line $\mathcal{M}$. Recall that for $m\in \mathcal{M}$, we use $T_m$ to denote the corresponding extension. Then $\sigma(T_m)$ consists of only eigenvalues (if any). $\sigma(T_m)$ can be interpreted in a geometric way as follows.

Let $l$ be the tautological line bundle over $\mathcal{M}$, i.e., at $m\in \mathcal{M}$ the fiber $l_m$ is precisely the line in $\mathcal{B}_T$ parameterized by $m$. Let $l^*$ be the linear dual line bundle. We can fix a nonzero element $\varrho \in \wedge^2 \mathcal{B}_T$. Then a point $q\in \mathcal{M}$ (i.e., an abstract boundary condition) provides a holomorphic section $s_q$ of $l^*$ up to a constant factor: Choose $0\neq v\in l_q$ and set $s_q(w_m)\varrho =v\wedge w_m$ for each $w_m\in l_m$. $q$ is precisely the simple zero of $s_q$.

$\lambda\in \mathbb{C}$ is an eigenvalue of $T_m$ if and only if $W_\lambda\subset T_m$. Since $\textup{dim}_{\mathbb{C}}(T_m/A_T)=1$, this happens if and only if $W_T(\lambda)=m$. Phrased in another way, this is equivalent to that $s_m(W_T(\lambda))=0$ or that $\lambda$ is an $m$-point of the Weyl curve in the sense of value distribution theory. It is possible that a certain $m\in \mathcal{M}$ can only be obtained a finite number of times by $W_T(\lambda)$. In this case, we say $m$ is a Picard exceptional boundary condition.

Note that $\mathcal{B}_T$ has a canonical Hermitian inner product which induces a Hermitian metric on $l^*$. Let $c_1$ be the first Chern form of $l^*$ w.r.t. the unique Chern connection. Let $\mathbb{D}_t$ for $t>0$ be the open disc on $\mathbb{C}$ with center 0 and radius $t$. Then the height $h_T(r)$ of $T$ is defined to be the function
\[h_T(r)=\int_0^r\frac{dt}{t}\int_{\mathbb{D}_t}W_T^*(c_1),\]
where $W_T^*(c_1)$ is the pull-back of $c_1$ through $W_T: \mathbb{C}\rightarrow \mathcal{M}$. $h_T(r)$ is actually the characteristic function of the entire curve $W_T(\lambda)$, measuring the growth of $T$. If
$$\liminf_{r\rightarrow +\infty}\frac{h_T(r)}{\ln r}=+\infty,$$
we say $T$ is transcendental. Otherwise, $T$ is called rational. Actually, rational entire operators only occur when $\dim H<+\infty$ and the corresponding Weyl curves are essentially rational functions in $\lambda$. We only consider transcendental entire operators in the paper. The Weyl order $\rho_T$ is defined to be
\[\rho_T=\limsup_{r\rightarrow+\infty}\frac{\ln h_T(r)}{\ln r}.\]
If $\rho_T$ is finite, we say $T$ is of finite Weyl order, and in that case the number
\[\tau_T=\limsup_{r\rightarrow+\infty}\frac{h_T(r)}{r^{\rho_T}}\]
is called the Weyl type of $T$. Note that $\tau_T\in [0, +\infty]$. $T$ is called of at most minimal exponential type if $\rho_T\leq 1$ and $\tau_T=0$ in case $\rho_T=1$.

Let $n_T(r, m)$ be the number of eigenvalues (counted with analytic multiplicity) of $T_m$ in $\mathbb{D}_r$. The counting function of $T_m$ is defined to be
\[N_T(r, m)=\int_0^r [n_T(t, m)-n_T(0,m)]\frac{dt}{t}+n_T(0,m)\ln r,\]
where $n_T(0,m)$ is the multiplicity of 0 as an eigenvalue of $T_m$. The proximity function $\mathfrak{m}_T(r, m)$ of $T_m$ is
\[\mathfrak{m}_T(r, m):=-\int_0^{2\pi}\ln|s_m(W_T(re^{\mathrm{i}\phi}))|\frac{d\phi}{2\pi}.\] The First Main Theorem for entire curves now says that
\[h_T(r)=N_T(r,m)+\mathfrak{m}_T(r, m)+O(1),\]
where $O(1)$ means a bounded term in $r$. The Nevanlinna defect $\delta_T(m)$ of $T_m$ is defined by
\[\delta_T(m)=\liminf_{r\rightarrow +\infty}\frac{\mathfrak{m}_T(r,m)}{h_T(r)}.\]
Obviously $\delta_T(m)\in[0,1]$. If $\delta_T(m)>0$, $m$ is called a Nevanlinna exceptional boundary condition. In particular, for a Picard exceptional boundary condition $m$, $\delta_T(m)=1$. Nevanlinna's famous Defect Relation now reads
\begin{equation}\Sigma_{m\in \mathcal{M}}\delta_T(m)\leq 2.\end{equation}
Consequently, there can be at most countably many Nevanlinna exceptional boundary conditions.

Self-adjoint extensions are special in that they can never be Nevanlinna exceptional. Actually, for a self-adjoint boundary condition $m\in \mathcal{M}$, one can conclude from \cite[Thm.~10.19 and Thm.~10.43]{wang2024complex} that
\begin{equation}h_T(r)=N_T(r,m)+O(\ln r).\label{se}\end{equation}

If a boundary triplet $(\mathbb{C}, \Gamma_+, \Gamma_-)$ is chosen, all generic extensions can be written down concretely. For each $c\in \mathbb{C}\cup \{\infty\}\cong \mathbb{CP}^1$, there is the extension $T_c$ characterized by
\[T_c:=\{a\in A_T^{\bot_s}|\Gamma_-a=c \Gamma_+ a\}.\]
Note that for $c=\infty$, the above means $T_\infty=\{a\in A_T^{\bot_s}|\Gamma_+ a=0\}$. $T_c$ with $|c|<1$ is called \emph{strictly dissipative} while $T_c$ with $|c|>1$ is called \emph{strictly accumulative}. It is known that for $|c|<1$, we have $\sigma(T_c)\subset \mathbb{C}_+$, and for $|c|>1$ ($c$ could be $\infty$), $\sigma(T_c)\subset \mathbb{C}_-$ \cite[\S~9]{wang2024complex}. Self-adjoint extensions correspond to those $c$'s such that $|c|=1$. Note that $T_0$ is the conjugate of $T_\infty$. As a consequence, $\sigma(T_0)=\overline{\sigma(T_\infty)}$. It should also be pointed out that the above classification of generic boundary conditions is in fact independent of the choices of boundary triplets.

If $B(\lambda)$ is the contractive Weyl function w.r.t. $(\mathbb{C}, \Gamma_+, \Gamma_-)$, $B(\lambda)$ is necessarily a meromorphic function on $\mathbb{C}$, whose poles are precisely points in $\sigma(T_\infty)$ and whose zeros are precisely points in $\sigma(T_0)$. In general, $\lambda\in \sigma(T_c)$ if and only if $B(\lambda)-c$=0 and in that case the algebraic multiplicity of $\lambda\in \sigma(T_m)$ coincides with the analytic multiplicity of $\lambda$ as a root of this equation \cite[Thm.~10.36]{wang2024complex}. In particular, roots of the equations $B(\lambda)\mp 1=0$ are precisely the eigenvalues of the self-adjoint extensions $T_1$ and $T_{-1}$, and each eigenvalue is simple. The above argument has the following immediate conclusion.
\begin{proposition}\label{par}For $T\in \mathfrak{E}_1(H)$, let $m_1, m_2\in \mathcal{M}$ be two distinct boundary conditions. Then $\sigma(T_{m_1})\cap \sigma(T_{m_2})=\emptyset$. In particular, the collection $\{\sigma(T_m)\}_{m\in \mathcal{M}}$ is a partition of $\mathbb{C}$.
\end{proposition}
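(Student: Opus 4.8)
The plan is to observe that the proposition is an immediate consequence of the geometric description of the point spectrum recalled just above, so all that is really needed is to assemble the pieces. First I would record the key identity: for $T\in\mathfrak{E}_1(H)$ and any $m\in\mathcal{M}$, the spectrum $\sigma(T_m)$ consists only of eigenvalues, and $\lambda\in\mathbb{C}$ is an eigenvalue of $T_m$ precisely when $W_\lambda\subset T_m$, which — since $\dim_{\mathbb{C}}(T_m/A_T)=1$ — happens if and only if $W_T(\lambda)=m$. Hence $\sigma(T_m)$ is exactly the fiber $W_T^{-1}(m)$ of the Weyl curve over $m$.

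Granting this, disjointness is trivial: if some $\lambda$ lay in $\sigma(T_{m_1})\cap\sigma(T_{m_2})$, then $m_1=W_T(\lambda)=m_2$, contradicting $m_1\neq m_2$. For the partition claim I would use that $T$ is entire, so that $W_T$ is a single-valued holomorphic map defined on all of $\mathbb{C}$; thus every $\lambda\in\mathbb{C}$ belongs to $\sigma(T_{W_T(\lambda)})$, giving $\mathbb{C}=\bigsqcup_{m\in\mathcal{M}}\sigma(T_m)$. Blocks $\sigma(T_m)$ with $m$ outside the image of $W_T$ are empty; these correspond to boundary conditions omitted by $W_T$, a strong form of Picard exceptionality.

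There is essentially no obstacle. The one point deserving a remark is the non-densely-defined case, where a distinguished $m_0\in\mathcal{M}$ corresponds to a self-adjoint relation rather than an operator. This changes nothing: the statement ``$\lambda\in\sigma(T_{m_0})$'' still unravels to ``$W_\lambda\subset T_{m_0}$'', so the same argument applies; and there is no inconsistency with the spectrum of a self-adjoint relation being real, since $m_0$ is Lagrangian in $\mathcal{B}_T$ whereas $W_T(\lambda)$ is definite for $\lambda\in\mathbb{C}_+\cup\mathbb{C}_-$, forcing $W_T^{-1}(m_0)\subset\mathbb{R}$. If one prefers an explicit computation, fix a boundary triplet and identify $\mathcal{M}$ with $\mathbb{C}\cup\{\infty\}$: then $\lambda\in\sigma(T_c)$ if and only if $B(\lambda)-c=0$, with $c=\infty$ read as a pole of the meromorphic inner function $B$, and since $B$ assumes a single value in $\mathbb{CP}^1$ at each $\lambda$ the level sets $\{B=c\}$ partition $\mathbb{C}$ — the same conclusion.
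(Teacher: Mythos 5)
Your proof is correct and is essentially the paper's argument in invariant form: the paper fixes a boundary triplet and observes (after reducing to strictly dissipative $m_1,m_2$) that $\sigma(T_{c_1})$ and $\sigma(T_{c_2})$ are the root sets of $B(\lambda)=c_1$ and $B(\lambda)=c_2$, which is exactly your identification $\sigma(T_m)=W_T^{-1}(m)$ read in a chart, and you even include that chart computation as your closing alternative. Your coordinate-free phrasing has the minor advantage of handling all boundary conditions at once (self-adjoint pairs and the non-operator relation included) and of spelling out the partition claim via surjectivity of $\lambda\mapsto W_T(\lambda)$ onto its own values, points the paper's proof leaves as ``obviously''/``clear''.
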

\begin{proof}Obviously, we only have to prove the first statement for $m_1,m_2$ that are strictly dissipative. We choose a boundary triplet $(\mathbb{C}, \Gamma_+, \Gamma_-)$. Then the two boundary conditions are determined by two distinctive constants $c_1, c_2$ in the open unit disc and the spectra arise from roots of $B(\lambda)-c_1=0$ and $B(\lambda)-c_2=0$ respectively. This surely implies $T_{m_1}$ and $T_{m_2}$ cannot have a common eigenvalue. The second statement is clear.
 \end{proof}
 \emph{Remark}. This proposition is a generalization of a well-known result on self-adjoint extensions. See for instance Thm.5 (ii) in \cite{silva2013branges}.

The height function can also be written simply in terms of $B(\lambda)$ ( \cite[Thm.~10.19]{wang2024complex}):
\begin{equation}h_T(r)=\frac{1}{2\pi \mathrm{i}}\int_0^r\frac{dt}{t}\int_{-t}^td\ln B(u)+O(1),\label{height}\end{equation}
where $B(u)$ is $B(\lambda)$ restricted on the real line $\mathbb{R}$.

From the above argument, we immediately have
\begin{proposition}For $T\in \mathfrak{E}_1(H)$ we have
\[\sum_{m\in s. diss.}\delta_T(m)\leq 1,\]
where the summation is over all strictly dissipative extensions of $T$. In particular, the number of Picard exceptional boundary conditions can only be 0 or 2.
\end{proposition}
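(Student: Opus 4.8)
\emph{Proof proposal.} The plan is to combine the conjugation symmetry of the Weyl curve with the global Defect Relation $\Sigma_{m\in\mathcal{M}}\delta_T(m)\le 2$. Fix a boundary triplet $(\mathbb{C},\Gamma_+,\Gamma_-)$, so that the strictly dissipative conditions are the $T_c$ with $|c|<1$, the strictly accumulative ones are the $T_c$ with $|c|>1$, and $\sigma(T_c)$ consists of the roots of $B(\lambda)-c=0$ counted with analytic multiplicity. For $|c|<1$ put $c':=1/\bar c$, so $|c'|>1$; the identity $B(\lambda)\overline{B(\bar\lambda)}\equiv 1$ shows that $B(\lambda_0)=c$ if and only if $B(\bar\lambda_0)=c'$, and by \cite[Thm.~10.36]{wang2024complex} the multiplicities match, whence $\sigma(T_{c'})=\overline{\sigma(T_c)}$ as multisets. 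Since $|\bar\lambda|=|\lambda|$ and $\bar 0=0$, this gives $n_T(r,c')=n_T(r,c)$ and $n_T(0,c')=n_T(0,c)$, hence $N_T(r,c')=N_T(r,c)$ for every $r>0$.

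Next I would rewrite the defect via the First Main Theorem. Because $T$ is transcendental we have $h_T(r)/\ln r\to\infty$, and the relation $h_T(r)=N_T(r,m)+\mathfrak{m}_T(r,m)+O(1)$, together with the $m$-independence of $h_T(r)$, yields $\delta_T(m)=1-\limsup_{r\to\infty}N_T(r,m)/h_T(r)$. Combined with the previous paragraph this gives $\delta_T(c')=\delta_T(c)$ for every strictly dissipative $c$. Splitting $\mathcal{M}$ into the open disc, the unit circle, and its exterior, and discarding the nonnegative self-adjoint contributions, the Defect Relation becomes
\[
2\ \geq\ \sum_{|c|<1}\delta_T(c)+\sum_{|c|>1}\delta_T(c)\ =\ 2\sum_{m\in s. diss.}\delta_T(m),
\]
which is the asserted inequality.

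For the ``in particular'' clause, recall that a Picard exceptional condition has defect $1$. No self-adjoint condition can be Picard exceptional: by (\ref{se}) we have $\mathfrak{m}_T(r,m)=O(\ln r)$, so $\delta_T(m)=0$ since $h_T(r)$ grows faster than $\ln r$. Hence every Picard exceptional condition is strictly dissipative or strictly accumulative, and the involution $c\mapsto 1/\bar c$ matches the Picard exceptional dissipative conditions bijectively with the Picard exceptional accumulative ones. By the first part, two distinct strictly dissipative conditions cannot both be Picard exceptional (their defects alone would sum to $2>1$), so there is at most one; therefore the total number of Picard exceptional boundary conditions equals twice this count, namely $0$ or $2$.

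The only step that genuinely needs care is the identification $\sigma(T_{c'})=\overline{\sigma(T_c)}$ with matching analytic multiplicities; once that conjugation symmetry is in hand from $B(\lambda)\overline{B(\bar\lambda)}\equiv 1$ and the multiplicity statement of \cite[Thm.~10.36]{wang2024complex}, everything else is bookkeeping with the Defect Relation and the First Main Theorem.
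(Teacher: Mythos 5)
Your proposal is correct and follows essentially the same route as the paper: the conjugation symmetry $\sigma(T_{1/\bar c})=\overline{\sigma(T_c)}$ (with matching multiplicities from $B(\lambda)\overline{B(\bar\lambda)}\equiv 1$) forces equal counting functions and hence equal defects for paired dissipative/accumulative conditions, and the Defect Relation then yields the bound and the $0$-or-$2$ count for Picard exceptional conditions. Your write-up merely spells out the bookkeeping (the identity $\delta_T(m)=1-\limsup N_T(r,m)/h_T(r)$ and the exclusion of self-adjoint conditions via (\ref{se})) that the paper leaves implicit.
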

\begin{proof} In terms of a fixed boundary triplet $(\mathbb{C}, \Gamma_+, \Gamma_-)$, for a strictly dissipative extension $T_c$, its conjugate is $T_{1/\bar{c}}$. Thus $\sigma(T_{1/\bar{c}})=\overline{\sigma(T_c)}$. In particular, the (analytic) multiplicity of $\lambda_i\in \sigma(T_c)$ as a root of $B(\lambda)-c=0$ is the same as that of $\overline{\lambda_i}$ as a root of $B(\lambda)-\frac{1}{\bar{c}}=0$. Thus Nevanlinna exceptional boundary conditions must occur in pairs. The conclusion follows from Nevanlinna's Defect Relation.
\end{proof}
\section{Geometry of the characteristic line bundle}\label{line}
Since the characteristic line bundle $F$ plays a basic role in our canonical model, it is interesting to learn the relationship of the geometry of $F$ with the model Hilbert space $\mathfrak{H}$. In particular, though the curvature of $F$ is essentially computed in \cite{wang2024complex} and shown to be a complete unitary invariant, its meaning for spectral theory was unrevealed there. In this section we investigate the geometric aspect of $F$. We continue to use the notations in \S\S~\ref{bvp}.

 Let a boundary triplet $(\mathbb{C}, \Gamma_+, \Gamma_-)$ for $T\in \mathfrak{E}_1(H)$ be fixed. We note that $T_\infty$ provides a holomorphic frame for $E$ over $\mathbb{C}\backslash \sigma(T_\infty)$. In fact, for each $\lambda\in \mathbb{C}\backslash \sigma(T_\infty)$, there is a unique $0\neq \gamma_+(\lambda)\in E_\lambda \subset H$ such that \[\hat{\gamma}_+(\lambda):=(\gamma_+(\lambda), \lambda\gamma_+(\lambda))\in W_\lambda\subset \mathbb{H}\] and $\Gamma_+\hat{\gamma}_+(\lambda)=1$. Similarly, for each $\lambda\in \mathbb{C}\backslash \sigma(T_0)$, there is a unique $0\neq \gamma_-(\lambda)\in E_\lambda\subset H$ such that
\[\hat{\gamma}_-(\lambda):=(\gamma_-(\lambda), \lambda\gamma_-(\lambda))\in W_\lambda\]
and $\Gamma_-\hat{\gamma}_-(\lambda)=1$. $\gamma_+(\lambda)$ (resp. $\gamma_-(\lambda)$) is thus a holomorphic frame of $E$ over $\mathbb{C}\backslash \sigma(T_\infty)$ (resp. $\mathbb{C}\backslash \sigma(T_0)$). Note that by definition $\Gamma_-\hat{\gamma}_+(\lambda)=B(\lambda)$ and $\Gamma_+\hat{\gamma}_-(\lambda)=\frac{1}{B(\lambda)}$.
\begin{lemma}\label{metric}$\gamma_+(\lambda)$ is a meromorphic section of $E$, whose poles are precisely points in $\sigma(T_\infty)$. The order of a pole $\lambda_i$ of $\gamma_+(\lambda)$ is the same as the analytic multiplicity of $\lambda_i$ as an eigenvalue of $T_\infty$. In particular, for $\lambda, \mu\in \mathbb{C}\backslash \sigma(T_\infty)$
\begin{equation}(\gamma_+(\lambda),\gamma_+(\mu))_H=\mathrm{i}\frac{1-\overline{B(\mu)}B(\lambda)}{\lambda-\bar{\mu}}.\label{fr}\end{equation}
\end{lemma}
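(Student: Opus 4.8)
The plan is to deduce the meromorphy and the pole orders from the relation $\gamma_+(\lambda)=B(\lambda)\gamma_-(\lambda)$ between the two natural frames of $E$, and to obtain the reproducing identity (\ref{fr}) by evaluating the canonical symplectic pairing on the graph vectors $\hat\gamma_+(\lambda),\hat\gamma_+(\mu)$ in two different ways.

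First I would note that $\sigma(T_0)\subset\mathbb{C}_+$ and $\sigma(T_\infty)\subset\mathbb{C}_-$ (alternatively invoke Prop.~\ref{par}), so these two sets are disjoint. On $\mathbb{C}\backslash(\sigma(T_0)\cup\sigma(T_\infty))$ both $\gamma_+$ and $\gamma_-$ are holomorphic, nowhere-vanishing frames of the line bundle $E$, hence $\gamma_+(\lambda)=c(\lambda)\gamma_-(\lambda)$ for a holomorphic, nowhere-vanishing scalar $c(\lambda)$; applying $\Gamma_+$ to $\hat\gamma_\pm(\lambda)$ and using $\Gamma_+\hat\gamma_+(\lambda)=1$ together with $\Gamma_+\hat\gamma_-(\lambda)=B(\lambda)^{-1}$ gives $1=c(\lambda)B(\lambda)^{-1}$, i.e. $c\equiv B$. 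Now fix $\lambda_i\in\sigma(T_\infty)$. On a punctured neighbourhood of $\lambda_i$ the frame $\gamma_-$ is holomorphic and nowhere zero (since $\lambda_i\notin\sigma(T_0)$), while $B$ is meromorphic with a pole at $\lambda_i$ whose order is, by the facts recalled above, the analytic multiplicity of $\lambda_i$ as an eigenvalue of $T_\infty$; hence $\gamma_+=B\gamma_-$ extends meromorphically across $\lambda_i$ with a pole of exactly that order. As $\gamma_+$ is already holomorphic and nowhere vanishing off $\sigma(T_\infty)$, the first two assertions follow. (Equivalently, one can work locally with a holomorphic frame $e$ of $E$, so that $\hat e(\lambda)=(e(\lambda),\lambda e(\lambda))\in W_\lambda$ and $\gamma_+=e/\Gamma_+\hat e$, and note that $\Gamma_+\hat e(\lambda_0)=0$ forces the line $W_{\lambda_0}$ into $\ker\Gamma_+=T_\infty$, i.e. $\lambda_0\in\sigma(T_\infty)$.)

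For (\ref{fr}) I would compute $[\hat\gamma_+(\lambda),\hat\gamma_+(\mu)]_c$ twice. Directly from $[x,y]_c=(x_2,y_1)_H-(x_1,y_2)_H$ with $x=(\gamma_+(\lambda),\lambda\gamma_+(\lambda))$ and $y=(\gamma_+(\mu),\mu\gamma_+(\mu))$ one obtains $(\lambda-\bar\mu)(\gamma_+(\lambda),\gamma_+(\mu))_H$. On the other hand $\hat\gamma_+(\lambda),\hat\gamma_+(\mu)\in A_T^{\bot_s}$ (this is built into the normalization $\Gamma_+\hat\gamma_+\equiv 1$), so the abstract Green's formula (\ref{green}) together with $\Gamma_+\hat\gamma_+\equiv1$ and $\Gamma_-\hat\gamma_+=B$ yields $\mathrm{i}\bigl(1-\overline{B(\mu)}B(\lambda)\bigr)$. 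Equating the two expressions and dividing by $\lambda-\bar\mu$ gives (\ref{fr}) for $\lambda\neq\bar\mu$; the case $\lambda=\bar\mu$ then follows by letting $\lambda\to\bar\mu$, since $1-\overline{B(\mu)}B(\lambda)=1-\overline{B(\bar\lambda)}B(\lambda)$ vanishes there because $B(\lambda)\,\overline{B(\bar\lambda)}\equiv1$. I do not expect a genuine obstacle in this argument; the only delicate points are the bookkeeping of the conjugate-linearity conventions, so that the two evaluations of $[\cdot,\cdot]_c$ agree exactly, and the (elementary) disjointness of $\sigma(T_0)$ and $\sigma(T_\infty)$, which is what makes $\gamma_-$ available as a regular comparison frame near $\sigma(T_\infty)$.
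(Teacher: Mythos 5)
Your proposal is correct and follows essentially the same route as the paper: the identity $\gamma_+(\lambda)=B(\lambda)\gamma_-(\lambda)$ (obtained by comparing boundary values $\Gamma_\pm$), combined with the non-vanishing of $\gamma_-$ near $\sigma(T_\infty)$ and the known pole structure of $B$, gives the meromorphy and pole orders, while Eq.~(\ref{fr}) comes from evaluating $[\hat\gamma_+(\lambda),\hat\gamma_+(\mu)]_c$ both directly and via the abstract Green's formula (\ref{green}). Your extra remarks on the disjointness of $\sigma(T_0)$ and $\sigma(T_\infty)$ and on the limiting case $\lambda=\bar\mu$ are just more explicit versions of what the paper leaves implicit.
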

\begin{proof}For each $\lambda_i\in \sigma(T_\infty)$, by definition $\gamma_-(\lambda)$ is non-vanishing around $\lambda_i$. Away from $\lambda_i$,
\[(\Gamma_+,\Gamma_-)(\hat{\gamma}_+(\lambda))=(1, B(\lambda))=B(\lambda)(\frac{1}{B(\lambda)}, 1)=B(\lambda)(\Gamma_+,\Gamma_-)(\hat{\gamma}_-(\lambda)).\]
This shows $\gamma_+(\lambda)=B(\lambda)\gamma_-(\lambda)$ and the claim follows. The latter formula comes from applying the abstract Green's formula (\ref{green}) to $\hat{\gamma}_+(\lambda)$ and $\hat{\gamma}_+(\mu)$.
\end{proof}
\emph{Remark}. In Eq.~(\ref{fr}), since $B(\lambda)\cdot \overline{B(\bar{\lambda})}\equiv 1$, we have
\[\frac{1-\overline{B(\mu)}B(\lambda)}{\lambda-\bar{\mu}}=\overline{B(\mu)}\times\frac{B(\bar{\mu})-B(\lambda)}{\lambda-\bar{\mu}}.\]
Thus when $\lambda=\bar{\mu}$, the RHS of Eq.~(\ref{fr}) should be interpreted as $-i\overline{B(\mu)}B'(\bar{\mu})$.

Note that for $\lambda\in \mathbb{C}\backslash \sigma(T_0)$, $\gamma_+(\bar{\lambda})\in F^\dag_\lambda$. By duality this defines $\psi(\lambda)\in F_\lambda$ via
\begin{equation}\langle \psi(\lambda),\gamma_+(\bar{\lambda})\rangle=1.\label{dual}\end{equation}
Then $\psi(\lambda)$ is a holomorphic frame of $F$ over $\mathbb{C}\backslash \sigma(T_0)$. More precisely,
\begin{lemma}$\psi(\lambda)$ is a holomorphic section of $F$ over $\mathbb{C}$. The zeros of $\psi(\lambda)$ are precisely points in $\sigma(T_0)$. The order of a zero $\lambda_i$ is the same as the analytic multiplicity of $\lambda_i$ as an eigenvalue of $T_0$. In particular,
\[(\psi(\lambda), \psi(\lambda))_\lambda=-\frac{2\Im \lambda}{1-|B(\bar{\lambda})|^2},\]
where $(\cdot,\cdot)_\lambda$ is the induced Hermitian inner product on $F_\lambda$.
\end{lemma}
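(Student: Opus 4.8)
The plan is to obtain the Lemma as the ``dual'' counterpart of Lemma~\ref{metric}, reducing everything to that lemma, to (\ref{fr}), and to the elementary bookkeeping of the pairing $\langle\cdot,\cdot\rangle$. It was already observed that $\psi(\lambda)$ is a holomorphic, nowhere-vanishing section of $F$ over $\mathbb{C}\setminus\sigma(T_0)$; so what remains is (i) that $\psi$ extends holomorphically across each point of $\sigma(T_0)$, with a zero whose order equals the analytic multiplicity of that point as an eigenvalue of $T_0$, and (ii) the Hermitian-norm identity.

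For (i) I would introduce the companion frame built from $\gamma_-$. Since $\gamma_-(\lambda)$ is a holomorphic frame of $E$ over $\mathbb{C}\setminus\sigma(T_0)$ and $\overline{\sigma(T_0)}=\sigma(T_\infty)$, the assignment $\lambda\mapsto\gamma_-(\bar\lambda)\in E_{\bar\lambda}=F^\dag_\lambda$ is an anti-holomorphic, nowhere-vanishing section of $F^\dag$ over $\mathbb{C}\setminus\sigma(T_\infty)$; dualizing it exactly as in (\ref{dual}) produces a holomorphic frame $\tilde\psi(\lambda)$ of $F$ over $\mathbb{C}\setminus\sigma(T_\infty)$, in particular a holomorphic, nonvanishing section on a neighbourhood of $\sigma(T_0)$. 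On $\mathbb{C}\setminus(\sigma(T_0)\cup\sigma(T_\infty))$ I would compare the two frames: substituting $\mu=\bar\lambda$ into the relation $\gamma_+(\mu)=B(\mu)\gamma_-(\mu)$ from the proof of Lemma~\ref{metric}, using that $\langle\cdot,\cdot\rangle$ is conjugate-linear in its $F^\dag$-argument, and invoking $B(\lambda)\overline{B(\bar\lambda)}\equiv1$, one finds $\langle\psi(\lambda),\gamma_-(\bar\lambda)\rangle=B(\lambda)$, hence $\psi(\lambda)=B(\lambda)\tilde\psi(\lambda)$. Since $\tilde\psi$ is holomorphic and nonzero near $\sigma(T_0)$, while $B$ is holomorphic there with zeros precisely at the points of $\sigma(T_0)$, the order of each zero being the analytic multiplicity of the corresponding eigenvalue of $T_0$, the product $B\tilde\psi$ provides the required holomorphic extension of $\psi$ with the stated zero divisor; combined with the holomorphy and nonvanishing of $\psi$ on $\mathbb{C}\setminus\sigma(T_0)$ and with $\sigma(T_0)\cap\sigma(T_\infty)=\emptyset$ (Prop.~\ref{par}), this shows $\psi$ is a global holomorphic section of $F$ over $\mathbb{C}$.

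For (ii), recall that $F_\lambda$ is, by construction, the conjugate-linear dual of $F^\dag_\lambda=E_{\bar\lambda}$, and that under the canonical complex-linear isomorphism $E_{\bar\lambda}\to F_\lambda$, $v\mapsto(v,\cdot)_H$, the induced Hermitian structure on $F_\lambda$ is the one making this map an isometry. Writing $\psi(\lambda)=(w(\lambda),\cdot)_H$ with $w(\lambda)\in E_{\bar\lambda}$, the normalization (\ref{dual}) becomes $(w(\lambda),\gamma_+(\bar\lambda))_H=1$; since $E_{\bar\lambda}$ is one-dimensional this forces $w(\lambda)=\gamma_+(\bar\lambda)/(\gamma_+(\bar\lambda),\gamma_+(\bar\lambda))_H$, so that
\[(\psi(\lambda),\psi(\lambda))_\lambda=(w(\lambda),w(\lambda))_H=\frac{1}{(\gamma_+(\bar\lambda),\gamma_+(\bar\lambda))_H}.\]
Evaluating (\ref{fr}) with both of its arguments equal to $\bar\lambda$ gives $(\gamma_+(\bar\lambda),\gamma_+(\bar\lambda))_H=\mathrm{i}(1-|B(\bar\lambda)|^2)/(\bar\lambda-\lambda)=-(1-|B(\bar\lambda)|^2)/(2\Im\lambda)$, and the claimed identity follows for $\lambda\notin\sigma(T_0)$. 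At points of $\sigma(T_0)$ the left side vanishes and $\bar\lambda$ is a pole of $B$, so the right side is read as $0$; when $\lambda\in\mathbb{R}$ both numerator and denominator of the right side vanish and the identity is to be understood in the limiting sense, exactly as in the Remark after Lemma~\ref{metric}, giving the positive value $\mathrm{i}B(\lambda)/B'(\lambda)$, consistent with $\psi(\lambda)\neq0$ there (note $\mathbb{R}\cap\sigma(T_0)=\emptyset$).

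I expect the main obstacle to be the conjugate-linear bookkeeping tying together $E$, $F^\dag$ and $F$: getting every conjugation right in the identity $\langle\psi(\lambda),\gamma_-(\bar\lambda)\rangle=B(\lambda)$ and in the isometry $v\mapsto(v,\cdot)_H$, together with the (minor) issue of interpreting the norm formula on $\mathbb{R}$. Once these are pinned down, everything reduces to a direct appeal to Lemma~\ref{metric}, (\ref{fr}), $B(\lambda)\overline{B(\bar\lambda)}\equiv1$, and Prop.~\ref{par}.
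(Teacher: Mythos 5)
Your proposal is correct and takes essentially the same route as the paper: the zero/order statement is deduced from Lemma~\ref{metric} via the relation $\gamma_+=B\gamma_-$ (which you make explicit through the auxiliary dual frame $\tilde\psi$ and the factorization $\psi=B\tilde\psi$), and the norm identity is obtained, exactly as in the paper's proof, by writing $\psi(\lambda)$ as the multiple of $\gamma_+(\bar\lambda)$ normalized by (\ref{dual}) and evaluating (\ref{fr}) at $\bar\lambda$. Your additional limiting interpretations on $\mathbb{R}$ and at points of $\sigma(T_0)$ are consistent with the remark following Lemma~\ref{metric}.
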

\begin{proof}The first two statements are clear from the previous lemma and Eq.~(\ref{dual}). For the third, let $g(\lambda)\in \mathbb{C}$ be the unique solution of the equation
\[1=\langle\psi(\lambda), \gamma_+(\bar{\lambda})\rangle=(g(\lambda)\gamma_+(\bar{\lambda}), \gamma_+(\bar{\lambda}))_\lambda=-\frac{1-|B(\bar{\lambda})|^2}{2\Im \lambda}g(\lambda).\]
We see that
\[g(\lambda)=-\frac{2\Im \lambda}{1-|B(\bar{\lambda})|^2}.\]
Thus by definition
\[(\psi(\lambda), \psi(\lambda))_\lambda=[g(\lambda)]^2(\gamma_+(\bar{\lambda}),( \gamma_+(\bar{\lambda}))_\lambda=-\frac{2\Im \lambda}{1-|B(\bar{\lambda})|^2}.\]
\end{proof}
As a holomorphic Hermitian line bundle, $F$ is equipped with the canonical Chern connection whose curvature is of basic importance.
\begin{lemma}In terms of the above section $\psi(\lambda)$, the curvature of the Chern connection in $F$ is
\[R=[\frac{1}{4(\Im \lambda)^2}-\frac{|B'(\lambda)|^2}{(1-|B(\lambda)|^2)^2}]d\lambda\wedge d\bar{\lambda}.\]
\end{lemma}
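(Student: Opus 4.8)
The plan is to read off the curvature from the general coordinate formula for the Chern connection of a holomorphic Hermitian line bundle, fed with the norm formula for $\psi(\lambda)$ established in the previous lemma. Recall (with the conventions of \cite{wang2024complex}) that on a holomorphic Hermitian line bundle, in any nonvanishing local holomorphic frame $e$ with squared norm $h:=(e,e)$, the Chern curvature is $R=\bar\partial\partial\log h=-\frac{\partial^2\log h}{\partial\lambda\,\partial\bar\lambda}\,d\lambda\wedge d\bar\lambda$. By the preceding lemma, $\psi(\lambda)$ is a holomorphic section of $F$ whose zero set is exactly $\sigma(T_0)$, so it is a genuine frame over $\mathbb{C}\backslash\sigma(T_0)$; since the curvature is a globally defined smooth $(1,1)$-form on $\mathbb{C}$, it suffices to compute on the dense open set $\mathbb{C}\backslash(\mathbb{R}\cup\sigma(T_0))$, where moreover the logarithm splits cleanly, using $h(\lambda)=(\psi(\lambda),\psi(\lambda))_\lambda=-\frac{2\Im\lambda}{1-|B(\bar\lambda)|^2}$.

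First I would split $\log h=\log(-2\Im\lambda)-\log\bigl(1-|B(\bar\lambda)|^2\bigr)$ and apply $\partial_\lambda\partial_{\bar\lambda}$ term by term. For the first term, writing $-2\Im\lambda=\mathrm{i}(\lambda-\bar\lambda)$, a one-line computation gives $\partial_\lambda\partial_{\bar\lambda}\log(\lambda-\bar\lambda)=\frac{1}{(\lambda-\bar\lambda)^2}=-\frac{1}{4(\Im\lambda)^2}$. For the second term, the key point is that $\lambda\mapsto\overline{B(\bar\lambda)}$ is \emph{holomorphic} (Schwarz reflection), so $|B(\bar\lambda)|^2=\overline{B(\bar\lambda)}\cdot B(\bar\lambda)$ is a product $u(\lambda)v(\bar\lambda)$ of a holomorphic and an antiholomorphic factor; for such products one has $\partial_\lambda\partial_{\bar\lambda}\log(1-uv)=-\frac{u'v'}{(1-uv)^2}$, which here reads $\partial_\lambda\partial_{\bar\lambda}\log\bigl(1-|B(\bar\lambda)|^2\bigr)=-\frac{|B'(\bar\lambda)|^2}{(1-|B(\bar\lambda)|^2)^2}$. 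Assembling, $R=\bigl[\frac{1}{4(\Im\lambda)^2}-\frac{|B'(\bar\lambda)|^2}{(1-|B(\bar\lambda)|^2)^2}\bigr]d\lambda\wedge d\bar\lambda$.

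The last step is the purely cosmetic conversion of the $\bar\lambda$-arguments to $\lambda$-arguments using that $B$ is a meromorphic inner function: differentiating $B(\lambda)\cdot\overline{B(\bar\lambda)}\equiv1$ in $\lambda$ gives $\frac{d}{d\lambda}\overline{B(\bar\lambda)}=-\frac{B'(\lambda)\overline{B(\bar\lambda)}}{B(\lambda)}$, and together with $|B(\bar\lambda)|=|B(\lambda)|^{-1}$ this yields $|B'(\bar\lambda)|^2=|B'(\lambda)|^2/|B(\lambda)|^4$ and $\bigl(1-|B(\bar\lambda)|^2\bigr)^2=\bigl(1-|B(\lambda)|^2\bigr)^2/|B(\lambda)|^4$, so the factors of $|B(\lambda)|^4$ cancel and $\frac{|B'(\bar\lambda)|^2}{(1-|B(\bar\lambda)|^2)^2}=\frac{|B'(\lambda)|^2}{(1-|B(\lambda)|^2)^2}$. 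This establishes the claimed identity on $\mathbb{C}\backslash(\mathbb{R}\cup\sigma(T_0))$, and since both sides are smooth on all of $\mathbb{C}$ (the right-hand side having only removable singularities along $\mathbb{R}$), it extends by continuity.

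The computation is routine; the only points requiring care are matching the sign convention for the Chern curvature to the one fixed in \cite{wang2024complex}, correctly treating $\overline{B(\bar\lambda)}$ as a holomorphic function in the mixed second derivative, and checking the inner-function identity used in the final substitution. I do not expect any of these to be a genuine obstacle.
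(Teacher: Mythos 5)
Your proposal is correct and follows essentially the same route as the paper: both apply the standard formula $R=\overline{\partial}\partial\ln h$ to $h(\lambda)=(\psi(\lambda),\psi(\lambda))_\lambda$ and compute directly, the only (cosmetic) difference being that the paper first uses $B(\lambda)\cdot\overline{B(\bar\lambda)}\equiv1$ to rewrite $h$ as $|B(\lambda)|^2\cdot\frac{2\Im\lambda}{1-|B(\lambda)|^2}$ before differentiating (the $|B|^2$ factor being locally $\partial\bar\partial$-harmonic), whereas you differentiate in the $\bar\lambda$-variables first and convert with the inner-function identity at the end.
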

\begin{proof}Let $h(\lambda):=(\psi(\lambda), \psi(\lambda))_\lambda$. Since $B(\lambda)\cdot \overline{B(\bar{\lambda})}\equiv 1$,
\[h(\lambda)=|B(\lambda)|^2\times \frac{2\Im \lambda}{1-|B(\lambda)|^2}.\]
It is well-known that $R=\overline{\partial}\partial\ln h$ and the lemma follows from a direct computation.
\end{proof}
\emph{Remark}. The 2-form $R$ is actually globally defined on $\mathbb{C}$ and independent of the choices of boundary triplets. $R$ is a complete unitary invariant of $T$, so is the real-analytic \emph{curvature function}
\[\omega(\lambda):=\frac{1}{4(\Im \lambda)^2}-\frac{|B'(\lambda)|^2}{(1-|B(\lambda)|^2)^2}\]
on $\mathbb{C}$. In terms of the Weyl function $M(\lambda)$ \cite[Thm.~5.9]{wang2024complex},
\begin{equation}\omega(\lambda)=\frac{1}{4(\Im \lambda)^2}(1-(\frac{|M'(\lambda)|}{\Im{M(\lambda)/\Im \lambda}})^2).\label{cur2}\end{equation}
It's easy to see $\omega(\overline{\lambda})=\omega(\lambda)$. It's also a basic result that $\omega(\lambda)>0$ for $\lambda\in \mathbb{C}\backslash \mathbb{R}$ \cite[Thm.~5.8]{wang2024complex}. This is actually a version of the famous Schwarz-Pick inequality, which implies
\begin{proposition}\label{lem1}Let $B(\lambda)$ be the contractive Weyl function of $T\in \mathfrak{E}_1(H)$ w.r.t. a given boundary triplet $(\mathbb{C}, \Gamma_+,\Gamma_-)$. Then for each fixed $u\in \mathbb{R}$, the function $\chi(\lambda):=\frac{1-|B(\lambda)|^2}{2\Im \lambda}$ is strictly decreasing on the half-line $l(u):=\{\lambda\in \mathbb{C}_+|\Re \lambda=u\}$.
\end{proposition}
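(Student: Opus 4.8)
The plan is to reduce the monotonicity statement to the positivity of the curvature function $\omega$ — i.e.\ to the Schwarz--Pick inequality quoted just above. Parametrize the half-line $l(u)$ by $\lambda = u + \mathrm{i}t$ with $t > 0$, so that $\Im\lambda = t$; since $B$ is holomorphic and satisfies $|B(\lambda)| < 1$ throughout $\mathbb{C}_+$, the function $\chi$ restricted to $l(u)$ is a well-defined positive real-analytic function of $t$, and it suffices to show $\frac{d}{dt}\chi(u+\mathrm{i}t) < 0$ for every $t > 0$.

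First I would compute the $t$-derivative by plain Wirtinger calculus: writing $\partial_t = \mathrm{i}\partial_\lambda - \mathrm{i}\partial_{\bar\lambda}$ and using $|B|^2 = B(\lambda)\,\overline{B(\lambda)}$ (holomorphic times anti-holomorphic) one gets $\partial_t(|B|^2) = -2\,\Im\!\big(B'(\lambda)\overline{B(\lambda)}\big)$, and hence
\[
\frac{d}{dt}\,\chi(u+\mathrm{i}t) \;=\; \frac{\,2t\,\Im\!\big(B'(\lambda)\overline{B(\lambda)}\big) \;-\; \big(1 - |B(\lambda)|^2\big)\,}{2t^2}.
\]
So the assertion is equivalent to the scalar inequality $2t\,\Im\!\big(B'\overline{B}\big) < 1 - |B|^2$ at every point of $l(u)$.

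Next I would feed in $\omega(\lambda) > 0$ for $\lambda\in\mathbb{C}\setminus\mathbb{R}$, i.e.\ $\frac{1}{4t^2} > \frac{|B'(\lambda)|^2}{(1-|B(\lambda)|^2)^2}$. Clearing the positive denominators and taking square roots — legitimate because both $1 - |B|^2$ and $2t|B'|$ are nonnegative on $\mathbb{C}_+$ — gives $2t|B'(\lambda)| < 1 - |B(\lambda)|^2$. Since $\Im\!\big(B'\overline{B}\big) \le |B'\overline{B}| = |B'|\,|B| \le |B'|$, we conclude $2t\,\Im\!\big(B'\overline{B}\big) \le 2t|B'| < 1 - |B|^2$, which is exactly what was required; all inequalities are strict, so $\chi$ is strictly decreasing along $l(u)$.

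None of the steps is genuinely hard: the derivative computation is routine, and the real content is already packaged in the positivity of $\omega$. The only point that needs a moment's care is extracting from $\omega>0$ precisely the scalar inequality $2t|B'| < 1 - |B|^2$ (keeping track that $1 - |B|^2 > 0$ on $\mathbb{C}_+$ so the sign is unambiguous) and then dominating the cross term $\Im(B'\overline{B})$ by $|B'|$ — the factor $|B|\le 1$ is not even needed but tightens the estimate. As a remark, one could phrase this geometrically: Lemma~\ref{metric} with $\mu=\lambda$ gives $\chi(\lambda) = (\gamma_+(\lambda),\gamma_+(\lambda))_H = \|\gamma_+(\lambda)\|_H^2$, so the proposition says the norm of the normalized defect vector $\gamma_+(\lambda)$ decreases as $\lambda$ moves up a vertical line; but the self-contained computation above is the most economical route.
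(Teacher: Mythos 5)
Your proposal is correct and follows essentially the same route as the paper: both compute $\partial_v\chi$ along the vertical line, obtaining the numerator $2v\,\Im\!\bigl(B'\overline{B}\bigr)-(1-|B|^2)$, and then dominate the cross term by $2v|B'|$ and invoke the strict positivity of the curvature function $\omega$ (Schwarz--Pick) in the form $2v|B'|<1-|B|^2$ on $\mathbb{C}_+$. The details, including the bound $\Im(B'\overline{B})\le |B||B'|\le |B'|$, match the paper's proof.
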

\begin{proof}Let $\lambda=u+\mathrm{i}v$. We should prove that $\chi(u+\mathrm{i}v)$ is strictly decreasing for $v>0$. Note that
\begin{eqnarray*}\frac{\partial}{\partial v}(2\chi)&=&\frac{\partial}{\partial v}(\frac{1-|B|^2}{v})=\frac{2\Re(\mathrm{i}B\overline{B'})v-(1-|B|^2)}{v^2}\\
&\leq&\frac{2|B||B'|v-(1-|B|^2)}{v^2}\leq \frac{2|B'|v-(1-|B|^2)}{v^2}\\
&<&0,
\end{eqnarray*}
where the last line follows from the inequality $\omega(\lambda)>0$ for $\lambda\in \mathbb{C}_+$.
\end{proof}

 We are particularly interested in the restriction of $\chi(\lambda)$ and $\omega(\lambda)$ on the real line. Note that there is a strictly increasing real-analytic phase function $\theta(u)$ on $\mathbb{R}$ such that $B(u)=e^{\mathrm{i}\theta(u)}$ for $u\in \mathbb{R}$. $\theta(u)$ is determined by $B(\lambda)$ up to addition of an integral multiple of $2\pi$. We say $\theta(u)$ is the phase function associated to $B(\lambda)$ or the underlying boundary triplet.
 \begin{proposition}\label{ka}For $u\in \mathbb{R}$, $\chi(u)=\theta'(u)$.
 \end{proposition}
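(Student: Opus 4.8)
The plan is to read off $\chi(u)$, for $u\in\mathbb{R}$, as the boundary value of $\chi(\lambda)$ from $\mathbb{C}_+$, turn it into a single $v$-derivative of $|B|^2$, and then substitute $B(u)=e^{\mathrm{i}\theta(u)}$.

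First I would record that, since $B(\lambda)$ is a meromorphic inner function whose zeros lie in $\mathbb{C}_+$ and whose poles lie in $\mathbb{C}_-$, it is holomorphic and zero-free in a neighbourhood of $\mathbb{R}$; in particular the complex derivative $B'(u)$ at a real point $u$ coincides with the $u$-derivative of the restriction $B|_{\mathbb{R}}$. We interpret $\chi(u)$ as $\lim_{v\to 0^+}\chi(u+\mathrm{i}v)$. Writing $\lambda=u+\mathrm{i}v$, the numerator $1-|B(u+\mathrm{i}v)|^2$ vanishes at $v=0$ because $|B(u)|=1$, and $B$ is smooth there, so
\[\chi(u)=\lim_{v\to0^+}\frac{1-|B(u+\mathrm{i}v)|^2}{2v}=-\frac12\left.\frac{\partial}{\partial v}|B(u+\mathrm{i}v)|^2\right|_{v=0}.\]

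Next I would compute this derivative. Using the Cauchy--Riemann equations in the form $\frac{\partial}{\partial v}B(u+\mathrm{i}v)=\mathrm{i}\,B'(u+\mathrm{i}v)$ together with the product rule for $|B|^2=B\overline{B}$,
\[\frac{\partial}{\partial v}|B(u+\mathrm{i}v)|^2=\mathrm{i}B'(u+\mathrm{i}v)\overline{B(u+\mathrm{i}v)}-\mathrm{i}B(u+\mathrm{i}v)\overline{B'(u+\mathrm{i}v)}=-2\,\Im\!\big(B'(u+\mathrm{i}v)\overline{B(u+\mathrm{i}v)}\big),\]
so that $\chi(u)=\Im\!\big(B'(u)\overline{B(u)}\big)$. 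Finally, substituting $B(u)=e^{\mathrm{i}\theta(u)}$ yields $B'(u)=\mathrm{i}\theta'(u)e^{\mathrm{i}\theta(u)}$, hence $B'(u)\overline{B(u)}=\mathrm{i}\theta'(u)$, and therefore $\chi(u)=\Im(\mathrm{i}\theta'(u))=\theta'(u)$, which is the assertion.

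There is no genuine difficulty here; the computation is routine. The only points deserving a word are that $\chi$ extends continuously (indeed real-analytically) across $\mathbb{R}$ --- which is precisely the existence of the limit above and follows from the smoothness and non-vanishing of $B$ near $\mathbb{R}$ --- and the elementary passage from the quotient to the $v$-derivative. As an alternative route staying entirely inside the paper's formalism, one may observe that the diagonal $\mu=\lambda$ of (\ref{fr}) reads $\chi(\lambda)=\|\gamma_+(\lambda)\|_H^2$, that $\gamma_+$ is holomorphic and non-vanishing on $\mathbb{R}$ since $\sigma(T_\infty)\subset\mathbb{C}_-$, and that by the remark following Lemma~\ref{metric} the diagonal value at a real $u$ equals $-\mathrm{i}\,\overline{B(u)}B'(u)=\theta'(u)$; this gives the same identity.
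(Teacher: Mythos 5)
Your proof is correct. The identity $\chi(u)=\lim_{v\to0^+}\frac{1-|B(u+\mathrm{i}v)|^2}{2v}=-\tfrac12\partial_v|B(u+\mathrm{i}v)|^2\big|_{v=0}$ is legitimate because $B$ is holomorphic near $\mathbb{R}$ (its poles lie in $\mathbb{C}_-$ and are isolated) and $|B(u)|=1$ on $\mathbb{R}$, and the chain of computations via Cauchy--Riemann, $\partial_v|B|^2=-2\Im\big(B'\overline{B}\big)$, and $B(u)=e^{\mathrm{i}\theta(u)}$ checks out; zero-freeness of $B$ near $\mathbb{R}$ is in fact not even needed. The paper itself does not spell out a proof: it simply cites Prop.~8.9 of \cite{wang2024complex} (specialized to deficiency indices $(1,1)$) and remarks that the statement "can also be proved by directly using Taylor expansion." Your argument is precisely that direct route, carried out cleanly --- it is the first-order Taylor expansion of $1-|B(u+\mathrm{i}v)|^2$ in $v$, packaged as a $v$-derivative --- so it buys a self-contained verification where the paper relies on an external reference. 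Your alternative route through the diagonal of Eq.~(\ref{fr}), $\chi(\lambda)=\|\gamma_+(\lambda)\|_H^2$ together with the remark after Lemma~\ref{metric} giving the diagonal value $-\mathrm{i}\overline{B(u)}B'(u)=\theta'(u)$ at real $u$, is also valid, though note it is logically the same limit in disguise, since that remark's interpretation of the diagonal is itself obtained by the continuity/L'H\^opital-type passage you perform in the main argument.
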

 \begin{proof}This is actually \cite[Prop.~8.9]{wang2024complex} in the case with deficiency indices $(1,1)$. See also the remark following this proposition. It can also be proved by directly using Taylor expansion.
 \end{proof}
 Since $\omega(\lambda)\leq \frac{1}{4(\Im \lambda)^2}$ for $\lambda\in \mathbb{C}\backslash \mathbb{R}$, the line bundle $F$ is almost flat far from the real line. On the real line we have
\begin{proposition}\label{cur}Let $\theta(u)$ be the phase function associated to the contractive Weyl function $B(\lambda)$. Then
\[\omega(u)=\frac{1}{4}[\frac{(\theta'(u))^2}{3}+\frac{2}{3}\frac{\theta'''(u)}{\theta'(u)}-(\frac{\theta''(u)}{\theta'(u)})^2].\]
\end{proposition}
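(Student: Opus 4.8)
The plan is to recover $\omega(u)$ as the boundary value $\lim_{v\to 0^{+}}\omega(u+\mathrm{i}v)$, which reduces everything to a Taylor expansion of the phase function $\theta$ at the real point $u$. Since the zeros (resp.\ poles) of the contractive Weyl function $B(\lambda)$ lie in $\mathbb{C}_{+}$ (resp.\ $\mathbb{C}_{-}$), $B$ is holomorphic and nowhere vanishing on a neighbourhood of every $u\in\mathbb{R}$, so there one may write $B(\lambda)=e^{\mathrm{i}\Theta(\lambda)}$ with $\Theta$ holomorphic and $\Theta|_{\mathbb{R}}=\theta$. In particular $\Theta$ is real on $\mathbb{R}$, hence its Taylor coefficients at $u$ are the real numbers $\theta^{(k)}(u)$, and Schwarz reflection yields $\overline{\Theta(\bar{\lambda})}=\Theta(\lambda)$ and $\overline{\Theta'(\bar{\lambda})}=\Theta'(\lambda)$.

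First I would rewrite the two pieces of $\omega$ through $\Theta$. From $B=e^{\mathrm{i}\Theta}$ one gets $B'/B=\mathrm{i}\Theta'$, so $|B'(\lambda)|^{2}=|B(\lambda)|^{2}\,|\Theta'(\lambda)|^{2}$; and the reflection identity gives $|B(\lambda)|^{2}=e^{\mathrm{i}(\Theta(\lambda)-\Theta(\bar{\lambda}))}=e^{-2\Im\Theta(\lambda)}$. Substituting these into the definition of $\omega$ and using $1-e^{-2s}=e^{-s}\cdot 2\sinh s$, the non-flat term collapses and one obtains the compact identity
\[
\frac{|B'(\lambda)|^{2}}{\bigl(1-|B(\lambda)|^{2}\bigr)^{2}}
=\frac{|\Theta'(\lambda)|^{2}}{4\sinh^{2}\!\bigl(\Im\Theta(\lambda)\bigr)},
\qquad\text{hence}\qquad
\omega(\lambda)=\frac{1}{4}\Bigl[\frac{1}{(\Im\lambda)^{2}}-\frac{|\Theta'(\lambda)|^{2}}{\sinh^{2}\!\bigl(\Im\Theta(\lambda)\bigr)}\Bigr].
\]

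Next I would expand along $\lambda=u+\mathrm{i}v$ as $v\to 0^{+}$. From the Taylor expansion of $\Theta$ at $u$ one finds $\Im\Theta(u+\mathrm{i}v)=v\theta'-\frac{v^{3}}{6}\theta'''+O(v^{5})$ and, using $\overline{\Theta'(\bar{\lambda})}=\Theta'(\lambda)$,
\[
|\Theta'(u+\mathrm{i}v)|^{2}=\Theta'(u+\mathrm{i}v)\,\Theta'(u-\mathrm{i}v)=(\theta')^{2}+v^{2}\bigl((\theta'')^{2}-\theta'\theta'''\bigr)+O(v^{4}),
\]
where all derivatives are evaluated at $u$. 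Feeding these into $\sinh^{2}$ and dividing, the leading term is exactly $\frac{1}{4}(\Im\lambda)^{-2}$ and cancels the flat part; the limit is then simply the $O(v^{0})$ coefficient of the difference, since $\omega(\bar{\lambda})=\omega(\lambda)$ forces $v\mapsto\omega(u+\mathrm{i}v)$ to be even (so no $v^{-1}$ or odd terms occur). A short algebraic simplification, using $\frac{(\theta'')^{2}-\theta'\theta'''}{(\theta')^{2}}=(\theta''/\theta')^{2}-\theta'''/\theta'$, then gives
\[
\omega(u)=-\frac{1}{4}\Bigl[\frac{(\theta'')^{2}-\theta'\theta'''}{(\theta')^{2}}+\frac{\theta'''}{3\theta'}-\frac{(\theta')^{2}}{3}\Bigr]
=\frac{1}{4}\Bigl[\frac{(\theta')^{2}}{3}+\frac{2}{3}\frac{\theta'''}{\theta'}-\Bigl(\frac{\theta''}{\theta'}\Bigr)^{2}\Bigr],
\]
which is the assertion. (Equivalently the right-hand side equals $\frac{1}{12}(\theta')^{2}+\frac{1}{6}S\theta$, where $S\theta$ is the Schwarzian derivative of $\theta$; this form makes the unitary invariance of $\omega|_{\mathbb{R}}$ transparent and may be worth recording.)

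The computation is elementary throughout, so the only real obstacle is the careful bookkeeping of the second-order expansion: one must keep every term through $v^{2}$ in $|\Theta'|^{2}$, in $\sinh^{2}(\Im\Theta)$, and in their quotient, and check that the $v^{-2}$ contribution cancels against $\frac{1}{4}(\Im\lambda)^{-2}$. I expect the conceptual heart to be the passage to the $\sinh$-form of $\omega$; once that identity is in hand, the rest is a routine Taylor expansion.
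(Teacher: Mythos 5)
Your argument is correct and reaches the stated formula; at bottom it is the same strategy as the paper's proof, namely computing $\omega(u)$ as the boundary limit $\lim_{v\to 0^+}\omega(u+\mathrm{i}v)$ via a Taylor expansion at the real point $u$, but your bookkeeping is organized differently. The paper fixes $u$, normalizes $B(u)=1$, expands $B$ itself with coefficients constrained by $B(\lambda)\overline{B(\bar\lambda)}\equiv 1$, and only at the end converts the coefficients $c_1,c_2,c_3$ into $\theta',\theta'',\theta'''$. You instead pass immediately to the holomorphic phase $\Theta$ (a local logarithm of $B$, legitimate since $B$ is meromorphic on $\mathbb{C}$ so its zero and pole sets are discrete and avoid a neighbourhood of each real point, and $|B|=1$ on $\mathbb{R}$ makes $\Theta$ real there), use Schwarz reflection, and derive the exact off-axis identity
\[
\frac{|B'(\lambda)|^{2}}{\bigl(1-|B(\lambda)|^{2}\bigr)^{2}}=\frac{|\Theta'(\lambda)|^{2}}{4\sinh^{2}\bigl(\Im\Theta(\lambda)\bigr)},
\]
after which the limit reduces to a second-order expansion of $\theta$ alone. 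This buys a cleaner cancellation of the flat term $\tfrac{1}{4}(\Im\lambda)^{-2}$ and avoids the normalization and the $a_k\leftrightarrow c_k$ conversion, at the cost of the (easy) justification of the local holomorphic logarithm; the paper's version works directly with $B$ and its inner-function identity, which is slightly more pedestrian but needs no auxiliary function. Your closing observation that the right-hand side equals $\tfrac{1}{12}(\theta')^2+\tfrac{1}{6}\mathcal{S}\theta$ is consistent with the paper's subsequent discussion of the Schwarzian (Prop.~\ref{Schw}).
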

\begin{proof}For a fixed $u\in \mathbb{R}$, w.l.g., we assume $B(u)=1$. Let $\lambda=u+\mathfrak{i}\varepsilon$ for a small $\varepsilon>0$. By using the Taylor expansion of $B(\lambda)$ around $u$, we have
\[B(u+\mathrm{i}\varepsilon)=1+\sum_{k=1}^{\infty}a_k(\mathrm{i}\varepsilon)^k.\]
Due to the identity $B(\lambda)\cdot \overline{B(\bar{\lambda})}\equiv 1$, we can find that
\[a_1=\mathrm{i}c_1, \quad a_2=-\frac{c_1^2}{2}+\mathrm{i}c_2,\quad a_3=-c_1c_2+\mathrm{i}c_3,\]
where $c_1,c_2,c_3$ are real numbers. A careful but routine computation shows
\[|B'(u+\mathrm{i}\varepsilon)|^2=c_1^2-2c_1^3\varepsilon +(c_1^4+4c_2^2-6c_1c_3)\varepsilon^2+o(\varepsilon^2),\]
and
\[1-|B(u+\mathrm{i}\varepsilon)|^2=2c_1\varepsilon -2c_1^2\varepsilon^2-(2c_3-c_1^3)\varepsilon^3+o(\varepsilon^3).\]
From these we can deduce that
\begin{equation}\omega(u)=\lim_{\varepsilon\rightarrow 0+}(\frac{1}{4\varepsilon^2}-\frac{|B'(u+\mathrm{i}\varepsilon)|^2}{(1-|B(u+\mathrm{i}\varepsilon)|^2)^2})=\frac{1}{4}[c_1^2+\frac{4c_3}{c_1}-(\frac{2c_2}{c_1})^2].\label{cu}\end{equation}
On the other side, $B(u)=e^{\mathfrak{i}\theta(u)}$ on the real line and we find
\[c_1=\theta'(u),\quad c_2=\frac{\theta''(u)}{2},\quad c_3=\frac{1}{6}(\theta'''(u)-(\theta'(u))^3).\]
Substituting these into (\ref{cu}) leads to the claimed expression.
\end{proof}

Though generally a real-analytic function on $\mathbb{C}$ is not determined by its restriction on the real line, the function $\omega(u)$ for $u\in \mathbb{R}$ is really a complete unitary invariant of $T$. We need a lemma to prove this fact.
\begin{lemma}For $T\in \mathfrak{E}_1(H)$, a suitable boundary triplet can be chosen such that its corresponding phase function $\theta(u)$ satisfies
\[\theta(0)=0,\quad \theta'(0)=1,\quad \theta''(0)=0.\]
\end{lemma}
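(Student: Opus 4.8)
The plan is to exploit the freedom in the choice of boundary triplet: a change of triplet acts on the contractive Weyl function by an element of $\mathrm{Aut}(\mathbb{D})$ via (\ref{mo}), equivalently on the classical Weyl function $M(\lambda)$ by an element of $\mathrm{Aut}(\mathbb{C}_+)$, and there are precisely three real parameters available to meet the three real conditions $\theta(0)=0$, $\theta'(0)=1$, $\theta''(0)=0$. Rather than invoking an abstract transitivity statement, I would normalize in two explicit steps.

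First I would arrange $\theta(0)=0$. Starting from any triplet, with contractive Weyl function $B$ and Weyl function $M$, replace $B$ by $\overline{B(0)}B$ — a change of triplet of the form (\ref{mo}) with $\tau=0$, $\gamma=\overline{B(0)}$ — so that $B(0)=1$, hence $\theta(0)=0$. Then $M(0)=\infty$, and since the poles of $M$ are simple, near $0$ we may write $M(\lambda)=\rho/\lambda+m_0+m_1\lambda+O(\lambda^2)$ with $\rho\neq0$. From $e^{\mathrm{i}\theta(u)}=(M(u)-\mathrm{i})/(M(u)+\mathrm{i})$ one gets $\theta'(u)=2M'(u)/(M(u)^2+1)$; substituting the Laurent expansion (after clearing the pole) should give, after a short computation,
\[\theta'(0)=-\frac{2}{\rho},\qquad \theta''(0)=\frac{4m_0}{\rho^2}.\]
Since $\theta$ is strictly increasing, $\theta'(0)>0$, so $\rho<0$.

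Second I would apply a further change of triplet of the affine form $M\mapsto aM+b$ with $a>0$ and $b\in\mathbb{R}$; this lies in $\mathrm{Aut}(\mathbb{C}_+)$, so it again produces a Weyl function of $T$, and it preserves the pole of $M$ at $0$, hence keeps $\theta(0)=0$. Its effect on the Laurent coefficients is $\rho\mapsto a\rho$ and $m_0\mapsto am_0+b$, so the displayed formulas show that choosing $a=-2/\rho>0$ gives $\theta'(0)=1$, and then choosing $b=-am_0$ gives $\theta''(0)=0$. This would complete the proof.

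The only non-formal ingredient is the jet computation leading to $\theta'(0)=-2/\rho$ and $\theta''(0)=4m_0/\rho^2$, together with the sign $\rho<0$; I expect this, and the bookkeeping of which automorphism achieves what, to be the only places demanding care. A slightly more conceptual route avoids $M$ entirely: the circle automorphisms $e^{\mathrm{i}\alpha}\mapsto e^{\mathrm{i}\psi(\alpha)}$ induced by $\mathrm{Aut}(\mathbb{D})$ satisfy $\tilde\theta=\psi\circ\theta$, and one would pick $\psi$ with a prescribed $2$-jet at $\theta(0)$ — legitimate since such $\psi$ form a three-parameter family subject only to $\psi'>0$ — but the $M$-version above gives the cleanest explicit construction.
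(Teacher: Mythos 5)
Your proposal is correct, and the jet computation you flagged as the delicate point does check out: from $B=(M-\mathrm{i})/(M+\mathrm{i})$ one gets $\theta'(u)=2M'(u)/(M(u)^2+1)$, and inserting $M(\lambda)=\rho/\lambda+m_0+O(\lambda)$ gives $\theta'(\lambda)=-\tfrac{2}{\rho}+\tfrac{4m_0}{\rho^2}\lambda+O(\lambda^2)$, so indeed $\theta'(0)=-2/\rho$, $\theta''(0)=4m_0/\rho^2$, and $\rho<0$ because residues of a Herglotz function at its (simple) real poles are negative; moreover $M\mapsto aM+b$ with $a>0$, $b\in\mathbb{R}$ fixes the pole at $0$ and sends $(\rho,m_0)\mapsto(a\rho,am_0+b)$, so $a=-2/\rho$, $b=-am_0$ do the job. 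The route differs from the paper's in execution: the paper stays entirely on the disc side, normalizing in three successive steps by solving for the parameters $\gamma,\tau$ in (\ref{mo}) — the constraints there force $\tau$ onto explicit circles inside $\mathbb{D}$, and the last step is settled by the identity $\Im\frac{\tau}{(1-\tau)^2}=\cot\frac{\alpha}{2}$ — whereas you pass to the half-plane picture, where after the rotation making $B(0)=1$ the remaining two conditions become linear in the Laurent data $(\rho,m_0)$ and are killed at once by an affine automorphism $M\mapsto aM+b$. Your version buys a cleaner, essentially linear bookkeeping (and makes the role of the three real parameters transparent: rotation, dilation, translation), at the price of relying on the fact that the full $\mathrm{Aut}(\mathbb{C}_+)$-orbit of $M$ (equivalently the full orbit (\ref{mo}) of $B$) is realized by boundary triplets of $T$ — the same fact the paper's proof uses implicitly when it freely chooses $\gamma$ and $\tau$ — together with the simplicity of the poles of $M$, which the paper records. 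Either argument is complete; yours is somewhat shorter and less computational than the paper's.
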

\begin{proof}In principle this is because in the formula (\ref{mo}), there are three free \emph{real} parameters in $\gamma$ and $\tau$ that can be adjusted to justify the claim. A detailed proof goes as follows.

It is clear that one can choose $\gamma$ to make the new $B(\lambda)$ to satisfy $B(0)=1$ and thus we can choose $\theta$ such that $\theta(0)=0$. In the following we assume $\theta(0)=0$.

We can choose $\gamma$ and $\tau$ in (\ref{mo}) such that the phase function $\tilde{\theta}$ for $\tilde{B}(\lambda)$ satisfies $\tilde{\theta}(0)=0$ and $\tilde{\theta}'(0)=1$. Since
\[e^{\mathrm{i}\tilde{\theta}(u)}=\gamma\frac{e^{\mathrm{i}\theta(u)}-\tau}{1-\bar{\tau}e^{\mathrm{i}\theta(u)}}\]
and consequently
\[\tilde{\theta}'=[\frac{1}{e^{\mathrm{i}\theta}-\tau}+\frac{\bar{\tau}}{1-\bar{\tau}e^{\mathrm{i}\theta}}]e^{\mathrm{i}\theta}\theta',\]
this requires we choose $\gamma$ and $\tau$ such that $\gamma=\frac{1-\bar{\tau}}{1-\tau}$
and $c(1-|\tau|^2)=|1-\tau|^2$ where $c=\theta'(0)>0$. Let $\tau=a+\mathrm{i}b$ for $a,b\in \mathbb{R}$. Then the latter equation is
\[(a-\frac{1}{1+c})^2+b^2=\frac{c^2}{(1+c)^2}.\]
This means $\tau$ lies on a circle centered at $(\frac{1}{1+c},0)$. Obviously, this circle has non-empty intersection with the open unit disc and we can thus make the claimed choice.

Now we assume $\theta(0)=0$ and $\theta'(0)=1$. We can choose $\gamma$ and $\tau$ in (\ref{mo}) such that the phase function $\tilde{\theta}$ for $\tilde{B}(\lambda)$ satisfies $\tilde{\theta}(0)=0$, $\tilde{\theta}'(0)=1$ and $\tilde{\theta}''(0)=0$. As in the above argument, now $\tau=a+\mathrm{i}b$ should lie on the circle $C_0$
\[(a-\frac{1}{2})^2+b^2=\frac{1}{4},\]
$\gamma=\frac{1-\bar{\tau}}{1-\tau}$ and
\[\theta''(0)+2\Im\frac{\tau}{(1-\tau)^2}=0.\]
Note that except for the point $(1,0)$, $C_0$ lies totally in the open unit disc. Let $\tau=\frac{1}{2}+\frac{1}{2}e^{\mathrm{i}\alpha}\in C_0$ with $\alpha\in (0,2\pi)$. We find that $\Im\frac{\tau}{(1-\tau)^2}=\cot\frac{\alpha}{2}$. This shows that we have the freedom to choose $\alpha$ such that $2\cot\frac{\alpha}{2}=-\theta''(0)$. This completes the proof of the lemma.
\end{proof}
\begin{theorem}\label{complete}The curvature function $\omega(u)$ for $u\in \mathbb{R}$ is a complete unitary invariant of $T\in \mathfrak{E}_1(H)$.
\end{theorem}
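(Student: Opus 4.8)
Since $\omega(\lambda)$ on all of $\mathbb{C}$ is a complete unitary invariant of $T$ (the remark above), its restriction to $\mathbb{R}$ is at least preserved under unitary equivalence; so the real content of the theorem is \emph{completeness}, i.e.\ that for any $T_1,T_2\in\mathfrak{E}_1(H)$ the equality $\omega_1(u)=\omega_2(u)$ for all $u\in\mathbb{R}$ forces $T_1$ and $T_2$ to be unitarily equivalent. The plan is to read Proposition~\ref{cur} as a third-order ODE for the phase function, to use the lemma just proved to fix the initial data, and then to invoke ODE uniqueness.

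Concretely, I would first apply the preceding lemma to choose, for $i=1,2$, a boundary triplet for $T_i$ whose phase function $\theta_i$ satisfies $\theta_i(0)=0$, $\theta_i'(0)=1$, $\theta_i''(0)=0$; these three real conditions use up exactly the three real parameters ($\gamma$ on the unit circle, $\tau$ in the unit disc) in the M\"obius freedom (\ref{mo}). Next I would rearrange the identity of Proposition~\ref{cur} to solve for the third derivative, obtaining an equation of the shape
\[
\theta_i'''(u)=6\,\omega(u)\,\theta_i'(u)-\tfrac12\big(\theta_i'(u)\big)^3+\tfrac32\,\frac{\big(\theta_i''(u)\big)^2}{\theta_i'(u)},\qquad u\in\mathbb{R},
\]
where $\omega:=\omega_1=\omega_2$. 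Written as a first-order system in $(\theta_i,\theta_i',\theta_i'')$, the right-hand side is real-analytic in the dependent variables on the open region where the second coordinate is positive, and real-analytic in $u$ because $\omega$ is; and the trajectory of $(\theta_i,\theta_i',\theta_i'')$ stays in that region because $\theta_i$ is a (strictly increasing) phase function, so $\theta_i'>0$ everywhere. Hence the right-hand side is locally Lipschitz along both trajectories, and since $\theta_1,\theta_2$ are solutions of the same equation on all of $\mathbb{R}$ with the same value at $u=0$, the uniqueness theorem for ODEs gives $\theta_1\equiv\theta_2$.

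It then follows that $B_1(u)=e^{\mathrm{i}\theta_1(u)}=e^{\mathrm{i}\theta_2(u)}=B_2(u)$ for all $u\in\mathbb{R}$, whence $B_1\equiv B_2$ on $\mathbb{C}$ by the identity theorem for meromorphic functions; in particular $B_1$ and $B_2$ are (trivially) congruent, so $T_1$ and $T_2$ share the same Weyl class and are therefore unitarily equivalent, by the fact recalled in \S\ref{P} that the congruence class of the contractive Weyl function determines the unitary equivalence class. The one point requiring care — the "main obstacle" — is making the ODE-uniqueness step rigorous \emph{globally}: one must be sure the singular locus $\{\theta'=0\}$ is never reached (this is precisely where monotonicity of the phase function is used) and that both $\theta_1,\theta_2$ are genuinely defined on the whole real line (which they are, being honest phase functions). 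The remaining ingredients — the algebraic rearrangement in Proposition~\ref{cur} and the passage from equal boundary values to congruent inner functions — are routine.
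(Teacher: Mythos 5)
Your proposal is correct and follows essentially the same route as the paper: normalize the initial data $\theta(0)=0$, $\theta'(0)=1$, $\theta''(0)=0$ via the preceding lemma, read Proposition~\ref{cur} as a third-order ODE for the phase function, conclude $\theta_1\equiv\theta_2$ by uniqueness of the Cauchy problem, and finish with the Identity Theorem and the fact that the Weyl class determines the unitary equivalence class. Your extra care about the singular locus $\{\theta'=0\}$ and the local Lipschitz condition is a welcome sharpening of the uniqueness step that the paper states without comment.
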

\begin{proof}Let $T_1$, $T_2$ be two entire operators with deficiency index 1 such that they share the same $\omega(u)$. By the above lemma we can assume that suitable boundary triplets have been chosen such that the corresponding phase functions $\theta_1$ and $\theta_2$ both have the Cauchy initial values
\[\theta(0)=0,\quad \theta'(0)=1,\quad \theta''(0)=0.\]
On the other side, $\theta_1, \theta_2$ are both solutions of the ordinary differential equation of third order
\[\frac{2}{3}\frac{\theta'''(u)}{\theta'(u)}-(\frac{\theta''(u)}{\theta'(u)})^2+\frac{(\theta'(u))^2}{3}=4\omega(u).\]
The uniqueness of solutions of the Cauchy problem for this differential equation then implies $\theta_1(u)\equiv \theta_2(u)$ on $\mathbb{R}$, and consequently the corresponding contractive Weyl functions $B_1(\lambda)$ and $B_2(\lambda)$ share the same boundary values on the real line. By the Identity Theorem, $B_1(\lambda)\equiv B_2(\lambda)$ and the proposition follows.
\end{proof}
\begin{example}\label{ex}Let $B(\lambda)=e^{\mathrm{i}b\lambda}$ for $b>0$. Then $\theta(u)=bu$ and $\omega(u)=\frac{b^2}{12}$. By the above proposition we see that this is the only case where $\omega(u)$ is constant.
\end{example}

\emph{Remark}. Let $\sigma(u):=\ln \theta'(u)$. Then $\omega(u)$ can be written in a more compact form
\[\omega(u)=\frac{1}{6}[\frac{1}{2}e^{2\sigma}+\sigma''-\frac{1}{2}(\sigma')^2].\]
Due to Prop.~\ref{complete}, it's interesting to know the precise condition for a function $f(u)$ to be the curvature function $\omega(u)$ of a certain entire operator $T$. Up to now we only know the necessary condition that $f(u)$ should be real-analytic and non-negative on the real line $\mathbb{R}$.

There is another interpretation of the curvature function $\omega(u)$. Let $\mathcal{L}\subset \mathcal{M}$ be the submanifold of Lagrangian subspaces in $\mathcal{B}_T$. Then when restricted on the real line $\mathbb{R}$, $W_T(u)$ is a real-analytic curve in $\mathcal{L}$. $\mathcal{L}$ carries a natural transitive $\mathbb{U}(1,1)$-action \cite[Example 3.17]{wang2024complex} and what interests us is the $\mathbb{U}(1,1)$-invariant properties of $W_T(u)$. What we have proved is that $\omega(u)$ is such a complete differential invariant. If a boundary triplet $(\mathbb{C}, \Gamma_0,\Gamma_1)$ is chosen, then $\mathcal{L}$ is realized as the real projective line $\mathbb{RP}^1\cong \mathbb{R}\cup \{\infty\}$, the $\mathbb{U}(1,1)$-action amounts to ordinary projective transformations, and $W_T(u)$ can be represented locally by the Weyl function $M(u)$. It is well-known that the Schwarzian derivative
\[(\mathcal{S}M)(u):=(\frac{M''(u)}{M'(u)})'-\frac{1}{2}(\frac{M''(u)}{M'(u)})^2\]
is a projective invariant of $W_T(u)$.
\begin{proposition}\label{Schw}In terms of the Weyl function $M(u)$, $\omega(u)=\frac{1}{6}(\mathcal{S}M)(u)$.
\end{proposition}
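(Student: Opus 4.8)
The plan is to reduce the claim to the formula for $\omega(u)$ in terms of the phase function established in Prop.~\ref{cur}, and then to recognise the resulting expression as a Schwarzian derivative transported along the Cayley transform $w\mapsto(w-\mathrm{i})(w+\mathrm{i})^{-1}$.

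First I would rewrite the conclusion of Prop.~\ref{cur} in the equivalent form
\[6\,\omega(u)=\frac{1}{2}\bigl(\theta'(u)\bigr)^{2}+\frac{\theta'''(u)}{\theta'(u)}-\frac{3}{2}\Bigl(\frac{\theta''(u)}{\theta'(u)}\Bigr)^{2}.\]
Next I would compute the Schwarzian of $B(u)=e^{\mathrm{i}\theta(u)}$ directly. From $B'/B=\mathrm{i}\theta'$ one gets $B''/B'=\theta''/\theta'+\mathrm{i}\theta'$, and a one-line computation shows that in $(\mathcal{S}B)(u)=(B''/B')'-\tfrac12(B''/B')^{2}$ the imaginary contributions $\mathrm{i}\theta''$ cancel, leaving precisely the real expression displayed above. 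Hence $6\,\omega(u)=(\mathcal{S}B)(u)$ for $u\in\mathbb{R}$ (using $|B(u)|=1$ and the fact that $\theta$ is the phase function of $B$).

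The second ingredient is the cocycle identity $\mathcal{S}(g\circ M)=(\mathcal{S}g\circ M)\,(M')^{2}+\mathcal{S}M$. Since $B$ is obtained from the (meromorphically continued) Weyl function $M$ by the M\"obius transformation $g(w)=(w-\mathrm{i})(w+\mathrm{i})^{-1}$, and every M\"obius transformation has $\mathcal{S}g\equiv0$, this gives $(\mathcal{S}B)(u)=(\mathcal{S}M)(u)$. As $M$ restricts to a real meromorphic function on $\mathbb{R}$ (because $M(\bar{\lambda})=\overline{M(\lambda)}$), both sides are real-analytic where $M$ is finite, and combining the two steps yields $\omega(u)=\frac{1}{6}(\mathcal{S}M)(u)$ there; the identity then extends across the isolated poles of $M$ by continuity, or by re-running the argument after the further M\"obius substitution $w\mapsto1/w$, under which $\mathcal{S}M$ is unchanged.

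The only genuinely delicate point is the behaviour at points with $M(u)=\infty$, i.e. making sense of $\mathcal{S}M$ on all of $\mathbb{RP}^{1}$; this is exactly the projective invariance of the Schwarzian emphasised before the statement, so it causes no real trouble. As a fully computational fallback I would instead substitute $\lambda=u+\mathrm{i}\varepsilon$ into (\ref{cur2}), expand $M(u+\mathrm{i}\varepsilon)$ in its Taylor series at $u$ (all coefficients real by the symmetry above), and let $\varepsilon\to0^{+}$, just as in the proof of Prop.~\ref{cur}; this gives $\omega(u)=\frac14\bigl[\tfrac23\,M'''/M'-(M''/M')^{2}\bigr]=\frac16(\mathcal{S}M)(u)$ directly.
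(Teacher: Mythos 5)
Your proposal is correct and takes essentially the same approach as the paper: both reduce the claim to the phase-function formula of Prop.~\ref{cur} and identify it with a Schwarzian through the Cayley relation linking $M$, $B=e^{\mathrm{i}\theta}$ and $\theta$. The paper simply substitutes $M(u)=-\cot(\theta(u)/2)$ and computes $(\mathcal{S}M)(u)$ directly, whereas you compute $(\mathcal{S}B)(u)$ and then invoke the M\"obius invariance $\mathcal{S}(g\circ M)=\mathcal{S}M$; this is only a cosmetic reorganization of the same argument.
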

\begin{proof}Since $M(u)=\mathrm{i}\frac{1+e^{\mathrm{i}\theta(u)}}{1-e^{\mathrm{i}\theta(u)}}=-\cot \frac{\theta(u)}{2}$, the result follows from a direct computation.
\end{proof}
Since $\omega(u)$ is a complete unitary invariant of $T$, it's natural to ask how $\omega(u)$ captures the growth property of $T$. The interpretation of Prop.~\ref{Schw} suggests we consider the following Sturm-Liouville equation on the real line $\mathbb{R}$:
\begin{equation}y''+3\omega(u)y=0.\label{SL}\end{equation}
Let $y_1,y_2$ be any two linearly independent real-valued solutions to this equation and set $g:=y_1/y_2$. Then it's well-known that $(\mathcal{S}g)(u)=6\omega(u)$. Since any solution $g$ to the equation $(\mathcal{S}g)(u)=6\omega(u)$ has to be of this form, there are solutions $y_1$ and $y_2$ to Eq.~(\ref{SL}) such that $M(u)=y_1(u)/y_2(u)$. By the classical Sturm-Liouville theory (see \cite[\S~27]{walter1998ordinary}), $y_1$ and $y_2$ have no common zeros and each zero of them is simple. Hence we see that the zeros of $M(u)$ (i.e., eigenvalues of $T_{-1}$) on the real line are precisely the zeros of $y_1$. This is how $\omega(u)$ controls the distribution of eigenvalues of any self-adjoint extension of $T$ and $\sqrt{3\omega(u)}$ can be interpreted as a time-varying angular frequency of an oscillation system. Loosely speaking, the larger $\omega(u)$ is, the more concentrated the distribution of eigenvalues will be.
\begin{proposition}\label{compare}Let $T, \hat{T}\in \mathfrak{E}_1(H)$ and $\omega(u)$, $\hat{\omega}(u)$ be their curvature functions respectively such that $\omega(u)\not\equiv \hat{\omega}(u)$. If $\lambda_1<\lambda_2$ are two distinct eigenvalues of a certain self-adjoint extension of $T$ and $\omega(u)\leq \hat{\omega}(u)$ on $(\lambda_1, \lambda_2)$, then in $(\lambda_1, \lambda_2)$ there must be at least one eigenvalue of any self-adjoint extension of $\hat{T}$. In particular, if $\omega(u)\leq \hat{\omega}(u)$ on the whole real line, then $\rho_{T}\leq \rho_{\hat{T}}$.
\end{proposition}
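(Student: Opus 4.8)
The plan is to reduce both assertions to the classical Sturm comparison theorem for the second-order equation (\ref{SL}). First I would set up the dictionary between self-adjoint extensions and (\ref{SL}). By the discussion preceding the proposition we may fix linearly independent real-valued solutions $y_1,y_2$ of $y''+3\omega(u)y=0$ with $M(u)=y_1(u)/y_2(u)$. For a self-adjoint extension $T_c$ with $|c|=1$ the condition $\lambda\in\sigma(T_c)$ says $B(\lambda)=c$, equivalently $M(\lambda)=t_c$ for a fixed $t_c\in\mathbb{R}$ or $M(\lambda)=\infty$; in the first case this reads $y_1(\lambda)-t_cy_2(\lambda)=0$ and in the second $y_2(\lambda)=0$, and conversely every nontrivial real linear combination of $y_1,y_2$ arises this way. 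Hence the eigenvalue set of each self-adjoint extension of $T$ is exactly the zero set of a fixed nontrivial real solution of (\ref{SL}), and likewise the eigenvalues of a self-adjoint extension of $\hat T$ are the zeros of a nontrivial real solution of $z''+3\hat\omega(u)z=0$. I would also note that, since $\omega$ and $\hat\omega$ are real-analytic on $\mathbb{R}$, the hypothesis $\omega\not\equiv\hat\omega$ forces $\omega\not\equiv\hat\omega$ on every subinterval of $\mathbb{R}$.

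For the first assertion, let $y$ be a nontrivial real solution of (\ref{SL}) with $y(\lambda_1)=y(\lambda_2)=0$. On $(\lambda_1,\lambda_2)$ we have $3\hat\omega\geq 3\omega$ with strict inequality on a subset of positive measure, so the strict form of the Sturm comparison theorem shows that every nontrivial solution of $z''+3\hat\omega z=0$ has a zero in the open interval $(\lambda_1,\lambda_2)$. Applying this to the solution whose zeros are the eigenvalues of a given self-adjoint extension of $\hat T$ yields the claim.

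For the second assertion I would propagate the interlacing to the counting functions and then convert to heights via (\ref{se}). Fix self-adjoint extensions $T_m$ of $T$ and $\hat T_{\hat m}$ of $\hat T$; because $T$ and $\hat T$ are transcendental and $h_T(r)=N_T(r,m)+O(\ln r)$ (similarly for $\hat T$), each of $T_m,\hat T_{\hat m}$ has infinitely many eigenvalues, all real and simple and with no finite accumulation point. Between any two consecutive eigenvalues of $T_m$ on the positive axis, on the negative axis, or across the origin, the first assertion places at least one eigenvalue of $\hat T_{\hat m}$; counting eigenvalues inside $\mathbb{D}_r$ (all of which lie on $(-r,r)$) and discarding a bounded number of boundary effects near $0$ and near $\pm r$, this gives $n_{\hat T}(r,\hat m)\geq n_T(r,m)-C$ for a constant $C$ and all large $r$. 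Integrating the definition of the counting function then yields $N_{\hat T}(r,\hat m)\geq N_T(r,m)-O(\ln r)$, and (\ref{se}) applied to both operators gives $h_{\hat T}(r)\geq h_T(r)-O(\ln r)$. Since $T$ is transcendental, $h_T(r)/\ln r\to+\infty$, so $h_{\hat T}(r)\geq\frac{1}{2}h_T(r)$ for all large $r$; taking logarithms, dividing by $\ln r$ and passing to $\limsup$ gives $\rho_T\leq\rho_{\hat T}$.

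The step I expect to be the most delicate is the counting inequality $n_{\hat T}(r,\hat m)\geq n_T(r,m)-C$: one must carefully treat the eigenvalues straddling the origin and the two moving endpoints $\pm r$, and observe that ``one eigenvalue of $\hat T_{\hat m}$ strictly inside each gap of consecutive eigenvalues of $T_m$'' really does produce $n_T(r,m)-C$ distinct eigenvalues of $\hat T_{\hat m}$, since distinct open gaps of $T_m$ are disjoint. Everything else is either the classical Sturm comparison theorem or routine bookkeeping with (\ref{se}).
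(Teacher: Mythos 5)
Your argument is correct and follows essentially the paper's own route: the first claim is the Sturm--Picone comparison applied to Eq.~(\ref{SL}), with the Identity Theorem for real-analytic functions guaranteeing $\omega\not\equiv\hat{\omega}$ on the subinterval, exactly as in the paper. Your detailed derivation of the second claim, via the counting inequality $n_{\hat{T}}(r,\hat{m})\geq n_T(r,m)-C$, the passage to counting functions, Eq.~(\ref{se}) and transcendence of $T$, is a correct filling-in of the bookkeeping that the paper compresses into ``the second claim follows from the first.''
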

\begin{proof}Since $\omega(u)\not\equiv \hat{\omega}(u)$ on $\mathbb{R}$, by the Identity Theorem this holds as well on $(\lambda_1, \lambda_2)$. Then the first claim is an immediate consequence of the Theorem of Sturm-Picone \cite[\S~27]{walter1998ordinary}. The second claim follows from the first.
\end{proof}
\begin{corollary}Let $\omega(u)$ be the curvature function of $T\in \mathfrak{E}_1(H)$. If on the interval $(-r, r)$ we have $0<m_1(r)\leq \omega(u)\leq m_2(r)$, then the number $n_T(r,m)$ of eigenvalues in $(-r,r)$ of any self-adjoint extension of $T_m$ satisfies
\[\lfloor\frac{2r\sqrt{3m_1(r)}}{\pi}\rfloor-1\leq n_T(r,m)\leq \lfloor\frac{2r\sqrt{3m_2(r)}}{\pi}\rfloor+1,\]
where $\lfloor\cdot \rfloor$ is the floor function.
\end{corollary}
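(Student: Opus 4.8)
The plan is to reduce the statement to a zero-counting problem for the Sturm--Liouville equation (\ref{SL}) and then invoke the classical Sturm comparison theorem against two constant-coefficient equations. First I would make precise the reduction already suggested in the discussion preceding the corollary: fix a boundary triplet $(\mathbb{C},\Gamma_0,\Gamma_1)$, so that a self-adjoint extension is some $T_c$ with $c\in\mathbb{R}\cup\{\infty\}$, whose (necessarily real and simple) eigenvalues are exactly the real solutions of $M(u)=c$. Writing $M(u)=y_1(u)/y_2(u)$ with $y_1,y_2$ linearly independent real solutions of (\ref{SL}) as in the text, the eigenvalues of $T_c$ in $(-r,r)$ are precisely the zeros in $(-r,r)$ of the nontrivial real solution $y:=y_1-c\,y_2$ of (\ref{SL}) (or $y:=y_2$ when $c=\infty$); since zeros of solutions of a second-order linear ODE are automatically simple, this is consistent with the simplicity of the eigenvalues. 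Hence $n_T(r,m)$ equals the number of zeros of $y$ in $(-r,r)$. Note also that, $\omega$ being real-analytic, the hypothesis $0<m_1(r)\le\omega(u)\le m_2(r)$ on $(-r,r)$ extends by continuity to the closed interval $[-r,r]$, which is the only place where the exact shape of the interval matters.

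For the upper bound I would use the standard gap estimate (a form of the Sturm comparison theorem): since $3\omega(u)\le 3m_2(r)$ on $[-r,r]$, any two consecutive zeros $u_i<u_{i+1}$ of $y$ in $[-r,r]$ satisfy $u_{i+1}-u_i\ge d_2:=\pi/\sqrt{3m_2(r)}$ (compare with the solution of $z''+3m_2(r)z=0$ vanishing at $u_i$, whose next zero is at $u_i+d_2$). Consequently, if $u_1<\dots<u_n$ are all the zeros of $y$ in $(-r,r)$ then $(n-1)d_2\le u_n-u_1<2r$, which yields $n\le\lfloor 2r\sqrt{3m_2(r)}/\pi\rfloor+1$.

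For the lower bound, put $d_1:=\pi/\sqrt{3m_1(r)}$ and recall the companion fact: on any closed interval of length $d_1$ on which $\omega\ge m_1(r)>0$, every nontrivial solution of (\ref{SL}) has a zero (compare with $z''+3m_1(r)z=0$, whose consecutive zeros are spaced $d_1$ apart, so that $y$ must vanish between them). I would then pack $N:=\lfloor 2r\sqrt{3m_1(r)}/\pi\rfloor-1$ pairwise disjoint closed subintervals of length $d_1$ into the open interval $(-r,r)$; this is possible because $Nd_1\le 2r-d_1$ leaves total slack at least $d_1>0$ to distribute among the end margins and the gaps. Each subinterval then contains a zero of $y$, and by disjointness these $N$ zeros are distinct and interior to $(-r,r)$, so $n_T(r,m)\ge N$ (the inequality being vacuous when $N\le 0$).

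The computations here are routine; the genuine points of care are (i) checking that the description ``eigenvalues $=$ zeros of a solution of (\ref{SL})'' applies to \emph{every} self-adjoint extension, not merely the distinguished one represented by the Weyl function $M$, and (ii) the endpoint bookkeeping responsible for the terms $\mp 1$ --- i.e.\ whether a zero of $y$ falls exactly at $\pm r$ and how tightly the zero sets of the comparison sinusoids can be fitted into the open interval $(-r,r)$. Both are dispatched by the margin-and-counting arguments above, and everything else is just the Sturm comparison theorem already invoked for Proposition~\ref{compare}.
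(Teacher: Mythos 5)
Your proposal is correct and follows essentially the same route as the paper: both reduce the eigenvalue count to counting zeros of solutions of $y''+3\omega(u)y=0$ and compare with the constant-coefficient equations $z''+3m_j(r)z=0$ (the paper phrases this as comparison with the constant-curvature operator of Example \ref{ex} and counting zeros of $\cos\frac{bu}{2}$ with $b=\sqrt{12m_j(r)}$). Your version merely spells out the Sturm comparison and the endpoint bookkeeping for the $\mp 1$ terms more explicitly than the paper does.
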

\begin{proof}We can compare the curvature function $\omega(u)$ with the curvature function $\frac{b^2}{12}$ in Example \ref{ex} where $b=\sqrt{12m_j(r)}$, $j=1,2$. Then the result follows from counting the number of zeros of $\cos \frac{bu}{2}$ in $(-r,r)$.
\end{proof}
\begin{proposition}Let $\omega(u)$ be the curvature function of $T\in \mathfrak{E}_1(H)$ and $n_T(r,m)$ the number of eigenvalues of any self-adjoint extension $T_m$ in the interval $(-r,r)$. Then
\[n_T(r,m)< \frac{\sqrt{6}}{2}(r\int_{-r}^r \omega(u)du)^{1/2}+1.\]
\end{proposition}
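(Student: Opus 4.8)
The plan is to deduce this estimate from the Sturm–Liouville picture developed just before the statement, combined with a classical eigenvalue count. Recall that the real-line eigenvalues of a self-adjoint extension $T_m$ are precisely the zeros in $(-r,r)$ of a solution $y$ of the equation $y''+3\omega(u)y=0$, and distinct eigenvalues of different self-adjoint extensions interlace (they are zeros of two independent solutions $y_1,y_2$, which have simple, alternating zeros). So it suffices to bound the number of zeros of a solution of \eqref{SL} in $(-r,r)$. First I would invoke the standard comparison/count for the number of zeros of solutions of $y''+q(u)y=0$ on an interval: if $N$ is the number of zeros of a nontrivial solution in $(-r,r)$, then between consecutive zeros the Sturmian theory (via the Prüfer angle or via comparison with the constant-coefficient case) gives $\int_{z_k}^{z_{k+1}}\sqrt{q(u)}\,du \ge \pi$ (this is essentially the lower half of the Corollary already proved, applied on subintervals and summed), whence $N\,\pi \le \int_{-r}^r \sqrt{3\omega(u)}\,du + (\text{boundary correction})$, so $N \le \frac{1}{\pi}\int_{-r}^r\sqrt{3\omega(u)}\,du + 1$.

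The second and final step is to pass from the $L^1$-norm of $\sqrt{3\omega}$ to the square root of the $L^1$-norm of $\omega$ times $r$ by Cauchy–Schwarz:
\[
\int_{-r}^r \sqrt{3\omega(u)}\,du \le \Bigl(\int_{-r}^r 3\,du\Bigr)^{1/2}\Bigl(\int_{-r}^r \omega(u)\,du\Bigr)^{1/2} = \sqrt{6r}\,\Bigl(\int_{-r}^r \omega(u)\,du\Bigr)^{1/2}=\sqrt{6}\,\Bigl(r\int_{-r}^r\omega(u)\,du\Bigr)^{1/2}.
\]
Dividing by $\pi$ and combining with the previous step gives $n_T(r,m) \le \frac{\sqrt{6}}{\pi}\bigl(r\int_{-r}^r\omega(u)\,du\bigr)^{1/2}+1$, which is even slightly stronger than the claimed bound with $\frac{\sqrt{6}}{2}$ in place of $\frac{\sqrt{6}}{\pi}$; since $\frac{1}{\pi}<\frac12$ the stated inequality follows a fortiori (and the strict inequality is clear because the Cauchy–Schwarz step is strict unless $\omega$ is constant, and the zero-count inequality has slack). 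One should note $\omega(u)>0$ a.e. is not needed here; only $\omega\ge 0$ on $\mathbb{R}$, which is the basic positivity fact quoted from \cite[Thm.~5.8]{wang2024complex} and its boundary limit.

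The main obstacle is the first step: making the zero-count $N\pi \le \int_{-r}^r\sqrt{3\omega}\,du + O(1)$ rigorous and pinning down the additive constant (here $+1$). The cleanest route is the Prüfer substitution $y = \rho\sin\phi$, $y' = \rho\cos\phi$, under which $\phi' = \cos^2\phi + 3\omega(u)\sin^2\phi$; each zero of $y$ corresponds to $\phi$ crossing a multiple of $\pi$, and one shows $\phi(r)-\phi(-r) \le \int_{-r}^r (1+3\omega(u))\,du$ is too crude — instead one compares on each nodal interval with the constant-coefficient equation $y''+3\omega_{\max}y=0$ as in the proof of the Corollary, but localized, to get $\int \sqrt{3\omega}\ge\pi$ over each gap between successive zeros. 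Summing over the at most $N-1$ interior gaps contained in $(-r,r)$ yields $(N-1)\pi \le \int_{-r}^r\sqrt{3\omega(u)}\,du$, i.e. $N\le \frac1\pi\int_{-r}^r\sqrt{3\omega}+1$, exactly what is needed. This is entirely standard Sturm–Liouville theory (e.g. \cite[\S~27]{walter1998ordinary}), so the argument is short once the setup from \eqref{SL} is in place.
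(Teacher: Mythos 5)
Your reduction of the statement to counting the zeros in $(-r,r)$ of a nontrivial real solution of $y''+3\omega(u)y=0$ is the right move (and is exactly the setup the paper uses; its proof simply cites Coro.~5.2 of Hartman's book, which is precisely this zero-count bound). The genuine gap is your per-gap inequality: it is \emph{not} true that between two consecutive zeros $z_k<z_{k+1}$ of a solution of $y''+q(u)y=0$ one has $\int_{z_k}^{z_{k+1}}\sqrt{q}\,du\ge\pi$. Equality holds for constant $q$, but the inequality fails badly when $q$ is concentrated: if $q=M^2$ on an interval of small length $\delta$ with $M\delta$ slightly larger than $\pi/2$ and $q=0$ afterwards, the solution vanishing at the left endpoint turns around inside the bump and then runs linearly back to a second zero, while $\int\sqrt q$ between these two zeros is only about $\pi/2$ (and by taking $M\delta$ small with $M^2\delta$ fixed one can make $\int\sqrt q$ between consecutive zeros arbitrarily small). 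Consequently the intermediate bound $N\le\frac1\pi\int_{-r}^r\sqrt{3\omega}\,du+1$ you derive — which by Cauchy--Schwarz would be \emph{stronger} than the stated one — does not follow and is false for general nonnegative potentials; nothing in your argument uses a special feature of curvature functions that could rescue it. The only valid per-gap statement of this kind is the Lyapunov inequality $\int_{z_k}^{z_{k+1}}q(u)\,du>4/(z_{k+1}-z_k)$.

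The good news is that the Lyapunov inequality is exactly what is needed, and it produces the paper's constant. With $q=3\omega$ and $\ell_k=z_{k+1}-z_k$ the lengths of the $N-1$ gaps between consecutive zeros in $(-r,r)$, summing Lyapunov and using $\sum_k\ell_k\le 2r$ together with the Cauchy--Schwarz (harmonic--arithmetic mean) estimate $\sum_k 1/\ell_k\ge (N-1)^2/\sum_k\ell_k$ gives
\[
3\int_{-r}^{r}\omega(u)\,du\;>\;\sum_k\frac{4}{\ell_k}\;\ge\;\frac{4(N-1)^2}{2r},
\]
hence $(N-1)^2<\tfrac{3}{2}\,r\int_{-r}^r\omega(u)\,du$ and $N<\frac{\sqrt6}{2}\bigl(r\int_{-r}^r\omega(u)\,du\bigr)^{1/2}+1$, which is the proposition; this computation is the content of Hartman's Coro.~5.2 cited in the paper. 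So replace your WKB-style step $\int\sqrt{3\omega}\ge\pi$ per gap by Lyapunov per gap, and drop the final Cauchy--Schwarz on $\sqrt{3\omega}$ (it is no longer needed); the rest of your reduction, including the observation that eigenvalues of any self-adjoint extension are the zeros of some real solution $y_1-ty_2$ of the same equation, is fine.
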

\begin{proof}This is a direct consequence of Coro.~5.2 in \cite{hartman2002ordinary}.
\end{proof}
\begin{corollary}Let $\omega(u)$ be the curvature function of $T\in \mathfrak{E}_1(H)$. Then the Weyl order $\rho_T$ satisfies
\[\rho_T\leq \frac{1}{2}+\frac{1}{2}\limsup_{n\rightarrow +\infty}\frac{\ln (\int_{-r}^r\omega(u)du)}{\ln r}.\]
\end{corollary}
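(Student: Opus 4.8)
The plan is to convert the eigenvalue estimate of the preceding proposition into a bound on the height function $h_T(r)$ by routing through the counting function of a self-adjoint extension. First I would fix an arbitrary self-adjoint extension $T_m$ of $T$; for such $m$ one has $h_T(r)=N_T(r,m)+O(\ln r)$ by~(\ref{se}), where
\[N_T(r,m)=\int_0^r[n_T(t,m)-n_T(0,m)]\frac{dt}{t}+n_T(0,m)\ln r.\]
Since $n_T(t,m)$ is non-decreasing in $t$, splitting the integral at $t=1$ and using $\int_1^r\frac{dt}{t}=\ln r$ gives $N_T(r,m)\le n_T(r,m)\ln r+O(1)$, and hence $h_T(r)\le n_T(r,m)\ln r+O(\ln r)$ as $r\to+\infty$.

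Next I would insert the preceding proposition, $n_T(r,m)<\frac{\sqrt6}{2}\bigl(r\int_{-r}^r\omega(u)\,du\bigr)^{1/2}+1$; after multiplication by $\ln r$ the stray $+1$ is absorbed into the main term, yielding
\[h_T(r)\le \frac{\sqrt6}{2}\Bigl(r\int_{-r}^r\omega(u)\,du\Bigr)^{1/2}\ln r+O(\ln r).\]
Here I would use that $T$ is transcendental, so $\omega$ is not identically zero on $\mathbb{R}$ (otherwise, by the Schwarzian interpretation of $\omega(u)$, the Weyl function $M(u)$ would be a Möbius function of $u$ and $T$ would be rational); since $\omega\ge 0$ on $\mathbb{R}$, the non-decreasing quantity $\int_{-r}^r\omega\,du$ is therefore bounded below by a positive constant for all large $r$, so that $\ln\bigl(\int_{-r}^r\omega\,du\bigr)$ is bounded below and the error term $O(\ln r)$ is dominated by the first term on the right. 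Taking logarithms gives
\[\ln h_T(r)\le \frac12\ln r+\frac12\ln\Bigl(\int_{-r}^r\omega(u)\,du\Bigr)+O(\ln\ln r),\]
and dividing by $\ln r$ and passing to $\limsup_{r\to+\infty}$ yields the claimed inequality, since $\frac{\ln\ln r}{\ln r}\to 0$ and $\rho_T=\limsup_{r\to+\infty}\frac{\ln h_T(r)}{\ln r}$. (The ``$n\to+\infty$'' in the statement is a misprint for ``$r\to+\infty$''.)

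There is no real obstacle here beyond bookkeeping: the only mechanism is the classical Nevanlinna-type device $\int_1^r n(t)\,\frac{dt}{t}\le n(r)\ln r$, which becomes applicable to the eigenvalue counting function precisely because~(\ref{se}) ties $h_T(r)$ to $N_T(r,m)$ for self-adjoint $m$. The mildly delicate point is making the logarithm manipulations legitimate when $\int_{-r}^r\omega\,du$ grows only slowly, and this is exactly what the transcendentality remark above takes care of. Note finally that $\rho_T$ is intrinsic to $T$ (it is read off from $h_T(r)$, which does not depend on $m$), so the choice of self-adjoint extension in the argument is immaterial.
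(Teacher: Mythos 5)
Your proposal is correct and follows essentially the same route as the paper: the paper's (very terse) proof just observes that $n_T(r,m)$, $N_T(r,m)$ and $h_T(r)$ have the same order for a self-adjoint $m$ via~(\ref{se}) and then reads the bound off the preceding proposition, which is exactly the chain you make explicit with $N_T(r,m)\le n_T(r,m)\ln r+O(1)$ and the logarithm bookkeeping. Your extra care about the harmless $\ln r$ factor and the lower bound on $\int_{-r}^r\omega\,du$ (via transcendence) is a legitimate filling-in of details the paper leaves implicit, not a different method.
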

\begin{proof}Note that in the above proposition $n_T(r,m)$, the corresponding counting function $N_T(r,m)$ and the height function $h_T(r)$ all have the same order. Then the estimation holds by definition.
\end{proof}
Of course, in practice the above estimations could be of use only when one can find the asymptotic behavior of $\omega(u)$ to some extent.

\section{The structure of characteristic functions}\label{char}
A \emph{concrete} entire operator $T$ may contain too much irrelevant information if one is only concerned with the spectral theory of $T$, e.g., $T$ may be a differential operator and defined by local data, but the spectral theory of $T$ is definitely non-local. The importance of the Weyl curve $W_T(\lambda)$ lies in that it almost eliminates all the irrelevant information and reduces the spectral theory of $T$ to the study of a single finite-dimensional (unfortunately non-linear) object.

In practice, it is often very hard to write down the (contractive) Weyl functions of a concrete entire operator explicitly. However, if we reverse the direction, then a careful study of the possibility of the Weyl curves would convey in principle how complicated the spectral theory of an entire operator can be and what property of a concrete entire operator should be our focus. Through some special choices, the Weyl curve can be represented in terms of certain holomorphic or meromorphic functions, e.g., the functions $B(\lambda)$ and $M(\lambda)$, etc. In the literature, such functions have rather different names, say, $Q$-function, Lisvic characteristic function, etc. We shall roughly call all of them characteristic functions of $T$ and emphasize their common geometric origin. The focus of this section is the structure of some of these functions and its effect on spectral theory. Function theory of one complex variable then enters in spectral theory of $T$ in this manner.
\subsection{Contractive Weyl function}
Given a boundary triplet $(\mathbb{C}, \Gamma_+, \Gamma_-)$ for $T\in \mathfrak{E}_1(H)$, its corresponding contractive Weyl function $B(\lambda)$ is a meromorphic inner function over $\mathbb{C}$. By the canonical Riesz-Smirnov factorization, such functions are all of the form \cite[Thm.~3.2]{martin2018function}
\begin{equation}B(\lambda)=\gamma e^{\mathrm{i}b\lambda}\prod_{k=1}^N\frac{\overline{\lambda_k}}{\lambda_k}\frac{\lambda-\lambda_k}{\lambda-\overline{\lambda_k}},\label{cf}\end{equation}
where $\gamma$ is a constant with $|\gamma|=1$ and $b$ a non-negative constant, and $\{\lambda_k\}_{k=1}^N$ ($N\in \mathbb{N}\cup \{0, \infty\}$) is the sequence of zeros (counted with multiplicity) of $B(\lambda)$ on $\mathbb{C}_+$. When $N=\infty$, it is necessary that $\{\lambda_k\}_{k=1}^\infty$ has no finite accumulation points and that the Blaschke condition $\sum_{k=1}^\infty \frac{\Im \lambda_k}{|\lambda_k|^2}< +\infty$ holds. Conversely, each function of the form (\ref{cf}) with the above properties is a meromorphic inner function and hence a contractive Weyl function for an entire operator $T$ with deficiency index 1. $T$ is transcendental if and only if $b>0$ or $N=\infty$. $T$ is densely defined if and only if $b>0$ or $\sum_k\Im \lambda_k=+\infty$ \cite[Coro.~3.1.3]{martin2011representation}.

The Blaschke condition is a restriction on the distribution of zeros of $B(\lambda)$. Actually it implies that the eigenvalues of $T_0$ are almost located near the real line $\mathbb{R}$ in the following sense.
\begin{proposition}\label{p2}Let $\delta>0$ be a small positive constant and $\{\lambda_{k_j}\}$  zeros of $B(\lambda)$ in (\ref{cf}) that lie in the region
\[\Omega_\delta:=\{\lambda\in \mathbb{C}_+| \arg \lambda\in (\delta, \pi-\delta)\}.\]
Then $\sum_{j=1}^\infty\frac{1}{|\lambda_{k_j}|}<+\infty$.
\end{proposition}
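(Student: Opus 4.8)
The plan is to derive the claim directly from the Blaschke condition $\sum_k \frac{\Im\lambda_k}{|\lambda_k|^2}<+\infty$ satisfied by the zeros $\{\lambda_k\}$ of $B(\lambda)$ in the representation (\ref{cf}). The key observation is elementary: if $\lambda=\lambda_{k_j}$ lies in the sector $\Omega_\delta$, then $\arg\lambda\in(\delta,\pi-\delta)$, so $\Im\lambda=|\lambda|\sin(\arg\lambda)\geq |\lambda|\sin\delta$. Therefore for each such zero
\[
\frac{\Im\lambda_{k_j}}{|\lambda_{k_j}|^2}\geq \frac{|\lambda_{k_j}|\sin\delta}{|\lambda_{k_j}|^2}=\frac{\sin\delta}{|\lambda_{k_j}|}.
\]
Summing over $j$ gives
\[
\sin\delta\sum_{j=1}^\infty\frac{1}{|\lambda_{k_j}|}\leq \sum_{j=1}^\infty\frac{\Im\lambda_{k_j}}{|\lambda_{k_j}|^2}\leq \sum_{k=1}^\infty\frac{\Im\lambda_k}{|\lambda_k|^2}<+\infty,
\]
and since $\sin\delta>0$ for $\delta\in(0,\pi/2)$, the series $\sum_j\frac{1}{|\lambda_{k_j}|}$ converges. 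That is essentially the whole argument.

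The steps in order are: (1) recall from (\ref{cf}) and the surrounding discussion that the zeros $\{\lambda_k\}$ satisfy the Blaschke condition; (2) for a zero lying in $\Omega_\delta$, bound $\Im\lambda$ below by $|\lambda|\sin\delta$ using $\Im\lambda=|\lambda|\sin(\arg\lambda)$ and monotonicity/positivity of $\sin$ on $(\delta,\pi-\delta)$ (the minimum of $\sin$ on that interval is $\sin\delta$); (3) divide the Blaschke summand by $|\lambda_{k_j}|$ to extract the reciprocal; (4) sum and invoke finiteness of the full Blaschke sum, which dominates the sub-sum over $\{k_j\}$ since all terms are nonnegative. One may also note the conclusion does not depend on the finitely many factors $e^{\mathrm{i}b\lambda}$ and $\gamma$, which are irrelevant to the zero set.

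Honestly, there is no real obstacle here; the only mild subtlety worth a sentence is handling the case where only finitely many zeros lie in $\Omega_\delta$ (then the sum is trivially finite), or where $N<\infty$ altogether, so that the statement is vacuous or immediate. One could also remark that the bound degenerates as $\delta\to 0$ (the constant $1/\sin\delta$ blows up), which is exactly why the zeros are only forced near $\mathbb{R}$ in this sector-by-sector sense rather than uniformly — the Blaschke condition permits, e.g., zeros $\lambda_k=k+\mathrm{i}$ whose imaginary parts do not decay, and for those $\sum 1/|\lambda_k|$ indeed diverges, consistent with the fact that such zeros escape every fixed $\Omega_\delta$ only if... — but in fact $\lambda_k=k+\mathrm{i}$ does lie in $\Omega_\delta$ for large $k$, so this would violate the proposition; the resolution is that $\sum_k\frac{\Im\lambda_k}{|\lambda_k|^2}=\sum_k\frac{1}{k^2+1}<\infty$ is fine but then $\sum 1/|\lambda_k|$ diverges, contradicting nothing because such a sequence is \emph{not} admissible together with the Blaschke condition being the only constraint — wait, it does satisfy Blaschke. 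This tension is only apparent: $\lambda_k = k+\mathrm{i}$ has $\Im\lambda_k/|\lambda_k|^2\sim 1/k^2$ summable, yet $\sum 1/|\lambda_k|\sim\sum 1/k$ diverges, and these $\lambda_k$ are in $\Omega_\delta$ for all large $k$. So either the proposition needs an additional hypothesis, or I am misreading — on reflection the correct reading must be that the stated conclusion as I derived it is exactly right and the putative counterexample fails because $\arg(k+\mathrm{i})\to 0$, hence $k+\mathrm{i}\notin\Omega_\delta$ for $k$ large. That is the point: fixed $\delta$ excludes sequences approaching the real axis, and the proof above is complete and correct.
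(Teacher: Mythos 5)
Your argument is correct and is essentially the paper's own proof: both bound the Blaschke summand from below in the sector (you via $\Im\lambda\geq|\lambda|\sin\delta$, the paper via $|a_j|<\cot\delta\cdot b_j$, which is the same estimate up to the constant $\csc\delta$) and then compare with the convergent Blaschke sum. The digression about $\lambda_k=k+\mathrm{i}$ resolves itself correctly — those points leave every fixed $\Omega_\delta$ — so nothing in it undermines the proof.
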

\begin{proof}Let $\lambda_{k_j}=a_j+\mathrm{i}b_j$ for $a_j, b_j\in \mathbb{R}$. Note that
\[\frac{\Im \lambda_{k_j}}{|\lambda_{k_j}|^2}=\frac{b_j}{a_i^2+b_j^2}.\]
Since $\lambda_{k_j}\in \Omega_\delta$, we have $|a_j|<\cot \delta\cdot b_j$. Thus
\[\frac{1}{|\lambda_{k_j}|}=\frac{1}{\sqrt{a_j^2+b_j^2}}=\frac{\sqrt{a_j^2+b_j^2}}{a_j^2+b_j^2}< \frac{b_j\csc \delta}{a_j^2+b_j^2}.\]
Thus the Blaschke condition does imply the claim.
\end{proof}
\begin{corollary}If for $T\in \mathfrak{E}_1(H)$ there is a small number $\delta>0$ and a strictly dissipative boundary condition $m$ such that $\sigma(T_m)$ lies totally in $\Omega_\delta$ in Prop.~\ref{p2}, then the Weyl order $\rho_T\leq 1$.
\end{corollary}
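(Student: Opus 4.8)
The plan is to realise the eigenvalues of $T_m$ as the zero set of a contractive Weyl function, to extract from the Blaschke condition together with Prop.~\ref{p2} that these zeros have a finite reciprocal sum, and then to read off from the Riesz--Smirnov factorization \eqref{cf} that the corresponding phase function has a \emph{uniformly bounded} derivative; the height formula \eqref{height} then forces $h_T(r)=O(r)$, whence $\rho_T\leq 1$.

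First I would move to a boundary triplet adapted to $m$. Fixing any triplet, $m$ corresponds to a constant $c$ with $|c|<1$, and $\sigma(T_m)$ consists precisely of the roots of $B(\lambda)=c$. The function $\frac{B(\lambda)-c}{1-\bar cB(\lambda)}$ is again a meromorphic inner function and, being congruent to $B$ in the sense of \eqref{mo}, is itself a contractive Weyl function of $T$ with respect to a suitable triplet; its zero set is exactly $\sigma(T_m)$. Hence we may assume from now on that the contractive Weyl function $B$ of $T$ has zero set $\{\lambda_k\}=\sigma(T_m)\subset\mathbb{C}_+$, and by hypothesis $\{\lambda_k\}\subset\Omega_\delta$. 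The factorization \eqref{cf} then reads $B(\lambda)=\gamma e^{\mathrm{i}b\lambda}\prod_k\frac{\overline{\lambda_k}}{\lambda_k}\frac{\lambda-\lambda_k}{\lambda-\overline{\lambda_k}}$ with $b\geq 0$, and the Blaschke condition holds; since every $\lambda_k$ lies in $\Omega_\delta$, Prop.~\ref{p2} gives $S:=\sum_k|\lambda_k|^{-1}<+\infty$.

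Next I would bound the phase function $\theta(u)$ of $B$ on the real line (so $B(u)=e^{\mathrm{i}\theta(u)}$). Logarithmic differentiation of the factorization gives $\theta'(u)=b+\sum_k\frac{2\,\Im\lambda_k}{|u-\lambda_k|^2}$ (the series converging locally uniformly by the Blaschke condition), all summands being non-negative. For each $k$ and each $u\in\mathbb{R}$, from $|u-\lambda_k|^2\geq(\Im\lambda_k)^2$ and $\Im\lambda_k\geq|\lambda_k|\sin\delta$ (because $\lambda_k\in\Omega_\delta$) we get
\begin{equation*}
0\leq\frac{2\,\Im\lambda_k}{|u-\lambda_k|^2}\leq\frac{2}{\Im\lambda_k}\leq\frac{2}{\sin\delta}\cdot\frac{1}{|\lambda_k|},
\end{equation*}
and this estimate is uniform in $u$; summing over $k$ yields $0\leq\theta'(u)\leq b+\frac{2}{\sin\delta}S=:C_0<+\infty$ for every $u\in\mathbb{R}$. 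Consequently $0\leq\theta(t)-\theta(-t)=\int_{-t}^t\theta'(u)\,du\leq 2C_0t$, so by \eqref{height}
\begin{equation*}
h_T(r)=\frac{1}{2\pi}\int_0^r\frac{\theta(t)-\theta(-t)}{t}\,dt+O(1)\leq\frac{C_0}{\pi}\,r+O(1),
\end{equation*}
that is, $h_T(r)=O(r)$, and therefore $\rho_T=\limsup_{r\to+\infty}\frac{\ln h_T(r)}{\ln r}\leq 1$.

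The only delicate point is turning the geometric hypothesis into the analytic one. One must first pass to the boundary triplet for which $B$ has zero set $\sigma(T_m)$, so that the factorization \eqref{cf} is available; and then observe that the angular confinement $\lambda_k\in\Omega_\delta$ does double duty: it upgrades the merely summable Blaschke weights $\Im\lambda_k/|\lambda_k|^2$ to the summable sequence $|\lambda_k|^{-1}$, and at the same time bounds each term $2\,\Im\lambda_k/|u-\lambda_k|^2$ by $2/(\sin\delta\,|\lambda_k|)$ \emph{independently of} $u$, which is precisely what makes $\theta'$ globally bounded. Everything else is routine; note the argument actually gives the stronger conclusion $h_T(r)=O(r)$, not merely $\rho_T\leq 1$.
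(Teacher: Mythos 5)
Your proof is correct, and it takes a somewhat different route from the paper's. Both arguments begin the same way: pass to a boundary triplet for which the zero set of the contractive Weyl function $B$ is exactly $\sigma(T_m)$, and invoke Prop.~\ref{p2} to get $\sum_k|\lambda_k|^{-1}<+\infty$. From there the paper argues through growth of the factorization itself: the finite reciprocal sum says the exponent of convergence of the zeros is at most $1$, hence (citing Lemma~3.6 of \cite{kaltenbackbranges}) the Blaschke-product part of (\ref{cf}) has order at most $1$, and since the exponential factor also has order $1$, the order of $B$ --- and with it $\rho_T$ --- is at most $1$. You instead exploit the angular confinement a second time: $\lambda_k\in\Omega_\delta$ gives $\Im\lambda_k\geq|\lambda_k|\sin\delta$, which together with $|u-\lambda_k|^2\geq(\Im\lambda_k)^2$ makes every term of $\theta'(u)=b+\sum_k\frac{2\Im\lambda_k}{|u-\lambda_k|^2}$ bounded by $\frac{2}{\sin\delta\,|\lambda_k|}$ \emph{uniformly in} $u$, so $\theta'$ is globally bounded and the height formula (\ref{height}) yields $h_T(r)=O(r)$. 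Your route is self-contained (no appeal to the Kaltenb\"ack--Woracek lemma, only to the phase-derivative formula already used in the proof of Prop.~\ref{order}) and gives the quantitatively stronger conclusion $h_T(r)=O(r)$, i.e.\ at most finite exponential type, whereas the paper's order-theoretic argument delivers only $\rho_T\leq 1$; the price is that your estimate leans harder on the hypothesis $\sigma(T_m)\subset\Omega_\delta$, while the paper's argument uses it only through Prop.~\ref{p2}.
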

\begin{proof}Let $B(\lambda)$ be a contractive Weyl function of $T$ such that the zero set $\{\lambda_i\}$ of $B(\lambda)$ coincides with $\sigma(T_m)$. Then due to Prop.~\ref{p2} the exponent of  convergence of $\{\lambda_i\}$ is not greater than 1. This implies that the order of the Blaschke product part in the factorization (\ref{cf}) is not greater than 1 \cite[Lemma 3.6]{kaltenbackbranges}. The result then follows.
\end{proof}

The number $-b$ in (\ref{cf}) is called the mean type of $B(\lambda)$. It's a basic result that
\begin{equation}-b=\limsup_{y\rightarrow+\infty}\frac{\ln|B(\mathrm{i}y)|}{y}.\label{mt}\end{equation}
An entire operator with $e^{\mathrm{i}b\lambda}$ as one contractive Weyl function arises naturally.
\begin{example}\label{exp}Consider $T(b)=-\mathrm{i}\frac{d}{dx}$ with domain $H_0^1(I)\subset L^2(I)$ where $I=[0,b]$ for a constant $b>0$ and $H_0^1(I)$ is the usual Sobolev space
 \[\{f\in L^2(I)|f'\in L^2(I), f(0)=f(b)=0\}.\] It's a classical result that $T(b)$ is a symmetric operator in $L^2(I)$ and $$D((T(b))^*)=H^1(I):=\{f\in L^2(I)|f'\in L^2(I)\}.$$ $T(b)$ is an entire operator with deficiency index 1. A natural boundary triplet can be chosen by setting $\Gamma_+\varphi=\varphi(0)$ and $\Gamma_-\varphi=\varphi(b)$ for $\varphi\in D(((T(b))^*)$. Then the corresponding contractive Weyl function is $B(\lambda)=e^{\mathrm{i}b\lambda}$.
\end{example}
$T\in \mathfrak{E}_1(H)$ which has Weyl order 1 is very special in the following sense.
\begin{proposition}If $T\in \mathfrak{E}_1(H)$ has a Picard exceptional boundary condition, then $\rho_T=1$. In particular, if $T$ has a Picard exceptional boundary condition $m$ such that $\sigma(T_m)=\varnothing$, then $T$ is unitarily equivalent to $T(b)$ in Example \ref{exp} for some $b>0$.
\end{proposition}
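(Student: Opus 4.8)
The plan is to exhibit the contractive Weyl function of $T$, in a boundary triplet adapted to the Picard exceptional condition $m$, as a single exponential times a \emph{finite} Blaschke product, to read off $\rho_T=1$ from the height formula (\ref{height}), and in the degenerate case to recognize $T$ through its Weyl class. Since a Picard exceptional $m$ satisfies $\delta_T(m)=1>0$, it is Nevanlinna exceptional and hence not self-adjoint, so $m$ is strictly dissipative or strictly accumulative. Accordingly I would choose a boundary triplet $(\mathbb{C},\Gamma_+,\Gamma_-)$ with $m=T_0$ in the first case and $m=T_\infty$ in the second, and write the corresponding contractive Weyl function $B(\lambda)$ in the Riesz--Smirnov form (\ref{cf}), with parameters $b\ge0$, $|\gamma|=1$ and zero sequence $\{\lambda_k\}_{k=1}^N\subset\mathbb{C}_+$. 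In the first case $\{\lambda_k\}=\sigma(T_0)=\sigma(T_m)$, and in the second the poles $\{\overline{\lambda_k}\}$ form $\sigma(T_\infty)=\sigma(T_m)$; either way $\sigma(T_m)$ finite forces $N<\infty$. As $H$ is infinite-dimensional, $T$ is transcendental, so the criterion recalled right after (\ref{cf}) (transcendental $\iff b>0$ or $N=\infty$) gives $b>0$.

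Next I would estimate the height. On $\mathbb{R}$ write $B(u)=e^{\mathrm{i}\theta(u)}$ for the strictly increasing phase function $\theta$ associated to $B(\lambda)$; each of the $N$ Blaschke factors contributes to $\theta$ a monotone term whose total change over $\mathbb{R}$ is $2\pi$, so $\theta(u)=bu+\beta(u)$ with $\beta$ bounded on $\mathbb{R}$. Since $\int_{-t}^{t}d\ln B(u)=\mathrm{i}(\theta(t)-\theta(-t))$, formula (\ref{height}) becomes
\[h_T(r)=\frac{1}{2\pi}\int_0^r\frac{\theta(t)-\theta(-t)}{t}\,dt+O(1).\]
The integrand extends continuously to $t=0$ (with value $2\theta'(0)$) and equals $2b+O(1/t)$ as $t\to\infty$, whence $h_T(r)=(b/\pi)\,r+O(\ln r)$. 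Because $b>0$ this immediately yields $\rho_T=1$ (in fact the Weyl type is $b/\pi$).

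Finally, if moreover $\sigma(T_m)=\varnothing$, then $N=0$, so $B(\lambda)=\gamma e^{\mathrm{i}b\lambda}$ with $b>0$. Putting $\tau=0$ in the congruence relation (\ref{mo}) shows that $B$ is congruent to $e^{\mathrm{i}b\lambda}$, which is the contractive Weyl function of the operator $T(b)$ of Example \ref{exp}; hence $T$ and $T(b)$ have the same Weyl class, and since the Weyl class is a complete unitary invariant of entire operators with deficiency index $1$, $T$ is unitarily equivalent to $T(b)$. I do not expect a genuine obstacle here: the only mildly delicate point is the boundedness of the Blaschke phase $\beta$, which is immediate once $N<\infty$ is known, and the rest is bookkeeping with the height integral, the structure theorem (\ref{cf}), and the Weyl-class characterization of unitary equivalence.
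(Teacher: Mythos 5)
Your proposal is correct and follows essentially the same route as the paper: normalize the boundary triplet so that $m$ becomes $T_0$ (or $T_\infty$), invoke the Riesz--Smirnov form (\ref{cf}) to see the Blaschke part is finite, conclude $b>0$ from transcendence, and identify the Weyl class $\gamma e^{\mathrm{i}b\lambda}$ with that of $T(b)$ in the degenerate case. The only difference is that you spell out the height estimate $h_T(r)=\tfrac{b}{\pi}r+O(\ln r)$ via (\ref{height}), which the paper treats as immediate (and which mirrors its proof of Prop.~\ref{order}); this is fine and even slightly more self-contained.
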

\begin{proof}Let $m$ be a Picard exceptional boundary condition. W.l.g., we assume that $T_m$ is strictly dissipative. Since $\mathbb{U}(1,1)$ acts transitively on the set of strictly dissipative boundary conditions, a boundary triplet can be chosen such that the extension $T_m$ is given by $\{x\in A_T^{\bot_s}|\Gamma_-x=0\}$. Thus zeros of $B(\lambda)$ are precisely points in $\sigma(T_m)$ and
\[B(\lambda)=\gamma e^{\mathrm{i}b\lambda}\prod_{k=1}^K\frac{\overline{\lambda_k}}{\lambda_k}\frac{\lambda-\lambda_k}{\lambda-\overline{\lambda_k}},\]
 where $K$ is a non-negative integer. Since $T$ is transcendental, we have $b>0$. The latter claim is clear.
\end{proof}
\emph{Remark}. The latter part of the above proposition is actually Thm.~4 in \cite[\S~114]{akhiezer2013theory}, but our emphasis is that the existence of Picard exceptional boundary conditions is a very strict restriction on $T$ and such boundary conditions won't occur for general entire operators $T\in \mathfrak{E}_1(H)$.

Given a strictly dissipative boundary condition $m$, we can always choose a boundary triplet such that $\sigma(T_m)$ is the zero set of the corresponding contractive Weyl function $B(\lambda)$. Such a boundary triplet is not unique, but $B(\lambda)$ can be determined up to a constant factor $\gamma$ with $|\gamma|=1$. Consequently the mean type of $B(\lambda)$ is well-defined and determined by $m$.

\begin{definition}In the above sense we call $b$ the mean type of the strictly dissipative boundary condition $m$ or the corresponding extension $T_m$.
 \end{definition}
\emph{Remark}. Since $m$ and its symplectic orthogonal complement $m^{\bot_s}$ determine each other mutually, we shall also call $b$ the mean type of $T_{m^{\bot_s}}$.

 From the viewpoint of inverse spectral theory, it matters whether the mean type of a boundary condition is zero or not.

\begin{proposition}For $T\in \mathfrak{E}_1(H)$, let $m$ be a strictly dissipative boundary condition whose mean type is zero. Then the set of all eigenvalues (counted with multiplicity) of $T_m$ determines the unitary equivalence class of $T$.
\end{proposition}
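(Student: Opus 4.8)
The plan is to reduce the claim to the Riesz--Smirnov factorization (\ref{cf}) and to the fact, recalled after (\ref{mo}), that the Weyl class of $T$ determines its unitary equivalence class. Concretely, suppose $T_1, T_2 \in \mathfrak{E}_1(H)$ carry strictly dissipative boundary conditions $m_1, m_2$, both of mean type zero, with $\sigma((T_1)_{m_1}) = \sigma((T_2)_{m_2})$ as sets counted with multiplicity; I want to conclude that $T_1$ and $T_2$ are unitarily equivalent.

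First, for each $j\in\{1,2\}$ I would invoke the observation preceding the definition of mean type: since $m_j$ is strictly dissipative, one can choose a boundary triplet $(\mathbb{C}, \Gamma_+, \Gamma_-)$ for $T_j$ whose contractive Weyl function $B_j(\lambda)$ has $\sigma((T_j)_{m_j})$ as its zero set, with the order of each zero equal to the analytic multiplicity of the corresponding eigenvalue (by \cite[Thm.~10.36]{wang2024complex}). Because the mean type of $m_j$ is zero, the exponential factor $e^{\mathrm{i}b\lambda}$ in (\ref{cf}) drops out, so
\[B_j(\lambda) = \gamma_j \prod_{k}\frac{\overline{\lambda_k}}{\lambda_k}\,\frac{\lambda - \lambda_k}{\lambda - \overline{\lambda_k}}, \qquad |\gamma_j| = 1,\]
where $\{\lambda_k\}$ runs over $\sigma((T_j)_{m_j})$ with multiplicity. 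This sequence is automatically free of finite accumulation points and obeys the Blaschke condition (these being necessary features of any contractive Weyl function), so the product above is a well-defined meromorphic inner function determined purely by the prescribed zero set.

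By hypothesis the two zero sets coincide, so the two Blaschke products are literally the same; hence $B_1(\lambda) = (\gamma_1 \overline{\gamma_2})\,B_2(\lambda)$, which is exactly the congruence (\ref{mo}) with $\tau = 0$. Thus $B_1$ and $B_2$ belong to a single Weyl class, and by the remark following (\ref{mo}) the operators $T_1$ and $T_2$ are unitarily equivalent, proving the proposition.

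There is essentially no analytic obstacle here; the only points demanding care are the bookkeeping of multiplicities --- matching the analytic multiplicity of an eigenvalue of $T_m$ with the order of the corresponding zero of $B$ --- and checking that the reconstruction is insensitive to the auxiliary choices (the boundary triplet adapted to $m$, and the unimodular constant $\gamma$), so that the conclusion depends only on $T$ together with the eigenvalue data of $T_m$, as asserted.
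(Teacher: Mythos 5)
Your proposal is correct and follows essentially the same route as the paper: choose a boundary triplet adapted to the strictly dissipative condition so that the zeros of $B(\lambda)$ (with multiplicities) are $\sigma(T_m)$, note that vanishing mean type forces $b=0$ in the factorization (\ref{cf}) so $B$ is the Blaschke product determined by the eigenvalue data up to a unimodular constant, and conclude that the Weyl class, hence the unitary equivalence class, is fixed. Your extra bookkeeping (matching multiplicities and observing the residual freedom is the congruence (\ref{mo}) with $\tau=0$) is just a more explicit rendering of the paper's two-line argument.
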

\begin{proof}We can choose a boundary triplet such that zeros of $B(\lambda)$ are precisely points in $\sigma(T_m)$. Then $B(\lambda)$ with $b=0$ in Eq.~(\ref{cf}) determines the Weyl class of $T$ and thus the unitary equivalence class of $T$.
\end{proof}
\emph{Remark}. If the strictly dissipative boundary condition $m$ has a non-vanishing mean type $b$, then the spectrum (with multiplicity) of $T_m$ only specifies the Blaschke product part of the corresponding contractive Weyl function $B(\lambda)$.
\begin{proposition}\label{order}If $T\in \mathfrak{E}_1(H)$ has a strictly dissipative boundary condition $m$ whose mean type is $b>0$, then the Weyl order $\rho_T\geq 1$.
\end{proposition}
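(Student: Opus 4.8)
The plan is to read off a lower bound for the height function from the explicit formula (\ref{height}), combined with the canonical factorization (\ref{cf}) of a contractive Weyl function adapted to $m$. By the definition of the mean type, choose a boundary triplet $(\mathbb{C},\Gamma_+,\Gamma_-)$ for which $\sigma(T_m)$ is precisely the zero set of the corresponding contractive Weyl function $B(\lambda)$; then $B(\lambda)$ has the shape (\ref{cf}) with exponent $b>0$ equal to the mean type of $m$, and every zero $\lambda_k$ lies in $\mathbb{C}_+$.

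First I would examine $B$ on the real axis. Since $B$ is a meromorphic inner function, $B(u)=e^{\mathrm{i}\theta(u)}$ with $\theta$ the (strictly increasing, real-analytic) phase function, so $\ln B(u)=\mathrm{i}\theta(u)$ up to an additive constant. Differentiating the logarithm of (\ref{cf}) term by term, which is legitimate because the series $\sum_k[(\lambda-\lambda_k)^{-1}-(\lambda-\overline{\lambda_k})^{-1}]$ converges locally uniformly on $\mathbb{C}$ away from the poles $\{\overline{\lambda_k}\}$ by the Blaschke condition, one obtains
\[
\theta'(u)=b+\sum_{k=1}^{N}\frac{2\,\Im\lambda_k}{|u-\lambda_k|^2}\;\ge\;b\qquad(u\in\mathbb{R}),
\]
since $\Im\lambda_k>0$ for all $k$. (Equivalently $\chi(u)=\theta'(u)\ge b$ by Prop.~\ref{ka}; one could instead deduce this from $|B(\lambda)|^2\le e^{-2b\,\Im\lambda}$ on $\mathbb{C}_+$ together with Prop.~\ref{lem1} by letting $\Im\lambda\to 0^+$.)

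It then remains to feed this bound into (\ref{height}). On $\mathbb{R}$ we have $\frac{1}{2\pi\mathrm{i}}\,d\ln B(u)=\frac{1}{2\pi}\theta'(u)\,du$, so
\[
h_T(r)=\frac{1}{2\pi}\int_0^r\frac{dt}{t}\int_{-t}^t\theta'(u)\,du+O(1)\;\ge\;\frac{1}{2\pi}\int_0^r\frac{dt}{t}\int_{-t}^t b\,du+O(1)=\frac{b}{\pi}\,r+O(1).
\]
Hence $h_T(r)$ grows at least linearly in $r$, and therefore $\rho_T=\limsup_{r\to+\infty}\frac{\ln h_T(r)}{\ln r}\ge 1$, which is the assertion.

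I do not anticipate a serious obstacle here. The only point demanding a little care is the justification of the term-by-term logarithmic differentiation of a possibly infinite Blaschke product along $\mathbb{R}$ and the attendant convergence estimate, which is routine for meromorphic inner functions. It is also worth recording that the argument in fact yields the quantitative refinement $h_T(r)\ge\frac{b}{\pi}\,r+O(1)$; in particular, if $\rho_T=1$ then the Weyl type satisfies $\tau_T\ge b/\pi$.
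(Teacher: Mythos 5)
Your proof is correct and follows essentially the same route as the paper: choose a boundary triplet so that $\sigma(T_m)$ is the zero set of $B(\lambda)$, differentiate the canonical factorization (\ref{cf}) along $\mathbb{R}$ to get $\theta'(u)\geq b$, and feed this into the height formula (\ref{height}) to obtain $h_T(r)\geq \frac{b}{\pi}r+O(1)$, hence $\rho_T\geq 1$. The quantitative remark $\tau_T\geq b/\pi$ when $\rho_T=1$ is a nice bonus but not part of the paper's argument.
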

\begin{proof}Choose a boundary triplet such that the zeros of $B(\lambda)$ produce $\sigma(T_m)$. Write $B(\lambda)$ in the form of (\ref{cf}). Note that for $u\in \mathbb{R}$, by differentiating we have
\[d\ln B(u)=\mathrm{i}bdu+2\mathrm{i}\Sigma_{k=1}^N\frac{\Im \lambda_k}{|u-\lambda_k|^2}du.\]
Thus
\[\frac{1}{2\pi \mathrm{i}}\int_{-t}^td\ln B(u)\geq \frac{bt}{\pi},\]
and consequently $h_T(r)\geq \frac{br}{\pi}+O(1)$. This shows that $\rho_T\geq 1$.
\end{proof}

The notion of Nevanlinna exceptional boundary conditions was defined in \cite{wang2024complex} but no example with $\delta_T(m)\in (0,1)$ was provided there. Here we give a simple example.
\begin{example}\label{defect}Let $b_1, b_2>0$ be two constants. Then both $B_1(\lambda)=e^{\mathrm{i}b_1\lambda}$ and $B_2(\lambda)=\frac{e^{\mathrm{i}b_2\lambda}-c}{1-\bar{c}e^{\mathrm{i}b_2\lambda}}$ ($0<|c|<1$) are meromorphic inner functions and the latter is congruent to $e^{\mathrm{i}b_2\lambda}$. They are contractive Weyl functions for $T(b_1)$ and $T(b_2)$ in Example \ref{exp} respectively. We can consider the meromorphic inner function $B(\lambda)=B_1(\lambda)\cdot B_2(\lambda)$, which can be viewed as a contractive Weyl function for a certain entire operator $T$ with deficiency index 1. If $\theta_1$, $\theta_2$ and $\theta$ are the phase functions for these three functions, then clearly
\[\theta'(u)=\theta_1'(u)+\theta_2'(u).\]
Thus the formulae (\ref{height}) and (\ref{se}) imply that \[h_T(r)=\frac{(b_1+b_2)r}{\pi}+O(\ln r).\]
Let $m$ be the boundary condition for $T$ such that $\sigma(T_m)$ is just the zero set of $B(\lambda)$ and $N_T(r, m)$ the corresponding counting function. However, the zero set of $B(\lambda)$ coincides with that of $B_2(\lambda)$. We know $\lim_{r\rightarrow +\infty} \frac{\pi N_T(r,m)}{b_2 r}=1$ because $0$ and $\infty$ are the only two Nevanlinna exceptional values of $e^{\mathrm{i}b_2\lambda}$. Therefore
\[\lim_{r\rightarrow +\infty}\frac{N_T(r,m)}{h_T(r)}=\frac{b_2}{b_1+b_2},\]
which implies that $\delta_T(m)=\frac{b_1}{b_1+b_2}$. In particular, we see that $\delta_T(m)$ can assume any value in $(0,1)$.
\end{example}
\subsection{Weyl function}
Given a boundary triplet $(\mathbb{C}, \Gamma_0, \Gamma_1)$ for $T\in \mathfrak{E}_1(H)$, its corresponding Weyl function $M(\lambda)$ is a meromorphic Herglotz function, which has the following canonical representation
\begin{equation}M(\lambda)=c+d\lambda+\sum_n (\frac{1}{t_n-\lambda}-\frac{t_n}{1+t_n^2})w_n,\label{Horg}
\end{equation}
where $c\in \mathbb{R}$, $d\geq 0$, (1) $\{w_n\}$ is a sequence of strictly positive constants and (2) $\{t_n\}$ is a purely discrete, strictly increasing real sequence with no finite accumulation point. It is necessary that (3) $\{w_n\}$ and $\{t_n\}$ satisfy the compatibility condition $\sum_{n}\frac{w_n}{1+t_n^2}< +\infty$. Note that here $n\in \mathbb{N}$, $-\mathbb{N}$ or $\mathbb{Z}$ according to whether $\{t_n\}$ is semi-bounded or not. Note that $\{t_n\}$ is precisely $\sigma(T_1)$ (in terms of the associated boundary triplet $(\mathbb{C}, \Gamma_+,\Gamma_-)$). It is known that $T$ is densely defined if and only if $d=0$ and $\sum_n w_n=+\infty$ \cite[Coro.~3.7.3]{behrndt2020boundary} \cite[Lemma 2.13]{martin2018function}.

Conversely, for $c\in \mathbb{R}$, $d\geq 0$ and sequences $\{t_n\}$, $\{w_n\}$ satisfying the above conditions (1)(2)(3),\footnote{In \cite{martin2018function}, such a pair is called a bandlimit pair.} the function defined via (\ref{Horg}) is a meromorphic Herglotz function and can be viewed as the Weyl function of an entire operator $T$ w.r.t. a certain boundary triplet. Then the sequence $\{t_n\}$ is nothing else but the spectrum of $T_1$. Combined with the formula (\ref{height}), this provides an opportunity to construct entire operators of different Weyl orders.
\begin{example}Let $\alpha>0$ be a fixed constant. For $n\in \mathbb{N}$, set $t_n=n^\alpha$ and $w_n=\frac{1}{n}$. Then clearly
\[\sum_{n=1}^{\infty}\frac{w_n}{1+t_n^2}=\sum_{n=1}^{\infty}\frac{1}{n+n^{2\alpha+1}}<+\infty.\]
Thus
\[M(\lambda)=\sum_{n=1}^\infty (\frac{1}{t_n-\lambda}-\frac{t_n}{1+t_n^2})w_n\]
is a meromorphic Herglotz function and ought to be the Weyl function of a certain densely defined entire operator $T$ w.r.t. a boundary triplet. Then $\{n^\alpha\}$ should be $\sigma(T_1)$ and in this case
\[r^{1/\alpha}-1\leq n_T(r, 1)< r^{1/\alpha}.\]
Then by Eq.~(\ref{se}), the Weyl order $\rho_T=\frac{1}{\alpha}$. Thus we see that the Weyl order of $T\in \mathfrak{E}_1(H)$ can be an arbitrary positive number.
\end{example}
\begin{example}For each $n\in \mathbb{N}$, set $t_n=\ln n$ and $w_n=\frac{1}{n^2}$. Then clearly
\[M(\lambda)=\sum_{n=1}^\infty (\frac{1}{t_n-\lambda}-\frac{t_n}{1+t_n^2})w_n\]
is a meromorphic Herglotz function, which can be viewed as the Weyl function of a not densely defined entire operator $T$. Then $\{\ln n\}$ should be $\sigma(T_1)$ and in this case
\[e^r-1\leq n_T(r,1)<e^r.\]
Consequently we have $\rho_T=+\infty$.
\end{example}
\subsection{de Branges function}
Recall that an Hermite-Biehler function $e: \mathbb{C}\rightarrow \mathbb{C}$ is an entire function such that (1) $|e(\lambda)|> |e^\sharp(\lambda)|$ for each $\lambda\in \mathbb{C}_+$, and (2) $e$ has no zeros on the real line. Here $e^\sharp(\lambda)=\overline{e(\bar{\lambda})}$.

Let $B(\lambda)$ be a contractive Weyl function of $T\in \mathfrak{E}_1(H)$, written in the form of (\ref{cf}). It is always possible to find an Hermite-Bielher function $e(\lambda)$ such that $B(\lambda)=\frac{e^\sharp(\lambda)}{e(\lambda)}$. We say $e(\lambda)$ is a de Branges function for $B(\lambda)$. This $e(\lambda)$ is not unique: If $c(\lambda)$ is a real function (i.e. $c^\sharp\equiv c$) vanishing nowhere on $\mathbb{C}$, then $c\times e$ is also a de Branges function for $B(\lambda)$. A canonical choice of $e(\lambda)$ is as follows: Let $\{\lambda_j\}$ be poles of $B(\lambda)$ on $\mathbb{C}_-$, arranged such that $|\lambda_j|\leq |\lambda_{j+1}|$. We can choose
\[e(\lambda)=e^{\frac{-\mathrm{i}b\lambda}{2}}\prod_{j=1}^N(1-\frac{\lambda}{\lambda_j})\Re(p_j(\lambda))\]
where $p_j(\lambda)=\sum_{k=1}^j\frac{\lambda^k}{k\lambda_j^k}$. See for example \cite[Lemma 2.1]{havin2003admissible}. These factors $\Re(p_j(\lambda))$ are present to make sure the convergence of the infinite product. If additionally we know in advance $\rho_T<1$, we should have $b=0$ and can omit these extra factors. See \cite[Lemma 3.12]{kaltenbackbranges}.

For a given de Branges function $e(\lambda)$, let
\[a(\lambda)=\frac{e(\lambda)+e^\sharp(\lambda)}{2},\quad b(\lambda)=\mathrm{i}\frac{e(\lambda)-e^\sharp(\lambda)}{2}.\]
Then $B(\lambda)-1=\frac{2\mathrm{i}b(\lambda)}{a(\lambda)-\mathrm{i}b(\lambda)}$ and we see that $\sigma(T_1)$ is precisely the zero set of the entire function $b(\lambda)$. Similarly, $\sigma(T_{-1})$ is just the zero set of the entire function $a(\lambda)$.

The pair $[e(\lambda), e^\sharp(\lambda)]$ can be viewed as a representation of the Weyl curve $W_T(\lambda)$ in terms of projective coordinates of $\mathbb{CP}^1$. In the same way, a generic boundary condition can be represented by a point $[a,b]\in \mathbb{CP}^1$. Then $\lambda$ is a corresponding eigenvalue if and only if
\[\det \left(
         \begin{array}{cc}
           e(\lambda) & e^\sharp(\lambda) \\
           a & b \\
         \end{array}
       \right)=0.
\]
In particular, $\lambda$ is a multiple eigenvalue if and only if
\[G(\lambda):=\det \left(
         \begin{array}{cc}
           e(\lambda) & e^\sharp(\lambda) \\
           e'(\lambda) & (e^\sharp)'(\lambda) \\
         \end{array}
       \right)=0.
\]
\begin{proposition}\label{mul}For $T\in \mathfrak{E}_1(H)$, there are at most countably many generic boundary conditions possessing a multiple eigenvalue. In particular, if the Weyl order $\rho_T<1$, then such a generic boundary condition always exists.
\end{proposition}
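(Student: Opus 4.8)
The plan is as follows.

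\emph{Set-up and the countability statement.} First, fix a de~Branges function $e(\lambda)$ for a contractive Weyl function $B(\lambda)$ of $T$. Since $e$ has no real zeros, all its zeros lying in $\mathbb{C}_-$ and all zeros of $e^\sharp$ in $\mathbb{C}_+$, the functions $e$ and $e^\sharp$ have no common zero, so $\lambda\mapsto[e(\lambda):e^\sharp(\lambda)]$ is a well-defined holomorphic map $\mathbb{C}\to\mathbb{CP}^1\cong\mathcal{M}$. From the determinantal description given just above the statement, a generic boundary condition $[a:b]$ has an eigenvalue of multiplicity $\geq 2$ at $\lambda_0$ iff the homogeneous system $b\,e(\lambda_0)-a\,e^\sharp(\lambda_0)=0=b\,e'(\lambda_0)-a\,(e^\sharp)'(\lambda_0)$ has a nonzero solution, i.e.\ iff $G(\lambda_0)=0$, and then necessarily $[a:b]=[e(\lambda_0):e^\sharp(\lambda_0)]$. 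Hence the set of generic boundary conditions carrying a multiple eigenvalue is the image of $Z(G):=G^{-1}(0)$ under this map. As $T$ is transcendental, $B=e^\sharp/e$ is non-constant, so $e,e^\sharp$ are linearly independent and $G=e(e^\sharp)'-e'e^\sharp\not\equiv 0$; therefore $Z(G)$ is discrete, hence at most countable, and the first assertion follows. (Note also $Z(G)\subset\mathbb{C}\setminus\mathbb{R}$, since $B'$ has no real zeros and the poles of $B$ lie in $\mathbb{C}_-$, so multiple eigenvalues occur only for non-self-adjoint extensions.)

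\emph{Reduction of the existence statement.} Next I would show $Z(G)\neq\varnothing$ when $\rho_T<1$. In that case one may take $b=0$ in \eqref{cf}, so $e$ is a genus-zero canonical product over $\sigma(T_\infty)$; since the proximity function is bounded below, the First Main Theorem gives that the exponent of convergence of $\sigma(T_\infty)$ is $\leq\rho_T$, so $\mathrm{ord}(e)\leq\rho_T<1$, and as $G$ is built from $e,e^\sharp$ and their first derivatives, $\mathrm{ord}(G)<1$. Suppose for contradiction $Z(G)=\varnothing$. Then $G=\exp g$ for an entire $g$, and finite order $<1$ forces $g$ to be constant; thus $G\equiv c\neq 0$, and since a direct check gives $G^\sharp=-G$, in fact $c=2\mathrm{i}\kappa$ with $\kappa\in\mathbb{R}\setminus\{0\}$.

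\emph{The core obstruction.} I would then bring in $a=\tfrac12(e+e^\sharp)$, $b=\tfrac{\mathrm{i}}{2}(e-e^\sharp)$: these are real entire of order $<1$, with zero sets $\sigma(T_{-1})$ and $\sigma(T_1)$, which (being the spectra of two self-adjoint extensions of a transcendental $T$, cf.\ Proposition~\ref{par}) are infinite, real, simple, and mutually disjoint. A short computation yields $G=2\mathrm{i}(ab'-a'b)$, so the Wronskian $ab'-a'b$ equals the nonzero constant $\kappa$; differentiating gives $ab''=a''b$, whence $q:=-a''/a=-b''/b$ is entire (no poles, as $a,b$ have no common zeros) and $a,b$ are linearly independent entire solutions of $y''+q\,y=0$. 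Consequently $E:=ab$ is entire of order $<1$ with only simple zeros, satisfying $E'(z_0)=\pm\kappa$ at each zero $z_0$; that is, $E/\kappa$ is a Bank--Laine function of order $<1$ with infinitely many zeros. But no such function exists: when $\mathrm{ord}(q)<\tfrac12$ this is immediate from Gundersen's theorem (every nontrivial solution of $y''+q\,y=0$ with $q$ entire of order $<\tfrac12$ has infinite order, contradicting $\mathrm{ord}(a)<\tfrac12$), and for $\tfrac12\leq\mathrm{ord}(e)<1$ one appeals to the finer oscillation/Bank--Laine theory of second-order linear equations with coefficient of order $<1$ (exploiting also the special structure of $E$: real, simple, interlacing zeros and constant Wronskian). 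This contradicts $Z(G)=\varnothing$, so a multiple eigenvalue occurs for some generic boundary condition.

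The routine part is the de~Branges bookkeeping of the first two paragraphs. The genuine difficulty is the final step — excluding the possibility that $G$ is a nonzero constant — which reduces to the nonexistence of Bank--Laine functions of order in $(0,1)$ with infinitely many zeros; the subcase $\rho_T<\tfrac12$ is clean via Gundersen, but closing the full range $\rho_T<1$ will require heavier input (alternatively one might try to argue directly that $G\equiv\mathrm{const}$ forces the diagonal reproducing kernel $K_e(u,u)$ to be constant, hence $\mathcal{H}(e)$ to be Paley--Wiener-like and therefore of order one, contradicting $\rho_T<1$ — but this route appears comparably deep).
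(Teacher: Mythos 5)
Your first paragraph reproduces the paper's own argument for the countability statement: a multiple eigenvalue at $\lambda_0$ occurs exactly when $G(\lambda_0)=0$, the boundary condition is then $[e(\lambda_0):e^\sharp(\lambda_0)]$, and $G\not\equiv 0$ makes $Z(G)$ discrete. That part is fine and essentially identical to the paper. For the existence statement, however, the paper argues in one line: with $e$ chosen of order $<1$, $G$ has order $<1$, and the cited value-distribution theorem of Goldberg--Ostrovskii is invoked to produce (infinitely many) zeros of $G$. You instead isolate the only possible failure mode --- $G$ zero-free, hence, being of order $<1$, a nonzero constant --- and try to exclude it via $G=2\mathrm{i}(ab'-a'b)$ and the oscillation theory of $y''+qy=0$. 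The reduction and the observation that $a,b$ would be linearly independent finite-order solutions with infinitely many real, simple, interlacing zeros are correct, but the argument is not closed, and this is a genuine gap rather than a presentational one.

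Concretely: (i) the statement you attribute to Gundersen (``every nontrivial solution of $y''+qy=0$ with $q$ entire of order $<\tfrac12$ has infinite order'') is false as quoted --- $q\equiv\omega^2$ gives $\sin$, $\cos$ of order $1$, and $q\equiv 0$ gives polynomials; what is true and what suffices here (since \emph{both} $a$ and $b$ have finite order) is the Bank--Laine/Rossi--Shen result that for \emph{transcendental} $q$ of order $\le\tfrac12$ two linearly independent solutions cannot both have finitely many zeros of finite exponent of convergence, with the polynomial/constant cases handled separately through the order formula $(\deg q+2)/2\ge 1$. (ii) For $\tfrac12\le\rho_T<1$ you prove nothing: the blanket assertion ``no Bank--Laine function of order $<1$ with infinitely many zeros exists'' is not a citable theorem, and the general Bank--Laine conjecture fails precisely in the order range $(\tfrac12,1)$ (Bergweiler--Eremenko), so any proof along your lines must genuinely exploit the extra structure you only mention in passing (all zeros real and interlacing, $E=ab$ itself of order $<1$, constant Wronskian); deferring this to unspecified ``heavier input'' leaves the existence claim established only for $\rho_T<\tfrac12$. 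By contrast, the paper's route needs none of this machinery, so your proposal both diverges from the intended proof and, as written, does not yet prove the second half of the proposition on the full range $\rho_T<1$.
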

\begin{proof}It's easy to see the entire function $G(\lambda)$ is never trivial. Since $\lambda_0$ is a multiple eigenvalue for a certain generic extension of $T$ if and only if $\lambda_0$ is a zero of $G(\lambda)$. If it is the case, the boundary condition is provided by the point $[e(\lambda_0),e^\sharp(\lambda_0)]\in \mathbb{CP}^1$. The claim then follows because the zeros of $G(\lambda)$ are at most countably many. If furthermore $\rho_T<1$, then $e(\lambda)$ can be chosen to be of order $< 1$ and consequently the order of $G(\lambda)$ is less than 1 as well. The second claim follows since $G(\lambda)$ always has infinite zeros \cite[Thm.~1.1, Chap.~4]{gol'dberg2008value}.
\end{proof}

\section{Some other existing models compared with the canonical model}\label{S4}
Besides our model in \S~\ref{model}, there are still several other functional models for an entire operator $T\in \mathfrak{E}_1(H)$, in which the model spaces are all function spaces over a certain subset of $\mathbb{C}$ and the model operators are just multiplication by the independent variable. See for instance \cite{ross2013theorem}. These models were proposed independently and sometimes one appears to be rather different from another. The basic goal of this section is to show that our model is\emph{ universal} in the sense that the others can often be derived from it by two natural geometric operations--restricting $F$ onto a certain subset $N$ of $\mathbb{C}$ and choosing a suitable trivialization of $F$ over $N$. This certainly means there are even more functional models not mentioned in the literature at all!

In the following, we assume that $M$ is a complex manifold, although it will finally be specified to be $\mathbb{C}$ or an open subset of it.
 \begin{definition}For a holomorphic vector bundle $F$ over a complex manifold $M$,  a subset $N\subset M$ is called a determining subset of $F$ if any holomorphic section vanishing on $N$ also vanishes on the whole of $M$.
 \end{definition}

 It should be mentioned that in the above definition since $N$ may even be a discrete subset, $E|_N$ may not be a holomorphic vector bundle at all.

 \begin{lemma}\label{l1}Let $K$ be a reproducing kernel on a holomorphic vector bundle $F$ over $M$ and $\mathfrak{H}(K)$ the corresponding reproducing kernel Hilbert space, consisting of certain holomorphic sections of $F$. If $N$ is a determining subset for $F$, then the restriction $K_N$ of $K$ on $N$ is a reproducing kernel on $F|_N$ and the restriction map
 \[r_N^*: \mathfrak{H}(K)\rightarrow \mathfrak{H}(K_N),\  s\mapsto s|_N\]
 is a unitary map.
 \end{lemma}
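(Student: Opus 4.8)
The plan is to verify the three assertions in order, exploiting that the RKHS structure on $\mathfrak{H}(K)$ is entirely encoded by the reproducing property $\langle s(\lambda),\omega\rangle = \langle s, K(\cdot,\lambda)\omega\rangle_{\mathfrak{H}(K)}$ for $\omega\in F^\dag_\lambda$. First I would note that $K_N(\lambda,\mu):=K(\lambda,\mu)$ for $\lambda,\mu\in N$ is, by construction, a positive-definite kernel on $F|_N$ in the sense of \cite{bertram1998reproducing}, since positivity of a kernel is tested on finitely many points and these all lie in $N$; hence $\mathfrak{H}(K_N)$ is a well-defined RKHS consisting of sections of $F|_N$. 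The determining-subset hypothesis is not needed for this step; it enters only in the next.

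Next I would show the restriction map $r_N^*$ is well defined and \emph{injective}. Well-definedness is automatic: if $s\in\mathfrak{H}(K)$ then $s|_N$ is a section of $F|_N$. Injectivity is precisely the content of $N$ being a determining subset: $s|_N\equiv 0$ forces $s\equiv 0$ on $M$ because $s$ is a holomorphic section of $F$. The key point to establish next is that $r_N^*$ is \emph{isometric onto} $\mathfrak{H}(K_N)$. The natural approach is to show $r_N^*$ carries the spanning set of kernel sections of $\mathfrak{H}(K)$ to the spanning set of kernel sections of $\mathfrak{H}(K_N)$, preserving inner products. Concretely, for $\mu\in N$ and $\omega\in F^\dag_\mu$, the section $\lambda\mapsto K(\lambda,\mu)\omega$ lies in $\mathfrak{H}(K)$, and its restriction to $N$ is $\lambda\mapsto K_N(\lambda,\mu)\omega$, which is the corresponding kernel section of $\mathfrak{H}(K_N)$. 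The inner product identity
\[
\langle K(\cdot,\mu)\omega,\ K(\cdot,\nu)\eta\rangle_{\mathfrak{H}(K)} = \langle K(\nu,\mu)\omega,\eta\rangle = \langle K_N(\cdot,\mu)\omega,\ K_N(\cdot,\nu)\eta\rangle_{\mathfrak{H}(K_N)}
\]
holds because the middle term only involves the values of $K$ at points of $N$. So $r_N^*$ restricts to an inner-product-preserving bijection between the two dense spans of kernel sections; it therefore extends to an isometry $\mathfrak{H}(K)\to\mathfrak{H}(K_N)$. Since $r_N^*$ is already defined on all of $\mathfrak{H}(K)$ and agrees with this isometric extension on the dense span (by continuity of evaluation in both spaces), $r_N^*$ is globally isometric.

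Finally I would argue surjectivity. The image $r_N^*(\mathfrak{H}(K))$ is a closed subspace of $\mathfrak{H}(K_N)$ (being the isometric image of a complete space) containing all kernel sections $K_N(\cdot,\mu)\eta$, $\mu\in N$; but these span a dense subspace of $\mathfrak{H}(K_N)$ by the defining property of a RKHS. Hence $r_N^*$ is onto, and therefore unitary. The main obstacle I anticipate is purely bookkeeping: ensuring that the kernel section $K(\cdot,\mu)\omega$ genuinely lies in $\mathfrak{H}(K)$ and that the span of such sections is dense — these are the axioms of a reproducing kernel Hilbert space of holomorphic sections in the sense of \cite{bertram1998reproducing}, so once that framework is invoked the argument is routine. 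A secondary subtlety is that $F|_N$ need not be a holomorphic bundle when $N$ is discrete, so one must phrase "section of $F|_N$" and "positive-definite kernel on $F|_N$" set-theoretically (fiberwise), as the paper's definitions already permit.
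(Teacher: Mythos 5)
Your argument is correct, but it takes a different route from the paper: the paper disposes of this lemma in one line by citing \cite[Prop.~1.6]{bertram1998reproducing} and remarking that the denseness hypothesis there can be replaced by the determining property, whereas you reprove the statement from scratch via the standard RKHS mechanism (kernel sections based at points of $N$ are mapped to kernel sections of $\mathfrak{H}(K_N)$ with inner products preserved, then extend by continuity and identify the extension with $r_N^*$ through pointwise evaluation). This buys a self-contained proof and makes visible exactly where the determining property enters, at the cost of two small bookkeeping points you should tighten. First, ``well-definedness is automatic'' is not quite right: that $s|_N$ is a section of $F|_N$ is trivial, but that it belongs to the Hilbert space $\mathfrak{H}(K_N)$ is not a priori clear; it is in fact delivered only by your later limit argument ($Us$ lies in $\mathfrak{H}(K_N)$ by construction and coincides pointwise on $N$ with $s$), so the order of the claims should reflect that. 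Second, the extension-by-continuity step needs the span $S_N$ of the sections $K(\cdot,\mu)\omega$ with $\mu\in N$ to be dense in $\mathfrak{H}(K)$, not merely in $\mathfrak{H}(K_N)$; this is where the determining property is really used, via the identity $\ker r_N^*=S_N^{\perp}$ (a section of $\mathfrak{H}(K)$ is orthogonal to $S_N$ exactly when it vanishes on $N$), so the injectivity you extract from the determining hypothesis is equivalent to the density you implicitly invoke, and spelling out that one line closes the argument.
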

 \begin{proof}This is essentially \cite[Prop.~1.6]{bertram1998reproducing} except that the denseness of $N$ is replaced with the determining property.
 \end{proof}
 Now let $M\subseteq \mathbb{C}$ be an open subset and $F$ a holomorphic vector bundle over $M$ with a reproducing kernel $K$ such that $\mathfrak{H}(K)$ consists of certain holomorphic sections. Since for each holomorphic section $s$ of $F$, the section $\mathfrak{X}s$ defined by $(\mathfrak{X}s)(\lambda):=\lambda s(\lambda)$ for $\lambda\in M$ is again holomorphic, we can define
 \[D(\mathfrak{T}_K)=\{s\in \mathfrak{H}(K)|\mathfrak{X}s\in\mathfrak{H}(K)\}\]
 and obtain an operator $\mathfrak{T}_K$ in $\mathfrak{H}(K)$ via $\mathfrak{T}_K s=\mathfrak{X}s$ for each $s\in D(\mathfrak{T}_K)$. Note that if $N\subseteq M$, we can define $\mathfrak{T}_{K_N}$ in $\mathfrak{H}(K_N)$ in a similar manner.
 \begin{lemma}\label{p1}Let $F$ be a holomorphic vector bundle over $M\subseteq\mathbb{C}$ and $K$ a reproducing kernel on $F$ such that $\mathfrak{H}(K)$ consists of certain holomorphic sections. If $N\subset M$ is a determining subset for $F$, then $\mathfrak{T}_K$ is unitarily equivalent to $\mathfrak{T}_{K_N}$. More precisely, $r_N^*(D(\mathfrak{T}_K))=D(\mathfrak{T}_{K_N})$ and for each $s\in D(\mathfrak{T}_K)$
 \begin{equation}r_N^*(\mathfrak{T}_K s)=\mathfrak{T}_{K_N}r_N^*(s).\label{e3}\end{equation}
 \end{lemma}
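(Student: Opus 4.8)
The plan is to read off the unitary equivalence directly from Lemma~\ref{l1}, which already supplies that $r_N^*\colon\mathfrak{H}(K)\to\mathfrak{H}(K_N)$ is unitary; what remains is purely the bookkeeping that this unitary carries $D(\mathfrak{T}_K)$ onto $D(\mathfrak{T}_{K_N})$ and intertwines the two multiplication operators. The structural fact that drives everything is that restriction to $N$ commutes with multiplication by the independent variable: for any holomorphic section $s$ of $F$,
\[(\mathfrak{X}s)|_N=\mathfrak{X}(s|_N),\]
since both sides are $\lambda\mapsto\lambda s(\lambda)$ evaluated on $N$. I would record this at the outset, together with the fact (already noted in the text) that $\mathfrak{X}s$ is holomorphic whenever $s$ is.

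First I would prove $r_N^*(D(\mathfrak{T}_K))\subseteq D(\mathfrak{T}_{K_N})$ and the identity~(\ref{e3}) simultaneously. Take $s\in D(\mathfrak{T}_K)$, so that $s,\mathfrak{X}s\in\mathfrak{H}(K)$. Applying $r_N^*$ and the commutation relation above, both $r_N^*(s)$ and $\mathfrak{X}(r_N^*(s))=(\mathfrak{X}s)|_N=r_N^*(\mathfrak{X}s)$ lie in $\mathfrak{H}(K_N)$; hence $r_N^*(s)\in D(\mathfrak{T}_{K_N})$ and $\mathfrak{T}_{K_N}r_N^*(s)=\mathfrak{X}(r_N^*(s))=r_N^*(\mathfrak{T}_Ks)$, which is exactly~(\ref{e3}).

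For the reverse inclusion $D(\mathfrak{T}_{K_N})\subseteq r_N^*(D(\mathfrak{T}_K))$ I would use surjectivity of $r_N^*$ (it being unitary) twice. Given $t\in D(\mathfrak{T}_{K_N})$, choose $s\in\mathfrak{H}(K)$ with $s|_N=t$; then $(\mathfrak{X}s)|_N=\mathfrak{X}t\in\mathfrak{H}(K_N)$, so there is $s'\in\mathfrak{H}(K)$ with $s'|_N=(\mathfrak{X}s)|_N$. Now $s'-\mathfrak{X}s$ is a holomorphic section of $F$ vanishing on $N$, so the determining property of $N$ forces $s'=\mathfrak{X}s$ on all of $M$. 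In particular $\mathfrak{X}s\in\mathfrak{H}(K)$, so that $s\in D(\mathfrak{T}_K)$ with $r_N^*(s)=t$. Combining the two inclusions gives $r_N^*(D(\mathfrak{T}_K))=D(\mathfrak{T}_{K_N})$, which together with~(\ref{e3}) is the asserted statement. The only genuine use of the hypotheses is this last step: the determining property is precisely what promotes ``$(\mathfrak{X}s)|_N\in\mathfrak{H}(K_N)$'' to ``$\mathfrak{X}s\in\mathfrak{H}(K)$'', so that is the one point I expect to treat with care; everything else is formal, with no analytic estimates entering the argument.
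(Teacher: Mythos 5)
Your argument is correct and follows essentially the same route as the paper: the forward inclusion and the intertwining relation are formal consequences of the unitarity of $r_N^*$ from Lemma~\ref{l1} and the commutation of restriction with $\mathfrak{X}$, while the reverse inclusion uses surjectivity of $r_N^*$ to produce extensions and the determining property of $N$ to identify $\mathfrak{X}\tilde{s}$ with the extension of $\mathfrak{T}_{K_N}s$. Your write-up merely spells out the "obvious" inclusion and the final identification more explicitly than the paper does.
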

 \begin{proof}Obviously $r_N^*(D(\mathfrak{T}_K))\subset D(\mathfrak{T}_{K_N})$. Conversely, if $s\in D(\mathfrak{T}_{K_N})$, then in $\mathfrak{H}(K)$ there is a unique extension $\tilde{s}$ (resp. $\widetilde{\mathfrak{T}_{K_N}s}$) of $s$ (resp.~$\mathfrak{T}_{K_N}s$). Since $\mathfrak{X}\tilde{s}-\widetilde{\mathfrak{T}_{K_N}s}$ vanishes on $N$ and $N$ is determining, it is necessary that $\mathfrak{X}\tilde{s}=\widetilde{\mathfrak{T}_{K_N}s}$. Thus $\tilde{s}\in D(\mathfrak{T}_K)$ and consequently $r_N^*(D(\mathfrak{T}_K))=D(\mathfrak{T}_{K_N})$. Eq.~(\ref{e3}) is clear.
 \end{proof}
\subsection{de Branges model}\label{deBranges}
In this and the following subsections, we shall continue to use the notations in \S~\ref{line}. For $T\in \mathfrak{E}_1(H)$, we choose a fixed boundary triplet $(\mathbb{C}, \Gamma_+, \Gamma_-)$. Let $B(\lambda)$ be the associated contractive Weyl function and $e(\lambda)$ an Hermite-Biehler function for it, i.e., $B(\lambda)=e^\sharp(\lambda)/e(\lambda)$. Since zeros of $e(\lambda)$ are precisely poles of $B(\lambda)$, the section $\zeta(\lambda):=e(\lambda)\gamma_+ (\lambda)$ is a holomorphic section of $E$, vanishing nowhere on $\mathbb{C}$. As a consequence, $\xi(\lambda):=\psi(\lambda)/e^\sharp(\lambda)$ as a holomorphic section of $F$ vanishes nowhere on $\mathbb{C}$ as well. Furthermore
\[(\xi(\lambda),\xi(\lambda))_\lambda=-\frac{1}{|e^\sharp(\lambda)|^2}\times\frac{2\Im \lambda}{1-|B(\bar{\lambda})|^2}=\frac{2\Im \lambda}{|e(\lambda)|^2-|e(\bar{\lambda})|^2}.\]
In terms of the global frames $\zeta(\bar{\lambda})$ and $\xi(\lambda)$, the model space $\mathfrak{H}$ in \S~\ref{model} is now interpreted as a reproducing kernel Hilbert space $\mathcal{H}_e$ of entire functions.
\begin{proposition} $\mathcal{H}_e$ is just the de Branges space associated to the Hermite-Biehler function $e$.
\end{proposition}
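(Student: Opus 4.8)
\emph{Proof proposal.} The strategy is to realize both $\mathcal{H}_e$ and the de Branges space $\mathcal{H}(e)$ as reproducing kernel Hilbert spaces of entire functions and to show that their reproducing kernels agree, up to the harmless normalization constant hidden in the definition of the de Branges norm. Recall the classical fact that $\mathcal{H}(e)$ --- the space of entire $f$ with $f/e,\ f^\sharp/e\in H^2(\mathbb{C}_+)$, normed by $\|f\|^2=\int_{\mathbb{R}}|f(t)/e(t)|^2\,dt$ --- is exactly the RKHS whose reproducing kernel is
\[\mathbb{K}^{dB}(z,w)=\frac{e(z)\overline{e(w)}-e^\sharp(z)\overline{e^\sharp(w)}}{2\pi\mathrm{i}\,(\overline{w}-z)}.\]
It therefore suffices to compute the reproducing kernel of $\mathcal{H}_e$ and match.

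First I would make explicit the entire function $f_x$ attached to $x\in H$ by the trivialization $\hat{x}=f_x\cdot\xi$. Pairing $\hat{x}(\lambda)$ against $\gamma_+(\overline{\lambda})\in F^\dag_\lambda$ and combining the defining relations $\langle\hat{x}(\lambda),\gamma_+(\overline{\lambda})\rangle=(x,\gamma_+(\overline{\lambda}))_H$, $\langle\psi(\lambda),\gamma_+(\overline{\lambda})\rangle=1$, $\xi=\psi/e^\sharp$ and $\zeta=e\,\gamma_+$, a one-line manipulation yields $f_x(\lambda)=(x,\zeta(\overline{\lambda}))_H$; this is entire in $\lambda$ since $\zeta$ is a global holomorphic frame of $E$ and $(x,\cdot)_H$ is conjugate-linear. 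Because $T$ is simple, $x\mapsto f_x$ is a linear bijection $H\to\mathcal{H}_e$ which transports the norm of $H$, so $\|f_x\|_{\mathcal{H}_e}=\|x\|_H$; the point evaluation $f_x\mapsto f_x(\lambda)=(x,\zeta(\overline{\lambda}))_H$ is bounded, hence $\mathcal{H}_e$ is an RKHS with evaluation vector $\zeta(\overline{\lambda})$, and its reproducing kernel is $\mathbb{K}(z,w)=(\zeta(\overline{w}),\zeta(\overline{z}))_H$.

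Next I would evaluate $(\zeta(a),\zeta(b))_H$: writing $\zeta=e\,\gamma_+$ and feeding in the frame identity (\ref{fr}) together with $B=e^\sharp/e$ gives $(\zeta(a),\zeta(b))_H=\mathrm{i}\,(e(a)\overline{e(b)}-e^\sharp(a)\overline{e^\sharp(b)})/(a-\overline{b})$. Specializing to $a=\overline{w}$, $b=\overline{z}$ and simplifying via $\overline{e(\overline{z})}=e^\sharp(z)$, $e(\overline{w})=\overline{e^\sharp(w)}$, $e^\sharp(\overline{w})=\overline{e(w)}$ (and the analogues), I expect to arrive at
\[\mathbb{K}(z,w)=\mathrm{i}\,\frac{e(z)\overline{e(w)}-e^\sharp(z)\overline{e^\sharp(w)}}{z-\overline{w}}=2\pi\,\mathbb{K}^{dB}(z,w).\]
Two reproducing kernel Hilbert spaces whose kernels are positive scalar multiples of each other consist of exactly the same functions, and once the de Branges norm is taken in the matching normalization (equivalently, $\tfrac{1}{2\pi}\int_{\mathbb{R}}|f/e|^2\,dt$) the identification is isometric. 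This proves the proposition.

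The only genuinely delicate points are the $\sharp$-involution and conjugation bookkeeping in the last display, together with tracking the constant $2\pi$; there is no conceptual obstacle, since $\mathcal{H}_e$ is an RKHS essentially by construction and the de Branges space is pinned down by its kernel. If one wished to bypass that characterization, an equivalent route restricts everything to $\mathbb{R}$ and checks directly that $\|x\|_H^2$ equals, up to $2\pi$, $\int_{\mathbb{R}}|f_x(t)/e(t)|^2\,dt=\int_{\mathbb{R}}|(x,\gamma_+(t))_H|^2\,dt$; on that path the substantive ingredient would be the Parseval-type identity $\int_{\mathbb{R}}|(x,\gamma_+(t))_H|^2\,dt=2\pi\|x\|_H^2$, which encodes the completeness of the Weyl transform of the simple operator $T$, and that is where the real work would lie.
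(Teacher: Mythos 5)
Your proposal is correct and follows essentially the same route as the paper: compute the reproducing kernel of $\mathcal{H}_e$ via the evaluation vectors $\zeta(\bar{\lambda})=e(\bar{\lambda})\gamma_+(\bar{\lambda})$, substitute the frame identity (\ref{fr}) with $B=e^\sharp/e$ to get $\mathrm{i}\,[e(\lambda)\overline{e(\mu)}-e^\sharp(\lambda)\overline{e^\sharp(\mu)}]/(\lambda-\bar{\mu})$, and conclude by uniqueness of the reproducing kernel that this is the de Branges space (the constant $2\pi$ being absorbed by the paper's normalization $\int_{\mathbb{R}}|f/e|^2\,dx/2\pi$). The extra bookkeeping you include, such as $f_x(\lambda)=(x,\zeta(\bar{\lambda}))_H$, is a correct elaboration of what the paper leaves implicit.
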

\begin{proof}The reproducing kernel of $\mathcal{H}_e$ is
\begin{eqnarray*}\mathrm{K}_1(\lambda, \mu)&=&\langle K(\lambda,\mu)\zeta(\bar{\mu}),\zeta(\bar{\lambda})\rangle=\langle\iota^\dag_\lambda\iota_\mu\zeta(\bar{\mu}), \zeta(\bar{\lambda})\rangle=(\zeta(\bar{\mu}), \zeta(\bar{\lambda}))_H\\
&=&e(\bar{\mu})e^\sharp(\lambda)(\gamma_+ (\bar{\mu}),\gamma_+ (\bar{\lambda}))_H=e(\bar{\mu})e^\sharp(\lambda)\times \mathrm{i}\frac{1-\overline{B(\bar{\lambda})}B(\bar{\mu})}{\bar{\mu}-\lambda}\\
&=&\mathrm{i}\frac{e(\lambda)e^\sharp(\bar{\mu})-e(\bar{\mu})e^\sharp(\lambda)}{\lambda-\bar{\mu}}.
\end{eqnarray*}
This is precisely the reproducing kernel (up to a constant factor) of the de Branges space associated to $e$. See for instance \cite{kaltenbackbranges}. Due to the uniqueness of the reproducing kernel, $\mathcal{H}_e$ is just the de Branges space associated to $e$.
\end{proof}

It is not trivial at all that an entire function $f(\lambda)\in \mathcal{H}_e$ if and only if both $f(\lambda)/e(\lambda)$ and $f^\sharp(\lambda)/e(\lambda)$ are in the Hardy space $H^2(\mathbb{C}_+)$ and for $f,g\in \mathcal{H}_e$.
 \[(f(\cdot), g(\cdot))_{\mathcal{H}_e}=\int_\mathbb{R}\frac{f(x)\overline{g(x)}}{|e(x)|^2}\frac{dx}{2\pi}.\]
Recall that an analytic function $f$ is of bounded type on $\mathbb{C}_+$ if it is the quotient of two bounded analytic functions on $\mathbb{C}_+$\footnote{That $f$ is of bounded type is also equivalent to that $\ln^+|f|$ has a positive harmonic majorant on $\mathbb{C}_+$. Here $\ln^+ x=\max\{0, \ln x\}$.}. It is also necessary and sufficient that for $f$ to belong to $\mathcal{H}_e$, $f$ should satisfy (i) $\int_\mathbb{R}|\frac{f(u)}{e(u)}|^2du<+\infty$ and (ii) both $f(\lambda)/e(\lambda)$ and $f^\sharp(\lambda)/e(\lambda)$ have to be of bounded type on $\mathbb{C}_+$ and of non-positive mean type (defined via the same Eq.~(\ref{mt})). See \cite[Prop.~2.1]{remling2002schrodinger}. As a result, we have the following
\begin{proposition} If $s\in \mathfrak{H}$ is a nontrivial section of the characteristic line bundle $F$ of $T\in \mathfrak{E}_1(H)$ and $\{\lambda_i\}_{i\in \mathbb{N}}$ are zeros of $s$ (counted with multiplicity), then
\begin{equation}\sum_{i=1}^\infty |\Im(\frac{1}{\lambda_i})|<+\infty.\label{e4}\end{equation}
\end{proposition}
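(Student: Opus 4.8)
The plan is to transport the statement into the de Branges picture set up in \S~\ref{deBranges}. I would fix a boundary triplet $(\mathbb{C},\Gamma_+,\Gamma_-)$ for $T$, let $B(\lambda)$ be the associated contractive Weyl function, and let $e$ be a de Branges (Hermite--Biehler) function with $B=e^\sharp/e$. Recall that $\xi(\lambda)=\psi(\lambda)/e^\sharp(\lambda)$ is a global holomorphic frame of $F$ which vanishes nowhere on $\mathbb{C}$, and that writing sections of $F$ in this frame identifies the model space $\mathfrak{H}$ isometrically with the de Branges space $\mathcal{H}_e$. Hence the nontrivial section $s$ corresponds to a nontrivial entire function $f\in\mathcal{H}_e$, and since $\xi$ is nowhere zero the zero set $\{\lambda_i\}$ of $s$ (with multiplicities) is exactly the zero set of $f$. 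Because $\Im(1/\lambda)=-\Im\lambda/|\lambda|^2$, the assertion (\ref{e4}) is the same as $\sum_i |\Im\lambda_i|/|\lambda_i|^2<+\infty$, the sum running over the nonzero zeros of $f$ (a zero at the origin, if it occurs, is the single exceptional term and is discarded).

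First I would record that $e$ is zero-free on the closed upper half-plane: it has no real zeros by the definition of an Hermite--Biehler function, and no zeros in $\mathbb{C}_+$ since there $|e|>|e^\sharp|\ge 0$. Consequently $1/e$ is holomorphic on $\mathbb{C}_+$, so a zero of $f$ lying in $\mathbb{C}_+$ is exactly a zero of the quotient $f/e$ in $\mathbb{C}_+$; likewise, using $f^\sharp(\lambda)=\overline{f(\bar\lambda)}$, a zero $\lambda_i$ of $f$ in $\mathbb{C}_-$ is exactly a zero of $f^\sharp/e$ at the point $\bar\lambda_i\in\mathbb{C}_+$, with the same multiplicity. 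Now I would invoke the characterization recalled just above the statement: both $f/e$ and $f^\sharp/e$ lie in the Hardy space $H^2(\mathbb{C}_+)$. Applying the Blaschke condition for $H^2(\mathbb{C}_+)$ to each of these two functions gives
\[\sum_{\lambda_i\in\mathbb{C}_+}\frac{\Im\lambda_i}{1+|\lambda_i|^2}<+\infty,\qquad \sum_{\lambda_i\in\mathbb{C}_-}\frac{|\Im\lambda_i|}{1+|\lambda_i|^2}<+\infty.\]

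It then remains to pass from these two estimates to (\ref{e4}). The real zeros of $f$ contribute nothing to $\sum_i|\Im(1/\lambda_i)|$. Since $f$ is entire it has only finitely many zeros in the disc $\{|\lambda|\le 1\}$, and for $|\lambda_i|\ge 1$ one has $|\Im\lambda_i|/|\lambda_i|^2\le 2|\Im\lambda_i|/(1+|\lambda_i|^2)$; combining this comparison with the two displayed sums yields the convergence of $\sum_i|\Im\lambda_i|/|\lambda_i|^2$, which is (\ref{e4}).

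I do not anticipate a serious obstacle: the whole argument is the transfer of the classical Blaschke condition for Hardy spaces through the de Branges identification of \S~\ref{deBranges}. The only points needing a line of care are the nowhere-vanishing of the frame $\xi$ (already established in \S~\ref{deBranges}), the zero-freeness of $e$ on the closed upper half-plane, and the harmless replacement of $1+|\lambda_i|^2$ by $|\lambda_i|^2$ away from the origin. I would also note that (\ref{e4}) is independent of the choice of boundary triplet and of the de Branges function $e$, since it involves only the intrinsically defined zero set of $s$.
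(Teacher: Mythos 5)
Your argument is correct, but it is not the route the paper takes: the paper's entire proof of this proposition is a one-line citation, calling the statement ``a geometric reformulation of part of Lemma 2.3'' of Kaltenb\"ack--Woracek, whereas you supply a self-contained derivation. Your two inputs are exactly the facts recorded in \S\S~\ref{deBranges}: the nowhere-vanishing frame $\xi$ identifies $\mathfrak{H}$ with $\mathcal{H}_e$ (so the zero set of $s$, with multiplicities, is that of the corresponding $f\in\mathcal{H}_e$), and the characterization that $f\in\mathcal{H}_e$ forces $f/e$ and $f^\sharp/e$ to lie in $H^2(\mathbb{C}_+)$. From there the classical Blaschke condition $\sum \Im z_k/(1+|z_k|^2)<\infty$ for zeros of $H^2(\mathbb{C}_+)$ functions, applied to $f/e$ for the zeros of $f$ in $\mathbb{C}_+$ and to $f^\sharp/e$ for the reflected zeros from $\mathbb{C}_-$ (using that the Hermite--Biehler function $e$ is zero-free on $\overline{\mathbb{C}_+}$, so no cancellation is lost), gives the two half-plane sums, and your comparison $1+|\lambda_i|^2\le 2|\lambda_i|^2$ for $|\lambda_i|\ge 1$ together with the finiteness of zeros in the unit disc and the vanishing contribution of real zeros yields (\ref{e4}). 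What the paper's citation buys is brevity and the full strength of the Kaltenb\"ack--Woracek lemma; what your argument buys is transparency, at the price of leaning on the $H^2$-quotient characterization of $\mathcal{H}_e$, which the paper itself flags as nontrivial and also quotes without proof --- so the logical dependence on the literature is comparable, just routed through a different standard fact. Your closing remark that (\ref{e4}) is independent of the choice of boundary triplet and of $e$ is apt, since the zero divisor of $s$ is intrinsic.
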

\begin{proof}This is just a geometric reformulation of part of Lemma 2.3 in \cite{kaltenbackbranges}.
\end{proof}
\emph{Remark}. As the Blaschke condition, (\ref{e4}) also implies that the zeros of $s$ are located near the real line. See \cite[Lemma 2.2]{kaltenbackbranges}.

If $B_1(\lambda)$ and $B_2(\lambda)$ are two contractive Weyl functions of $T\in \mathfrak{E}_1(H)$ and $e_1(\lambda)$, $e_2(\lambda)$ the corresponding de Branges functions, then it's possible that $\mathcal{H}_{e_1}=\mathcal{H}_{e_2}$. Actually we have
\begin{proposition} $\mathcal{H}_{e_1}$ is equal to $\mathcal{H}_{e_2}$ as reproducing kernel Hilbert spaces if and only if
\[e_2(\lambda)=ae_1(\lambda)+be_1^\sharp(\lambda),\quad \forall \lambda\in \mathbb{C},\]
where $a,b$ are complex constants such that $|a|^2-|b|^2=1$.
\end{proposition}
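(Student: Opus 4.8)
We rely on the standard fact that a reproducing kernel Hilbert space of functions is uniquely determined by, and conversely determines, its reproducing kernel; hence $\mathcal{H}_{e_1}=\mathcal{H}_{e_2}$ as reproducing kernel Hilbert spaces precisely when the two kernels coincide identically. From the explicit formula for the de Branges kernel computed above, the reproducing kernel of $\mathcal{H}_{e_j}$ is
\[
\mathrm{K}_j(\lambda,\mu)=\mathrm{i}\,\frac{e_j(\lambda)e_j^\sharp(\bar\mu)-e_j(\bar\mu)e_j^\sharp(\lambda)}{\lambda-\bar\mu}=\frac{\mathrm{i}}{\lambda-\bar\mu}\det\begin{pmatrix}e_j(\lambda)&e_j^\sharp(\lambda)\\ e_j(\bar\mu)&e_j^\sharp(\bar\mu)\end{pmatrix},
\]
so the whole statement reduces to comparing these $2\times 2$ determinant numerators.

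For the sufficiency I would argue directly. If $e_2=ae_1+be_1^\sharp$, then applying the conjugate-linear involution $\sharp$ gives $e_2^\sharp=\bar b e_1+\bar a e_1^\sharp$, i.e. $\begin{pmatrix}e_2\\ e_2^\sharp\end{pmatrix}=M\begin{pmatrix}e_1\\ e_1^\sharp\end{pmatrix}$ with $M=\begin{pmatrix}a&b\\ \bar b&\bar a\end{pmatrix}$. Substituting this into the determinant above and using multiplicativity of $\det$ shows the numerator of $\mathrm{K}_2$ equals $(\det M)$ times that of $\mathrm{K}_1$, and $\det M=|a|^2-|b|^2=1$; hence $\mathrm{K}_1\equiv\mathrm{K}_2$ and the two spaces coincide.

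For the necessity, starting from $\mathrm{K}_1\equiv\mathrm{K}_2$ I would clear denominators and let $w:=\bar\mu$ range over all of $\mathbb{C}$, obtaining the two-variable identity of entire functions
\[
e_1(\lambda)e_1^\sharp(w)-e_1(w)e_1^\sharp(\lambda)=e_2(\lambda)e_2^\sharp(w)-e_2(w)e_2^\sharp(\lambda),\qquad \lambda,w\in\mathbb{C}.
\]
Since $T$ is transcendental, each $B_j=e_j^\sharp/e_j$ is nonconstant, so the $\mathbb{CP}^1$-valued map $w\mapsto[e_1(w):e_1^\sharp(w)]$ is nonconstant and one may pick $w_1,w_2$ for which $(e_1(w_1),e_1^\sharp(w_1))$ and $(e_1(w_2),e_1^\sharp(w_2))$ are linearly independent. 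Specializing the identity to $w=w_1$ and $w=w_2$ yields, for each $k$, an equality of entire functions $\alpha_k e_1+\beta_k e_1^\sharp=\alpha_k' e_2+\beta_k' e_2^\sharp$ with constant coefficients, whose matrix $(\alpha_k,\beta_k)_{k=1,2}$ is invertible by the choice of $w_1,w_2$; solving gives $\begin{pmatrix}e_1\\ e_1^\sharp\end{pmatrix}=C\begin{pmatrix}e_2\\ e_2^\sharp\end{pmatrix}$ for a constant $2\times 2$ matrix $C$. Applying $\sharp$ to the first row and using that $e_2,e_2^\sharp$ are linearly independent (again by nonconstancy of $B_2$) forces $C=\begin{pmatrix}p&q\\ \bar q&\bar p\end{pmatrix}$, so its inverse has the same shape and $e_2=ae_1+be_1^\sharp$ for suitable constants $a,b$. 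Feeding this back into the kernel identity and invoking the sufficiency computation shows the numerators of $\mathrm{K}_1$ and $\mathrm{K}_2$ differ by the factor $|a|^2-|b|^2$, which must therefore equal $1$.

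Honestly this argument has no serious obstacle: once the equality of spaces is turned into equality of kernels, everything is elementary bookkeeping with $2\times2$ determinants. The only two points deserving a line of care are the standard equivalence between equality of reproducing kernel Hilbert spaces and equality of their kernels, and the availability of the sample points $w_1,w_2$ in the necessity step, which rests on the transcendentality of $T$ (equivalently, the nonconstancy of its contractive Weyl functions) assumed throughout the paper.
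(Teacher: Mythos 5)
Your argument is correct and follows the same route as the paper, which simply invokes the uniqueness of the reproducing kernel and leaves the remaining bookkeeping implicit; your determinant computation for sufficiency and the two-point specialization for necessity are exactly the details that "direct consequence" is hiding. The only hypotheses you use beyond that (nonconstancy of the $B_j$, hence linear independence of $e_j$ and $e_j^\sharp$) are indeed available in the paper's setting, so the proposal is complete.
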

\begin{proof}This is a direct consequence of the uniqueness of reproducing kernel.
\end{proof}
\emph{Remark}. Thus $(e_1, e_1^\sharp)$ and $(e_2, e_2^\sharp)$ are related via
\[\left(
    \begin{array}{c}
      e_2 \\
      e_2^\sharp \\
    \end{array}
  \right)=
\left(
    \begin{array}{cc}
      a & b \\
      \bar{b} & \bar{a} \\
    \end{array}
  \right)\left(
           \begin{array}{c}
             e_1 \\
             e_1^\sharp \\
           \end{array}
         \right),
\]where $\left(
    \begin{array}{cc}
      a & b \\
      \bar{b} & \bar{a} \\
    \end{array}
  \right)\in \mathbb{SU}(1,1)$. The appearance of $\mathbb{SU}(1,1)$ here is not strange since it preserves the (strong) symplectic structure $[\cdot, \cdot]_1$ on $\mathbb{C}^2$.

Since de Branges' early work in 1960s, the theory of de Branges spaces has played a role in several different areas. Our emphasis here is that this theory is naturally included in spectral theory of entire operators.
\subsection{de Branges-Rovnyak model}\label{br}
 By the Identity Theorem, $\mathbb{C}_+$ is of course a determining subset of the characteristic line bundle $F$. Due to Lemma~\ref{p1}, $r^*_{\mathbb{C}_+}(\mathfrak{H})$ together with multiplication by the independent variable provides another functional model for $T$.

Given the boundary triplet $(\mathbb{C}, \Gamma_+, \Gamma_-)$, we can use $\gamma_-(\bar{\lambda})$ to trivialize $F^\dag$ over $\mathbb{C}_+$. Note that $\gamma_+(\bar{\lambda})=B(\bar{\lambda})\gamma_-(\bar{\lambda})$. The model space $\mathfrak{H}$ is now interpreted as a reproducing kernel Hilbert space $\mathcal{H}_B$ of analytic functions on $\mathbb{C}_+$.

 \begin{proposition} $\mathcal{H}_B$ is precisely the de Branges-Rovnyak space associated to $B(\lambda)$.
 \end{proposition}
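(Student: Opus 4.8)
The plan is to follow the blueprint of the de Branges case in \S~\ref{deBranges}: write down the scalar reproducing kernel of $\mathcal{H}_B$ explicitly, recognise it as the de Branges--Rovnyak kernel of $B$, and then appeal to the uniqueness of the reproducing kernel. The matching of the model operators costs nothing extra: since $\mathbb{C}_+$ is a determining subset of $F$, Lemmas~\ref{l1} and~\ref{p1} already tell us that restriction to $\mathbb{C}_+$ is a unitary map carrying $\mathfrak{T}$ to multiplication by the independent variable in $\mathcal{H}_B$. So everything reduces to identifying $\mathcal{H}_B$, together with its inner product, as a space of analytic functions on $\mathbb{C}_+$.

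First I would make the trivialisation explicit. For $\lambda\in\mathbb{C}_+$ one has $\bar\lambda\in\mathbb{C}_-$, and since $\sigma(T_0)\subset\mathbb{C}_+$ the frame $\gamma_-(\bar\lambda)$ of $E$ is holomorphic and nonvanishing there, hence trivialises $F^\dag$ over $\mathbb{C}_+$. From $\gamma_+(\bar\lambda)=B(\bar\lambda)\gamma_-(\bar\lambda)$, the conjugate-linearity of $\langle\cdot,\cdot\rangle$ in its second slot, the normalisation (\ref{dual}), and $B(\lambda)\cdot\overline{B(\bar\lambda)}\equiv1$, one gets $\langle\psi(\lambda),\gamma_-(\bar\lambda)\rangle=B(\lambda)$, so the frame of $F$ over $\mathbb{C}_+$ dual to $\gamma_-(\bar\lambda)$ is $\psi(\lambda)/B(\lambda)$. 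In these frames the isomorphism $\mathfrak{H}\to\mathcal{H}_B$ sends $\hat x$ to $f_x(\lambda):=\langle\hat x(\lambda),\gamma_-(\bar\lambda)\rangle=(x,\gamma_-(\bar\lambda))_H$, which is indeed holomorphic on $\mathbb{C}_+$.

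Next I would compute the reproducing kernel. Writing $K(\lambda,\mu)=\iota_\lambda^\dag\iota_\mu$ as in \S\S~\ref{model}, the scalar kernel of $\mathcal{H}_B$ is
\[\mathrm{K}_B(\lambda,\mu)=\langle K(\lambda,\mu)\gamma_-(\bar\mu),\gamma_-(\bar\lambda)\rangle=(\gamma_-(\bar\mu),\gamma_-(\bar\lambda))_H.\]
To evaluate the last inner product I would apply the abstract Green's formula (\ref{green}) to $\hat\gamma_-(\bar\mu)$ and $\hat\gamma_-(\bar\lambda)$, exactly as in the proof of Lemma~\ref{metric} but now using $\Gamma_-\hat\gamma_-\equiv 1$ and $\Gamma_+\hat\gamma_-=1/B$, and then simplify with $B(\lambda)\cdot\overline{B(\bar\lambda)}\equiv1$. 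This should give
\[\mathrm{K}_B(\lambda,\mu)=\mathrm{i}\,\frac{1-\overline{B(\mu)}\,B(\lambda)}{\lambda-\bar\mu},\]
which, with the normalisation of $H^2(\mathbb{C}_+)$ chosen so as to absorb the usual factor of $2\pi$, is precisely the reproducing kernel of the de Branges--Rovnyak space $\mathcal{H}(B)$. By uniqueness of the reproducing kernel, $\mathcal{H}_B=\mathcal{H}(B)$, which is the assertion; and since $B$ is an inner function one may add the familiar fact that $\mathcal{H}(B)$ is then the model subspace $H^2(\mathbb{C}_+)\ominus B\,H^2(\mathbb{C}_+)$.

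The whole argument is routine bookkeeping; the three places demanding care are the conjugate-linearity of the duality pairing $\langle\cdot,\cdot\rangle$ (this is what makes $1/B$ rather than $B$ the coordinate of the dual frame), the substitutions $\lambda\mapsto\bar\mu$ and $\mu\mapsto\bar\lambda$ needed to transfer the Green's-formula computation of Lemma~\ref{metric} from $\gamma_+$ to $\gamma_-$, and the explicit choice of normalisation for $H^2(\mathbb{C}_+)$, so that the final statement is an equality of Hilbert spaces rather than merely of the underlying spaces of holomorphic functions. No ingredient beyond \S~\ref{line} and Lemmas~\ref{l1}--\ref{p1} is needed.
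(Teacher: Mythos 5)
Your proposal is correct and follows essentially the same route as the paper: trivialize $F^\dag$ over $\mathbb{C}_+$ by $\gamma_-(\bar\lambda)$, compute the scalar reproducing kernel $\langle K(\lambda,\mu)\gamma_-(\bar\mu),\gamma_-(\bar\lambda)\rangle=\mathrm{i}\frac{1-B(\lambda)\overline{B(\mu)}}{\lambda-\bar\mu}$, and conclude by uniqueness of the reproducing kernel. The only cosmetic difference is that you evaluate $(\gamma_-(\bar\mu),\gamma_-(\bar\lambda))_H$ by applying the Green's formula (\ref{green}) directly to $\hat\gamma_-$, whereas the paper factors through $\gamma_+$ and Eq.~(\ref{fr}) and then simplifies with $B(\lambda)\overline{B(\bar\lambda)}\equiv 1$; both computations are identical in substance.
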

 \begin{proof}
 The reproducing kernel of $\mathcal{H}_B$ is
\begin{eqnarray*}\mathrm{K}_2(\lambda,\mu)&=&\langle K(\lambda,\mu)\gamma_-(\bar{\mu}),\gamma_-(\bar{\lambda})\rangle=\frac{1}{B(\bar{\mu})\cdot \overline{B(\bar{\lambda})}}\times \mathrm{i}\frac{1-\overline{B(\bar{\lambda})}B(\bar{\mu})}{\bar{\mu}-\lambda}\\
&=&\mathrm{i}\frac{1-B(\lambda)\overline{B(\mu)}}{\lambda-\bar{\mu}}.
\end{eqnarray*}
This is precisely the reproducing kernel of the de Branges-Rovnyak space associated to $B(\lambda)$ up to a constant factor. See \cite{ross2013theorem} for more technical details.
\end{proof}

As in Eq.~(\ref{dual}), we can define the conjugate dual frame $\psi_-(\lambda)$ for $\gamma_-(\bar{\lambda})$. Since the two frames $\xi(\lambda)$ and $\psi_-(\lambda)$ are related via
\[\xi(\lambda)=\frac{\psi(\lambda)}{e^\sharp(\lambda)}=\frac{\psi_-(\lambda)}{\overline{B(\bar{\lambda})}\times e^\sharp(\lambda)}=\frac{B(\lambda)\times \psi_-(\lambda)}{e^\sharp(\lambda)}=\frac{\psi_-(\lambda)}{e(\lambda)}\] over $\mathbb{C}_+$, we thus naturally have $(\mathcal{H}_e)|_{\mathbb{C}_+}=e(\cdot)\times \mathcal{H}_B$, which is a well-known result. Another well-known result is that $\mathcal{H}_B$ is a closed subspace of the Hardy space $H^2(\mathbb{C}_+)$, i.e. $\mathcal{H}_B=(BH^2(\mathbb{C}_+))^\bot$.

It should also be pointed out that the de Branges-Rovnyak model actually can be defined on $\mathbb{C}\backslash \sigma(T_{\infty})$ and each element in $\mathcal{H}_B$ is analytic on $\mathbb{C}\backslash \sigma(T_{\infty})$ rather than only on $\mathbb{C}_+$. This fact will be used in the next section.
\subsection{Herglotz model}
This time we take $N=\mathbb{C}\backslash \mathbb{R}$. We use a boundary triplet $(\mathbb{C}, \Gamma_0, \Gamma_1)$ and let $M(\lambda)$ be the corresponding Weyl function. As in \S\S~\ref{bvp}, for each $\lambda\in \mathbb{C}\backslash \mathbb{R}$ there is a unique $\gamma(\lambda)\in E_\lambda$ such that
\[\hat{\gamma}(\lambda):=(\gamma(\lambda), \lambda \gamma(\lambda))\in W_\lambda\subset \mathbb{H}\]
and $\Gamma_0\hat{\gamma}(\lambda)=1$. Note that $\Gamma_1\gamma(\lambda)=M(\lambda)$ and thus $\gamma(\lambda)$ is a holomorphic frame of $E$ over $\mathbb{C}\backslash \mathbb{R}$. In particular, due to the abstract Green's formula
\begin{equation}(\gamma(\lambda),\gamma(\mu))_H=\frac{M(\lambda)-M(\bar{\mu})}{\lambda-\bar{\mu}}.\label{herg}\end{equation}
We use $\gamma(\bar{\lambda})$ to trivialize $F^\dag$ over $\mathbb{C}\backslash \mathbb{R}$ and then $r^*_{\mathbb{C}\backslash \mathbb{R}}(\mathfrak{H})$ is identified with a reproducing kernel Hilbert space $\mathcal{H}_M$ consisting of certain analytic functions on $\mathbb{C}\backslash \mathbb{R}$.
\begin{proposition}$\mathcal{H}_M$ is precisely the Herglotz space associated to $M(\lambda)$.
\end{proposition}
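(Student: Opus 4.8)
The plan is to mirror the proofs already given for the de Branges model in \S\S~\ref{deBranges} and the de Branges--Rovnyak model in \S\S~\ref{br}: compute the reproducing kernel of $\mathcal{H}_M$ in the chosen trivialization, recognize it as the Herglotz kernel, and then invoke the uniqueness of the reproducing kernel. The ingredients are all in place, so the argument is short.

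First I would unwind the definitions. Since $\mathbb{C}\backslash\mathbb{R}$ is a determining subset of $F$ (Identity Theorem), Lemma~\ref{l1} identifies $\mathcal{H}_M=r^*_{\mathbb{C}\backslash\mathbb{R}}(\mathfrak{H})$ isometrically with $\mathfrak{H}$, and trivializing $F^\dag$ over $\mathbb{C}\backslash\mathbb{R}$ by the holomorphic frame $\gamma(\bar\lambda)$ turns the reproducing kernel $K(\lambda,\mu)=\iota_\lambda^\dag\iota_\mu$ of \S\S~\ref{model} into the scalar kernel
\[\mathrm{K}_3(\lambda,\mu)=\langle K(\lambda,\mu)\gamma(\bar\mu),\gamma(\bar\lambda)\rangle=\langle\iota_\lambda^\dag\iota_\mu\gamma(\bar\mu),\gamma(\bar\lambda)\rangle=(\gamma(\bar\mu),\gamma(\bar\lambda))_H,\]
exactly as in the computation of $\mathrm{K}_2$. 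Applying the abstract Green's formula in the form (\ref{herg}) with its $\lambda,\mu$ replaced by $\bar\mu,\bar\lambda$ gives $(\gamma(\bar\mu),\gamma(\bar\lambda))_H=\frac{M(\bar\mu)-M(\lambda)}{\bar\mu-\lambda}$, and since $M$ is a meromorphic Herglotz function with $M(\bar\mu)=\overline{M(\mu)}$ this becomes
\[\mathrm{K}_3(\lambda,\mu)=\frac{M(\lambda)-\overline{M(\mu)}}{\lambda-\bar\mu},\]
which is precisely (up to the usual harmless positive constant) the reproducing kernel of the Herglotz space associated to $M(\lambda)$.

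It then follows from the uniqueness of the reproducing kernel that $\mathcal{H}_M$ coincides with the Herglotz space associated to $M$, both as a set of functions analytic on $\mathbb{C}\backslash\mathbb{R}$ and as a Hilbert space; moreover, by Lemma~\ref{p1} the restriction map $r^*_{\mathbb{C}\backslash\mathbb{R}}$ intertwines the model operator on $\mathfrak{H}$ with multiplication by the independent variable on $\mathcal{H}_M$, so this recovers the classical realization of $T$ on the Herglotz space. I do not expect a genuine obstacle: the only points demanding care are the bookkeeping of complex conjugations (the trivializing frame is $\gamma(\bar\lambda)$, not $\gamma(\lambda)$, so one must check the arguments land in the correct slots of (\ref{herg})) and matching the chosen normalization of the ``Herglotz space'' to the kernel $\tfrac{M(\lambda)-\overline{M(\mu)}}{\lambda-\bar\mu}$, just as was tacitly done in the de Branges and de Branges--Rovnyak cases.
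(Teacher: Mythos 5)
Your proposal is correct and follows essentially the same route as the paper: compute $\mathrm{K}_3(\lambda,\mu)=\langle K(\lambda,\mu)\gamma(\bar\mu),\gamma(\bar\lambda)\rangle=(\gamma(\bar\mu),\gamma(\bar\lambda))_H$ via Eq.~(\ref{herg}), identify it with the Herglotz kernel (the paper writes it as $\frac{M(\lambda)-M(\bar\mu)}{\lambda-\bar\mu}$, which equals your $\frac{M(\lambda)-\overline{M(\mu)}}{\lambda-\bar\mu}$), and conclude by uniqueness of the reproducing kernel. The extra remarks about Lemma~\ref{l1}, Lemma~\ref{p1} and the conjugation bookkeeping are consistent with, and implicit in, the paper's argument.
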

\begin{proof}The reproducing kernel of $\mathcal{H}_M$ is
\begin{eqnarray*}\mathrm{K}_3(\lambda,\mu)=\langle K(\lambda,\mu)\gamma(\bar{\mu}),\gamma(\bar{\lambda})\rangle=(\gamma(\bar{\mu}),\gamma(\bar{\lambda}))_H=\frac{M(\lambda)-M(\bar{\mu})}{\lambda-\bar{\mu}}.\end{eqnarray*}
This is just the reproducing kernel of the Herglotz space associated to $M(\lambda)$. See \cite{ross2013theorem} for more details.
\end{proof}
Let $(\mathbb{C}, \Gamma_+, \Gamma_-)$ be the boundary triplet associated to $(\mathbb{C}, \Gamma_0, \Gamma_1)$ via Cayley transform and $B(\lambda)$ the corresponding contractive Weyl function. If $e(\lambda)$ is a de Branges function for $B(\lambda)$ and $\zeta(\lambda)$ the corresponding frame in \S\S~\ref{deBranges}, then
\[(\Gamma_0, \Gamma_1)(\zeta(\lambda),\lambda\zeta(\lambda))=(\frac{e(\lambda)-e^\sharp(\lambda)}{\mathrm{i}\sqrt{2}}, \frac{e(\lambda)+e^\sharp(\lambda)}{\sqrt{2}}).\]
From this we know that on $\mathbb{C}\backslash \mathbb{R}$,
\[\zeta(\lambda)=\frac{e(\lambda)-e^\sharp(\lambda)}{\mathrm{i}\sqrt{2}}\gamma(\lambda).\]
This implies easily that the two spaces $\mathcal{H}_e$ and $\mathcal{H}_M$ are related via
\[(\mathcal{H}_e)|_{\mathbb{C}\backslash \mathbb{R}}=\frac{e(\lambda)-e^\sharp(\lambda)}{\mathrm{i}\sqrt{2}}\times \mathcal{H}_M,\]
and consequently by the argument in \S\S~\ref{br},
\[\mathcal{H}_B=\frac{1-B(\lambda)}{i\sqrt{2}}\times (\mathcal{H}_M)|_{\mathbb{C}_+}.\]
\subsection{Kempf-Martin model}
Now let $N$ be the real line $\mathbb{R}$. If $B(\lambda)$ is the contractive Weyl function of $T$ w.r.t. a boundary triplet $(\mathbb{C}, \Gamma_+, \Gamma_-)$ and $e(\lambda)$ is a de Branges function for $B(\lambda)$, as in \S\S~\ref{deBranges} we have the global frames $\zeta(\lambda)$ and $\xi(\lambda)$ for $E$ and $F$ respectively. Set $\tilde{\zeta}(\lambda)=\zeta(\lambda)/|\zeta(\lambda)|_\lambda$. On the real line, $\tilde{\zeta}(u)$ for $u\in \mathbb{R}$ is a frame for $E|_{\mathbb{R}}$, in terms of which $\mathfrak{H}|_{\mathbb{R}}$ is identified with a reproducing kernel Hilbert space $\mathcal{K}$ consisting of certain functions on the real line.
\begin{proposition}The reproducing kernel of $\mathcal{K}$ is
\[\mathrm{K}_4(u,v)=\frac{\mathrm{K}_1(u,v)}{N(u)\cdot N(v)},\]
where $N(u)=\sqrt{\mathrm{K}_1(u,u)}$. In particular, $(\mathcal{H}_e)|_{\mathbb{R}}= N(u)\times \mathcal{K}$.
\end{proposition}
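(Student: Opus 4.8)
The plan is to obtain $\mathcal{K}$ from $\mathcal{H}_e$ by two bookkeeping operations: restrict to the real line, and then replace the global frame $\xi(\lambda)$ of $F$ by the frame dual to the normalized $\tilde\zeta(u)$. First I would observe that $\mathbb{R}$ is a determining subset of $F$: in the trivialization by $\xi$ a holomorphic section of $F$ is an entire function, and an entire function vanishing on $\mathbb{R}$ vanishes identically by the Identity Theorem. Hence Lemma~\ref{l1} applies, so $\mathrm{K}_1|_{\mathbb{R}}$ is a reproducing kernel on $F|_{\mathbb{R}}$ and the restriction map is unitary; it only remains to see how the kernel changes under the change of local frame from $\xi(u)$ to the frame paired to $\tilde\zeta(u)$.

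The computation has two short parts. First, identify $N(u)$ geometrically: since $K(\lambda,\mu)=\iota_\lambda^\dag\iota_\mu$ and, on $\mathbb{R}$, $\iota_u$ is simply the inclusion $E_u=F^\dag_u\hookrightarrow H$, we get $\mathrm{K}_1(u,u)=\langle K(u,u)\zeta(u),\zeta(u)\rangle=(\zeta(u),\zeta(u))_H=|\zeta(u)|_u^2$, which is strictly positive because $e$ has no real zeros and $\gamma_+(u)\neq0$. Thus $N(u)=\sqrt{\mathrm{K}_1(u,u)}=|\zeta(u)|_u$, so $\tilde\zeta(u)=\zeta(u)/N(u)$ is exactly the unit-length frame used to define $\mathcal{K}$. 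Second, compute the frame of $F|_{\mathbb{R}}$ dual to $\tilde\zeta(u)$: from $e^\sharp(\lambda)=\overline{e(\bar\lambda)}$ together with $\langle\psi(\lambda),\gamma_+(\bar\lambda)\rangle=1$ one gets $\langle\xi(\lambda),\zeta(\bar\lambda)\rangle=1$, which restricted to $\mathbb{R}$ reads $\langle\xi(u),\zeta(u)\rangle=1$; hence the frame of $F|_{\mathbb{R}}$ paired to $1$ with $\tilde\zeta(u)$ is $\tilde\xi(u):=N(u)\xi(u)$.

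With these in hand the conclusion is immediate. A section $s=\hat x\in\mathfrak{H}$ with de Branges representative $f$ (so $s=f\,\xi$) satisfies $s|_{\mathbb{R}}=(f|_{\mathbb{R}}/N)\,\tilde\xi$, so its representative in $\mathcal{K}$ is $g=f|_{\mathbb{R}}/N$ with $\|g\|_{\mathcal{K}}=\|f\|_{\mathcal{H}_e}$. Then, either from the transformation law for reproducing kernels under multiplication by the real nonvanishing weight $1/N$, or directly from $\mathrm{K}_4(u,v)=\langle K(u,v)\tilde\zeta(v),\tilde\zeta(u)\rangle=\tfrac{1}{N(u)N(v)}\langle K(u,v)\zeta(v),\zeta(u)\rangle=\tfrac{\mathrm{K}_1(u,v)}{N(u)N(v)}$, one gets the claimed kernel. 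The ``in particular'' follows because $g\mapsto N\cdot g$ inverts the restriction map composed with the change of frame, so $(\mathcal{H}_e)|_{\mathbb{R}}=N\cdot\mathcal{K}$.

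I do not expect a genuine obstacle: the statement is essentially the general fact that normalizing a reproducing kernel along its diagonal rescales it by $1/\sqrt{K(x,x)}$ in each slot, transported through the trivializations of $F$. The only point requiring care is the conjugate-linear conventions — the pairing $\langle\cdot,\cdot\rangle$ is conjugate-linear in the $F^\dag$-slot and the Hermitian structures on $E$ and $F$ carry conjugations — and not conflating the frame $\tilde\zeta$ of $E|_{\mathbb{R}}=F^\dag|_{\mathbb{R}}$ with its dual frame $\tilde\xi$ of $F|_{\mathbb{R}}$; since $N$ is real-valued these conjugations do not affect the final formula, but they must be tracked to verify it.
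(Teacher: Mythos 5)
Your proposal is correct and follows essentially the same route as the paper: the paper's (terse) proof consists precisely of the identification $\mathrm{K}_1(u,u)=(\zeta(u),\zeta(u))_H=|\zeta(u)|_u^2$, so that $N(u)=|\zeta(u)|_u$, combined with the kernel computation from the de Branges subsection under the frame change $\zeta\mapsto\tilde\zeta=\zeta/N$. Your additional checks (the determining property of $\mathbb{R}$ via the Identity Theorem, the dual-frame bookkeeping $\langle\xi(u),\zeta(u)\rangle=1$) just make explicit what the paper leaves implicit.
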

\begin{proof}Since $|\zeta(u)|_u^2=(\zeta(u),\zeta(u))_H=\mathrm{K}_1(u,u)$, the conclusion is clear from the argument in \S\S~\ref{deBranges}.
\end{proof}

 $\mathcal{K}$ was first introduced in \cite{martin2018function} to define a notion of time-varying bandlimits in sampling theory. It was constructed from its reproducing kernel, which itself stems from a prescription of a bandlimit pair. This model was shown to be equivalent to the de Branges-Rovnyak model \cite{martin2018function} and later in \cite{marco2022spaces} an alternative definition of $\mathcal{K}$ in terms of the de Branges function was given and the name Kempf-Martin space was coined. In \cite{marco2022spaces} the reproducing kernel was the formula (6.20) .

 Since $\gamma_\pm(\lambda)$ are both well-defined on the real line, they can also be normalized to replace $\tilde{\zeta}(u)$ in the above argument and produce new models on the real line. Clearly, all these models differ from each other only by a unitary gauge transformation on $F|_\mathbb{R}$. We won't spell the details out any more.
 \subsection{A discrete model}
 Let $B(\lambda)$ be a contractive Weyl function for $T\in \mathfrak{E}_1(H)$ w.r.t. a fixed boundary triplet $(\mathbb{C}, \Gamma_+, \Gamma_-)$. Then $T_1$ is a self-adjoint extension of $T$, each eigenvalue of which is simple. W.l.g., we assume that $T_1$ is an operator. We choose $N=\sigma(T_1)$. Since there are many nontrivial holomorphic sections of $F$ vanishing on $\sigma(T_1)$, $\sigma(T_1)$ is in no way a determining subset of $F$. Let $\gamma_-(\bar{\lambda})$ be the frame of $F^\dag$ and $\psi_-(\lambda)$ the conjugate dual frame in $F$. Then $\{\widehat{\gamma_-(\lambda_j)}\}_{\lambda_j\in \sigma(T_1)}$ is a complete orthogonal subset in $\mathfrak{H}$. In particular, on one side, as in \S\S~\ref{br} we have
 \begin{eqnarray*}\|\widehat{\gamma_-(\lambda_j)}\|_{\mathfrak{H}}^2&=&(\gamma_-(\lambda_j), \gamma_-(\lambda_j))_{H}=\lim_{\varepsilon\rightarrow 0+}(\gamma_-(\lambda_j+\mathrm{i}\varepsilon), \gamma_-(\lambda_j+\mathrm{i}\varepsilon))_{H}\\
 &=&\lim_{\varepsilon\rightarrow 0+}\frac{1-|B(\lambda_j+\mathrm{i}\varepsilon)|^2}{2\varepsilon}=\theta'(\lambda_j),
 \end{eqnarray*}
 where $\theta(u)$ is the phase function for $B(\lambda)$ and Prop.~\ref{ka} is used. On the other side, for any $s\in \mathfrak{H}$
 \[(s(\cdot), \widehat{\gamma_-(\lambda_j)}(\cdot))_{\mathfrak{H}}=(s(\cdot), K(\cdot, \lambda_j)\gamma_-(\lambda_j))_{\mathfrak{H}}=\langle s(\lambda_j), \gamma_-(\lambda_j)\rangle.\]
 Due to the spectral resolution of $T_1$, for any $s\in \mathfrak{H}$, we ought to have
 \[s(\lambda)=\sum_{\lambda_j\in \sigma(T_1)}\langle s(\lambda_j), \gamma_-(\lambda_j)\rangle\frac{\widehat{\gamma_-(\lambda_j)}(\lambda)}{\theta'(\lambda_j)}.\]
 Of course this is essentially only a geometric reformulation of the well-known sampling formula in de Branges spaces. Thus if $s\in \mathfrak{H}$ is vanishing on $\sigma(T_1)$, it is necessary that $s(\lambda)\equiv 0$. From this variants of Lemma \ref{l1} and Lemma \ref{p1} hold and $\mathfrak{H}|_{\sigma(T_1)}$ is also a model space for $T$. The following proposition is clear.
 \begin{proposition}In terms of the frame $\gamma_-(\lambda)|_{\sigma(T_1)}$, $\mathfrak{H}|_{\sigma(T_1)}$ is identified with the reproducing kernel Hilbert space $L^2(\Gamma)$ where $\Gamma$ is the discrete measure
 \[\Gamma=\sum_{\lambda_j\in \sigma(T_1)}\frac{\delta_{\lambda_j}}{\theta'(\lambda_j)}\]
 and $\delta_{\lambda_j}$ is the Dirac measure at $\lambda_j$. The reproducing kernel of $L^2(\Gamma)$ is
 \[\mathrm{K}_5(\lambda_i, \lambda_j)=\theta'(\lambda_i)\times \delta_{ij}.\]
 \end{proposition}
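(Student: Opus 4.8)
The plan is to upgrade the orthogonal expansion and sampling formula obtained just above into an explicit unitary identification, and then read off the kernel. First I would isolate from the sampling formula
\[s(\lambda)=\sum_{\lambda_j\in\sigma(T_1)}\langle s(\lambda_j),\gamma_-(\lambda_j)\rangle\frac{\widehat{\gamma_-(\lambda_j)}(\lambda)}{\theta'(\lambda_j)},\qquad s\in\mathfrak{H},\]
the two facts it contains: any $s\in\mathfrak{H}$ vanishing on $\sigma(T_1)$ is identically zero, so the restriction map $r^*_{\sigma(T_1)}$ is injective; and, writing a section in the frame $\gamma_-(\lambda)|_{\sigma(T_1)}$ amounts to attaching to $s$ the scalar data $\tilde s(\lambda_j):=\langle s(\lambda_j),\gamma_-(\lambda_j)\rangle$, in terms of which the formula reads $s=\sum_j\theta'(\lambda_j)^{-1}\tilde s(\lambda_j)\,\widehat{\gamma_-(\lambda_j)}$. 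The injectivity is precisely the substitute for the determining-subset hypothesis of Lemmas \ref{l1} and \ref{p1}: $\sigma(T_1)$ is not determining for the whole bundle $F$, but it is determining for the model subspace $\mathfrak{H}$, and I would state the corresponding variants of those two lemmas explicitly before invoking them.

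Next I would compute the norm. Since $T_1$ is self-adjoint with purely discrete, simple spectrum (see \S\S~\ref{bvp}), and $\|\widehat{\gamma_-(\lambda_j)}\|^2_{\mathfrak{H}}=\|\gamma_-(\lambda_j)\|^2_H=\theta'(\lambda_j)$ as already shown, the vectors $e_j:=\gamma_-(\lambda_j)/\sqrt{\theta'(\lambda_j)}$ form an orthonormal basis of $H$. Because $\langle\hat x(\lambda_j),\gamma_-(\lambda_j)\rangle=(x,\gamma_-(\lambda_j))_H$, the composite map $\mathfrak{H}\cong H\to L^2(\Gamma)$ sending $\hat x$ to the sequence $\big((x,\gamma_-(\lambda_j))_H\big)_j$ is just the canonical unitary $H\cong\ell^2$ (expansion in the $e_j$) followed by the unitary $\ell^2\to L^2(\Gamma)$ given by multiplication with $\sqrt{\theta'(\lambda_j)}$; hence it is unitary and
\[\|s\|^2_{\mathfrak{H}}=\sum_{\lambda_j\in\sigma(T_1)}\frac{|\tilde s(\lambda_j)|^2}{\theta'(\lambda_j)}=\int|\tilde s|^2\,d\Gamma.\]
Thus $s\mapsto\tilde s$ is a unitary map of $\mathfrak{H}$ onto $L^2(\Gamma)$, and the variant of Lemma \ref{p1} transports the model operator to multiplication by the independent variable in $L^2(\Gamma)$.

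Finally, for the reproducing kernel I would invoke the change-of-frame rule for reproducing kernels exactly as in \S\S~\ref{br}, now evaluated at the real points of $\sigma(T_1)$: in the frame $\gamma_-$ one gets $\mathrm{K}_5(\lambda_i,\lambda_j)=\langle K(\lambda_i,\lambda_j)\gamma_-(\lambda_j),\gamma_-(\lambda_i)\rangle=(\gamma_-(\lambda_j),\gamma_-(\lambda_i))_H$, and orthogonality of the eigenvectors together with $\|\gamma_-(\lambda_i)\|^2_H=\theta'(\lambda_i)$ gives $\mathrm{K}_5(\lambda_i,\lambda_j)=\theta'(\lambda_i)\delta_{ij}$; equivalently this can be extracted from the reproducing identity $\tilde s(\lambda_i)=\int\tilde s\,\overline{\mathrm{K}_5(\cdot,\lambda_i)}\,d\Gamma$ by testing against the indicator functions $\mathbf{1}_{\{\lambda_k\}}\in L^2(\Gamma)$. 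I do not anticipate a genuine obstacle; the only delicate point is the one already flagged, namely replacing ``determining for $F$'' by ``determining for $\mathfrak{H}$'' in Lemmas \ref{l1} and \ref{p1}, which is exactly what the sampling formula provides.
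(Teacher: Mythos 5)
Your proposal is correct and follows essentially the same route as the paper: the paper treats the proposition as an immediate consequence of the preceding computations in that subsection (the orthogonality and norm identity $\|\widehat{\gamma_-(\lambda_j)}\|^2_{\mathfrak{H}}=\theta'(\lambda_j)$, the sampling formula from the spectral resolution of $T_1$, and the change-of-frame evaluation of the kernel), which is exactly the chain you spell out. Your explicit flagging of the ``determining for $\mathfrak{H}$ rather than for $F$'' point and the unitary $H\cong\ell^2\to L^2(\Gamma)$ factorization are just a more detailed write-up of the same argument.
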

 \emph{Remark}. Another description of this model can be found in \cite[\S~4]{martin2018function}. If $e(\lambda)$ is a de Branges function for $B(\lambda)$, in the above argument one can also use $\zeta(\lambda)$ in \S\S~\ref{deBranges} in place of $\gamma_-(\lambda)$ to get another model. However, the two models are related via a gauge transformation on $F|_{\sigma(T_1)}$.
\section{On the completeness problem}\label{com}
As an application of our previous investigation, we solve the completeness problem for generic extensions of $T\in \mathfrak{E}_1(H)$. Let $T_m$ be a generic extension of $T$ and $\lambda_0\in \sigma(T_m)$ of analytic multiplicity $k$. From \cite[Thm.~10.36]{wang2024complex} we know the algebraic multiplicity of $\lambda_0$ is also $k$. Then there are vectors $\varphi_0,\cdots, \varphi_{k-1}\in D(T_m)$, forming a Jordan chain of length $k$ for $T_m$ at $\lambda_0\in \sigma(T_m)$, i.e.,
\[\varphi_0\neq 0,\quad (T_m-\lambda_0)\varphi_0=0,\quad (T_m-\lambda_0)\varphi_j=\varphi_{j-1},\quad j=1,\cdots, k-1.\]
The subspace $E_{\lambda_0}:=\textup{span}\{\varphi_0, \varphi_1,\cdots, \varphi_{k-1}\}$ is called the generalized eigensubspace associated to $\lambda_0\in \sigma(T_m)$. One can use the Riesz projector
to cut out $E_{\lambda_0}$. This is the projection $P_{\lambda_0}$ defined by
\[P_{\lambda_0}=\frac{\mathrm{i}}{2\pi}\int_{C(\lambda_0)}(T_m-\lambda)^{-1}d\lambda,\]
where $C(\lambda_0)$ is a small circle centered at $\lambda_0$ such that $\lambda_0$ is the only eigenvalue lying in $C(\lambda_0)$.
\begin{definition}The generic extension $T_m$ is called complete if $\bigvee_{\lambda\in \sigma(T_m)}E_{\lambda}$ is dense in $H$. If it is the case, we say $m$ is a complete boundary condition.
\end{definition}
\begin{example}Let $T(b)$ be the entire operator in Example \ref{exp}. Since the contractive Weyl function $e^{\mathrm{i}b\lambda}$ never vanishes, the boundary condition $\varphi(b)=0$ has to be incomplete. It is no wander because the inverse of the corresponding extension $T_m$ is actually a Volterra operator.
 \end{example}

If $T_m$ is a self-adjoint operator, the spectral resolution of $T_m$ conveys that $T_m$ is complete\footnote{If $T$ is not densely defined and $T_m$ is the unique self-adjoint extension that is not an operator, the corresponding eigenvectors cannot span $H$. See \cite[Thm.~8]{silva2013branges}}. Since strictly dissipative extensions and strictly accumulative extensions of $T$ appear in pairs, it is easy to find that if a strictly dissipative extension is complete, so is the corresponding strictly accumulative extension. Therefore, w.l.g., we can only consider strictly accumulative extensions.

If $T_m$ is a fixed strictly accumulative extension of $T$, we can choose a boundary triplet $(\mathbb{C}, \Gamma_+, \Gamma_-)$ such that $T_\infty$ is just $T_m$ and $T_0$ the corresponding strictly dissipative extension. Our first goal in this section is to prove the following theorem.
\begin{theorem}\label{mean}$T_\infty$ is complete if and only if its mean type is zero.
\end{theorem}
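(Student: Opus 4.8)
The plan is to transport the completeness criterion into the de Branges-Rovnyak model of \S\S~\ref{br} and there reduce it to the Riesz-Smirnov factorization (\ref{cf}) of the contractive Weyl function. Keeping the chosen boundary triplet, so that $T_\infty=T_m$, let $B(\lambda)$ be the associated contractive Weyl function and write $B=\gamma\,e^{\mathrm{i}b\lambda}B_1$ as in (\ref{cf}), where $B_1$ is the Blaschke product over the zeros $\{\lambda_k\}$ of $B$ in $\mathbb{C}_+$. Recall that $\{\lambda_k\}=\sigma(T_0)=\overline{\sigma(T_\infty)}$, and that $b$ is by definition the mean type of the strictly dissipative condition $T_0$, hence also of $T_\infty$ by the Remark following that definition in \S~\ref{char}. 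So the statement to be proved is that $T_\infty$ is complete if and only if $b=0$.

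First I would rephrase ``$T_\infty$ is complete'' as a uniqueness statement in the canonical model space $\mathfrak{H}$. For $\lambda_0\in\sigma(T_\infty)$ the vector $\gamma_-(\lambda_0)$ is well defined (since $\sigma(T_0)\cap\sigma(T_\infty)=\varnothing$) and satisfies $\Gamma_+\hat\gamma_-(\lambda_0)=1/B(\lambda_0)=0$, so it spans the eigenspace of $T_\infty$ at $\lambda_0$, and the length-$\ell$ Jordan chain there ($\ell$ being the order of the pole of $B$ at $\lambda_0$, which equals the algebraic multiplicity by \cite[Thm.~10.36]{wang2024complex}) is obtained by differentiating the holomorphic frame $\gamma_-$. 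Pairing an arbitrary $y\in H$ against these sections and using the reproducing property of $\mathfrak{H}$ together with $\langle\hat y(\mu),\omega\rangle=(y,\omega)_H$ for $\omega\in F^\dag_\mu=E_{\bar\mu}$, one obtains that $y\perp E_{\lambda_0}$ if and only if $\hat y$ vanishes to order $\ell$ at $\bar\lambda_0$. Since $\bar\lambda_0$ runs through $\sigma(T_0)$ as $\lambda_0$ runs through $\sigma(T_\infty)$, this shows that $T_\infty$ is complete if and only if the only $s\in\mathfrak{H}$ vanishing, with multiplicities, at every point of $\sigma(T_0)$ is $s\equiv0$.

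Next I would invoke the identification $\mathfrak{H}\cong\mathcal{H}_B=(BH^2(\mathbb{C}_+))^\bot=:K_B$ from \S\S~\ref{br}, in the realization on $\mathbb{C}\backslash\sigma(T_\infty)$ pointed out at the end of that subsection; writing $s=f\cdot\psi_-$ with $f\in K_B$ and noting that the trivializing frame $\psi_-$ is holomorphic and nowhere zero on $\mathbb{C}\backslash\sigma(T_\infty)\supset\sigma(T_0)$, orders of vanishing are preserved, so the criterion becomes: the only $f\in K_B$ vanishing with multiplicities at every zero of $B$ is $f\equiv0$. A standard Hardy-space argument shows that such an $f$ is divisible by $B_1$ (since $|B_1|=1$ a.e.\ on $\mathbb{R}$, the quotient $f/B_1$ lies in the Smirnov class and in $L^2(\mathbb{R})$, hence in $H^2(\mathbb{C}_+)$), and the model-space factorization $K_{B_1 e^{\mathrm{i}b\lambda}}=K_{B_1}\oplus_\bot B_1 K_{e^{\mathrm{i}b\lambda}}$ then identifies $\{f\in K_B: f|_{\sigma(T_0)}=0\}$ with $B_1 K_{e^{\mathrm{i}b\lambda}}$. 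As $K_{e^{\mathrm{i}b\lambda}}$ is the Paley-Wiener space of the segment $[0,b]$, which is $\{0\}$ exactly when $b=0$, I conclude that $T_\infty$ is complete if and only if $b=0$.

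The step I expect to be the real obstacle is the bookkeeping in the second paragraph: matching the full generalized eigenspace $E_{\lambda_0}$, including Jordan chains when $B$ has multiple poles, with the precise order of vanishing of sections of $F$ at $\bar\lambda_0$, which forces one to differentiate both the frame $\gamma_-$ and the $F$--$F^\dag$ pairing and to use that the analytic, algebraic and pole multiplicities all coincide. Everything afterwards is classical one-variable function theory on $\mathbb{C}_+$ applied to the concrete meromorphic inner function $B$; as a cross-check one could instead deduce the statement from the Sz.-Nagy-Foias completeness criterion for dissipative operators (the characteristic function of a Cayley transform of $T_\infty$ being $B$ up to a M\"obius factor), but the route above keeps the argument internal to the models developed here.
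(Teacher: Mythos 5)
Your proof is correct, and its skeleton is the same as the paper's: pass to the de Branges--Rovnyak model, show that orthogonality to all generalized eigenspaces of $T_\infty$ means vanishing (with multiplicities) at the conjugate points $\sigma(T_0)$, divide out the Blaschke factor, and observe that what remains is nontrivial exactly when $b>0$. The two places where you diverge are both legitimate substitutions of standard machinery. For the orthogonality criterion, the paper first computes the generalized eigenspaces explicitly in $\mathcal{H}_B$ (via the resolvent formula and the Riesz projector, obtaining the basis $\{(\lambda-\lambda_0)^{-j}\}_{j=1}^{k}$) and then proves the vanishing characterization by an $H^2$ boundary-value computation with the Cauchy formula; you instead produce the Jordan chains by differentiating the defect frame $\gamma_-$ at $\lambda_0$ and read off vanishing of $\hat y$ at $\bar\lambda_0$ through the trivialization by $\psi_-$, which works because $\psi_-$ is zero-free off $\sigma(T_\infty)$ and because the pole order of $B$ equals the algebraic multiplicity by \cite[Thm.~10.36]{wang2024complex}; this is exactly the bookkeeping you flag, and it is the step the paper's projector computation is designed to avoid. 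For the endgame, the paper translates completeness into injectivity of the Toeplitz operator $\mathrm{T}_{\overline{B^*}\Delta^*}=\mathrm{T}_{e^{-\mathrm{i}bt}}$ and cites \cite{makarov2005meromorphic} for $\ker\mathrm{T}_{e^{-\mathrm{i}bt}}=\mathcal{H}_{e^{\mathrm{i}b\lambda}}$, whereas you use the factorization $K_{B_1e^{\mathrm{i}b\lambda}}=K_{B_1}\oplus_\bot B_1K_{e^{\mathrm{i}b\lambda}}$ and identify the subspace of elements vanishing on $\sigma(T_0)$ with $B_1K_{e^{\mathrm{i}b\lambda}}$; these are the same fact in different packaging (multiplication by $B_1$ carries the Toeplitz kernel onto your subspace, and your boundary identity $\overline{B^*}f^*=\bar\gamma e^{-\mathrm{i}bt}g^*$ is precisely the computation inside the paper's Toeplitz lemma), your version being slightly more self-contained, the paper's tying into the Toeplitz-kernel framework it uses elsewhere. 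Only cosmetic corrections are needed: ``vanishes to order $\ell$'' should read ``to order at least $\ell$'', and the divisibility $f=B_1g$ with $g\in H^2(\mathbb{C}_+)$ should be justified, as you indicate, by the inner-outer factorization.
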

Let $\lambda_0\in \sigma(T_\infty)\subset \mathbb{C}_-$ and $B(\lambda)$ the contractive Weyl function associated to $(\mathbb{C}, \Gamma_+, \Gamma_-)$. Our first step is to characterize $E_{\lambda_0}$ in the de Branges-Rovnyak model. By abuse of notation, we still use $T_\infty$ to denote the model operator of $T_\infty$. In the following proposition, elements in $\mathcal{H}_B$ are interpreted as analytic functions on $\mathbb{C}\backslash \sigma(T_\infty)$.
\begin{proposition}If $\mu\in \rho(T_\infty)$, then
\[D(T_\infty)=\{f\in \mathcal{H}_B|\exists h\in \mathcal{H}_B \quad s.t.\quad  f(\lambda)=\frac{h(\lambda)-h(\mu)}{\lambda-\mu},\quad \forall \lambda\in \mathbb{C}\backslash \sigma(T_\infty)\}\]
and for each $h\in \mathcal{H}_B$
\[[(T_\infty-\mu)^{-1}h](\lambda)=\frac{h(\lambda)-h(\mu)}{\lambda-\mu},\quad \forall \lambda\in \mathbb{C}\backslash \sigma(T_\infty).\]
The generalized eigensubspace of $T_\infty$ corresponding to $\lambda_0\in \sigma(T_\infty)$ has a basis
\[\{\frac{1}{(\lambda-\lambda_0)^j}\}_{j=1}^k.\]
\end{proposition}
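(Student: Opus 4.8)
The plan is to prove the three assertions in turn: first the resolvent formula, from which the description of $D(T_\infty)$ follows formally, and finally the basis of $E_{\lambda_0}$. The real content lies in a preliminary identification of the model action of $T_\infty$ in the de Branges-Rovnyak realization, where (following \S\S~\ref{br}) elements of $\mathcal{H}_B$ are viewed as functions holomorphic on $\mathbb{C}\backslash\sigma(T_\infty)$ with $F^\dag$ trivialized there by the frame $\gamma_-(\bar{\lambda})$. I claim that for $x\in D(T_\infty)$ with model function $f$ one has $(T_\infty f)(\lambda)=\lambda f(\lambda)-c_f$ on $\mathbb{C}\backslash\sigma(T_\infty)$ for some constant $c_f\in\mathbb{C}$, and that $D(T_\infty)$ is exactly the set of $f\in\mathcal{H}_B$ with $\lambda f(\lambda)-c\in\mathcal{H}_B$ for some $c\in\mathbb{C}$ (which is then unique, since $\mathcal{H}_B$ contains no nonzero constant). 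Indeed, unwinding the definitions of $\hat{x}$ and of the trivialization gives $(T_\infty f)(\lambda)-\lambda f(\lambda)=[(x,T_\infty x),\hat{\gamma}_-(\bar{\lambda})]_c$; since $(x,T_\infty x)$ lies in the graph $\{a\in A_T^{\bot_s}\mid\Gamma_+a=0\}$ of $T_\infty$ while $\Gamma_-\hat{\gamma}_-(\bar{\lambda})=1$, the abstract Green's formula (\ref{green}) collapses this pairing to $-\mathrm{i}\,\Gamma_-(x,T_\infty x)$, which is independent of $\lambda$; put $c_f:=\mathrm{i}\,\Gamma_-(x,T_\infty x)$. Then $f\mapsto c_f$ is a linear functional on the displayed set with kernel $D(T)=\{f\in\mathcal{H}_B\mid\lambda f(\lambda)\in\mathcal{H}_B\}$, so that set exceeds $D(T)$ by at most one dimension, while $D(T_\infty)\supseteq D(T)$ is contained in it and, being a generic extension, exceeds $D(T)$ by exactly one dimension; hence the two sets coincide. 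I expect this passage from the abstract boundary triplet to the concrete function model to be the main obstacle; the remaining steps are elementary one-variable complex analysis.

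Granting this, the resolvent formula is immediate. Fix $\mu\in\rho(T_\infty)$, so $\mu\notin\sigma(T_\infty)$, take $h\in\mathcal{H}_B$, and put $g:=(T_\infty-\mu)^{-1}h\in D(T_\infty)$. By the previous paragraph, $h(\lambda)=(T_\infty g-\mu g)(\lambda)=(\lambda-\mu)g(\lambda)-c_g$, hence $g(\lambda)=\frac{h(\lambda)+c_g}{\lambda-\mu}$ on $\mathbb{C}\backslash(\sigma(T_\infty)\cup\{\mu\})$; since $g$ is holomorphic at $\mu$, the numerator must vanish at $\mu$, forcing $c_g=-h(\mu)$ and therefore $g(\lambda)=\frac{h(\lambda)-h(\mu)}{\lambda-\mu}$, which is the claimed formula. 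The description of $D(T_\infty)$ now follows formally: any $f\in D(T_\infty)$ equals $(T_\infty-\mu)^{-1}((T_\infty-\mu)f)$ with $(T_\infty-\mu)f\in\mathcal{H}_B$, hence is of the stated form, and conversely every function $\frac{h(\lambda)-h(\mu)}{\lambda-\mu}$ with $h\in\mathcal{H}_B$ equals $(T_\infty-\mu)^{-1}h$ and so lies in $D(T_\infty)\subset\mathcal{H}_B$.

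Finally, for the basis of the generalized eigensubspace: the eigenspace $\ker(T_\infty-\lambda_0)$ is one-dimensional for operators of this class (its elements are $\pi_1(W_{\lambda_0})$), so the Jordan chain $\varphi_0,\dots,\varphi_{k-1}$ in the statement has full length $k$. Substituting the model action of $T_\infty$ into $(T_\infty-\lambda_0)\varphi_0=0$ and $(T_\infty-\lambda_0)\varphi_j=\varphi_{j-1}$ yields $(\lambda-\lambda_0)\varphi_0(\lambda)=c_{\varphi_0}$ and $(\lambda-\lambda_0)\varphi_j(\lambda)=c_{\varphi_j}+\varphi_{j-1}(\lambda)$; since $\varphi_0\ne0$ we have $c_{\varphi_0}\ne0$, and an easy induction then shows $\varphi_j$ is a linear combination of $(\lambda-\lambda_0)^{-1},\dots,(\lambda-\lambda_0)^{-(j+1)}$ with leading coefficient $c_{\varphi_0}$. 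Hence $E_{\lambda_0}=\textup{span}\{\varphi_0,\dots,\varphi_{k-1}\}=\textup{span}\{(\lambda-\lambda_0)^{-j}:1\le j\le k\}$, as asserted. (Equivalently, inserting the resolvent formula into the Riesz projector $P_{\lambda_0}=\frac{\mathrm{i}}{2\pi}\int_{C(\lambda_0)}(T_\infty-\mu)^{-1}\,d\mu$ and evaluating the residue at $\lambda_0$ returns the principal part of the argument there, giving the same conclusion.)
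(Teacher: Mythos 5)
Your proposal is correct, and while the middle step (the resolvent formula and the description of $D(T_\infty)$ via $\textup{Ran}((T_\infty-\mu)^{-1})$, forcing $c=-h(\mu)$ by holomorphy at $\mu$) coincides with the paper's argument, the two flanking steps take genuinely different routes. For the model action of $T_\infty$, the paper simply cites Prop.~5.24 of \cite{wang2024complex} for the graph description $g(\lambda)=\lambda f(\lambda)+c_-$, whereas you re-derive it: the identity $(T_\infty f)(\lambda)-\lambda f(\lambda)=[(x,T_\infty x),\hat{\gamma}_-(\bar{\lambda})]_c$ is exactly what unwinding the trivialization gives, the Green's formula (\ref{green}) with $\Gamma_+(x,T_\infty x)=0$ and $\Gamma_-\hat{\gamma}_-(\bar{\lambda})=1$ collapses it to a $\lambda$-independent constant, and your codimension-one count (kernel of $f\mapsto c_f$ equals $D(T)$, while $\dim D(T_\infty)/D(T)=1$) correctly pins down the domain; this makes the proof self-contained at the cost of re-proving a quoted result. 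For the generalized eigensubspace, the paper computes the Riesz projector $P_{\lambda_0}$ by residue calculus (Bochner integral, continuity of evaluation, a de Branges function factorization and the Leibniz rule), obtaining $E_{\lambda_0}\subseteq V_{\lambda_0}$ and concluding by dimension; you instead solve the Jordan-chain recursion directly in the model, getting a triangular change of basis with nonzero diagonal entries $c_{\varphi_0}$, which is more elementary, works straight from the paper's definition of $E_{\lambda_0}$ as the span of the chain, and avoids the integral-analytic technicalities — what it gives up is the explicit formula for $P_{\lambda_0}h$ on an arbitrary $h$, which the paper's computation provides as a byproduct. Both arguments are sound.
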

\begin{proof}According to Prop.~5.24 in \cite{wang2024complex}, the graph $G_{T_\infty}$ of $T_\infty$ is
\[G_{T_\infty}=\{(f,g)\in \mathcal{H}_B\times \mathcal{H}_B|\exists c_-\in \mathbb{C}\ s.t. \ \forall \lambda\in \mathbb{C}\backslash \sigma(T_\infty),\  g(\lambda)=\lambda f(\lambda)+c_-\}.\]
This is because $T_\infty$ is characterized by the boundary condition $\Gamma_+ a=0$ for $a\in A_T^{\bot_s}$. Note that $D(T_\infty)=\textup{Ran}((T_\infty-\mu)^{-1})$. For each $h\in \mathcal{H}_B$, consider the equation $(T_\infty-\mu)f=h$, i.e.,
\[\lambda f(\lambda)+c_--\mu f(\lambda)=h(\lambda),\quad \forall \lambda\in \mathbb{C}\backslash \sigma(T_\infty).\]
Setting $\lambda=\mu$ in the above formula, we find $c_-=h(\mu)$ and thus $f(\lambda)=\frac{h(\lambda)-h(\mu)}{\lambda-\mu}$.

Now for any $h\in \mathcal{H}_B$ and $\lambda\in \mathbb{C}_+$,
\[(P_{\lambda_0}h)(\lambda)=\frac{\mathrm{i}}{2\pi}\int_{C(\lambda_0)}\frac{h(\lambda)-h(\mu)}{\lambda-\mu}d\mu.\]
Note that here the properties of Bochner integral and the continuity of the evaluation map at $\lambda$ are used. Since $\frac{1}{\lambda-\mu}$ is analytic in $C(\lambda_0)$,
\[(P_{\lambda_0}h)(\lambda)=\frac{1}{2\pi \mathrm{i}}\int_{C(\lambda_0)}\frac{h(\mu)}{\lambda-\mu}d\mu.\]
By using a de Branges function $e(\lambda)$ for $B(\lambda)$, we know $h(\mu)=\frac{\tilde{h}(\mu)}{e(\mu)}$ for an entire function $\tilde{h}(\mu)$. We can also write $e(\mu)=e_0(\mu)\times (\mu-\lambda_0)^k$ where $e_0(\lambda_0)\neq 0$. Thus
\[(P_{\lambda_0}h)(\lambda)=\frac{1}{2\pi \mathrm{i}}\int_{C(\lambda_0)}\frac{j(\mu)}{(\mu-\lambda_0)^k}d\mu=\frac{j^{(k-1)}(\lambda_0)}{(k-1)!},\]
where $j(\mu)=\frac{\tilde{h}(\mu)}{(\lambda-\mu)e_0(\mu)}$. However, by using the Leibnitz formula for higher derivatives of a product, it is easy to see $j^{(k-1)}(\lambda_0)$ is of the following form
\[\sum_{j=1}^k \frac{c_j(\lambda_0)}{(\lambda-\lambda_0)^j},\]
where the coefficients $c_j(\lambda_0)$ are independent of $\lambda$. This shows the generalized eigensubspace $E_{\lambda_0}$ is contained in the subspace $V_{\lambda_0}$ generated by $\{\frac{1}{(\lambda-\lambda_0)^j}\}_{j=1}^k$. Due to dimensional reason, $E_{\lambda_0}=V_{\lambda_0}$.
\end{proof}
\emph{Remark}. Along the same line, one can compute the generalized eigensubspace of $T_0$ associated to the eigenvalue $\overline{\lambda_0}$. It is generated by the basis
\[\{\frac{B(\lambda)}{(\lambda-\overline{\lambda_0})^j}\}_{j=1}^k.\]

The above proposition has the following essentially known byproduct which will not be used later.
\begin{corollary}For $T\in \mathfrak{E}_1(H)$, the canonical model space $\mathfrak{H}$ is division invariant, i.e., if $\lambda_0$ is a zero of $s(\lambda)\in \mathfrak{H}$, then $\frac{s(\lambda)}{\lambda-\lambda_0}$ also lies in $\mathfrak{H}$.
\end{corollary}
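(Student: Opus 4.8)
The plan is to deduce the corollary directly from the resolvent formula of the preceding proposition, after a harmless change of boundary triplet. Division invariance is an intrinsic property of $\mathfrak{H}$, independent of any boundary triplet, so fix $s\in\mathfrak{H}$ with $s(\lambda_0)=0$ and choose a boundary triplet $(\mathbb{C},\Gamma_+,\Gamma_-)$ for which $\lambda_0\in\rho(T_\infty)$. Such a triplet always exists: if $\lambda_0\notin\mathbb{C}_-$ this is automatic because $\sigma(T_\infty)\subset\mathbb{C}_-$ for every triplet, while if $\lambda_0\in\mathbb{C}_-$ then by Prop.~\ref{par} the point $\lambda_0$ lies in $\sigma(T_m)$ for exactly one $m\in\mathcal{M}$, namely $m=W_T(\lambda_0)$, and since there are infinitely many strictly accumulative boundary conditions one can arrange that $T_\infty$ corresponds to a strictly accumulative $m'\neq W_T(\lambda_0)$.

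Next I would transport the problem into the de Branges--Rovnyak model $\mathcal{H}_B$ attached to this triplet. Its elements are analytic on $\mathbb{C}\backslash\sigma(T_\infty)$, hence holomorphic at $\lambda_0$; under the trivialization of $F^\dag$ by $\gamma_-(\bar\lambda)$ the section $s$ becomes the function $f(\lambda)=\langle s(\lambda),\gamma_-(\bar\lambda)\rangle\in\mathcal{H}_B$. Because $\bar\lambda_0\notin\sigma(T_0)$, the vector $\gamma_-(\bar\lambda_0)\in F^\dag_{\lambda_0}$ is nonzero, so pairing with it is an isomorphism $F_{\lambda_0}\to\mathbb{C}$ and the hypothesis $s(\lambda_0)=0$ gives $f(\lambda_0)=0$. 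Applying the preceding proposition with $\mu=\lambda_0$ then yields that
\[
g(\lambda):=\frac{f(\lambda)-f(\lambda_0)}{\lambda-\lambda_0}=\frac{f(\lambda)}{\lambda-\lambda_0}
\]
equals $[(T_\infty-\lambda_0)^{-1}f](\lambda)$ and therefore lies in $D(T_\infty)\subset\mathcal{H}_B=\mathfrak{H}$.

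Finally I would translate back. The function $g$ corresponds to a holomorphic section $\tilde s\in\mathfrak{H}$ with $\langle\tilde s(\lambda),\gamma_-(\bar\lambda)\rangle=g(\lambda)=\langle s(\lambda)/(\lambda-\lambda_0),\gamma_-(\bar\lambda)\rangle$ on $\mathbb{C}\backslash\sigma(T_\infty)$, so $\tilde s(\lambda)=s(\lambda)/(\lambda-\lambda_0)$ there; since $s$ is holomorphic and vanishes at $\lambda_0$, the right-hand side extends to a holomorphic section of $F$ on all of $\mathbb{C}$, and as it agrees with $\tilde s\in\mathfrak{H}$ on the determining subset $\mathbb{C}\backslash\sigma(T_\infty)$ it must equal $\tilde s$, whence $s/(\lambda-\lambda_0)\in\mathfrak{H}$. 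The only genuinely delicate point is the first step, namely securing a boundary triplet with $\lambda_0\in\rho(T_\infty)$ so that $\lambda_0$ is a regular point of the model operator; everything afterward is formal. Alternatively one could stay with a single triplet and argue in the de Branges model $\mathcal{H}_e$, whose global nonvanishing frame $\xi(\lambda)$ makes $\mathfrak{H}$ literally a space of entire functions and reduces the claim to the classical division invariance of de Branges spaces; but the resolvent route above has the advantage of using only what has just been established.
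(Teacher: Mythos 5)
Your proposal is correct and follows essentially the same route as the paper: use Prop.~\ref{par} to pick a strictly accumulative boundary condition (hence a boundary triplet with $T_\infty=T_m$) for which $\lambda_0\in\rho(T_\infty)$, then apply the preceding proposition with $\mu=\lambda_0$ in the de Branges--Rovnyak model. Your extra care about the existence of such a triplet and the translation back to sections via the determining-subset argument only fills in details the paper leaves implicit.
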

\begin{proof}Due to Prop.~\ref{par}, we can always choose a strictly accumulative boundary condition $m$ such that $\lambda_0\in \rho(T_m)$. Choose a suitable boundary triplet $(\mathbb{C}, \Gamma_+, \Gamma_-)$ such that $T_m=T_\infty$. Then the result follows from the above proposition by setting $\mu=\lambda_0$.
\end{proof}

The following lemma seems elementary, but we haven't found a suitable reference covering it.
\begin{lemma}\label{zeros}A function $f\in \mathcal{H}_B$ is orthogonal to the generalized eigensubspace $E_{\lambda_0}$ if and only if $\overline{\lambda_0}$ (not $\lambda_0$ itself) is a zero of $f$ with multiplicity at least $k$, i.e., $f^{(j)}(\overline{\lambda_0})=0$ for $j=0, 1,\cdots, k-1$.
\end{lemma}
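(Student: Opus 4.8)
The plan is to use the reproducing kernel $\mathrm{K}_2(\lambda,\mu)=\mathrm{i}\frac{1-B(\lambda)\overline{B(\mu)}}{\lambda-\bar\mu}$ of $\mathcal{H}_B$ together with the basis $\{(\lambda-\lambda_0)^{-j}\}_{j=1}^k$ of $E_{\lambda_0}$ obtained in the previous proposition, and to recognize these basis vectors as the ``jet'' reproducing kernels of $\mathcal{H}_B$ at the point $\overline{\lambda_0}\in\mathbb{C}_+$. (Note that $\lambda_0\in\sigma(T_\infty)\subset\mathbb{C}_-$, so it is $\overline{\lambda_0}$, and not $\lambda_0$, at which elements of $\mathcal{H}_B$ may legitimately be evaluated and differentiated.)

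First I would record the elementary fact that $B^\sharp(\lambda)=\overline{B(\bar\lambda)}$ has a zero of order exactly $k$ at $\lambda_0$: indeed $\lambda_0$ is a pole of $B$ of order $k$, namely the analytic multiplicity of $\lambda_0$ as an eigenvalue of $T_\infty$, and $B(\lambda)\,B^\sharp(\lambda)\equiv 1$. Writing $\mu=\overline{\lambda_0}+s$ one has $\bar\mu=\lambda_0+w$ with $w=\bar s$ and $\overline{B(\mu)}=B^\sharp(\bar\mu)=B^\sharp(\lambda_0+w)$, so near $\mu=\overline{\lambda_0}$
\[\mathrm{K}_2(\lambda,\mu)=\mathrm{i}\,\frac{1-B(\lambda)\,B^\sharp(\lambda_0+w)}{\lambda-\lambda_0-w},\qquad B^\sharp(\lambda_0+w)=O(w^k).\]
Differentiating $j$ times in $w$ and evaluating at $w=0$ for $0\le j\le k-1$: in the Leibniz expansion every term differentiating $B^\sharp(\lambda_0+w)$ vanishes because $B^\sharp$ vanishes to order $k$, and what remains is $\mathrm{i}\,j!\,(\lambda-\lambda_0)^{-(j+1)}$. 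Thus the vectors $\frac{1}{(\lambda-\lambda_0)^{l}}$, $l=1,\dots,k$, are, up to nonzero constants, precisely the holomorphic jet kernels $\partial_w^{\,j}\mathrm{K}_2(\cdot,\mu)\big|_{w=0}$, $j=0,\dots,k-1$; in particular they are finite linear combinations of one another's span $E_{\lambda_0}$ and hence lie in $\mathcal{H}_B$.

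Next I would differentiate the reproducing identity $f(\mu)=(f,\mathrm{K}_2(\cdot,\mu))_{\mathcal{H}_B}$ in the variable $w$, which is permissible because near $\overline{\lambda_0}$ both sides are holomorphic in $w$. This yields, for $0\le j\le k-1$,
\[f^{(j)}(\overline{\lambda_0})=\big(f,\ \partial_w^{\,j}\mathrm{K}_2(\cdot,\mu)\big|_{w=0}\big)_{\mathcal{H}_B}=-\mathrm{i}\,j!\,\Big(f,\ \tfrac{1}{(\cdot-\lambda_0)^{j+1}}\Big)_{\mathcal{H}_B}.\]
Hence $f$ is orthogonal to the basis vector $\frac{1}{(\lambda-\lambda_0)^{l}}$ for every $1\le l\le k$, i.e.\ $f\perp E_{\lambda_0}$, if and only if $f^{(j)}(\overline{\lambda_0})=0$ for all $0\le j\le k-1$, which is exactly the claimed characterization.

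The only delicate point is the bookkeeping with complex conjugation: one must keep track of the antiholomorphic slot of $\mathrm{K}_2$ and of the convention that $(\cdot,\cdot)$ is conjugate-linear in the second argument, so that differentiation in $\mu$ genuinely produces $f^{(j)}(\overline{\lambda_0})$ rather than its conjugate, and so that $\partial_w^{\,j}\mathrm{K}_2(\cdot,\mu)\big|_{w=0}$ is a bona fide holomorphic function of $\lambda$. Everything else is the routine Leibniz computation sketched above.
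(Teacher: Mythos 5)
Your proof is correct, but it takes a genuinely different route from the paper's. You stay inside $\mathcal{H}_B$ and exploit its reproducing kernel: using that $B^\sharp(\lambda)=\overline{B(\bar\lambda)}$ vanishes to order exactly $k$ at $\lambda_0$ (because $\lambda_0$ is a pole of $B$ of order $k$ and $B\,B^\sharp\equiv 1$), you identify the basis vectors $(\lambda-\lambda_0)^{-(j+1)}$, $0\le j\le k-1$, with the jet kernels $\partial_w^{\,j}\mathrm{K}_2(\cdot,\mu)\big|_{\bar\mu=\lambda_0}$ and then differentiate the reproducing identity to get $f^{(j)}(\overline{\lambda_0})=-\mathrm{i}\,j!\,\bigl(f,(\cdot-\lambda_0)^{-(j+1)}\bigr)_{\mathcal{H}_B}$. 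The paper instead uses the embedding $\mathcal{H}_B\subset H^2(\mathbb{C}_+)$: it expresses the inner product as a boundary integral over $\mathbb{R}$ and evaluates $\int_\mathbb{R} f^*(t)(t-\overline{\lambda_0})^{-j}\,\frac{dt}{2\pi}$ by the Cauchy formula differentiated under the integral, arriving at the same identity $\bigl(f,(\cdot-\lambda_0)^{-j}\bigr)_{\mathcal{H}_B}=\mathrm{i}\,f^{(j-1)}(\overline{\lambda_0})/(j-1)!$, so your constants agree with the paper's. What each approach buys: yours is intrinsic to the reproducing kernel Hilbert space, needs no boundary-value theory, and makes transparent \emph{why} $E_{\lambda_0}$ is exactly the $k$-jet space at $\overline{\lambda_0}$ (here the order-$k$ vanishing of $B^\sharp$ at $\lambda_0$ is essential, a fact the paper's computation never needs, since it works directly with the explicit rational functions); the paper's is lighter in its functional-analytic input (just the Cauchy formula in $H^2$) and avoids the one step of yours that deserves an explicit word, namely the interchange of $\partial_w^{\,j}$ with the inner product. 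That step is legitimate --- the map $w\mapsto \mathrm{K}_2(\cdot,\mu)$ with $\bar\mu=\lambda_0+w$ is weakly, hence norm-, holomorphic in $w$, and the candidate representers $\mathrm{i}\,j!\,(\cdot-\lambda_0)^{-(j+1)}$ are already known to lie in $\mathcal{H}_B$ by the preceding proposition --- but you should state this justification rather than only gesture at the conjugation bookkeeping.
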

\begin{proof}Recall that $\mathcal{H}_B$ is a closed subspace of the Hardy space $H^2(\mathbb{C}_+)$. To prove the lemma, we have to recall some facts on $H^2(\mathbb{C}_+)$. See \cite[Chap.~VI]{koosis1998introduction} and \cite[Chap.~8]{hoffman2014banach} for further technical details.

Each function $f\in H^2(\mathbb{C}_+)$ has a non-tangential boundary value $f^*(t)$ on the real line. $f^*$ is in $L^2(\mathbb{R})$ and for $f,g\in H^2(\mathbb{C}_+)$
\[(f,g)_{H^2(\mathbb{C}_+)}=\int_\mathbb{R}f^*(t)\overline{g^*(t)}\frac{dt}{2\pi}.\]
$f\in H^2(\mathbb{C})$ can also be recovered from its boundary value $f^*$, i.e., we have the Cauchy formula
\[f(\lambda)=\frac{1}{2\pi \mathrm{i}}\int_\mathbb{R}\frac{f^*(t)}{t-\lambda}dt,\quad \forall \lambda\in \mathbb{C}_+.\]
By taking derivatives under the integration and a routine use of Lebesgue's Dominated Convergence Theorem, we can deduce from this formula that
\[\frac{f^{(j)}(\lambda)}{j!}=\frac{1}{2\pi \mathrm{i}}\int_\mathbb{R}\frac{f^*(t)}{(t-\lambda)^{j+1}}dt,\quad j=1,2,\cdots.\]

Now let $e_j(\lambda):=\frac{1}{(\lambda-\lambda_0)^j}$ and $f\in \mathcal{H}_B\subseteq H^2(\mathbb{C}_+)$. Then
\[(f, e_j)_{\mathcal{H}_B}=\int_\mathbb{R}f^*(t)\overline{e_j^*(t)}\frac{dt}{2\pi}=\int_\mathbb{R}\frac{f^*(t)}{(t-\overline{\lambda_0})^j}\frac{dt}{2\pi}=\mathrm{i}\times \frac{f^{(j-1)}(\overline{\lambda_0})}{(j-1)!}.\]
The claim of the lemma then follows immediately.
\end{proof}
We can use the method of Toeplitz kernel to translate the completeness problem into the problem of determining the injectivity of a certain Toeplitz operator. We refer the reader to \cite{makarov2005meromorphic} for a detailed investigation of this method. Recall that we can also define the Hardy space $H^2(\mathbb{C}_-)=\overline{H^2(\mathbb{C}_+)}$ whose elements also have $L^2(\mathbb{R})$ boundary values. If elements in $H^2(\mathbb{C}_\pm)$ are identified with their boundary values, we should have $H^2(\mathbb{C}_+)\oplus_\bot H^2(\mathbb{C}_-)=L^2(\mathbb{R})$. Let $P$ be the orthogonal projector onto $H^2(\mathbb{C}_+)$. Then for a bounded measurable function $f$ on the real line, the Toeplitz operator $\mathrm{T}_f$ with symbol $f$ is defined to be
\[\mathrm{T}_f: H^2(\mathbb{C}_+)\rightarrow H^2(\mathbb{C}_+),\quad g\mapsto P(fg).\]

W.l.g, now we write the contractive Weyl function $B(\lambda)$ as $e^{\mathrm{i}b\lambda}\times\Delta(\lambda)$ where $\Delta(\lambda)$ is the Blaschke product part in the factorization (\ref{cf}). Then we have the function $\overline{B^*(t)}\Delta^*(t)=e^{-\mathrm{i}bt}$.
\begin{lemma}$T_\infty$ is complete if and only if $\textup{ker}(\mathrm{T}_{\overline{B^*}\Delta^*})=0$.
\end{lemma}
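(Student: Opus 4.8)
The plan is to run the completeness condition through Lemma~\ref{zeros} and the explicit description of the generalized eigensubspaces just obtained, and then recognize what comes out as the triviality of a model space sitting inside $H^2(\mathbb{C}_+)$. Since $\mathcal{H}_B$ is a Hilbert space and each $E_{\lambda_0}$ is a finite-dimensional (hence closed) subspace, $T_\infty$ is complete iff $\bigvee_{\lambda_0\in\sigma(T_\infty)}E_{\lambda_0}$ is dense in $\mathcal{H}_B$, iff the only $f\in\mathcal{H}_B$ orthogonal to every $E_{\lambda_0}$ is $f=0$. By Lemma~\ref{zeros}, $f\perp E_{\lambda_0}$ exactly when $f$ vanishes at $\overline{\lambda_0}$ to order at least the analytic multiplicity of $\lambda_0$ as an eigenvalue. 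Because $\sigma(T_\infty)$ consists precisely of the poles of $B(\lambda)$, the conjugates $\overline{\lambda_0}$ run exactly through the zeros of $B(\lambda)$ in $\mathbb{C}_+$, i.e. through the zeros of the Blaschke factor $\Delta(\lambda)$, and, using $B(\lambda)\overline{B(\bar{\lambda})}\equiv1$, with matching multiplicities. So the orthogonal complement in question is the set of $f\in\mathcal{H}_B$ vanishing at every zero of $\Delta$ to at least the order with which $\Delta$ vanishes there.

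Next I would identify this set with $\mathcal{H}_B\cap\Delta H^2(\mathbb{C}_+)$. For $f\in H^2(\mathbb{C}_+)$, vanishing at the zeros of the Blaschke product $\Delta$ to the appropriate orders is equivalent to $f/\Delta\in H^2(\mathbb{C}_+)$: one divides the Blaschke part of the inner factor of $f$ by $\Delta$, and the quotient stays in $H^2(\mathbb{C}_+)$. Hence $T_\infty$ is complete iff $\mathcal{H}_B\cap\Delta H^2(\mathbb{C}_+)=0$.

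Finally I would produce an explicit isometric bijection $g\mapsto\Delta g$ from $\ker\mathrm{T}_{\overline{B^*}\Delta^*}$ onto $\mathcal{H}_B\cap\Delta H^2(\mathbb{C}_+)$. Multiplication by the inner function $\Delta$ is isometric on $H^2(\mathbb{C}_+)$ with range $\Delta H^2(\mathbb{C}_+)$, so it suffices to check that for $g\in H^2(\mathbb{C}_+)$ one has $\Delta g\in\mathcal{H}_B$ iff $g\in\ker\mathrm{T}_{\overline{B^*}\Delta^*}$. Using $\mathcal{H}_B=H^2(\mathbb{C}_+)\ominus BH^2(\mathbb{C}_+)$, for every $h\in H^2(\mathbb{C}_+)$ one computes $(\Delta g,Bh)_{L^2(\mathbb{R})}=(\overline{B^*}\Delta^* g,h)_{L^2(\mathbb{R})}=(e^{-\mathrm{i}bt}g,h)_{L^2(\mathbb{R})}$, using $|\Delta^*|=1$ on $\mathbb{R}$ and the identity $\overline{B^*}\Delta^*=e^{-\mathrm{i}bt}$; this vanishes for all such $h$ exactly when $\overline{B^*}\Delta^* g\in H^2(\mathbb{C}_-)$, i.e. when $\mathrm{T}_{\overline{B^*}\Delta^*}g=0$. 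Composing the three equivalences yields the lemma.

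The step I expect to be most delicate is the identification in the second paragraph: one must be sure that ``vanishing at the zeros of $\Delta$ with the right multiplicity'' is genuinely the same as membership in $\Delta H^2(\mathbb{C}_+)$, which relies on the inner--outer factorization in $H^2(\mathbb{C}_+)$ and on the fact that dividing a Hardy function by a Blaschke subfactor it contains leaves it in the Hardy class. The rest is a routine unwinding of definitions once one uses the already-recorded description of $\mathcal{H}_B$ as $(BH^2(\mathbb{C}_+))^\perp$ and the identity $\overline{B^*}\Delta^*=e^{-\mathrm{i}bt}$.
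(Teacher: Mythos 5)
Your proposal is correct and follows essentially the same route as the paper: reduce completeness via Lemma~\ref{zeros} to the existence of a nonzero $f\in\mathcal{H}_B$ vanishing at the conjugated eigenvalues with the right multiplicities, factor out the Blaschke product to write $f=\Delta g$ with $g\in H^2(\mathbb{C}_+)$ (inner--outer factorization), and use $\mathcal{H}_B=(BH^2(\mathbb{C}_+))^\bot$ together with $\overline{B^*}\Delta^*=e^{-\mathrm{i}bt}$ to translate membership of $\Delta g$ in $\mathcal{H}_B$ into $g\in\ker(\mathrm{T}_{\overline{B^*}\Delta^*})$. Packaging this as the isometric correspondence $g\mapsto\Delta g$ between $\ker(\mathrm{T}_{\overline{B^*}\Delta^*})$ and $\mathcal{H}_B\cap\Delta H^2(\mathbb{C}_+)$ is only a cosmetic reorganization of the paper's two-directional argument.
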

\begin{proof}If $T_\infty$ is not complete, then there is an $f\in \mathcal{H}_B$ such that $f$ is orthogonal to $\bigvee_{\lambda\in \sigma(T_\infty)}E_{\lambda}$. Due to Lemma \ref{zeros}, $\overline{\lambda_j}$ is a zero of $f$ with multiplicity at least $k_j$, where $k_j$ is the analytic multiplicity of $\lambda_j\in \sigma(T_\infty)$. This shows
\[f(\lambda)=\Delta(\lambda)\times g(\lambda)\]
for a nonzero function $g\in H^2(\mathbb{C}_+)$. That $g$ lies in $H^2(\mathbb{C}_+)$ is a result of the inner-outer factorization of $f$. On the other side, since $\mathcal{H}_B=(BH^2(\mathbb{C}_+))^\bot$, $f\in \mathcal{H}_B$ if and only if there is an $h\in H^2(\mathbb{C}_+)$ such that on $\mathbb{R}$
\[f^*(t)=B^*(t)\overline{h^*(t)}.\]
Therefore,
\[\overline{h^*(t)}=\overline{B^*(t)}f^*(t)=\overline{B^*(t)}\Delta^*(t)g^*(t),\]
implying that $g$ lies in $\textup{ker}(\mathrm{T}_{\overline{B^*}\Delta^*})$.

Conversely, if $0\neq g\in \textup{ker}(\mathrm{T}_{\overline{B^*}\Delta^*})$, then there is an $h\in H^2(\mathbb{C}_+)$ such that $\overline{h^*(t)}=\overline{B^*(t)}\Delta^*(t)g^*(t)$. Thus $B^*(t)\overline{h^*(t)}=\Delta^*(t)g^*(t)$. This means the nonzero function $f(\lambda)=\Delta(\lambda)g(\lambda)$ lies in $\mathcal{H}_B$. Since $\overline{\lambda_j}$ is a zero of $f(\lambda)$ with  multiplicity at least $k_j$. Due to Lemma \ref{zeros}, $f$ is orthogonal to $\bigvee_{\lambda\in \sigma(T_m)}E_{\lambda}$ and as a consequence $T_\infty$ is not complete.
\end{proof}
\begin{proof}Proof of Thm.~\ref{mean}. Now this is clear because $\textup{ker}(\mathrm{T}_{e^{-\mathrm{i}bt}})$ is precisely the de Branges-Rovnyak space $\mathcal{H}_B$ with $B(\lambda)=e^{\mathrm{i}b\lambda}$. See \cite[\S~3]{makarov2005meromorphic}.
\end{proof}
\begin{proposition}If for $T\in \mathfrak{E}_1(H)$ the Weyl order $\rho_T<1$, then each strictly dissipative or accumulative boundary condition is complete.
\end{proposition}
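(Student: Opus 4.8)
The plan is to reduce the statement to two results already established: the mean-type criterion for completeness (Theorem~\ref{mean}) and the lower bound on the Weyl order forced by a positive mean type (Proposition~\ref{order}). The first step is to observe that it suffices to treat a single member of each conjugate pair. Since strictly dissipative and strictly accumulative extensions occur in conjugate pairs, and—by the remark preceding Theorem~\ref{mean} together with the fact that conjugation is an involution intertwining generalized eigensubspaces—one member of such a pair is complete exactly when the other is, it is enough to prove completeness of, say, an arbitrary strictly accumulative extension $T_\infty$ after a suitable choice of boundary triplet.

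Next I would apply Proposition~\ref{order} in contrapositive form. The mean type of a strictly dissipative boundary condition $m$ is well defined: one chooses a boundary triplet $(\mathbb{C},\Gamma_+,\Gamma_-)$ making $\sigma(T_m)$ the zero set of the contractive Weyl function $B(\lambda)$, which pins $B(\lambda)$ down up to a unimodular constant and hence fixes the number $b\geq 0$ in the factorization~(\ref{cf}); by the remark following the Definition after Proposition~\ref{order}, this $b$ is also the mean type of the paired accumulative extension $T_{m^{\bot_s}}$. Now if $\rho_T<1$, Proposition~\ref{order} forbids $b>0$, so $b=0$ for every strictly dissipative boundary condition of $T$, and consequently for every strictly accumulative one as well.

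The final step is to feed this into Theorem~\ref{mean}. Fix a strictly accumulative boundary condition and choose the boundary triplet so that it equals $T_\infty$ (with $T_0$ the conjugate dissipative extension); the mean type of $T_\infty$ is the $b$ computed above, namely $0$. Theorem~\ref{mean} then yields that $T_\infty$ is complete, and the pairing observation from the first step transfers completeness to $T_0$ and hence to any strictly dissipative boundary condition. This completes the argument.

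I do not expect a genuine obstacle here; the only point requiring a little care is bookkeeping the identifications—that the quantity called ``mean type of $m$'' in Proposition~\ref{order} is literally the ``mean type of $T_\infty$'' appearing in Theorem~\ref{mean} once the boundary triplet is aligned so that $T_\infty=T_{m^{\bot_s}}$—which is exactly what the Definition and remark following Proposition~\ref{order} are set up to guarantee. Apart from this routine matching, the proof is immediate.
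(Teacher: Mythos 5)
Your argument is exactly the paper's: apply Proposition~\ref{order} in contrapositive form to conclude that $\rho_T<1$ forces every strictly dissipative (hence every strictly accumulative) boundary condition to have mean type zero, and then invoke Theorem~\ref{mean}. The bookkeeping you mention about aligning the boundary triplet and the conjugate pairing is the same identification the paper relies on, so the proposal is correct and essentially identical to the published proof.
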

\begin{proof}This is simply because due to Prop.~\ref{order} the mean type of each strictly dissipative boundary condition is zero when $\rho_T<1$.
\end{proof}
Roughly speaking, the non-vanishing mean type $b$ prevents the extension $T_\infty$ from having sufficiently many eigenvalues. Taking the Crofton formula \cite[Lemma 10.24]{wang2024complex} on the average spectral behavior of boundary conditions into account, one may reasonably conjecture that \emph{almost all} boundary conditions are complete. Note that $\mathcal{B}_T$ is a (strong) symplectic Hilbert space and thus $\mathcal{M}$ has its induced Fubini-Study metric. Let $\omega$ be the corresponding normalized volume form and $\mathcal{C}\subset \mathcal{M}$ the subset of strictly dissipative boundary conditions with non-vanishing mean type.
\begin{theorem}$\mathcal{C}$ is Borel measurable and the $\omega$-measure of $\mathcal{C}$ is zero.
\end{theorem}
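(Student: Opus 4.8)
The plan is to establish the sharper fact that $\mathcal{C}$ contains \emph{at most one} point. Granting this, the theorem is immediate: the Fubini--Study volume form $\omega$ is smooth, so the induced measure has no atoms and assigns measure $0$ to every finite subset of $\mathcal{M}$, while a set of cardinality $\le 1$ is trivially Borel.

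Here is the mechanism. Suppose $m_{1}\in\mathcal{C}$. As recalled just before the definition of mean type, we may fix a boundary triplet $(\mathbb{C},\Gamma_{+},\Gamma_{-})$ whose contractive Weyl function $B(\lambda)$ has zero set exactly $\sigma(T_{m_{1}})$; then $T_{m_{1}}=T_{0}$, and writing $B$ in the canonical form $(\ref{cf})$, $B(\lambda)=\gamma e^{\mathrm{i}b\lambda}\prod_{k}\frac{\overline{\lambda_{k}}}{\lambda_{k}}\frac{\lambda-\lambda_{k}}{\lambda-\overline{\lambda_{k}}}$ with all $\lambda_{k}\in\mathbb{C}_{+}$, the mean type of $m_{1}$ is $b$, which is positive because $m_{1}\in\mathcal{C}$. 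Since $|\,\mathrm{i}y-\overline{\lambda_{k}}\,|^{2}-|\,\mathrm{i}y-\lambda_{k}\,|^{2}=4y\,\Im\lambda_{k}>0$ for $y>0$, every Blaschke factor has modulus $<1$ on the positive imaginary axis, so $|B(\mathrm{i}y)|\le e^{-by}$ and thus $B(\mathrm{i}y)\to 0$ as $y\to+\infty$. Now let $m_{2}\in\mathcal{C}$ with $m_{2}\ne m_{1}$. In this triplet $m_{2}$ is the condition associated with some $T_{c}$, $0<|c|<1$, and by $(\ref{mo})$ with $\tau=c$ the contractive Weyl function having $\sigma(T_{c})$ as its zero set is, up to a unimodular constant, the meromorphic inner function $B_{c}:=\frac{B-c}{1-\bar{c}B}$; hence by $(\ref{mt})$ the mean type of $m_{2}$ equals $-\limsup_{y\to+\infty}\frac{\ln|B_{c}(\mathrm{i}y)|}{y}$. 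But $B_{c}(\mathrm{i}y)=\frac{B(\mathrm{i}y)-c}{1-\bar{c}B(\mathrm{i}y)}\to -c$, so $|B_{c}(\mathrm{i}y)|\to|c|\in(0,1)$, the quotient $\frac{\ln|B_{c}(\mathrm{i}y)|}{y}$ tends to $0$, and the mean type of $m_{2}$ vanishes --- contradicting $m_{2}\in\mathcal{C}$. Therefore $|\mathcal{C}|\le 1$, as claimed.

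I do not anticipate a real obstruction in this argument; the only steps needing attention are the identification of the contractive Weyl function attached to $T_{c}$ with $B_{c}$ up to phase --- which is precisely the congruence relation $(\ref{mo})$ --- and the elementary sign bookkeeping for the mean type through $(\ref{mt})$. For completeness I note an alternative, soft route that does not switch triplets: bound $\int_{\mathcal{M}}\mathfrak{m}_{T}(r,m)\,\omega$ using the Crofton/averaging identity \cite[Lemma~10.24]{wang2024complex}, combine this with a lower bound $\mathfrak{m}_{T}(r,m)\ge \mathrm{const}\cdot b(m)\,r$ for strictly dissipative $m$ (extracted from $h_{T}(r)=N_{T}(r,m)+\mathfrak{m}_{T}(r,m)+O(1)$ together with $\theta_{m}'(u)\ge b(m)$), and then use an Egorov-type argument to make the error terms uniform on each $\{\,m:b(m)\ge 1/k\,\}$, forcing that set to be $\omega$-null; in that version the delicate point is the uniformity of the $O(1)$ terms, which is why the direct argument is cleaner.
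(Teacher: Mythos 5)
Your argument is correct, and it in fact proves something strictly stronger than the stated theorem: $\mathcal{C}$ contains at most one point. The crucial step is sound: if the condition parameterized by $c$ in the fixed triplet has positive mean type, then $\limsup_{y\to+\infty}\frac{1}{y}\ln|B(\mathrm{i}y)-c|<0$, so $|B(\mathrm{i}y)-c|$ is eventually exponentially small and $B(\mathrm{i}y)\to c$; since a limit along the imaginary axis is unique, at most one $c\in\mathbb{D}$ can have this property. (Equivalently: a meromorphic inner function has no singular factor other than $e^{\mathrm{i}b\lambda}$, so the Frostman shift $\frac{B-c}{1-\bar{c}B}$ can fail to be a Blaschke product for at most the single value $c=\lim_{y\to+\infty}B(\mathrm{i}y)$, when that limit exists in $\mathbb{D}$ and is attained exponentially fast.) Your identification of the mean type of $T_c$ with $-\limsup_{y\to+\infty}\frac{1}{y}\ln|B_c(\mathrm{i}y)|$ via Eq.~(\ref{mo}) and Eq.~(\ref{mt}) is exactly the identification the paper itself makes in the first lines of its proof, so no step is missing. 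The paper's route is genuinely different: it never bounds the cardinality of $\mathcal{C}$, but instead writes $\mathcal{C}=\bigcup_n\mathcal{C}_n$ with $\mathcal{C}_n=\{c:b(c)>1/n\}$, derives a uniform-in-$c$ estimate $\tilde{N}_T(r,c)<\tilde{h}_T(r)-\frac{b(c)(r-1)}{\pi}+3\ln r+K_3$ from the counting-function machinery (this is where the uniformity of the error terms is controlled, via Lemma~10.25 and Thm.~10.43 of \cite{wang2024complex}), and then averages against the Crofton formula to force each $|\mathcal{C}_n|=0$; that is essentially the ``soft route'' you sketch at the end, with the uniformity you flagged as delicate carried out in full. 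What your argument buys is brevity and a sharper conclusion (at most one incomplete strictly dissipative, hence also at most one strictly accumulative, extension); what the paper's averaging argument buys is a template inside the value-distribution framework, quantifying how a positive mean type depresses the counting function, which is the kind of argument one might hope to extend to higher deficiency indices, where a one-point rigidity statement is far less evident.
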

\begin{proof}Choose a boundary triplet $(\mathbb{C}, \Gamma_+, \Gamma_-)$. Then $\mathcal{M}$ is holomorphically isomorphic to $\mathbb{CP}^1$ and strictly dissipative boundary conditions are parameterized by the unit disc $\mathbb{D}$. For $c\in \mathbb{D}$, the mean type of $T_c$ is by definition given by
\[b(c)=-\limsup_{y\rightarrow +\infty}\frac{\ln|\frac{B(\mathrm{i}y)-c}{1-B(iy)\bar{c}}|}{y}=-\limsup_{y\rightarrow +\infty}\frac{\ln|B(\mathrm{i}y)-c|}{y}.\]
Note that the function $F(y,c):=\frac{\ln|B(\mathrm{i}y)-c|}{y}$ is a continuous function from $(0, +\infty)\times \mathbb{D}$ to $[-\infty, +\infty)$. $\mathcal{C}$ can be identified with the set
 \[\{c\in \mathbb{D}|b(c)>0\}\] and thus the Borel measurability of $\mathcal{C}$ can be guaranteed.

 Since \[\mathcal{C}=\bigcup_{n=1}^\infty \{c\in \mathbb{D}|b(c)>\frac{1}{n}\},\]
 it suffices to prove that each $\mathcal{C}_n=\{c\in \mathbb{D}|b(c)>\frac{1}{n}\}$ has measure zero. Note that the identification $\mathcal{M}\cong \mathbb{CP}^1$ may not preserve the Fubini-Study metrics, but we can identify the Weyl map $W_T(\lambda)$ with $B(\lambda)$ and define $h_T(r)$ only in terms of $\mathbb{CP}^1$ and $B(\lambda)$. This only differs from the \emph{canonical} definition by a bounded term. To avoid the possible singularity at $r=0$, let us define $\tilde{h}_T(r):=h_T(r)-h_T(1)$ and $\tilde{N}_T(r,c):=N_T(r,c)-N_T(1,c)$. Then according to \cite[Lemma 10.25]{wang2024complex}, there is a positive constant $K_1>0$ independent of $c$ and $r$ such that for $r>1$
 \[\tilde{N}_T(r,c)<\tilde{h}_T(r)+K_1.\]

  On the other side, the proof of \cite[Thm.~10.19]{wang2024complex} shows here
 \[h_T(r)=\frac{1}{2\pi \mathrm{i}}\int_0^r\frac{dt}{t}\int_{-t}^td\ln B(u)+e_1(r)\]
 where $|e_1(r)|<\ln 2$. Thus
 \[\tilde{h}_T(r)=\frac{1}{2\pi \mathrm{i}}\int_1^r\frac{dt}{t}\int_{-t}^td\ln B(u)+e_2(r)\]
 where $|e_2(r)|<2\ln 2$. Therefore
 \[\tilde{N}_T(r,c)<\frac{1}{2\pi \mathrm{i}}\int_1^r\frac{dt}{t}\int_{-t}^td\ln B(u)+2\ln 2+K_1.\]
 Now if $c\in \mathcal{C}_n$, then
 \[\tilde{B}(\lambda):=e^{-\mathrm{i}b(c)\lambda}\times\frac{B(\lambda)-c}{1-B(\lambda)\bar{c}}\]
  is also a meromorphic inner function whose zeros are just $\sigma(T_c)$. We can view $\tilde{B}(\lambda)$ as one contractive Weyl function of \emph{another} entire operator $\tilde{T}$. Then the above argument applied to $\tilde{T}$ implies there is a positive constant $K_2$ independent of $c$ and $r$ such that
  \begin{eqnarray*}\tilde{N}_T(r,c)&<&\frac{1}{2\pi \mathrm{i}}\int_1^r\frac{dt}{t}\int_{-t}^td\ln \tilde{B}(u)+K_2\\
  &=&
  \frac{1}{2\pi \mathrm{i}}\int_1^r\frac{dt}{t}\int_{-t}^td\ln (\frac{B(u)-c}{1-B(u)\bar{c}})-\frac{b(c)(r-1)}{\pi}+K_2.
    \end{eqnarray*}
 Due to \cite[Thm.~10.43]{wang2024complex}, we know that
 \[|\frac{1}{2\pi \mathrm{i}}\int_{-t}^t d\ln B(u)-n_T(r, 1)|\leq 1\]
 and
 \[|\frac{1}{2\pi \mathrm{i}}\int_{-t}^t d\ln(\frac{B(u)-c}{1-B(u)\bar{c}}) -n_T(r, \frac{1+c}{1+\bar{c}})|\leq 1.\]
 It is also a basic fact that
 \[|n_T(r,1)-n_T(r, \frac{1+c}{1+\bar{c}})|\leq 1.\]
 Combining all these together, for $c\in \mathcal{C}_n$ we have
 \[\tilde{N}_T(r,c)<\tilde{h}_T(r)-\frac{b(c)(r-1)}{\pi}+3\ln r+K_3<\tilde{h}_T(r)-\frac{r-1}{n\pi}+3\ln r+K_3,\]
 where $K_3=K_2+2\ln 2$.

 We know the (refined) Crofton formula \cite[Lemma 10.24]{wang2024complex}
 \[\tilde{h}_T(r)=\int_{\mathbb{CP}^1}\tilde{N}_T(r,c)\omega.\]
 Since strictly dissipative boundary conditions and strictly accumulative ones appear in pairs and $\omega$ is $\mathbb{U}(2)$-invariant, we should have $\tilde{h}_T(r)=2\int_{\mathbb{D}}\tilde{N}_T(r,c)\omega$. Therefore, we have
 \begin{eqnarray*}\tilde{h}_T(r)&=&2\int_{\mathbb{D}\backslash \mathcal{C}_n}\tilde{N}_T(r,c)\omega+2\int_{ \mathcal{C}_n}\tilde{N}_T(r,c)\omega\\
 &\leq&2\int_{\mathbb{D}\backslash \mathcal{C}_n}(\tilde{h}_T(r)+K_1)\omega+2\int_{ \mathcal{C}_n}(\tilde{h}_T(r)-\frac{r-1}{n\pi}+3\ln r+K_3)\omega\\
 &\leq&\tilde{h}_T(r)+K_1+2\times(-\frac{r-1}{n\pi}+3\ln r+K_3)|\mathcal{C}_n|,
  \end{eqnarray*}
 where $|\mathcal{C}_n|$ is the $\omega$-measure of $\mathcal{C}_n$. Clearly, $|\mathcal{C}_n|$ should be zero, otherwise there is a contradiction when $r$ is sufficiently large.
\end{proof}

Due to the above theorem and Prop.~\ref{mul}, we see that for $T\in \mathfrak{E}_1(T)$ almost all strictly accumulative boundary conditions are complete and only have simple eigenvalues. For later simplicity, we assume this is the case for $T_\infty$. Now let $f_j(\lambda):=\frac{1}{\lambda-\lambda_j}$ for $\lambda_j\in \sigma(T_\infty)$. We find that
\[(f_j, f_l)_{\mathcal{H}_B}=\int_{\mathbb{R}}\frac{1}{(u-\lambda_j)(u-\overline{\lambda_l})}\frac{du}{2\pi}=\frac{\mathrm{i}}{\overline{\lambda_l}-\lambda_j}.\]
In particular, $\|f_j\|_{\mathcal{H}_B}=\frac{1}{\sqrt{-2\Im \lambda_j}}$. We have the normalized vectors $\mathfrak{f}_j(\lambda):=\frac{\sqrt{-2\Im \lambda_j}}{\lambda-\lambda_j}$. By a similar argument to the proof of Thm.~\ref{mean}, one can easily see that $\{\mathfrak{f}_j\}$ is \emph{minimal} in the sense that if one element is deleted, the residual vectors won't span $\mathcal{H}_B$ any more.
Recall that a Riesz basis of a Hilbert space is the image of an orthonormal basis under a bounded invertible operator.
\begin{definition}We say $T_\infty$ is Riesz complete, if the above $\{\mathfrak{f}_j(\lambda)\}_{j=1}^\infty$ is a Riesz basis of $\mathcal{H}_B$.
\end{definition}
\emph{Remark}. A Riesz basis is something close to an orthonormal basis and useful in the context of non-self-adjoint operators. For more details on this notion, we refer the reader to \cite[Chap.~12]{garcia2016introduction}. Obviously, Riesz completeness is only a property of $T_\infty$ and independent of the chosen boundary triplet.  Recall that a biorthogonal sequence associated to a sequence $\{x_j\}_{j=1}^\infty$ in a Hilbert space $\mathcal{H}$ is another sequence $\{\widetilde{x_j}\}_{j=1}^\infty$ in $\mathcal{H}$ such that $(\widetilde{x_k}, x_j)_{\mathcal{H}}=\delta_{jk}$, where $\delta_{jk}$ is the Kronecker delta. If $\{x_j\}_{j=1}^\infty$ is complete in $\mathcal{H}$, then its biorthogonal sequence is uniquely determined.
\begin{lemma}Let $B_l(\lambda)=\frac{\lambda_l}{\overline{\lambda_l}}\prod_{j\neq l}\frac{\lambda_j}{\overline{\lambda_j}}\frac{\lambda-\overline{\lambda_j}}{\lambda-\lambda_j}$. Then for the sequence
$\{\mathfrak{f}(\lambda)\}_{j=1}^\infty$, the corresponding biorthogonal sequence is given by
\[\widetilde{\mathfrak{f}}_j(\lambda)=\frac{\sqrt{-2\Im \lambda_j}}{B_j(\overline{\lambda_j})}\times \frac{B(\lambda)}{\lambda-\overline{\lambda_j}}.\]
In particular,
\[\|\widetilde{\mathfrak{f}}_j\|_{\mathcal{H}_B}=\frac{1}{|B_j(\overline{\lambda_j})|}=\prod_{k\neq j}|\frac{\overline{\lambda_j}-\overline{\lambda_k}}{\overline{\lambda_j}-\lambda_k}|.\]

\end{lemma}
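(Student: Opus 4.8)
The plan is to verify directly that the proposed sequence is biorthogonal to $\{\mathfrak{f}_j\}$ and then invoke the uniqueness of the biorthogonal sequence of a complete system. First I would fix a convenient normalization. Since $T_\infty$ is complete, its mean type vanishes by Thm.~\ref{mean}, so, writing $\{\lambda_j\}=\sigma(T_\infty)\subset\mathbb{C}_-$ for the poles of $B$, the Riesz--Smirnov factorization (\ref{cf}) reads
\[B(\lambda)=\gamma\prod_k\frac{\lambda_k}{\overline{\lambda_k}}\,\frac{\lambda-\overline{\lambda_k}}{\lambda-\lambda_k},\qquad |\gamma|=1,\]
the product converging by the Blaschke condition on $\{\lambda_j\}$. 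Because $\mathcal{H}_{\gamma B}=\mathcal{H}_B$ as reproducing kernel Hilbert spaces for every unimodular $\gamma$, we may take $\gamma=1$; then $B$ is an honest Blaschke product, $B_j$ is the convergent sub-product obtained by deleting the $j$-th factor (so $B_j(\overline{\lambda_j})\neq0$), and one reads off
\[B(\lambda)=\frac{\lambda-\overline{\lambda_j}}{\lambda-\lambda_j}\,B_j(\lambda),\qquad\text{hence}\qquad \frac{B(\lambda)}{\lambda-\overline{\lambda_j}}=\frac{B_j(\lambda)}{\lambda-\lambda_j}.\]
Writing $c_j:=\sqrt{-2\Im\lambda_j}/B_j(\overline{\lambda_j})$, this gives $\widetilde{\mathfrak{f}}_j(\lambda)=c_j\,B_j(\lambda)/(\lambda-\lambda_j)$. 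Since $B_j$ is inner (hence bounded) on $\mathbb{C}_+$ and $1/(\lambda-\lambda_j)\in H^2(\mathbb{C}_+)$ (the pole $\lambda_j$ lies in $\mathbb{C}_-$), we get $\widetilde{\mathfrak{f}}_j\in H^2(\mathbb{C}_+)$; and because $|B^*|\equiv1$ on $\mathbb{R}$, $\overline{B^*(t)}\widetilde{\mathfrak{f}}_j^*(t)=c_j/(t-\overline{\lambda_j})$ is the boundary value of a function in $H^2(\mathbb{C}_-)$ (its pole $\overline{\lambda_j}$ lies in $\mathbb{C}_+$), so $\widetilde{\mathfrak{f}}_j\perp BH^2(\mathbb{C}_+)$, i.e. $\widetilde{\mathfrak{f}}_j\in\mathcal{H}_B$.

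The biorthogonality $(\widetilde{\mathfrak{f}}_k,\mathfrak{f}_j)_{\mathcal{H}_B}=\delta_{jk}$ I would obtain by a residue computation. Since $\mathcal{H}_B$ carries the $H^2(\mathbb{C}_+)$ inner product and $\overline{\mathfrak{f}_j^*(t)}=\sqrt{-2\Im\lambda_j}/(t-\overline{\lambda_j})$ on $\mathbb{R}$, the integrand $\widetilde{\mathfrak{f}}_k^*(t)\overline{\mathfrak{f}_j^*(t)}$ is the restriction to $\mathbb{R}$ of
\[g_{jk}(\lambda):=c_k\sqrt{-2\Im\lambda_j}\,\frac{B(\lambda)}{(\lambda-\overline{\lambda_k})(\lambda-\overline{\lambda_j})},\]
and one may close the contour in $\mathbb{C}_+$ because $|B|\le1$ there while the rational factor decays like $|\lambda|^{-2}$, so the large semicircles contribute nothing. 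For $j\neq k$ the function $g_{jk}$ is analytic on $\overline{\mathbb{C}_+}$ — the simple zeros of $B$ at $\overline{\lambda_j}$ and $\overline{\lambda_k}$ cancel the two poles — hence the integral vanishes and $(\widetilde{\mathfrak{f}}_k,\mathfrak{f}_j)=0$. For $j=k$, $g_{jj}$ has in $\mathbb{C}_+$ a single simple pole, at $\overline{\lambda_j}$, with residue $c_j\sqrt{-2\Im\lambda_j}\,B'(\overline{\lambda_j})$, so $(\widetilde{\mathfrak{f}}_j,\mathfrak{f}_j)=\mathrm{i}\,c_j\sqrt{-2\Im\lambda_j}\,B'(\overline{\lambda_j})$; differentiating $B(\lambda)=\frac{\lambda-\overline{\lambda_j}}{\lambda-\lambda_j}B_j(\lambda)$ at $\overline{\lambda_j}$ gives $B'(\overline{\lambda_j})=B_j(\overline{\lambda_j})/(\overline{\lambda_j}-\lambda_j)=B_j(\overline{\lambda_j})/(-2\mathrm{i}\Im\lambda_j)$, and on substituting $c_j$ everything collapses to $1$. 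Since $\{\mathfrak{f}_j\}$ was shown above to be complete in $\mathcal{H}_B$, its biorthogonal sequence is unique, so it must be $\{\widetilde{\mathfrak{f}}_j\}$.

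For the norm I would simply integrate the modulus: using $|B^*|\equiv1$ on $\mathbb{R}$,
\[\|\widetilde{\mathfrak{f}}_j\|_{\mathcal{H}_B}^2=\frac{|c_j|^2}{2\pi}\int_{\mathbb{R}}\frac{dt}{|t-\overline{\lambda_j}|^2}=\frac{|c_j|^2}{2\pi}\cdot\frac{\pi}{-\Im\lambda_j}=\frac{1}{|B_j(\overline{\lambda_j})|^2},\]
and $|B_j(\overline{\lambda_j})|=\prod_{k\neq j}|(\overline{\lambda_j}-\overline{\lambda_k})/(\overline{\lambda_j}-\lambda_k)|$ since every unimodular prefactor in $B_j$ drops out, which yields the asserted product expression. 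The only delicate points are keeping track of which of $\overline{\lambda_j}$, $\lambda_j$ is a zero of $B$ (it is $\overline{\lambda_j}$, the poles of $B$ being the eigenvalues of $T_\infty$ in $\mathbb{C}_-$) and justifying the contour shift, where the inner-function bound $|B|\le1$ on $\mathbb{C}_+$ is used; neither is a genuine obstacle, so the main work is really in organizing the $\mathcal{H}_B$-membership and residue steps cleanly.
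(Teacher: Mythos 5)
Your proposal is correct and follows essentially the same route as the paper: both verify biorthogonality by evaluating the boundary-value $H^2(\mathbb{C}_+)$ inner products via residues (using the factorization $B(\lambda)=\frac{\lambda-\overline{\lambda_j}}{\lambda-\lambda_j}B_j(\lambda)$) and then compute $\|\widetilde{\mathfrak{f}}_j\|_{\mathcal{H}_B}$ by a direct integral over $\mathbb{R}$. The extra steps you include (membership $\widetilde{\mathfrak{f}}_j\in\mathcal{H}_B$ and uniqueness of the biorthogonal sequence from completeness) are left implicit in the paper but are consistent with it.
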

\begin{proof}We see that
\begin{eqnarray*}
(\frac{B(\cdot)}{\cdot-\overline{\lambda_k}}, \mathfrak{f}_j(\cdot))_{\mathcal{H}_B}&=&\sqrt{-2\Im \lambda_j}\times\int_{\mathbb{R}}\frac{B(u)}{(u-\overline{\lambda_k})(u-\overline{\lambda_j})}\frac{du}{2\pi}\\
&=&\sqrt{-2\Im \lambda_j}\times \int_{\mathbb{R}}\frac{B_k(u)}{(u-\lambda_k)(u-\overline{\lambda_j})}\frac{du}{2\pi}\\
&=&\mathrm{i} \sqrt{-2\Im \lambda_j}\frac{B_k(\overline{\lambda_j})}{(\overline{\lambda_j}-\lambda_k)}\\
&=&\frac{B_j(\overline{\lambda_j})}{\sqrt{-2\Im \lambda_j}}\delta_{jk}.
\end{eqnarray*}
On the other side,
\[(\frac{B(\cdot)}{\cdot-\overline{\lambda_j}},\frac{B(\cdot)}{\cdot-\overline{\lambda_j}})_{\mathcal{H}_B}=\int_\mathbb{R}\frac{B(u)}{u-\overline{\lambda_j}}\cdot \frac{\overline{B(u)}}{u-\lambda_j}\frac{du}{2\pi}=\frac{1}{-2\Im\lambda_j}.\]
The lemma then follows immediately.
\end{proof}
\begin{theorem}\label{Riesz}$T_\infty$ is Riesz complete if and only if $\inf_{j\geq1}|B_j(\overline{\lambda_j})|>0$.
\begin{proof}We only cite relevant results in \cite[Chap.~12]{garcia2016introduction} and give an outline of an indirect proof. Given a Blaschke sequence $\{z_n\}$ in the unit disc $\mathbb{D}$, let $\mathrm{B}(z)$ be the associated Blaschke product, i.e.,
 \[\mathrm{B}(z)=\prod_{j=1}^\infty\frac{\overline{z_j}}{|z_j|}\frac{z_j-z}{1-\overline{z_j}z}.\]
$\mathrm{B}(z)$ is an inner function and thus generates a model space $H_\mathrm{B}=(\mathrm{B}H^2(\mathbb{D}))^\bot$, where $H^2(\mathbb{D})$ is the Hardy space on $\mathbb{D}$. \cite[Thm.~12.22]{garcia2016introduction} deals with the following question: \emph{when does the associated sequence of normalized reproducing kernels $k_{z_j}(z)$ form a Riesz basis in $H_B$?} This is completely parallel to the problem we are facing. More importantly, these two contexts can be connected by the following unitary map from $H^2(\mathbb{D})$ to $H^2(\mathbb{C}_+)$:
\[\mathcal{U}:H^2(\mathbb{D})\rightarrow H^2(\mathbb{C}_+),\quad g(z)\mapsto \frac{\sqrt{2}}{\lambda+\mathrm{i}}\times g(\frac{\lambda-\mathrm{i}}{\lambda+\mathrm{i}}).\]
Via the Cayley transform our $B(\lambda)$ on $\mathbb{C}_+$ is turned into a Blaschke product $\mathrm{B}(z)$ on $\mathbb{D}$; via $\mathcal{U}$ our $\{\mathfrak{f}_j\}$ can be transformed into a sequence of normalized reproducing kernels in $H_\mathrm{B}$. Clearly, the notions of Riesz basis and \emph{constant of uniform minimality} (\cite[\S~12.2]{garcia2016introduction}) are both unitarily invariant. Then Thm.~12.9 and Thm.~12.22 of \cite{garcia2016introduction} lead to our result.
\end{proof}
\end{theorem}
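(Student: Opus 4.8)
The plan is to carry everything over to the unit disc via the Cayley transform and then invoke the classical description of when the normalized reproducing kernels of a model space over a Blaschke product form a Riesz basis. Recall the standing assumption that $T_\infty$ is complete and has only simple eigenvalues; by Thm.~\ref{mean} its mean type then vanishes, so the contractive Weyl function $B(\lambda)$ has trivial singular inner factor and is an honest Blaschke product, with simple zeros located exactly at the points $\overline{\lambda_j}\in\mathbb{C}_+$, $\lambda_j\in\sigma(T_\infty)\subset\mathbb{C}_-$. From the formula for $\mathrm{K}_2$ in \S\S~\ref{br} together with $B(\overline{\lambda_j})=0$ one reads off $\mathrm{K}_2(\lambda,\overline{\lambda_j})=\mathrm{i}(\lambda-\lambda_j)^{-1}$, so the normalized vectors $\mathfrak{f}_j(\lambda)=\sqrt{-2\Im\lambda_j}/(\lambda-\lambda_j)$ are, up to a unimodular constant, precisely the normalized reproducing kernels of $\mathcal{H}_B$ at the points $\overline{\lambda_j}$.

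First I would verify that the unitary $\mathcal{U}:H^2(\mathbb{D})\to H^2(\mathbb{C}_+)$ displayed in the statement intertwines multiplication by $\mathrm{B}$ and multiplication by $B$, where $\mathrm{B}$ is the Blaschke product on $\mathbb{D}$ with zeros $z_j=(\overline{\lambda_j}-\mathrm{i})/(\overline{\lambda_j}+\mathrm{i})$; indeed $\mathcal{U}$ is the pull-back through the Cayley map and $B(\lambda)=\mathrm{B}((\lambda-\mathrm{i})/(\lambda+\mathrm{i}))$ up to a unimodular factor. Hence $\mathcal{U}$ maps $H_\mathrm{B}=(\mathrm{B}H^2(\mathbb{D}))^\bot$ unitarily onto $\mathcal{H}_B=(BH^2(\mathbb{C}_+))^\bot$. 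A short computation with evaluation functionals then shows that $\mathcal{U}$ sends the reproducing kernel of $H_\mathrm{B}$ at $z_j$ to a nonzero scalar multiple of that of $\mathcal{H}_B$ at $\overline{\lambda_j}$, hence the normalized kernel $k_{z_j}$ to a unimodular multiple of $\mathfrak{f}_j$. Since being a Riesz basis is unchanged by a unitary map and by multiplying individual vectors by unimodular constants, $T_\infty$ is Riesz complete if and only if $\{k_{z_j}\}$ is a Riesz basis of $H_\mathrm{B}$.

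Now I would invoke the classical equivalence (\cite[Thm.~12.9, Thm.~12.22]{garcia2016introduction}; see also Carleson's interpolation theorem): for a Blaschke sequence $\{z_j\}\subset\mathbb{D}$ with Blaschke product $\mathrm{B}$, the sequence of normalized reproducing kernels $\{k_{z_j}\}$ is a Riesz basis of $H_\mathrm{B}$ if and only if $\{z_j\}$ is a Carleson interpolating sequence, equivalently
\[\inf_{j\geq 1}\ \prod_{k\neq j}\left|\frac{z_j-z_k}{1-\overline{z_k}z_j}\right|>0 .\]
It only remains to identify this number with $\inf_j|B_j(\overline{\lambda_j})|$. This is the Cayley-invariance of the pseudo-hyperbolic distance: if $w,w'\in\mathbb{C}_+$ have Cayley images $z=(w-\mathrm{i})/(w+\mathrm{i})$ and $z'=(w'-\mathrm{i})/(w'+\mathrm{i})$, then $|(z-z')/(1-\overline{z'}z)|=|(w-w')/(w-\overline{w'})|$; applying this with $w=\overline{\lambda_j}$, $w'=\overline{\lambda_k}$, the product above becomes $\prod_{k\neq j}|(\overline{\lambda_j}-\overline{\lambda_k})/(\overline{\lambda_j}-\lambda_k)|$, which by the lemma preceding the theorem equals $|B_j(\overline{\lambda_j})|$. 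Chaining the equivalences proves the theorem.

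I expect the only real work to be bookkeeping: checking that $\mathcal{U}$ genuinely identifies the two model spaces and carries normalized reproducing kernels to normalized reproducing kernels, and pinning down which precise form of the Carleson/uniform-minimality/Riesz-basis trichotomy is being cited. The analytic heart — that for normalized reproducing kernels in a Blaschke model space, being uniformly minimal, being a Carleson sequence, and forming a Riesz basis are all equivalent — is entirely classical, so no new estimate is required; the completeness hypothesis on $T_\infty$ is exactly what guarantees that the system spans, so that ``Riesz basis of its closed span'' means ``Riesz basis of $\mathcal{H}_B$''.
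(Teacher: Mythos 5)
Your proposal is correct and follows essentially the same route as the paper's own (outlined) proof: transfer via the Cayley-type unitary $\mathcal{U}$ to the disc model space $H_{\mathrm{B}}$, identify the $\mathfrak{f}_j$ with normalized reproducing kernels, and invoke Thm.~12.9 and Thm.~12.22 of \cite{garcia2016introduction} (the Riesz basis/uniform minimality/Carleson equivalence), with the pseudo-hyperbolic invariance giving $\inf_j|B_j(\overline{\lambda_j})|>0$. You merely fill in the bookkeeping the paper leaves implicit, so no further changes are needed.
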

\emph{Remark}. The theorem means the distribution of $\{\lambda_j\}$ must be sufficiently sparse for $T_\infty$ to be Riesz complete. The same condition also appears in the study of interpolating sequences in the upper half-plane $\mathbb{C}_+$. By \cite[Thm.~1.1, Chap.~VII]{garnett2006bounded}, $T_\infty$ is Riesz complete, if and only if $\{\lambda_i\}$ is an interpolating sequence. Clearly, a necessary condition for $T_\infty$ to be Riesz complete is that there should be a positive constant $\delta$ such that $|\frac{\overline{\lambda_j}-\overline{\lambda_k}}{\overline{\lambda_j}-\lambda_k}|>\delta$ as soon as $k\neq j$.

The following two examples show that both Riesz complete and Riesz incomplete boundary conditions can occur.
\begin{example}Let $\lambda_j=-\mathrm{i}j^2$. Note that
\[|\frac{\overline{\lambda_{j+1}}-\overline{\lambda_j}}{\overline{\lambda_{j+1}}-\lambda_j}|=\frac{2j+1}{2j^2+2j+1}\rightarrow 0, \quad (j\rightarrow +\infty).\]
Then $T_\infty$ associated to the corresponding Blaschke product $B(\lambda)$ cannot be Riesz complete. It is easy to see the Weyl order $\rho_T=1/2$.
\end{example}
\begin{example}Let $\lambda_j=j-\mathrm{i}$. Then according to the discussion after \cite[Thm.~1.1, Chap.~VII]{garnett2006bounded}, $\{j+\mathrm{i}\}$ is an interpolating sequence and $T_\infty$ associated to the corresponding Blaschke product $B(\lambda)$ must be Riesz complete.
\end{example}
Note that in the above example $\rho_T=1$. More generally, we have
\begin{proposition}Let $T_\infty$ be Riesz complete. If $\{\lambda_j\}_{j=1}^\infty$ are the eigenvalues of $T_\infty$ and $\inf_j \Im \overline{\lambda_j}>0$, then $\rho_T\leq 1$.
\end{proposition}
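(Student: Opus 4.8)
The plan is to combine the interpolation-theoretic characterization of Riesz completeness with the integral formula (\ref{height}) for the height function. Fix a boundary triplet $(\mathbb{C},\Gamma_+,\Gamma_-)$ for which $T_\infty$ is the given extension, and let $B(\lambda)$ be the associated contractive Weyl function, so that the poles of $B$ are exactly $\sigma(T_\infty)=\{\lambda_j\}\subset\mathbb{C}_-$ and the zeros of $B$ are the conjugates $\overline{\lambda_j}=\sigma(T_0)\subset\mathbb{C}_+$. Since Riesz completeness implies completeness, Thm.~\ref{mean} forces the mean type of $T_\infty$ to be zero, i.e.\ $b=0$ in (\ref{cf}); hence $B$ is a pure Blaschke product and the associated phase function satisfies $\theta'(u)=\sum_j\dfrac{2\Im\overline{\lambda_j}}{|u-\lambda_j|^{2}}\ge 0$. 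By (\ref{height}) it then suffices to prove $\int_{-t}^{t}\theta'(u)\,du=O(t)$ as $t\to+\infty$, for then $h_T(r)=\tfrac1{2\pi}\int_0^r t^{-1}\bigl(\int_{-t}^t\theta'\bigr)\,dt+O(1)=O(r)$ and so $\rho_T\le 1$.

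The heart of the argument is a linear bound on the counting function $n(t):=\#\{j:|\lambda_j|<t\}$, multiplicities included. By the Remark following Thm.~\ref{Riesz}, Riesz completeness of $T_\infty$ is equivalent to $\{\lambda_j\}$ being an interpolating sequence; by Carleson's theorem (see \cite[Chap.~VII]{garnett2006bounded}) the measure $\nu:=\sum_j|\Im\lambda_j|\,\delta_{\lambda_j}$ is then a Carleson measure on $\mathbb{C}_-$, say $\nu(Q)\le C\,\ell(Q)$ for every Carleson box $Q$. The extra hypothesis $\delta_0:=\inf_j|\Im\lambda_j|>0$ now does the decisive work: the Carleson box over $(-t,t)$ contains every $\lambda_j$ with $|\lambda_j|<t$, whence $\delta_0\,n(t)\le\nu(Q)\le 2Ct$ and therefore $n(t)=O(t)$.

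It remains to estimate $\int_{-t}^t\theta'(u)\,du=2\sum_j\bigl(\arctan\tfrac{t-\Re\lambda_j}{|\Im\lambda_j|}+\arctan\tfrac{t+\Re\lambda_j}{|\Im\lambda_j|}\bigr)$. Splitting the sum at $|\lambda_j|=2t$, the terms with $|\lambda_j|\le 2t$ contribute at most $2\pi\,n(2t)=O(t)$, since each summand is the bounded angle that $(-t,t)$ subtends at $\overline{\lambda_j}$. For $|\lambda_j|>2t$ the addition formula for $\arctan$ rewrites the summand as $2\arctan\dfrac{2t|\Im\lambda_j|}{|\lambda_j|^{2}-t^{2}}$, and $\arctan x\le x$ together with $|\lambda_j|^{2}-t^{2}\ge\tfrac34|\lambda_j|^{2}$ bounds it by $C'\,t\,|\Im\lambda_j|/|\lambda_j|^{2}$; summing over $j$ gives $O(t)$ by the Blaschke condition. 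Hence $\int_{-t}^t\theta'=O(t)$ — the apparent singularity at $t=0$ being harmless because $0\notin\sigma(T_\infty)$, so $n(2t)=0$ for small $t$ — and $h_T(r)=O(r)$, i.e.\ $\rho_T\le 1$.

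The main obstacle is exactly the second paragraph: passing from Riesz completeness to $n(t)=O(t)$. Mere completeness (equivalently, $b=0$) is not enough, nor is the Blaschke condition together with $\inf_j|\Im\lambda_j|>0$, which only yields $n(t)=O(t^{2})$ and hence $\rho_T\le 2$; it is the Carleson-measure half of the interpolation condition that upgrades the quadratic bound to a linear one. The subsequent manipulations with $\theta'$ are routine, but one should check that the tail estimate for $|\lambda_j|>2t$ is uniform in $t$, which is precisely what the convergence of $\sum_j|\Im\lambda_j|/|\lambda_j|^{2}$ guarantees.
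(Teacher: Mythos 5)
Your proof is correct and follows essentially the same route as the paper: Riesz completeness gives an interpolating sequence, hence a Carleson measure, and the hypothesis $\inf_j \Im \overline{\lambda_j}>0$ converts the Carleson box estimate into a linear bound on the eigenvalue counting function. Your final step, estimating $\int_{-t}^{t}\theta'(u)\,du=O(t)$ directly and feeding it into the formula for $h_T(r)$, merely makes explicit (and slightly strengthens, to an exponential-type bound) the conclusion the paper draws from the linear counting bound.
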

\begin{proof}According to \cite[Thm.~1.1, Chap.~VII]{garnett2006bounded}, $\sum_j \Im{\overline{\lambda_j}}\times\delta_{\lambda_j}$ has to be a Carleson measure on $\mathbb{C}_+$. This implies that there is a positive constant $A$ such that
\[\sum_{\overline{\lambda_j}\in Q(r)}\Im{\overline{\lambda_j}}\leq 2Ar,\]
where $Q(r)$ is the square $\{\lambda\in \mathbb{C}_+| \Re \lambda\in [-r, r],\quad \Im \lambda\in (0, 2r]\}$. If $N$ is the number of eigenvalues of $T_\infty$ lying in $Q(r)$, then the above inequality leads to
 \[N\leq  \frac{2Ar}{\inf_j \Im \overline{\lambda_j}}, \]
which surely implies the claim.
\end{proof}
It is interesting to know whether the existence of a Riesz complete non-self-adjoint extension of $T$ restricts the Weyl order of $T$.
\section{The growth aspect of the model space}\label{growth}
 Fix $T\in \mathfrak{E}_1(H)$ and let $\mathfrak{H}$ be its canonical model space in \S~\ref{model}. It is natural to ask how elements in $\mathfrak{H}$ are controlled by the behavior of the Weyl curve $W_T(\lambda)$. If the de Branges model in \S\S~\ref{deBranges} is in use, $\mathcal{H}_e$ is a space of entire functions and thus the maximum modulus can be used to measure the growth of an element in $\mathcal{H}_e$. This was the viewpoint adopted in \cite{kaltenbackbranges}. For us, since $\mathcal{H}_e$ is not an intrinsic object and depends on how the characteristic line bundle $F$ is trivialized, the maximum modulus is not that suitable for our purpose. However, zeros of a section in $\mathfrak{H}$ are of intrinsic meaning, and our concern in this section is how the distribution of zeros of an element in $\mathfrak{H}$ is controlled by the geometry of $F$.
 \begin{definition}Let $c_1(F)$ be the first Chern form of $F$, i.e., $c_1(F)=\frac{\mathrm{i}}{2\pi}R$ where $R$ is the curvature of the canonical Chern connection in $F$. Define the characteristic function $\mathrm{T}_F(r)$ to be
 \[\mathrm{T}_F(r):=\int_0^r\frac{dt}{t}\int_{\mathbb{D}_t}c_1(F).\]
 Here $\mathbb{D}_t$ is the open disc on $\mathbb{C}$, with center 0 and radius $t>0$.
 \end{definition}
 \emph{Remark}. $\mathrm{T}_F(r)$ only depends on the geometry of $F$ and is a unitary invariant of $T$. Since $R$ is positive-definite, $\mathrm{T}_F(r)$ is a strictly increasing function in $r$. According to the philosophy in \cite{cornalba1975analytic}, $\mathrm{T}_F(r)$ measures the growth of the line bundle $F$.

 Let $s$ be a nontrivial holomorphic section of $F$ and $n_s(r)$ be the number of zeros (counted with multiplicity) of $s$ in the open disc $\mathbb{D}_r$. Then two functions can be defined.
 \begin{definition}The zero counting function $N_s(r)$ for $r>0$ is defined to be
 \[N_s(r):=\int_0^r\frac{n_s(t)-n_s(0)}{t}dt+n_s(0)\ln r,\]
 where $n_s(0)$ is the multiplicity of 0 as a zero of $s$.  The proximity function $m_s(r)$ is defined to be
 \[m_s(r):=-\frac{1}{2\pi}\int_0^{2\pi}\ln |s(re^{\mathrm{i}\phi})|d\phi,\]
 where $|s(\lambda)|$ is the length of $s(\lambda)$ in $F_\lambda$.
 \end{definition}
 \begin{proposition}\label{first}With the above definitions, we have
 \[\mathrm{T}_F(r)=m_s(r)+N_s(r)+O(1).\]
 Here $O(1)$ is a bounded term in $r$ as $r$ goes to $+\infty$.
 \end{proposition}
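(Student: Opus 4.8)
The plan is to reduce the statement to the classical Jensen formula by trivializing $F$. By the Oka--Grauert principle recalled in \S\S~\ref{model}, fix a global holomorphic section $e$ of $F$ vanishing nowhere on $\mathbb{C}$ and put $h(\lambda):=|e(\lambda)|_\lambda^2$, a smooth strictly positive function. Since $s$ is nontrivial we may write $s=f\cdot e$ for an entire function $f\not\equiv 0$, and then
\[\ln|s(\lambda)|_\lambda=\ln|f(\lambda)|+\tfrac12\ln h(\lambda).\]
Because $e$ is nowhere zero, $s$ and $f$ have exactly the same zeros with the same multiplicities, so $n_s\equiv n_f$ and $N_s\equiv N_f$.

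First I would express $\mathrm{T}_F(r)$ through $e$. From the identity $R=\overline{\partial}\partial\ln h$ used in the proof of the curvature lemma and the normalization $c_1(F)=\frac{\mathrm i}{2\pi}R$, one gets $c_1(F)=-\frac{1}{2\pi}\Delta(\ln|e|)\,dx\wedge dy$ on $\mathbb{C}$ (identifying $\tfrac{\mathrm i}{2}\partial\bar\partial$ with $\tfrac14\Delta\,dx\wedge dy$). Setting $\mu(t):=\frac{1}{2\pi}\int_0^{2\pi}\ln|e(te^{\mathrm i\phi})|_{te^{\mathrm i\phi}}\,d\phi$, Green's formula on the disc $\mathbb{D}_t$ gives $\int_{\mathbb{D}_t}c_1(F)=-\,t\,\mu'(t)$; this is $O(t^2)$ as $t\to0$ because $h$ is smooth and positive, so $\mu$ is $C^1$ on $[0,\infty)$ with $\mu'(0)=0$. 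Integrating,
\[\mathrm{T}_F(r)=\int_0^r\frac{dt}{t}\bigl(-t\,\mu'(t)\bigr)=\mu(0)-\mu(r)=\ln|e(0)|_0-\mu(r),\]
hence $-\frac{1}{2\pi}\int_0^{2\pi}\ln|e(re^{\mathrm i\phi})|_{re^{\mathrm i\phi}}\,d\phi=\mathrm{T}_F(r)+O(1)$.

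Next I would apply the classical Jensen formula to $f$: writing $f(\lambda)=c_f\lambda^{n_f(0)}+\cdots$ near $0$ with $c_f\neq0$,
\[\frac{1}{2\pi}\int_0^{2\pi}\ln|f(re^{\mathrm i\phi})|\,d\phi=\ln|c_f|+\int_0^r\frac{n_f(t)-n_f(0)}{t}\,dt+n_f(0)\ln r=N_f(r)+O(1),\]
which is precisely the source of the $n_s(0)\ln r$ term in $N_s$. Combining this with the previous paragraph and the splitting $\ln|s|_\lambda=\ln|f|+\ln|e|_\lambda$,
\[m_s(r)=-\frac{1}{2\pi}\int_0^{2\pi}\bigl(\ln|f(re^{\mathrm i\phi})|+\ln|e(re^{\mathrm i\phi})|_{re^{\mathrm i\phi}}\bigr)\,d\phi=-N_f(r)+\mathrm{T}_F(r)+O(1),\]
and since $N_f=N_s$ this rearranges to $\mathrm{T}_F(r)=m_s(r)+N_s(r)+O(1)$.

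I do not expect a real obstacle: this is the line-bundle version of the First Main Theorem, and everything collapses to Jensen plus Green's formula on discs. The only points needing a bit of care are the normalization constants relating $R$, $c_1(F)$ and $\Delta$, and the legitimacy of differentiating under the integral sign in the computation of $\mu'$ (which is fine because the Hermitian metric $h$ is smooth). Alternatively, the two Jensen-type computations above can be packaged into a single application of the Poincar\'e--Lelong identity $\frac{\mathrm i}{2\pi}\partial\bar\partial\ln|s|_\lambda^2=[Z_s]-c_1(F)$ of currents on $\mathbb{C}$, followed by integration against the Green kernel of $\mathbb{D}_r$; I would mention this as a coordinate-free variant but carry out the trivialization argument in detail since it is the most elementary.
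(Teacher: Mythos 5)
Your argument is correct, and it reaches the same First Main Theorem identity by a slightly different route than the paper. The paper applies the Poincar\'e--Lelong formula directly to $\ln|s|^2$ as an identity of currents and then invokes the Green--Jensen formula (citing Ru's book), disposing of a possible zero of $s$ at the origin by replacing $s$ with $s/\lambda^k$; you instead trivialize $F$ by an Oka--Grauert frame $e$, split $\ln|s|_\lambda=\ln|f|+\ln|e|_\lambda$, and treat the two pieces separately: the smooth metric part via $c_1(F)=-\frac{1}{2\pi}\Delta\ln|e|\,dx\wedge dy$ and Green's formula on discs (your normalization $\frac{\mathrm i}{2}\partial\bar\partial=\frac14\Delta\,dx\wedge dy$ and the resulting $\int_{\mathbb{D}_t}c_1(F)=-t\mu'(t)$ are right), and the holomorphic part via classical Jensen, which automatically produces the $\ln|c_f|+n_s(0)\ln r$ correction, so the origin needs no separate treatment. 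What your version buys is elementarity and self-containedness: no currents, no external Green--Jensen reference, and all singular behavior at zeros is absorbed into the classical Jensen formula; the only facts used are smoothness and positivity of the Hermitian metric (which hold here because the curvature function extends real-analytically across $\mathbb{R}$) and the existence of a global nonvanishing frame, which is special to line bundles over $\mathbb{C}$. What the paper's current-theoretic formulation buys is coordinate-freeness and the fact that it does not depend on the bundle being holomorphically trivial, so it transfers verbatim to the higher-deficiency-index setting the author has in view. You correctly note that your two computations repackage into the Poincar\'e--Lelong identity integrated with respect to the logarithmic kernel, which is exactly the paper's proof, so the two arguments are equivalent in content.
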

 \begin{proof}Recall the famous Poincare-Lelong formula
 \[dd^c[\ln |s|^2]=-c_1(F)+[(s)].\]
 Here $d^c=\frac{\mathrm{i}}{4\pi}(\overline{\partial}-\partial)$ and $(s)$ is the divisor on $\mathbb{C}$ determined by the zeros of $s$. This formula should be interpreted as an identity of currents. If $s(0)\neq 0$, by the Green-Jensen's formula (\cite[Thm.~A2.2.3]{ru2021nevanlinna}) we have
 \[\int_0^r\frac{dt}{t}\int_{\mathbb{D}_t}dd^c[\ln |s|^2]=\frac{1}{2\pi}\int_0^{2\pi}\ln|s(re^{\mathrm{i}\phi})|d\phi-\ln|s(0)|.\]
 The claim then follows. If 0 is a zero of $s(\lambda)$ with multiplicity $k$, it suffices to replace $s(\lambda)$ with $s(\lambda)/\lambda^k$ in the above argument.
 \end{proof}
In principle, by choosing a global holomorphic frame, the space of holomorphic sections of $F$ can be identified with the space of entire functions. Thus a general holomorphic section $s$ can be very wild and $N_s(r)$ won't be controlled by $\mathrm{T}_F(r)$ at all. However, this could not happen for $s\in \mathfrak{H}$. W.l.g., we assume that $\|s\|=1$, i.e., it is a unit vector in $\mathfrak{H}$.
\begin{lemma}\label{lem}If $s\in \mathfrak{H}$ and $\|s\|=1$, then $|s(\lambda)|\leq 1$ pointwise.
\end{lemma}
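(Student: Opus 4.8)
The plan is to reduce the estimate to the Cauchy--Schwarz inequality in $H$. By construction the model space $\mathfrak{H}=\hat H$ carries the Hilbert structure transported from $H$ along $x\mapsto\hat x$, so a unit section $s\in\mathfrak{H}$ is $s=\hat x$ for a (unique) $x\in H$ with $\|x\|_H=1$, and its value at $\lambda$ is the element $s(\lambda)=\hat x(\lambda)=\iota_\lambda^\dag x\in F_\lambda$. It therefore suffices to bound the length of $\iota_\lambda^\dag x$ in the fibre $F_\lambda$.

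First I would unwind what the length on the line $F_\lambda$ is. Since $F_\lambda$ is by definition the conjugate-linear dual of $F^\dag_\lambda$ with the dual Hermitian metric, every $\varphi\in F_\lambda$ satisfies
\[
|\varphi|_\lambda=\sup\Bigl\{\,\frac{|\langle\varphi,\omega\rangle|}{|\omega|_\lambda}\ :\ 0\ne\omega\in F^\dag_\lambda\,\Bigr\},
\]
the supremum of a Cauchy--Schwarz quotient over a one-dimensional space (hence in fact a maximum, attained at the Riesz representative of $\varphi$). Next I would apply this with $\varphi=s(\lambda)=\iota_\lambda^\dag x$. Using the defining identity $\langle\iota_\lambda^\dag x,\omega\rangle=(x,\iota_\lambda\omega)_H$ together with the fact that the canonical embedding $\iota_\lambda\colon F^\dag_\lambda\hookrightarrow H$ is isometric (the Hermitian structure of $F^\dag_\lambda$ being the restriction of that of $H$), for every $0\ne\omega\in F^\dag_\lambda$ one gets
\[
\frac{|\langle s(\lambda),\omega\rangle|}{|\omega|_\lambda}=\frac{|(x,\iota_\lambda\omega)_H|}{\|\iota_\lambda\omega\|_H}\le\|x\|_H=\|s\|_{\mathfrak{H}}=1
\]
by Cauchy--Schwarz in $H$; taking the supremum over $\omega$ gives $|s(\lambda)|\le 1$. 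Equivalently, the same bound can be obtained from the reproducing kernel, using $\langle s(\lambda),\omega\rangle=(s,K(\cdot,\lambda)\omega)_{\mathfrak{H}}$ and $\|K(\cdot,\lambda)\omega\|_{\mathfrak{H}}^2=\langle K(\lambda,\lambda)\omega,\omega\rangle=\|\iota_\lambda\omega\|_H^2$, which is the standard pointwise estimate in a reproducing kernel Hilbert space.

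I do not expect a serious obstacle here; the statement is essentially Cauchy--Schwarz. The only delicate point is the bookkeeping of the Hermitian structures: one must check that $|\cdot|_\lambda$ on $F_\lambda$ really is the dual norm of $|\cdot|_\lambda$ on $F^\dag_\lambda$, and that $\iota_\lambda$ is an isometry onto the line $E_{\bar\lambda}\subset H$. It is also worth noting that the estimate is sharp at $\lambda$: $|s(\lambda)|=\|s\|_{\mathfrak{H}}$ holds precisely when $x\in E_{\bar\lambda}$, i.e.\ when $s$ is a scalar multiple of a reproducing-kernel section at $\lambda$; in particular $|s(\lambda)|<\|s\|_{\mathfrak H}$ whenever the orthogonal projection of $x$ onto $E_{\bar\lambda}$ is not all of $x$.
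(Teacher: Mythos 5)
Your argument is correct and is essentially the paper's proof: the paper bounds $|\langle s(\lambda),\omega\rangle|$ by Cauchy--Schwarz against the kernel sections $K(\cdot,\lambda)\omega$ in $\mathfrak{H}$ and computes $\|K(\cdot,\lambda)\omega\|_{\mathfrak{H}}=|\omega|_\lambda$, which is exactly your second formulation, while your primary route merely transports the same Cauchy--Schwarz estimate to $H$ via the unitary $x\mapsto\hat{x}$ and the isometry $\iota_\lambda$. The bookkeeping you flag (that $|\cdot|_\lambda$ on $F_\lambda$ is the dual norm and $\iota_\lambda$ is isometric onto $E_{\bar\lambda}$) is exactly how the induced Hermitian structure on $F$ is defined, so there is no gap.
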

\begin{proof}Let $0\neq\omega\in F^\dag_\lambda$. Then by definition
\[|\langle s(\lambda), \omega\rangle|=|(s(\cdot), K(\cdot,\lambda)\omega)_{\mathfrak{H}}|\leq \|s\|\cdot \|K(\cdot, \lambda)\omega\|=\|K(\cdot, \lambda)\omega\|.\]
Note that
\[\|K(\cdot, \lambda)\omega\|^2=(K(\cdot, \lambda)\omega,K(\cdot, \lambda)\omega)_\mathfrak{H}=\langle K(\lambda,\lambda)\omega, \omega\rangle=(\iota_\lambda \omega, \iota_\lambda \omega)_H=|\omega|_\lambda^2.
\]
The claim then follows.
\end{proof}
We thus have
\begin{proposition}\label{esti}For a nonzero $s\in \mathfrak{H}$,
$N_s(r)\leq \mathrm{T}_F(r)+O(1)$.
\end{proposition}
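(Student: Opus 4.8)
The plan is to obtain Proposition~\ref{esti} as an immediate consequence of the First Main Theorem for the line bundle $F$ (Prop.~\ref{first}) together with the pointwise bound on unit vectors of $\mathfrak{H}$ (Lemma~\ref{lem}). First I would reduce to the normalized case. The asserted inequality is insensitive to rescaling $s$: replacing $s$ by $cs$ for a nonzero constant $c$ leaves the divisor of zeros, hence $n_s(t)$ and therefore $N_s(r)$, unchanged, while $m_s(r)$ is altered only by the additive constant $-\ln|c|$, which is absorbed into the $O(1)$ term; and $\mathrm{T}_F(r)$ does not involve $s$ at all. So without loss of generality we may assume $\|s\|_{\mathfrak{H}}=1$.

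Next I would invoke Lemma~\ref{lem}: for a unit vector $s\in\mathfrak{H}$ one has $|s(\lambda)|_\lambda\le 1$ at every $\lambda\in\mathbb{C}$, where $|\cdot|_\lambda$ is the norm on $F_\lambda$ induced by the Hermitian structure of $F$. Hence $\ln|s(re^{\mathrm{i}\phi})|\le 0$ on each circle $|\lambda|=r$, and therefore the proximity function satisfies
\[
m_s(r)=-\frac{1}{2\pi}\int_0^{2\pi}\ln|s(re^{\mathrm{i}\phi})|\,d\phi\ \ge\ 0
\]
for all $r>0$ (the integrand may be $+\infty$ at the zeros of $s$ lying on the circle, but this only makes $m_s(r)$ larger, so the inequality is unaffected).

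Finally I would substitute this into Prop.~\ref{first}, which gives $\mathrm{T}_F(r)=m_s(r)+N_s(r)+O(1)$. Rearranging,
\[
N_s(r)=\mathrm{T}_F(r)-m_s(r)+O(1)\le \mathrm{T}_F(r)+O(1),
\]
which is the claim. I do not expect any genuine obstacle here: the substantive work has already been done in Lemma~\ref{lem} (which itself rests on the reproducing-kernel identity $\|K(\cdot,\lambda)\omega\|=|\omega|_\lambda$) and in Prop.~\ref{first} (the Poincar\'e--Lelong and Green--Jensen formulas). The only points worth stating explicitly are the normalization reduction of the first step and the meaning of $O(1)$ as a term bounded as $r\to+\infty$; everything else is a one-line rearrangement.
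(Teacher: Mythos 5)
Your proposal is correct and follows essentially the same route as the paper: normalize $s$ to a unit vector (zeros and hence $N_s(r)$ unchanged, with $\mathrm{T}_F(r)$ independent of $s$), apply Lemma~\ref{lem} to get $m_s(r)\ge 0$, and conclude from Prop.~\ref{first}. No gaps.
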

\begin{proof}This is simply because $s$ and $\frac{s}{\|s\|}$ have the same zeros and $-\ln |\frac{s(\lambda)}{\|s\|}|\geq 0$ due to the above lemma.
\end{proof}

On one side, since $F$ is completely determined by $T$, $\mathrm{T}_F(r)$ surely measures the growth of $T$. On the other side, we also have the height function $h_T(r)$ to measure the growth of $T$, which can even be defined for entire operators with higher deficiency index. The following theorem implies that these two functions are essentially the same.
\begin{theorem}\label{equal}Let $F$ be the characteristic line bundle of $T\in \mathfrak{E}_1(H)$. Then
\[\mathrm{T}_F(r)=h_T(r)+O(\ln r).\]
\end{theorem}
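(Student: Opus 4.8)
The plan is to pin both $\mathrm{T}_F(r)$ and $h_T(r)$ to the counting function $N_T(r,m_1)$ of the self-adjoint extension $T_1$, whose asymptotics are already controlled by (\ref{se}). Fix a boundary triplet $(\mathbb{C},\Gamma_+,\Gamma_-)$ and let $B(\lambda)$ be the contractive Weyl function, $M(\lambda)$ the Weyl function for the associated triplet $(\mathbb{C},\Gamma_0,\Gamma_1)$, $e(\lambda)$ a de Branges function for $B(\lambda)$, and $\xi(\lambda)=\psi(\lambda)/e^\sharp(\lambda)$ the global nonvanishing holomorphic frame of $F$ from \S\S~\ref{deBranges}, for which $(\xi(\lambda),\xi(\lambda))_\lambda=1/\mathrm{K}_1(\lambda,\lambda)$ with $\mathrm{K}_1(\lambda,\lambda)=|e(\lambda)|^2\chi(\lambda)$ and $\chi(\lambda)=\frac{1-|B(\lambda)|^2}{2\Im\lambda}$. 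I would work with the section
\[ s(\lambda):=b(\lambda)\,\xi(\lambda),\qquad b(\lambda)=\tfrac{\mathrm{i}}{2}\bigl(e(\lambda)-e^\sharp(\lambda)\bigr), \]
a nontrivial holomorphic section of $F$ whose zeros are precisely $\sigma(T_1)$ (all simple), so that $N_s(r)=N_T(r,m_1)$. A short computation, using $|b|^2/|e|^2=|1-B|^2/4$ and the elementary identity $\Im M=(1-|B|^2)/|1-B|^2$, yields
\[ |s(\lambda)|^2_\lambda=\frac{|1-B(\lambda)|^2}{4\,\chi(\lambda)}=\frac{1}{2\,P(\lambda)},\qquad P(\lambda):=\frac{\Im M(\lambda)}{\Im\lambda}>0, \]
and from (\ref{Horg}) one has $P(\lambda)=d+\sum_n w_n\,|t_n-\lambda|^{-2}$ with $\{t_n\}=\sigma(T_1)\neq\varnothing$ (nonempty, since $T$ is transcendental).

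Applying Prop.~\ref{first} to $s$ gives $\mathrm{T}_F(r)=m_s(r)+N_T(r,m_1)+O(1)$, while (\ref{se}) gives $h_T(r)=N_T(r,m_1)+O(\ln r)$ because $T_1$ is self-adjoint; subtracting reduces the theorem to proving $m_s(r)=O(\ln r)$. Now $m_s(r)=-\frac1{2\pi}\int_0^{2\pi}\ln|s(re^{\mathrm{i}\phi})|\,d\phi=O(1)+\frac1{4\pi}\int_0^{2\pi}\ln P(re^{\mathrm{i}\phi})\,d\phi$, and since $\ln P=\ln|\Im M|-\ln|\Im\lambda|$ together with $\int_0^{2\pi}\ln|\Im(re^{\mathrm{i}\phi})|\,d\phi=\int_0^{2\pi}\ln|r\sin\phi|\,d\phi=2\pi\ln r-2\pi\ln2$, it is enough to show
\[ \int_0^{2\pi}\ln P(re^{\mathrm{i}\phi})\,d\phi=O(\ln r),\quad\text{equivalently}\quad \int_0^{2\pi}\ln\bigl|\Im M(re^{\mathrm{i}\phi})\bigr|\,d\phi=O(\ln r). \]

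For the lower bound I would simply drop all but one term of $P$: fixing $n_0$, $\ln P(\lambda)\ge\ln w_{n_0}-2\ln|\lambda-t_{n_0}|$, so by Jensen's formula $\int_0^{2\pi}\ln P(re^{\mathrm{i}\phi})\,d\phi\ge2\pi\ln w_{n_0}-4\pi\ln r$ for $r>|t_{n_0}|$. For the upper bound, $|\Im M|\le\max(1,|M|)$ gives $\int_0^{2\pi}\ln|\Im M(re^{\mathrm{i}\phi})|\,d\phi\le\int_0^{2\pi}\ln^+|M(re^{\mathrm{i}\phi})|\,d\phi=2\pi\,m(r,M)$, the Nevanlinna proximity of $M$ to $\infty$; by the classical First Main Theorem $m(r,M)=T(r,M)-N(r,M)+O(1)$, where $N(r,M)=N_T(r,m_1)$ (the poles of $M$ are exactly $\sigma(T_1)$), and $T(r,M)=h_T(r)+O(\ln r)$ since the Nevanlinna/Ahlfors--Shimizu characteristic of $M$ differs from the height of the Weyl curve $W_T(\lambda)$ only by a bounded term (the metric on $l^*$ differs from the Fubini--Study metric by a bounded positive factor on $\mathbb{CP}^1$; compare (\ref{height})). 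Hence $m(r,M)=h_T(r)-N_T(r,m_1)+O(\ln r)=O(\ln r)$, again by (\ref{se}). Combining the two bounds gives $\int_0^{2\pi}\ln P(re^{\mathrm{i}\phi})\,d\phi=O(\ln r)$, hence $m_s(r)=O(\ln r)$ and $\mathrm{T}_F(r)=h_T(r)+O(\ln r)$.

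The one step I expect to require care is the upper bound, i.e.\ showing that the proximity of $M$ to $\infty$ is $O(\ln r)$; this is precisely where the self-adjoint input (\ref{se}) must be fed in, through the identification $T(r,M)=h_T(r)+O(\ln r)$. Everything else is elementary manipulation of the explicit Hermitian metric of $F$ together with Jensen's formula, and the key move is choosing the section $s$ to vanish exactly on the self-adjoint spectrum $\sigma(T_1)$ (rather than, say, on $\sigma(T_0)$), since that is what lets us invoke (\ref{se}) on both sides. If one prefers to avoid the Nevanlinna characteristic of $M$ altogether, the same bound on $m(r,M)$ can be obtained directly from (\ref{height}) by rewriting $\ln^+|M|$ on the circle $|\lambda|=r$ in terms of $\ln|B|$ and using that the poles of $M$ lie on $\mathbb{R}$.
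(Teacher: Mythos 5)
Your proposal is correct, but it takes a genuinely different route from the paper. The paper proves the theorem by a direct curvature computation: writing $c_1(F)=dd^c\ln\chi$ with $\chi(\lambda)=\frac{1-|B(\lambda)|^2}{2\Im\lambda}$, applying Stokes' theorem on half-discs so that the boundary term along $\mathbb{R}$ reproduces the phase integral, hence $h_T(r)$ via (\ref{height}), and then bounding $\int_0^\pi\ln\chi(re^{\mathrm{i}\phi})\,d\phi$ from above by $\chi\leq\frac{1}{2\Im\lambda}$ and from below by the Schwarz--Pick monotonicity of Prop.~\ref{lem1} together with a case analysis on the mean type and a single Blaschke factor. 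You instead feed the already-proved bundle First Main Theorem (Prop.~\ref{first}) with the explicit section $s=b\,\xi$, $b=\frac{\mathrm{i}}{2}(e-e^\sharp)$, whose zero divisor is exactly $\sigma(T_1)$, so that $N_s(r)=N_T(r,m_1)$, and then use (\ref{se}) on the self-adjoint side; the whole theorem collapses to the two-sided estimate $m_s(r)=O(\ln r)$, which you get from $|s|^2_\lambda=\frac{1}{2}\bigl(\frac{\Im M}{\Im\lambda}\bigr)^{-1}$ (your metric computation is correct), Jensen's formula for the lower bound, and a proximity estimate for the Herglotz function $M$ for the upper bound. Your approach is shorter and makes the role of the self-adjoint spectrum transparent, but it does not recover the finer one-sided information the paper's proof yields (that $\mathrm{T}_F(r)<h_T(r)$ for large $r$, with deficit of order $\ln r$), and it consumes (\ref{se}) where the paper only needs (\ref{height}). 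The one step you should tighten is the identification $T(r,M)=h_T(r)+O(\ln r)$: it is true (comparability of the metric on $l^*$ with the Fubini--Study metric plus the $O(1)$ relation between the Nevanlinna and Ahlfors--Shimizu characteristics, in line with what the paper itself invokes in \S~\ref{com}), but as written it is only asserted; note that you can bypass it entirely, since $m(r,M)=O(\ln r)$ follows elementarily from the representation (\ref{Horg}) (each summand is bounded by $C\frac{1+|\lambda|^2}{\Im\lambda(1+t_n^2)}$ on $\mathbb{C}_+$, and $\int_0^\pi\ln^+\frac{1}{\sin\phi}\,d\phi<\infty$), which would make your proof self-contained given Prop.~\ref{first} and (\ref{se}).
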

\begin{proof}We note that $R=\omega(\lambda)d\lambda\wedge d\bar{\lambda}$ and $\omega(\bar{\lambda})=\omega(\lambda)$. Thus
\[\int_{\mathbb{D}_t}c_1(F)=2\int_{\mathbb{D}_t^+}c_1(F),\]
where $\mathbb{D}_t^+$ is the upper half of $\mathbb{D}_t$. In terms of a contractive Weyl function $B(\lambda)$ (see Lemma~\ref{metric}),
\[c_1(F)=dd^c\ln \frac{1-|B(\lambda)|^2}{2\Im \lambda}.\]
It should be mentioned that $\chi(\lambda)=\frac{1-|B(\lambda)|^2}{2\Im \lambda}$ is well-defined around the real line $\mathbb{R}$. Then by the Stokes Theorem, we have
\[\int_{\mathbb{D}_t^+}c_1(F)=\int_{\partial \mathbb{D}_t^+}d^c\ln \chi(\lambda)=\int_{C_t^+}d^c\ln \chi(\lambda)+\int_{I_t}d^c\ln \chi(\lambda),\]
where $C_t^+$ is the upper half of the circle $\partial \mathbb{D}_t$ and $I_t$ is the interval $[-t,t]$ on the real line. Note that in terms of polar coordinates,
\[d^c=\frac{1}{4\pi}(\rho d\phi\otimes\frac{\partial }{\partial \rho}-\frac{1}{\rho}d\rho\otimes\frac{\partial }{\partial \phi}).\]
Consequently,
\[\int_{C_t^+}d^c\ln \chi(\lambda)=\frac{t}{4\pi}\int_{0}^{\pi}\frac{\partial}{\partial t}\ln \chi(te^{\mathrm{i}\phi})d\phi\]
and hence
\[\int_0^r\frac{dt}{t}\int_{C_t^+}d^c\ln \chi(\lambda)=\frac{1}{4\pi}\int_0^rd\int_0^{\pi}\ln \chi(te^{\mathrm{i}\phi})d\phi=\frac{1}{4\pi}\int_0^{\pi}\ln \chi(re^{\mathrm{i}\phi})d\phi-\frac{1}{4}\ln \chi(0).\]
Since $\chi(\lambda)\leq \frac{1}{2\Im \lambda}$ on $\mathbb{C}_+$, we have
\[\int_0^{\pi}\ln \chi(re^{\mathrm{i}\phi})d\phi\leq -\int_0^{\pi}(\ln 2+\ln r+\ln \sin \phi)d\phi.\]
It's easy to see the integral $-\int_0^{\pi}\ln \sin \phi d\phi$ is convergent. We denote its value by $k_0$. Therefore,
\[\int_0^{\pi}\ln \chi(re^{\mathrm{i}\phi})d\phi\leq-\pi \ln 2-\pi \ln r+k_0.\]
Let $\lambda=u+\mathrm{i}v$ where $u,v\in \mathbb{R}$. Then in this rectangular coordinate system, we have
\[d^c=\frac{1}{4\pi}(dv\otimes \frac{\partial}{\partial u}-du\otimes \frac{\partial}{\partial v}).\]
Thus
\[\int_{I_t}d^c\ln \chi(\lambda)=-\frac{1}{4\pi}\int_{-t}^t(\frac{\partial \ln \chi}{\partial v})|_{v=0}du.\]
For $v>0$, we can find easily that
\[-\frac{\partial \ln \chi}{\partial v}=\frac{1}{v}-\frac{2\Im(B'(\lambda)\overline{B(\lambda)})}{1-|B(\lambda)|^2}.\]
In terms of notations in Prop.~\ref{cur}, we see that as $\varepsilon\rightarrow 0+$
\[1-|B(u+\mathrm{i}\varepsilon)|^2=2c_1\varepsilon-2c_1^2\varepsilon^2+o(\varepsilon^2)\]
and
\[2\Im(B'(u+\mathrm{i}\varepsilon)\overline{B(u+\mathrm{i}\varepsilon)})=2(c_1-2c_1^2\varepsilon+o(\varepsilon)).\]
From these we can deduce that
\[-(\frac{\partial \ln \chi}{\partial v})|_{v=0}=c_1=\theta'(u)\]
where $\theta(u)$ is the phase function for $B(\lambda)$.
Therefore,
\[\int_0^r\frac{dt}{t}\int_{I_t}d^c\ln \chi(\lambda)=\frac{1}{4\pi}\int_0^r\frac{dt}{t}\int_{-t}^td\theta=\frac{1}{4\pi i}\int_0^r\frac{dt}{t}\int_{-t}^td\ln B(u)=\frac{1}{2}h_T(r)+O(1).\]
Note that the last equality is due to the formula (\ref{height}).
From these we see that
\begin{equation}\mathrm{T}_F(r)\leq h_T(r)-\ln r/2+O(1).\label{RI}\end{equation}

On the other side, due to Prop.~\ref{lem1} for $\phi\in [0,\pi]$ we have $\chi(re^{\mathrm{i}\phi})\geq \chi(r\cos \phi+\mathrm{i}r)$. Let $B(\lambda)$ be of the form (\ref{cf}). Note that on $\mathbb{C}_+$ the modulus of each Blaschke factor is less than 1.

\emph{Case} 1. If the mean type $b>0$, then $|B(r\cos \phi+\mathrm{i}r)|\leq e^{-br}$ and
\[\chi(r\cos \phi+\mathrm{i}r)\geq \frac{1-e^{-2br}}{2r}.\]
Therefore,
\[\int_0^{\pi}\ln\chi(re^{\mathrm{i}\phi})d\phi\geq \int_0^{\pi}\ln\chi(r\cos\phi+\mathrm{i}r)d\phi\geq \pi \ln (1-e^{-2br})-\pi\ln 2-\pi\ln r.\]
Obviously, this implies that for $r$ large enough we have
\[\mathrm{T}_F(r)\geq h_T(r)-\ln r/2+O(1).\]
Combining this with (\ref{RI}), we thus have $\mathrm{T}_F(r)=h_T(r)-\ln r/2+O(1)$.

\emph{Case} 2. If the mean type $b=0$, we can single out a Blaschke factor, say $\frac{\overline{\lambda_1}}{\lambda_1}\frac{\lambda-\lambda_1}{\lambda-\overline{\lambda_1}}$ and have
\[|B(\lambda)|< |\frac{\lambda-\lambda_1}{\lambda-\overline{\lambda_1}}|.\]
Let $\lambda_1=u_0+iv_0$ with $u_0,v_0\in \mathbb{R}$. Then
\[|B(r\cos\phi+\mathrm{i}r)|^2< \frac{(r\cos\phi-u_0)^2+(r-v_0)^2}{(r\cos \phi-u_0)^2+(r+v_0)^2}.\]
From this we can obtain
\begin{eqnarray*}\chi(re^{\mathrm{i}\phi})&\geq& \chi(r\cos \phi+\mathrm{i}r)=\frac{1-|B(r\cos \phi)+\mathrm{i}r|^2}{2r}\\
&>&\frac{2v_0}{(r\cos \phi-u_0)^2+(r+v_0)^2}.
\end{eqnarray*}
W.l.g., we can assume $u_0>0$. Then for $r$ large enough,
\[\int_0^{\pi}\ln \chi(re^{\mathrm{i}\phi})d\phi\geq -2\pi\ln r+O(1).\]
Thus in this case
\[-\ln r+O(1)\leq \mathrm{T}_F(r)-h_T(r)\leq -\ln r/2+O(1).\]
The claimed result thus holds again.
\end{proof}
\emph{Remark}. (1) Obviously, more is proved in the above proof: For $r$ large enough, we must have $\mathrm{T}_F(r)< h_T(r)$, but the difference is of order $O(\ln r)$. (2) Generally speaking, the estimation of $N_s(r)$ in Prop.~\ref{esti} is almost optimal: If $\tilde{T}$ is a self-adjoint extension of $T$ that is an operator and $\{\lambda_i\}$ are eigenvalues of $\tilde{T}$, then in the de Branges model, $\{\mathrm{K}_1(\cdot, \lambda_i)\}$ are the corresponding eigenfunctions in $\mathcal{H}_e$. Due to the reproducing property, $\mathrm{K}_1(\cdot, \lambda_i)$ vanishes at $\lambda_j$ whenever $j\neq i$. Then by Eq.~(\ref{se}) and the above theorem, $N_s(r)=\mathrm{T}_F(r)+O(\ln r)$ for $s=\mathrm{K}_1(\cdot, \lambda_i)$.

Note that if $s\in \mathfrak{H}$ has no zeros, then Prop.~\ref{first} implies
\[\mathrm{T}_F(r)=-\frac{1}{2\pi}\int_0^{2\pi}\ln |s(re^{\mathrm{i}\phi})|d\phi+O(1).\]
Thus it's possible to estimate the growth of $T$ from $s$. In this respect, we have
\begin{proposition}\label{0-entire}If in the model space $\mathfrak{H}$ for $T\in \mathfrak{E}_1(H)$ there is a section $s$ of $F$ vanishing nowhere on $\mathbb{C}$, then the Weyl order $\rho_T\leq 1$.
\end{proposition}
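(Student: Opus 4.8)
The plan is to feed the identity noted just before the statement, $\mathrm{T}_F(r)=m_s(r)+O(1)$, through the de Branges picture of \S\S~\ref{deBranges}, bound $m_s(r)$ by the logarithm of the maximum modulus of a de Branges function, and then exploit the hypothesis in the form: the existence of a nowhere‑vanishing element forces that de Branges function to be of exponential type. First I would normalize $\|s\|_{\mathfrak{H}}=1$, so that $|s(\lambda)|_\lambda\le 1$ by Lemma~\ref{lem} and $-\ln|s(\lambda)|_\lambda\ge 0$. Since $s$ has no zeros, Prop.~\ref{first} gives $\mathrm{T}_F(r)=m_s(r)+O(1)$ with $m_s(r)=-\frac{1}{2\pi}\int_0^{2\pi}\ln|s(re^{\mathrm{i}\phi})|_{re^{\mathrm{i}\phi}}\,d\phi$, and Thm.~\ref{equal} turns this into $h_T(r)=m_s(r)+O(\ln r)$. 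As $\rho_T=\limsup_{r}\ln h_T(r)/\ln r$, it is then enough to prove $m_s(r)=O(r)$.

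For that I would fix a boundary triplet $(\mathbb{C},\Gamma_+,\Gamma_-)$, a contractive Weyl function $B$ and a de Branges function $e$ for $B$, and use the global frames $\zeta(\bar\lambda),\xi(\lambda)$ of \S\S~\ref{deBranges} to identify $\mathfrak{H}\cong\mathcal{H}_e$; under this identification $s$ becomes $f:=s/\xi$, a \emph{zero‑free} element of $\mathcal{H}_e$. Since $|\xi(\lambda)|_\lambda^2=1/\mathrm{K}_1(\lambda,\lambda)$ we have $-\ln|s(\lambda)|_\lambda=-\ln|f(\lambda)|+\tfrac12\ln\mathrm{K}_1(\lambda,\lambda)$, and because $f$ has no zeros, $\ln|f|$ is harmonic on $\mathbb{C}$, so $\frac{1}{2\pi}\int_0^{2\pi}\ln|f(re^{\mathrm{i}\phi})|\,d\phi=\ln|f(0)|=O(1)$; hence $m_s(r)=\frac{1}{4\pi}\int_0^{2\pi}\ln\mathrm{K}_1(re^{\mathrm{i}\phi},re^{\mathrm{i}\phi})\,d\phi+O(1)$. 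Using $\mathrm{K}_1(\lambda,\lambda)=\big|\,|e(\lambda)|^2-|e^\sharp(\lambda)|^2\,\big|/(2|\Im\lambda|)$ together with $|e^\sharp(\lambda)|=|e(\bar\lambda)|$ one gets $\mathrm{K}_1(\lambda,\lambda)\le M(|\lambda|,e)^2/(2|\Im\lambda|)$ with $M(r,e):=\max_{|\lambda|=r}|e(\lambda)|$, and since $\int_0^{2\pi}\ln|\sin\phi|\,d\phi$ is finite, integration yields
\[
m_s(r)\le \ln M(r,e)-\tfrac12\ln r+O(1),\qquad\text{so}\qquad h_T(r)\le \ln M(r,e)+O(\ln r).
\]

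It then remains to show that the zero‑free $f\in\mathcal{H}_e$ forces $\ln M(r,e)=O(r)$, and \emph{this is the step I expect to be the main obstacle}, since it is exactly where the soft hypothesis ``$s$ has no zeros'' must be converted into a hard growth bound — equivalently, it is the statement that a de Branges space containing a nowhere‑vanishing entire function is a de Branges space of exponential type (cf.\ \cite{kaltenbackbranges}). The mechanism I would use: as $f$ is zero‑free and $e$, being Hermite–Biehler, is zero‑free on $\overline{\mathbb{C}_+}$, both $f/e$ and $f^\sharp/e$ are zero‑free members of $H^2(\mathbb{C}_+)$; being moreover meromorphic on all of $\mathbb{C}$ with poles only at the zeros of $e$ (which lie in $\mathbb{C}_-$), their inner factors can contain no singular‑measure part, so $f/e=e^{\mathrm{i}\alpha\lambda}O_1$ and $f^\sharp/e=e^{\mathrm{i}\alpha'\lambda}O_2$ with $\alpha,\alpha'\ge 0$ and $O_1,O_2$ outer; dividing, $f/f^\sharp=e^{\mathrm{i}(\alpha-\alpha')\lambda}(O_1/O_2)$, and $|f/f^\sharp|\equiv 1$ on $\mathbb{R}$ forces $O_1/O_2$ to be a unimodular constant, so $f^\sharp=\gamma\,e^{-\mathrm{i}\kappa\lambda}f$ with $|\gamma|=1$, $\kappa\in\mathbb{R}$ (WLOG $\kappa\ge 0$). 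Consequently $e^{-\mathrm{i}\kappa\lambda/2}f$ is zero‑free with conjugation‑symmetric modulus, hence equals $c\,e^{H(\lambda)}$ for a real‑coefficient entire $H$ and $|c|=1$, i.e.\ $f=c\,e^{\mathrm{i}\kappa\lambda/2+H(\lambda)}$. Now the two defining conditions of $\mathcal{H}_e$ must be reconciled: bounded type of $e^{\mathrm{i}\kappa\lambda/2+H}/e$ in $\mathbb{C}_+$ forces $\ln|e|$ to track $\Re H$ up to a harmonic (hence ray‑linear) term, while the outer parts $O_1,O_2$ are Poisson integrals of the boundary datum $V(t)=H(t)-\ln|e(t)|+O(1)$ with $e^{2V}\in L^1(\mathbb{R})$; combining these (and Krein's theorem on entire functions of bounded type) pins $e$ to exponential type. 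Granting $\ln M(r,e)=O(r)$, the displayed inequality gives $h_T(r)=O(r)$, whence $\rho_T\le 1$.
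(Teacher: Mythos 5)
The first half of your plan is sound and runs parallel to the paper: reducing via Prop.~\ref{first} and Thm.~\ref{equal} to the bound $m_s(r)=O(r)$, the harmonicity trick for $\ln|f|$, and the estimate $m_s(r)\le \ln M(r,e)-\tfrac12\ln r+O(1)$ (the paper extracts the same $-\tfrac12\ln r$ saving from $\int_0^\pi\ln\chi(re^{\mathrm{i}\phi})d\phi\le-\pi\ln r+O(1)$). The genuine gap is exactly the step you flag: the claim that a zero-free $f\in\mathcal{H}_e$ forces $\ln M(r,e)=O(r)$ for the de Branges function $e$ you fixed at the outset, which you assert is equivalent to ``a de Branges space containing a nowhere-vanishing entire function is of exponential type.'' That statement is false. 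Take $\tilde e$ of exponential type with $1\in\mathcal{H}_{\tilde e}$ (e.g.\ $\tilde e(\lambda)=\prod_j(1-\lambda/\lambda_j)$ with $\lambda_j=-\mathrm{i}j^{1+\delta}$, as in \S~\ref{growth}) and put $e(\lambda)=e^{\lambda^2}\tilde e(\lambda)$. Since $e^{\lambda^2}$ is real and zero-free, $e$ is again Hermite--Biehler, $e^\sharp/e=\tilde e^\sharp/\tilde e=B$, and $\mathcal{H}_e=e^{\lambda^2}\mathcal{H}_{\tilde e}$ contains the zero-free element $e^{\lambda^2}$ --- yet $e$ has order $2$. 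So no argument can control the growth of an \emph{arbitrary} admissible $e$; only $e$ \emph{relative to} the zero-free element is controlled. Your own factorization chain confirms this: pushed to the end it gives $f=c\,e^{\mathrm{i}\kappa\lambda/2+H}$ and can at best yield information about $e\,e^{-H}$, i.e.\ about $e/f$, not about $e$; the closing sentence ``combining these \dots pins $e$ to exponential type'' is where the argument stops being an argument, and with $e$ arbitrary the target inequality is unattainable.

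The missing idea is the normalization the paper performs before invoking Krein: one may take the zero-free element real, $f=f^\sharp$ (Remark 2.8 in \cite{silva2012class}), and replace the frame $\xi$ by $f\xi$ --- equivalently replace $e$ by $e/f$, which is again a de Branges function for the \emph{same} $B$ --- so that w.l.g.\ $f\equiv1$, i.e.\ $1\in\mathcal{H}_e$. Then $1/e\in H^2(\mathbb{C}_+)$ gives that $e$, and hence $e^\sharp$ (since $|e^\sharp|<|e|$ on $\mathbb{C}_+$), are of bounded type, and Krein's theorem yields $\ln|e(\lambda)|\le K|\lambda|$; feeding this normalized $e$ into your displayed inequality (equivalently the paper's $-\ln|s|=\ln|e|+\tfrac12\ln\chi$ on $\mathbb{C}_+$) gives $\mathrm{T}_F(r)\le Kr+O(\ln r)$ and $\rho_T\le1$. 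Note that your harmonicity step is compatible with this repair: replacing $e$ by $e/f$ multiplies $\mathrm{K}_1(\lambda,\lambda)$ by $|f(\lambda)|^{-2}$, and the two occurrences of $\ln|f|$ cancel, reproducing the paper's computation. As written, however, your proposal is incomplete at the decisive step, and the intermediate statement it reduces the proposition to is not true.
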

\begin{proof}Let us use the de Branges model for $T$. Then the assumption means there is an $f\in \mathcal{H}_e$ vanishing nowhere on $\mathbb{C}$. We can further require $f$ to be real, i.e., $f=f^\sharp$. See Remark 2.8 in \cite{silva2012class}. Then by replacing the frame $\xi(\lambda)$ with the new frame $f(\lambda)\xi(\lambda)$, we can set $f(\lambda)\equiv 1$. In this case, the de Branges function $e(\lambda)$ has to be of bounded type on $\mathbb{C}_+$. Since $|e^\sharp(\lambda)|< |e(\lambda)|$ on $\mathbb{C}_+$, $e^\sharp(\lambda)$ also has to be of bounded type on $\mathbb{C}_+$. A well-known theorem of M. G. Krein says that for an entire function $f$, the following two conditions are equivalent:
  (i) $f$ is of exponential type and
\[\int_\mathbb{R}\frac{\ln^+|f(u)|}{1+u^2}du<+\infty;\]
(ii) both $f$ and $f^\sharp$ are of bounded type on $\mathbb{C}_+$. See \cite[Lecture 16]{levin1996lectures} or \cite[Appendix 5]{rosenblum1997hardy} for more details on this theorem.
Thus in our situation $e(\lambda)$ is at most of finite exponential type  i.e., there is a constant $K>0$ such that
\[\ln|e(\lambda)|\leq K |\lambda|,\quad \forall \lambda\in \mathbb{C}.\]

Note that in the present setting, $N_s(r)\equiv 0$ and
\[-\ln |s(\lambda)|=\frac{1}{2}\ln (\frac{|e(\lambda)|^2-|e^\sharp(\lambda)|^2}{2\Im \lambda}).\]
Obviously $|s(\lambda)|=|s(\bar{\lambda})|$ and for $\lambda\in \mathbb{C}_+$ we have
\[\ln (\frac{|e(\lambda)|^2-|e^\sharp(\lambda)|^2}{2\Im \lambda})=2\ln|e(\lambda)|+\ln \frac{1-|B(\lambda)|^2}{2\Im \lambda}=2\ln|e(\lambda)|+\ln \chi(\lambda).\]
Additionally, in the proof of Thm.~\ref{equal} we have already proved that \[\int_0^{\pi}\ln \chi(re^{\mathrm{i}\phi})d\phi\leq -\pi \ln 2-\pi \ln r+k_0.\]
Combining all these observations together, we obtain
\[-\frac{1}{2\pi}\int_0^{2\pi}\ln |s(re^{\mathrm{i}\phi})|d\phi\leq Kr-\ln r/2+C_0\]
for $r>0$ and a certain real constant $C_0$. Then Prop.~\ref{first} implies that $\mathrm{T}_F(r)\leq Kr-\ln r/2+O(1)$ and the claimed result follows from Thm.~\ref{equal} and the definition of $\rho_T$.
\end{proof}

Note that $T(b)$ in Example \ref{exp} has Weyl order 1, but a direct conclusion of \cite[Example 3.8]{silva2012class} is that there is no section $s$ in the canonical model space $\mathfrak{H}$ vanishing nowhere on $\mathbb{C}$. However, we do have a partial converse of the above proposition.
\begin{proposition}If $T\in \mathfrak{E}_1(H)$ has Weyl order $\rho_T<1$ and there is a strictly accumulative extension $\tilde{T}$ such that $\sum_{j=1}^\infty(\frac{\Re \lambda_j}{\Im \lambda_j})^2<+\infty$ where $\{\lambda_j\}$ are eigenvalues of $\tilde{T}$ counted with multiplicity, then there is an $s\in \mathfrak{H}$ such that $s$ vanishes nowhere on $\mathbb{C}$.
\end{proposition}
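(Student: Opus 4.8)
The plan is to work inside the de Branges model of \S\S~\ref{deBranges} and to reduce the whole assertion to showing that the constant function $1$ belongs to the de Branges space $\mathcal H_e$ attached to a suitable de Branges function $e$. First I would fix a boundary triplet $(\mathbb C,\Gamma_+,\Gamma_-)$ for which $T_\infty=\tilde T$, so that $\{\lambda_j\}$ is exactly the pole set of the corresponding contractive Weyl function $B(\lambda)$. Since $\rho_T<1$, Prop.~\ref{order} forces the mean type of every strictly accumulative boundary condition to vanish, hence $b=0$ in (\ref{cf}) and $B$ is a Blaschke product whose poles are the $\lambda_j\in\mathbb C_-$; moreover the exponent of convergence of $\{\lambda_j\}$ is $\le\rho_T<1$, because $N_T(r,\tilde T)\le h_T(r)+O(1)$ controls the counting function. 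I may therefore drop the convergence factors and take $e(\lambda)=\prod_j(1-\lambda/\lambda_j)$ (\cite[Lemma~3.12]{kaltenbackbranges}), an entire function of order $<1$. Under the identification $\mathfrak H\cong\mathcal H_e$ of \S\S~\ref{deBranges}, the nowhere-vanishing frame $\xi(\lambda)=\psi(\lambda)/e^\sharp(\lambda)$ corresponds to the constant function $1$; so the candidate section is $s:=\xi$, which is nowhere-vanishing on $\mathbb C$ by construction, and the whole statement comes down to proving $1\in\mathcal H_e$.

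By the membership criterion recalled just after the de Branges model, $1\in\mathcal H_e$ is equivalent to (i) $\int_{\mathbb R}|e(u)|^{-2}\,du<\infty$ together with (ii) $1/e$ (which equals $1^\sharp/e$) being of bounded type and of non-positive mean type in $\mathbb C_+$. Condition (ii) is automatic here: $e$ has order $<1$, hence is of exponential type with $\int_{\mathbb R}\ln^+|e(u)|(1+u^2)^{-1}\,du<\infty$, so by Krein's theorem (cited in the proof of Prop.~\ref{0-entire}) both $e$ and, since $e$ is Hermite--Biehler and thus zero-free in $\mathbb C_+$, the quotient $1/e$ are of bounded type in $\mathbb C_+$, necessarily of mean type $0$. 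Thus the entire statement reduces to the integrability (i).

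For (i) I would estimate $|e(u)|^{-2}=\prod_j|\lambda_j|^2|u-\lambda_j|^{-2}$ by splitting the index set. Writing $\lambda_j=a_j+\mathrm ib_j$, for the indices with $|\lambda_j|>|u|/2$ the bound $|u-\lambda_j|\ge|b_j|$ makes each factor $\le|\lambda_j|^2/b_j^2=1+(\Re\lambda_j/\Im\lambda_j)^2$, so the product of these over all such indices stays below $\prod_j\bigl(1+(\Re\lambda_j/\Im\lambda_j)^2\bigr)<\infty$ by hypothesis. For the remaining indices, those with $|\lambda_j|\le|u|/2$, one has $|u-\lambda_j|\ge|u|/2$, hence each factor is $\le 4|\lambda_j|^2/u^2$, and an integration by parts against $n(t):=\#\{j:|\lambda_j|\le t\}$ collapses this partial product exactly to $\exp\bigl(-2\int_0^{|u|/2}n(t)\,t^{-1}\,dt\bigr)$. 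Since $T$ is transcendental and $b=0$, $\tilde T$ has infinitely many eigenvalues — a finite Blaschke product $B$ would give $h_T(r)=O(\ln r)$ by (\ref{height}) — so $n(t)\to\infty$ and $\int_0^r n(t)\,t^{-1}\,dt\ge\frac{1}{2}n(\sqrt r)\ln r$ outgrows every constant multiple of $\ln r$; consequently $|e(u)|^{-2}=O(|u|^{-k})$ for every $k$, which yields (i). The main obstacle is precisely this estimate: the order condition $\rho_T<1$ is essentially a Blaschke-type bound and only controls the decay coming from the moduli $|\lambda_j|$, whereas the extra hypothesis $\sum_j(\Re\lambda_j/\Im\lambda_j)^2<\infty$ is exactly what neutralizes the horizontal displacement of the zeros of $e$ off the imaginary axis, which is the role played by the first grouping of indices above.
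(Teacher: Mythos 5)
Your proposal is correct and follows essentially the same route as the paper: fix the boundary triplet with $T_\infty=\tilde T$, take the genus-zero canonical product $e(\lambda)=\prod_j(1-\lambda/\lambda_j)$ (legitimate since the exponent of convergence is $\le\rho_T<1$), reduce everything to $1\in\mathcal{H}_e$, and verify bounded type and vanishing mean type of $1/e$ via Krein's theorem. The only divergence is the final integrability estimate, where the paper simply retains the single factor $|\lambda_1|^2/|u-\lambda_1|^2$ for decay and bounds all the remaining factors by the convergent product $\prod_{j\ge 2}\bigl(1+(\Re\lambda_j/\Im\lambda_j)^2\bigr)$, whereas your splitting at $|\lambda_j|=|u|/2$ combined with the Jensen-type identity yields a stronger, superpolynomial decay of $1/|e(u)|^2$ than is needed for $1/e\in L^2(\mathbb{R})$.
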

\begin{proof}Since $\rho_T< 1$, in the de Branges model of $T$ we can choose
\[e(\lambda)=\prod_{j=1}^\infty(1-\frac{\lambda}{\lambda_j}).\]
The order $\rho$ of $e(\lambda)$ as an entire function coincides with the exponent of convergence of the sequence $\{\lambda_i\}$ \cite[Prop.~9.10.6]{simon2015basic}. Due to \cite[Prop.~10.14]{wang2024complex} we have $\rho\leq \rho_T$. On the other side, $\rho_T$ is actually the order of the contractive Weyl function $e^\sharp/e$, which is not greater than $\rho$ ($e$ and $e^\sharp$ have the same order $\rho$). Thus $\rho=\rho_T$ and as a consequence we can choose $\varepsilon>0$ such that $\rho_T+\varepsilon<1$ and
$\ln|e(\lambda)|< |\lambda|^{\rho_T+\varepsilon}$ for $|\lambda|$ large enough. This implies particularly that $\int_\mathbb{R}\frac{\ln^+|e(u)|}{1+u^2}<+\infty$. Again by the abovementioned Krein's Theorem, we see both $e$ and $e^\sharp$ are of bounded type on $\mathbb{C}_+$ and so are $1/e$ and $1/e^\sharp$. It is easy to see both $1/e$ and $1/e^\sharp$ have vanishing mean type. We can also prove $1/e$ is in $L^2(\mathbb{R})$ because
\[\frac{1}{|e(u)|^2}\leq \frac{|\lambda_1|^2}{|u-\lambda_1|^2}\times \prod_{j=2}^\infty(1+(\frac{\Re \lambda_j}{\Im \lambda_j})^2),\]
where the convergence of the infinite product is guaranteed by assumption. Combining all these together leads to the conclusion that the constant function $1\in \mathcal{H}_e$.
\end{proof}
\emph{Remark}. By \cite[Thm.~2.1]{kaltenback2005hermite}, not only the non-vanishing section $s$ exists, but also all polynomial multiples of $s$ lie in $\mathfrak{H}$ and even such sections are dense there.
\begin{example}We can take $\lambda_j=-\mathrm{i}j^{1+\delta}$ where $\delta>0$ is a constant. Then the corresponding $e$ has order $1/(1+\delta)$ and consequently $T\in \mathfrak{E}_1(H)$ with $e^\sharp/e$ as its contractive Weyl function ought to have a section $s\in \mathfrak{H}$ that is zero-free.
\end{example}
The following example shows that the single condition $\rho_T<1$ cannot guarantee the existence of a zero-free section. It is taken from \S\S~3.5 of \cite{havin2003admissible}.
\begin{example}Consider $\lambda_j=\sqrt{2}2^j e^{-\mathrm{i}\pi/4}$ with multiplicity $\lfloor a^j\rfloor$, where $a<2$ is a constant. Then for $a$ close to $2$ we can set
\[e(\lambda)=\prod_{j=1}^\infty(1-\frac{\lambda}{\lambda_j})^{\lfloor a^j\rfloor}.\]
This does make sense for one can easily check the exponent of convergence of the zeros is less than 1. However, it was checked in \cite{havin2003admissible} that $1/e\notin L^2(\mathbb{R})$. As a consequence, $T\in \mathfrak{E}_1(H)$ with $e^\sharp/e$ as its contractive Weyl function cannot have any zero-free section in $\mathfrak{H}$.
\end{example}

In the de Branges model, though the de Branges function $e$ is not canonically determined by $T$, the growth of elements in $\mathcal{H}_e$ is really controlled by the growth of $e$. This was one of the main topics in \cite{kaltenbackbranges} and maximal modulus was used to explore this \cite[Thm.~3.4]{kaltenbackbranges}. We feel the Nevanlinna characteristic function is more suitable here. Recall that the Nevanlinna characteristic function of an entire function $f(\lambda)$ is defined to be
\[T_f(r)=\frac{1}{2\pi}\int_0^{2\pi}\ln^+|f(re^{\mathrm{i}\phi})|d\phi,\]
where for $x>0$, $\ln^+ x=\max\{0,\ln x\}$. The order and type of $f$ can be defined in a similar way as we define the Weyl order and type for $T$. This definition is equivalent to that expressed in terms of maximum modulus.

\begin{proposition}\label{mini}In the de Branges model of $T\in \mathfrak{E}_1(H)$, each element $f\in \mathcal{H}_e$ satisfies
\[T_f(r)\leq 2T_e(r)+O(\ln r).\]
\end{proposition}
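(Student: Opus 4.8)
The plan is to use the membership criterion recorded above---$f\in\mathcal H_e$ if and only if both $f/e$ and $f^\sharp/e$ lie in $H^2(\mathbb C_+)$---to cut the circular integral defining $T_f(r)$ into two semicircular pieces, each controlled on the \emph{upper} half-plane. First I would record that, since $f$ is entire and $|f(\bar\lambda)|=|f^\sharp(\lambda)|$, the change of variable $\phi\mapsto 2\pi-\phi$ on the lower semicircle turns the definition into
\[T_f(r)=\frac1{2\pi}\int_0^{\pi}\bigl(\ln^+|f(re^{i\phi})|+\ln^+|f^\sharp(re^{i\phi})|\bigr)\,d\phi .\]
On $\overline{\mathbb C_+}$ the Hermite--Biehler function $e$ has no zeros, so $g_1:=f/e$ and $g_2:=f^\sharp/e$ are holomorphic there, belong to $H^2(\mathbb C_+)$, and have $H^2$-norm equal to $\|f\|_{\mathcal H_e}$ (using $|f^\sharp(t)|=|f(t)|$ and $|e(t)|\in\mathbb R$ for $t\in\mathbb R$). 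From $\ln^+(ab)\le\ln^+a+\ln^+b$ one then gets, at $\lambda=re^{i\phi}$ with $\phi\in(0,\pi)$,
\[\ln^+|f|+\ln^+|f^\sharp|\;\le\;2\ln^+|e|+\ln^+|g_1|+\ln^+|g_2| .\]

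Next I would dispose of the $e$-term: because $\ln^+\ge0$, discarding the lower-semicircular part of $T_e(r)$ only makes it smaller, so $\frac1{\pi}\int_0^{\pi}\ln^+|e(re^{i\phi})|\,d\phi=2\cdot\frac1{2\pi}\int_0^{\pi}\ln^+|e(re^{i\phi})|\,d\phi\le 2\,T_e(r)$, and the factor $2$ is genuinely needed (take $e(\lambda)=e^{-ib\lambda}$). It then remains to show that $\frac1{2\pi}\int_0^{\pi}\ln^+|g_j(re^{i\phi})|\,d\phi$ stays bounded as $r\to\infty$ for $j=1,2$. For this I would invoke the classical pointwise bound for Hardy functions: applying the sub-mean value inequality for the subharmonic function $|g|^2$ on the disc of radius $\tfrac12\Im\lambda$ about $\lambda$ gives $|g(\lambda)|\le C\,\|g\|_{H^2}/\sqrt{\Im\lambda}$ with $C$ absolute. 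Since $\Im(re^{i\phi})=r\sin\phi$, for $r\ge1$ this yields
\[\ln^+|g(re^{i\phi})|\;\le\;\ln^+\!\bigl(C\|g\|_{H^2}\bigr)+\tfrac12\ln^+\!\frac1{r\sin\phi}\;\le\;\ln^+\!\bigl(C\|g\|_{H^2}\bigr)-\tfrac12\ln\sin\phi ,\]
and $\int_0^{\pi}(-\ln\sin\phi)\,d\phi=\pi\ln2<\infty$, so the semicircular integral of $\ln^+|g_j|$ is $O(1)$. Combining the three bounds gives $T_f(r)\le 2\,T_e(r)+O(1)$, slightly more than the statement asks.

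The argument is essentially bookkeeping with $\ln^+$ together with the symmetry $f\leftrightarrow f^\sharp$; the only non-elementary input is the pointwise Hardy estimate $|g(\lambda)|\lesssim\|g\|_{H^2}/\sqrt{\Im\lambda}$ (equivalently a Cauchy--Schwarz bound on the reproducing kernel of $H^2(\mathbb C_+)$), which is standard. I do not anticipate a genuine obstacle; the one point deserving a careful check is that $g_1$ and $g_2$ are bona fide elements of $H^2(\mathbb C_+)$ with $H^2$-norm controlled by $\|f\|_{\mathcal H_e}$, but this is precisely the characterization of $\mathcal H_e$ quoted earlier, combined with $|f^\sharp|=|f|$ on $\mathbb R$. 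If one prefers to avoid the Hardy kernel bound, the same remainder estimate follows from $\ln^+|g|\le P[\ln^+|g^{*}|]$ (valid for $g\in H^2(\mathbb C_+)$, which has non-positive mean type and no negative singular inner factor), the identity $\frac1{2\pi}\int_0^{\pi}\frac{r\sin\phi}{|t-re^{i\phi}|^{2}}\,d\phi=\frac1{2\pi t}\ln\bigl|\tfrac{t+r}{t-r}\bigr|$, and $\ln^+|g^{*}|\in L^2(\mathbb R)$, giving again an $O(1)$ error.
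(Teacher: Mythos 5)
Your proof is correct, and it even yields the slightly sharper bound $T_f(r)\le 2T_e(r)+O(1)$. The overall scheme is the same as the paper's: split the circle into two semicircles, use $|f(re^{-\mathrm{i}\phi})|=|f^\sharp(re^{\mathrm{i}\phi})|$ to work only on $\mathbb{C}_+$, and control the error term by the integrability of $-\ln\sin\phi$. The difference lies in how the key pointwise estimate is obtained. The paper normalizes $\|f\|_{\mathcal{H}_e}=1$ and applies its Lemma~\ref{lem} to the section $f\cdot s\in\mathfrak{H}$, which gives the intrinsic reproducing-kernel bound $|f(\lambda)|\le\bigl(\tfrac{|e(\lambda)|^2-|e^\sharp(\lambda)|^2}{2\Im\lambda}\bigr)^{1/2}\le |e(\lambda)|(2\Im\lambda)^{-1/2}$ on $\mathbb{C}_+$ directly from the canonical model, with no appeal to Hardy-space theory. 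You instead route through the (nontrivial, but quoted in \S\S~\ref{deBranges}) characterization that $f/e,\,f^\sharp/e\in H^2(\mathbb{C}_+)$ with norm $\|f\|_{\mathcal{H}_e}$, and then use the standard pointwise bound $|g(\lambda)|\lesssim\|g\|_{H^2}(\Im\lambda)^{-1/2}$; in effect both arguments are Cauchy--Schwarz against a reproducing kernel, yours in $H^2(\mathbb{C}_+)$, the paper's in $\mathcal{H}_e$ itself. What your version buys is independence from the bundle-theoretic Lemma~\ref{lem}; what the paper's buys is self-containedness within its canonical model and avoidance of the $H^2$ membership criterion as an input. Your flagged checkpoints (no zeros of $e$ on $\overline{\mathbb{C}_+}$, $|f^\sharp|=|f|$ on $\mathbb{R}$, equality of the $H^2$ norms) are all sound, so there is no gap.
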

\begin{proof}W.l.g., we can assume that $f\in \mathcal{H}_e$ is a unit vector in $\mathcal{H}_e$. Let $s$ be the section of $F$ associated to the constant function 1. Then $f(\lambda)s(\lambda)\in \mathfrak{H}$ and
\[|f(\lambda)||s(\lambda)|\leq 1\]
by Lemma \ref{lem}. From the proof of Prop.~\ref{0-entire}, we know this is precisely
\[|f(\lambda)|\leq (\frac{|e(\lambda)|^2-|e^\sharp(\lambda)|^2}{2\Im \lambda})^{1/2}.\]
Thus for $\lambda\in \mathbb{C}_+$
\[\ln^+|f(\lambda)|\leq \ln^+|e(\lambda)|+\frac{1}{2}\ln^+(\frac{1-|B(\lambda)|^2}{2\Im \lambda})\leq\ln^+|e(\lambda)|+\frac{1}{2}\ln^+(\frac{1}{2\Im \lambda})\]
while for $\lambda\in \mathbb{C}_-$
\[\ln^+|f(\lambda)|\leq \ln^+|e^\sharp(\lambda)|+\frac{1}{2}\ln^+(\frac{1-\frac{1}{|B(\lambda)|^2}}{-2\Im \lambda})\leq \ln^+|e^\sharp(\lambda)|+\frac{1}{2}\ln^+(\frac{1}{-2\Im \lambda}).\]
Therefore,
\begin{eqnarray*}T_f(r)&=&\int_0^\pi \ln^+|f(re^{\mathrm{i}\phi})|\frac{d\phi}{2\pi}+\int_\pi^{2\pi} \ln^+|f(re^{\mathrm{i}\phi})|\frac{d\phi}{2\pi}\\
&\leq&\int_0^\pi \ln^+|e(re^{\mathrm{i}\phi})|\frac{d\phi}{2\pi}+\int_\pi^{2\pi} \ln^+|e^\sharp(re^{\mathrm{i}\phi})|\frac{d\phi}{2\pi}+O(\ln r)\\
&=&2\int_0^\pi \ln^+|e(re^{\mathrm{i}\phi})|\frac{d\phi}{2\pi}+O(\ln r).
\end{eqnarray*}
The result then follows.
\end{proof}
\emph{Remark}. In the case that the growth is not very fast, \cite[Thm.~3.4]{kaltenbackbranges} contains finer information on the growth aspect of $\mathfrak{H}$.

According to \cite{silva2012class}, $T\in \mathfrak{E}_1(H)$ whose associated de Branges space $\mathcal{H}_e$ has an element vanishing nowhere on $\mathbb{C}$ is called $0$-entire. This kind of entire operators was first introduced by M. G. Krein. This notion can be extended to cover more operators in $\mathfrak{E}_1(H)$. Now let $T\in \mathfrak{E}_1(H)$ be given and $\mathrm{P}_n$ the space of polynomials in $\lambda$ with degree $\leq n$. Then for each $p\in \mathrm{P}_n$ and $f\in \mathfrak{H}$, $p\cdot f$ is again a holomorphic section of the characteristic line bundle $F$, but may not belong to $\mathfrak{H}$ any more. We use $\mathrm{P}_n(\mathfrak{H})$ to denote the linear space of all sections of $F$ that are a finite linear combination of these $p\cdot f$'s. Elements in $\mathrm{P}_n(\mathfrak{H})$ are called $n$-associated sections of $F$. The following definition is a geometric reformulation of Def.~6 in \cite{silva2013branges}.
\begin{definition}$T\in \mathfrak{E}_1(H)$ is called $n$-entire if there is an $s\in \mathrm{P}_n(\mathfrak{H})$, non-vanishing everywhere on $\mathbb{C}$.
\end{definition}
\emph{Remark}. In \cite{silva2013branges}, the notion of $n$-entire operators was introduced in terms of the de Branges model. Our reformulation is equivalent to that, but has the advantage that it is intrinsic. It is worth mentioning that in the definition the section can be replaced by one with finite zeros \cite[Prop.~3.11]{silva2012class}.  For some examples of $n$-entire operators, see \cite{silva2012class, silva2013branges}.

If $T$ is $n$-entire, it is $m$-entire as well if $m>n$. In \cite{silva2012class} it was observed these $n$-entire operators have not exhausted all possibilities in $\mathfrak{E}_1(H)$. Actually these only comprise a small part of $\mathfrak{E}_1(H)$.  \begin{proposition}If $T\in \mathfrak{E}_1(H)$ is $n$-entire, then the Weyl order $\rho_T\leq 1$.
\end{proposition}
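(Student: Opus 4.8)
The plan is to follow the strategy of Prop.~\ref{0-entire}, which is the case $n=0$. I would work in the de Branges model of \S\S~\ref{deBranges}: fix a contractive Weyl function $B(\lambda)$ of $T$, a de Branges function $e(\lambda)$ with $B=e^\sharp/e$, and the associated global frame $\xi$ of $F$, so that $\mathfrak{H}$ is realized as $\mathcal{H}_e$ and, in this trivialization, $\mathrm{P}_n(\mathfrak{H})$ becomes $\mathcal{H}_e+\lambda\mathcal{H}_e+\cdots+\lambda^n\mathcal{H}_e$. By hypothesis there is a zero-free $n$-associated section $s$, i.e.\ an entire function $f=\sum_{k=0}^n\lambda^k g_k$ with $g_k\in\mathcal{H}_e$ and $f(\lambda)\neq 0$ for every $\lambda\in\mathbb{C}$. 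As in the proof of Prop.~\ref{0-entire} we may take $f$ real, $f=f^\sharp$. Then $\tilde{e}:=e/f$ is entire (since $f$ is zero-free) and, because $f=f^\sharp$, it is again a Hermite--Biehler function with $\tilde{e}^\sharp/\tilde{e}=e^\sharp/e=B$; replacing the frame $\xi$ by $f\,\xi$---equivalently, using $\tilde{e}$ in place of $e$---realizes $\mathfrak{H}$ as $\mathcal{H}_{\tilde{e}}$, with $s$ now corresponding to the constant function $1$. Renaming $\tilde{e}$ as $e$, I have reduced to the situation
\[1=\sum_{k=0}^n\lambda^k g_k,\qquad g_k\in\mathcal{H}_e .\]

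Next I would use the membership characterization recalled in \S\S~\ref{deBranges} (after \cite{remling2002schrodinger}): each quotient $g_k/e$ is of bounded type on $\mathbb{C}_+$. Since polynomials are of bounded type on $\mathbb{C}_+$ and bounded type is preserved under products and finite sums, $1/e=\sum_{k=0}^n\lambda^k(g_k/e)$ is of bounded type on $\mathbb{C}_+$; as $e$ has no zeros in $\mathbb{C}_+$, its reciprocal $e$ is then of bounded type on $\mathbb{C}_+$ as well. Moreover $e^\sharp=B\,e$ with $|B(\lambda)|\le 1$ on $\mathbb{C}_+$, so $e^\sharp$, being the product of a bounded function and a function of bounded type, is of bounded type on $\mathbb{C}_+$. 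By the theorem of M.~G.~Krein invoked in the proof of Prop.~\ref{0-entire}, an entire function $g$ for which both $g$ and $g^\sharp$ are of bounded type on $\mathbb{C}_+$ is of finite exponential type; hence $e$ is of finite exponential type, and in particular $\ln|e(\lambda)|\le K|\lambda|$ on $\mathbb{C}$ for some $K>0$.

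Finally I would repeat the endgame of Prop.~\ref{0-entire}. The section $s=\xi$ is zero-free, so $N_s\equiv 0$ and Prop.~\ref{first} gives $\mathrm{T}_F(r)=m_s(r)+O(1)$. Using $|s(\lambda)|=|s(\bar\lambda)|$ and $-\ln|s(\lambda)|=\ln|e(\lambda)|+\tfrac12\ln\chi(\lambda)$ for $\lambda\in\mathbb{C}_+$, where $\chi(\lambda)=\tfrac{1-|B(\lambda)|^2}{2\Im\lambda}$, together with $\ln|e(\lambda)|\le K|\lambda|$ and the bound $\int_0^\pi\ln\chi(re^{\mathrm{i}\phi})\,d\phi\le-\pi\ln 2-\pi\ln r+k_0$ established inside the proof of Thm.~\ref{equal}, one obtains $\mathrm{T}_F(r)\le Kr-\tfrac12\ln r+O(1)$; Thm.~\ref{equal} then yields $h_T(r)\le Kr+O(\ln r)$ and therefore $\rho_T\le 1$. (Alternatively, once $e$ is of finite exponential type it has order $\le 1$, hence so do $e^\sharp$ and $B=e^\sharp/e$, and $\rho_T$ equals the order of $B$, as noted in the proof of the preceding proposition.) The main obstacle is the passage to $f\equiv 1$: knowing only that $f/e$ is of bounded type (which holds for the given $f$ with no normalization at all) does not control $e$ itself; one must first absorb $f$ into the trivialization, and for this the zero-free $n$-associated section has to be chosen real. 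Everything after that reduction is essentially a rerun of the $n=0$ argument.
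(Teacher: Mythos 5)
Your argument is correct in substance, but it follows a genuinely different route from the paper's. The paper first invokes the identification $\mathrm{P}_n(\mathfrak{H})\cong\mathcal{H}_{\tilde e}$ with $\tilde e(\lambda)=(\lambda+\mathrm{i})^n e(\lambda)$ (citing \cite{silva2012class} and \cite{langer2002characterization}), so that the zero-free $n$-associated section becomes a zero-free element of an honest de Branges space; it then reruns the argument of Prop.~\ref{0-entire} for the \emph{auxiliary} inner function $\tilde B(\lambda)=\bigl(\frac{\lambda-\mathrm{i}}{\lambda+\mathrm{i}}\bigr)^n B(\lambda)$ and transfers the linear bound back to $B$ by observing that $\int_{-t}^t d\ln\frac{u-\mathrm{i}}{u+\mathrm{i}}=4\mathrm{i}\arctan t$ is bounded, concluding via (\ref{height}). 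You instead stay with the original operator: you absorb the zero-free section into the trivialization, deduce bounded type of $1/\tilde e=f/e=\sum_k\lambda^k(g_k/e)$ directly from the membership criterion for each $g_k$ together with the algebra of the Nevanlinna class, get finite exponential type of $\tilde e$ from Krein's theorem, and then apply Prop.~\ref{first} to the zero-free section itself (which is legitimate, since any $s\in\mathrm{P}_n(\mathfrak{H})$ is still a global holomorphic section of $F$), finishing with Thm.~\ref{equal}. What your route buys is the avoidance of both the external identification and the auxiliary operator; what the paper's route buys is that, once the identification is quoted, the whole $n$-entire case is literally the $0$-entire case plus a bounded phase correction.

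The one step you should shore up is "we may take $f$ real". The fact cited in Prop.~\ref{0-entire} (Remark 2.8 of \cite{silva2012class}) concerns zero-free elements of a de Branges space, whereas your $f$ lives in $\mathrm{P}_n(\mathfrak{H})$; the symmetrized function $e^{\mathrm{i}(a\lambda+b)/2}f$ (with $f^\sharp=e^{\mathrm{i}(a\lambda+b)}f$) need not admit an obvious decomposition $\sum_k\lambda^k\tilde g_k$ with $\tilde g_k\in\mathcal{H}_e$, so the cleanest justification is exactly the identification $\mathrm{P}_n(\mathfrak{H})=\mathcal{H}_{(\lambda+\mathrm{i})^n e}$ that the paper uses, applied for this one step. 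Alternatively, you can drop realness altogether: your bounded-type computation already shows both $e/f$ and $e/f^\sharp$ (together with their $\sharp$-conjugates $B\cdot e/f^\sharp$ and $B\cdot e/f$) are of bounded type on $\mathbb{C}_+$, hence both are of finite exponential type by Krein, and the two half-circles in $m_s(r)$ can then be estimated separately, since $-\ln|s(\bar\mu)|=\ln|e(\mu)/f^\sharp(\mu)|+\tfrac12\ln\chi(\mu)$ for $\mu\in\mathbb{C}_+$. With either repair the proof is complete.
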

\begin{proof}Let $B(\lambda)$ be a contractive Weyl function of $T$ and $\mathcal{H}_e$ the de Branges space associated to a de Branges function $e(\lambda)$ for $B(\lambda)$. Then $\mathrm{P}_n(\mathfrak{H})$ can be identified with $\mathcal{H}_{\tilde{e}}$ where $\tilde{e}(\lambda)=(\lambda+\mathrm{i})^ne(\lambda)$ \cite[Remark 2.9]{silva2012class} \cite[Coro.~3.4]{langer2002characterization}. Thus the assumption means there is an $f\in \mathcal{H}_{\tilde{e}}$ such that $f$ is non-vanishing everywhere. As in the proof of Prop.~\ref{0-entire}, by choosing $e(\lambda)$ properly we can assume that $f(\lambda)\equiv 1$. This leads to the conclusion that $\tilde{e}(\lambda)$ is at most of finite exponential type. Let
\[\tilde{B}(\lambda):=(\frac{\lambda-\mathrm{i}}{\lambda+\mathrm{i}})^n \times B(\lambda).\]
Then by Thm.~\ref{equal}, Prop.~\ref{first} and the formula (\ref{height}) we see that there is a constant $K>0$ such that
\[\frac{1}{2\pi \mathrm{i}}\int_0^r\frac{dt}{t}\int_{-t}^td\ln \tilde{B}(u)< Kr.\]
On the other side,
\begin{eqnarray*}-\mathrm{i}\int_{-t}^td\ln \tilde{B}(u)&=&-n\times \mathrm{i}\int_{-t}^td\ln(\frac{u-\mathrm{i}}{u+\mathrm{i}})-\mathrm{i}\int_{-t}^td\ln B(u)\\
&=&4n\arctan t-\mathrm{i}\int_{-t}^td\ln B(u),\end{eqnarray*}
implying that
\[\frac{1}{2\pi \mathrm{i}}\int_0^r\frac{dt}{t}\int_{-t}^td\ln B(u)< Kr.\]
The claim then follows from the formula (\ref{height}).
\end{proof}
\emph{Remark}. There is a characterization of $n$-entire operators in terms of the spectra of their self-adjoint extensions \cite[Thm.~2.7]{silva2012class}. (C2) in \cite[Thm.~2.7]{silva2012class} implies that an $n$-entire operator $T$ has a self-adjoint boundary condition $y$ whose Weyl exponent $\alpha_T(y)\leq 1$ (see \cite[Def.~10.13]{wang2024complex}). Then by \cite[Coro.~10.45]{wang2024complex}, $\rho_T\leq 1$. Our above proof of this fact is just to show directly that the existence of a non-vanishing section $s\in \mathrm{P}_n(\mathfrak{H})$ is actually a severe restriction on the growth of $T$.
\section{Indeterminate Hamburger moment problems revisited}\label{Hmoment}
\subsection{Basics on Hamburger moment problems}Given a sequence of real numbers $\{s_i\}_{i=0}^\infty$, the classical Hamburger moment problem is to look for a positive Borel measure $\mu$ on the real line such that all its $i$-th moments exist and
\[s_i=\int_{-\infty}^\infty x^id\mu(x),\quad i\in \mathbb{N}_0=\mathbb{N}\cup \{0\}.\]
We only consider the case that $\mu$ has an \emph{infinite} support.
Then the problem is solvable if and only if the quadratic form
$\sum_{i,j=0}^ns_{i+j}\xi_i\overline{\xi_j}$ ($\xi_i\in \mathbb{C}$) is positive-definite for each $n\in \mathbb{N}$ \cite[Thm.~16.1]{schmudgen2012unbounded}. If this is the case, we say $\{s_i\}_{i=0}^\infty$ is positive-definite. There are several approaches towards the Hamburger moment problem and the standard reference is \cite{akhiezer2020classical}. However, our focus here is the operator-theoretic content of this problem. For this respect, \cite{schmudgen2012unbounded} also contains a concise introduction.

Given a positive-definite moment sequence $s=\{s_i\}_{i=0}^\infty$, if the corresponding moment problem has a unique solution, it is called \emph{determinate}, otherwise \emph{indeterminate}. We only consider the indeterminate case and adopt the convention that $s_0=1$. $s$ determines an inner product on the linear space $P$ of polynomials with complex coefficients, i.e.,
\[(p(\cdot), q(\cdot))=\sum_{i,j=0}^ns_{i+j}\xi_i\overline{\eta_j}\]
if $p(\lambda)=\sum_{i=0}^n \xi_i \lambda^i$ and $q(\lambda)=\sum_{i=0}^n\eta_i\lambda^i$. We denote the completion of $P$ w.r.t. this inner product by $H$. We can apply the Gram-Schmidt procedure to the basis $\{1, \lambda, \lambda^2,\cdots\}$ to obtain the so-called orthonormal polynomials $\{P_n\}$ of the first kind. It is assumed that the coefficient of the highest order term of $P_n$ is positive. In particular, $P_0(\lambda)\equiv 1$ and each coefficient of $P_n$ is real. Obviously, multiplication by the independent variable $\lambda$ maps $P$ to $P$ and thus by taking closure we get a symmetric operator $M_\lambda$ in $H$. Then we have the \emph{three term recurrence relation}
\begin{equation}M_\lambda P_n=\lambda P_n(\lambda)=a_nP_{n+1}(\lambda)+b_nP_n(\lambda)+a_{n-1}P_{n-1}(\lambda),\label{recurrence}\end{equation}
where $a_n>0, b_n$ are real numbers and it is assumed that $a_{-1}=1$, $P_{-1}(\lambda)\equiv 0$. It is known that $M_\lambda$ is densely defined and $M_\lambda\in \mathfrak{E}_1(H)$.

Let $\{e_n\}_{n=0}^\infty$ be the standard orthonormal basis in the Hilbert space $l^2(\mathbb{N}_0)$ given by $e_n=(\delta_{in})$. By setting $UP_n=e_n$ we get a unitary map from $H$ to $l^2(\mathbb{N}_0)$ and $M_\lambda$ is transformed into the \emph{Jacobi operator} $T$:
\[Te_n=a_n e_{n+1}+b_ne_n+a_{n-1}e_{n-1},\]
where $e_{-1}:=0$. We can define a linear map $\mathcal{T}$ on the linear space of complex sequences $\{\gamma_n\}_{n=0}^\infty$ by
\[(\mathcal{T}\gamma)_n=a_n\gamma_{n+1}+b_n\gamma_n+a_{n-1}\gamma_{n-1},\quad n\in \mathbb{N}_0,\]
where $\gamma_{-1}:=0$. It can be shown that
\[D(T^*)=\{\gamma\in l^2(\mathbb{N}_0)|\mathcal{T}\gamma\in l^2(\mathbb{N}_0)\}\]
and $T^*\gamma=\mathcal{T}\gamma$ for $\gamma\in D(T^*)$ \cite[Prop.~16.7]{schmudgen2012unbounded}.

For a fixed $\lambda\in \mathbb{C}$, consider the equation $T^*\gamma=\lambda \gamma$ (a difference equation) with the initial conditions $\gamma_{-1}=0$ and $\gamma_0=1$. It is easy to find that there is a \emph{unique} solution $\mathfrak{p}(\lambda)=\{\gamma_n(\lambda)\}_{n=0}^\infty$, where $\gamma_n(\lambda)$ is precisely $P_n(\lambda)$. This is just another way to interpret the occurrence of polynomials $P_n(\lambda)$. The difference equation $T^*\gamma =e_0+\lambda \gamma$ with the initial conditions $\gamma_{-1}=-1$ and $\gamma_0=0$ is also often used and the unique solution $\mathfrak{q}(\lambda)=\{Q_n(\lambda)\}_{n=0}^\infty$ is actually a sequence of polynomials with real coefficients. In particular $Q_0(\lambda)\equiv 0$. These are called polynomials of the second kind associated to the moment sequence $s$.

What interests us most here is the following four entire functions:
\[a(\lambda):=\lambda\sum_{n=0}^\infty Q_n(0)Q_n(\lambda),\quad b(\lambda):=-1+\lambda\sum_{n=0}^\infty Q_n(0)P_n(\lambda),\]
\[c(\lambda):=1+\lambda\sum_{n=0}^\infty P_n(0)Q_n(\lambda),\quad d(\lambda):=\lambda\sum_{n=0}^\infty P_n(0)P_n(\lambda).\]
 The matrix $N(\lambda):=\left(
               \begin{array}{cc}
                 a(\lambda) & b(\lambda) \\
                 c(\lambda) & d(\lambda) \\
               \end{array}
             \right)
 $ satisfies $\det N(\lambda)\equiv 1$, and is called the \emph{Nevanlinna matrix}, which was used by Nevanlinna to parameterize all solutions of the moment problem associated to $s$.
 It was noted very early by M. Riesz that all these entire functions $a(\lambda), b(\lambda), c(\lambda), d(\lambda)$ are of at most minimal exponential type. Later in \cite{berg1994order}, it was proved that all these functions share the same order $\rho$ and type $\tau$. The numbers $\rho$ and $\tau$ now are called the order and type of the moment problem. If the growth is much slower, finer notions like logarithmic order and type are also considered \cite{berg2007logarithmic, berg2014order}.
\subsection{On the growth of indeterminate Hamburger moment problems}

 Our focus in this subsection is also the growth aspect of indeterminate Hamburger moment problems. However, compared with investigations in the literature on this topic, rather than just the rough quantities order and type, we have the much finer object $h_T(r)$ or $\mathrm{T}_F(r)$ associated to the Jacobi operator $T$ to measure the growth of the moment problem \emph{canonically}. Thus our basic task here is to interpret the growth of the moment problem in the more general formalism developed in previous sections.

  \begin{lemma}\label{bt}$D(T^*)=D(T)\oplus \textup{span}\{\mathfrak{q}(0), \mathfrak{p}(0)\}$. For $\gamma=\beta+c_0\mathfrak{q}(0)+c_1\mathfrak{p}(0)$ where $\beta\in D(T)$ and $c_0, c_1\in \mathbb{C}$, set $\Gamma_0\gamma=c_0$ and $\Gamma_1\gamma=-c_1$. Then $(\mathbb{C}, \Gamma_0, \Gamma_1)$ is a boundary triplet whose corresponding Weyl function is $M(\lambda)=b(\lambda)/d(\lambda)$.
 \end{lemma}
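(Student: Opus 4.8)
The plan is to verify the three claims in order: the direct-sum decomposition of $D(T^*)$, the boundary-triplet identity (\ref{green1}), and the identification of the Weyl function. First I would establish $D(T^*) = D(T) \oplus \operatorname{span}\{\mathfrak{q}(0),\mathfrak{p}(0)\}$. Since $T$ is a densely defined symmetric operator with deficiency indices $(1,1)$, von Neumann's formula already guarantees $\dim D(T^*)/D(T) = 2$; what must be shown is that $\mathfrak{q}(0)$ and $\mathfrak{p}(0)$ are two linearly independent vectors in $D(T^*)$ that are independent modulo $D(T)$. For membership in $D(T^*)$ one uses the characterization $D(T^*)=\{\gamma\in l^2(\mathbb{N}_0)\mid \mathcal{T}\gamma\in l^2(\mathbb{N}_0)\}$: one computes $\mathcal{T}\mathfrak{q}(0) = e_0$ (from the difference equation $T^*\gamma = e_0 + \lambda\gamma$ at $\lambda = 0$) and $\mathcal{T}\mathfrak{p}(0)=0$ (from $T^*\gamma = \lambda\gamma$ at $\lambda=0$), both obviously in $l^2$; that $\mathfrak{p}(0),\mathfrak{q}(0)\in l^2(\mathbb{N}_0)$ is exactly the indeterminacy hypothesis (the series $\sum|P_n(0)|^2$ and $\sum|Q_n(0)|^2$ converge in the indeterminate case). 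Linear independence of the pair follows from their initial values $\gamma_{-1},\gamma_0$: $\mathfrak{p}(0)$ has $(\gamma_{-1},\gamma_0)=(0,1)$ while $\mathfrak{q}(0)$ has $(-1,0)$. Independence modulo $D(T)$ then follows because any element of $D(T)$, being a limit of finitely supported sequences in the graph norm, has "initial data" forcing a genuine boundary contribution; cleanly, one shows a nontrivial combination $c_0\mathfrak{q}(0)+c_1\mathfrak{p}(0)\in D(T)$ would make $T$ have a smaller deficiency index, a contradiction.

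Next I would check that $(\mathbb{C},\Gamma_0,\Gamma_1)$ satisfies the abstract Green's formula (\ref{green1}), i.e. $[\gamma,\eta]_c = \Gamma_1\gamma\,\overline{\Gamma_0\eta} - \Gamma_0\gamma\,\overline{\Gamma_1\eta}$ for $\gamma,\eta\in D(T^*)$ (where $[\cdot,\cdot]_c$ here is $(T^*\gamma,\eta)-(\gamma,T^*\eta)$). Writing $\gamma = \beta + c_0\mathfrak{q}(0)+c_1\mathfrak{p}(0)$ and $\eta = \beta' + c_0'\mathfrak{q}(0)+c_1'\mathfrak{p}(0)$ and using that $[\cdot,\cdot]_c$ vanishes whenever one argument lies in $D(T)=A_T$, the form reduces to a $2\times 2$ computation on the spanning vectors $\mathfrak{q}(0),\mathfrak{p}(0)$. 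Using $\mathcal{T}\mathfrak{p}(0)=0$ and $\mathcal{T}\mathfrak{q}(0)=e_0$ together with the summation-by-parts (discrete Green's) identity for Jacobi operators, one finds $[\mathfrak{p}(0),\mathfrak{p}(0)]_c = 0$, $[\mathfrak{q}(0),\mathfrak{q}(0)]_c = 0$, and $[\mathfrak{q}(0),\mathfrak{p}(0)]_c = (e_0,\mathfrak{p}(0)) = P_0(0) = 1$ (with the conjugate-symmetric partner). Matching this against the prescribed $\Gamma_0,\Gamma_1$ pins down the sign choice $\Gamma_1\gamma = -c_1$; the surjectivity of $(\Gamma_0,\Gamma_1): D(T^*)\to\mathbb{C}^2$ is immediate from the decomposition. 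This identifies $(\mathbb{C},\Gamma_0,\Gamma_1)$ as a genuine boundary triplet.

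Finally, to compute the Weyl function: by definition $M(\lambda)$ is determined by the requirement that for the defect element $\hat{\gamma}(\lambda)\in W_\lambda$ with $\Gamma_0\hat\gamma(\lambda)=1$ one has $\Gamma_1\hat\gamma(\lambda)=M(\lambda)$, equivalently $W_T(\lambda)$ is spanned by an element with $\Gamma_0$-component $1$ and $\Gamma_1$-component $M(\lambda)$. For the Jacobi operator, $\ker(T^*-\lambda)$ is spanned by a single $l^2$ solution of the difference equation; I would express this solution in the basis $\{\mathfrak{p}(0),\mathfrak{q}(0)\}$ modulo $D(T)$ by evaluating the Nevanlinna-matrix entries, whose very definitions encode exactly the pairings $\sum Q_n(0)P_n(\lambda)$, $\sum P_n(0)P_n(\lambda)$, etc. Concretely, the $l^2$ solution of $T^*\gamma = \lambda\gamma$ is (a multiple of) $\mathfrak{q}(\lambda) - M(\lambda)\mathfrak{p}(\lambda)$ for the appropriate $M(\lambda)$, and one recognizes from the series defining $b(\lambda),d(\lambda)$ that the $\mathfrak{q}(0)$-coefficient is proportional to $d(\lambda)$ and the $\mathfrak{p}(0)$-coefficient to $-b(\lambda)$; normalizing the $\Gamma_0$-component to $1$ yields $M(\lambda)=b(\lambda)/d(\lambda)$. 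One then checks this $M$ has positive imaginary part on $\mathbb{C}_+$ (it is a known Herglotz function, being a Nevanlinna parameter in the moment problem), consistent with the general theory.

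The main obstacle I anticipate is the bookkeeping in the last step: correctly matching the abstract defect element $\hat\gamma(\lambda)$, expressed via the boundary maps $\Gamma_0,\Gamma_1$, against the explicit $l^2$ solution written through the Nevanlinna-matrix series, so that the ratio comes out as $b(\lambda)/d(\lambda)$ with the right sign and without a spurious constant. This requires care with the precise definitions of $a,b,c,d$ (which differ by conventions in the literature) and with the normalization $P_0(0)=1$, $Q_0(0)=0$ feeding into the pairings $[\mathfrak{q}(0),\mathfrak{p}(0)]_c$. Everything else is a routine, if slightly tedious, summation-by-parts computation for second-order difference operators.
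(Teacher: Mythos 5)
There is one genuine error in your proposal, localized in the Weyl-function step. You write that the $l^2$ solution of $T^*\gamma=\lambda\gamma$ is a multiple of $\mathfrak{q}(\lambda)-M(\lambda)\mathfrak{p}(\lambda)$. That is the limit-point (determinate) Weyl picture and it fails here: in the indeterminate case every solution of the recurrence is in $l^2$, and since the convention $\gamma_{-1}=0$ is built into $\mathcal{T}$, the equation at $n=0$ determines $\gamma_1$ from $\gamma_0$, so $\textup{ker}(T^*-\lambda)$ is one-dimensional and spanned by $\mathfrak{p}(\lambda)$ itself; $\mathfrak{q}(\lambda)$ satisfies the inhomogeneous equation $T^*\mathfrak{q}(\lambda)=e_0+\lambda\mathfrak{q}(\lambda)$, so no combination $\mathfrak{q}(\lambda)+m\,\mathfrak{p}(\lambda)$ lies in the defect space. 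As written, your "normalize the $\Gamma_0$-component to $1$" bookkeeping cannot start from that vector, and the crucial step — actually computing the coefficients of the defect vector in the decomposition $D(T^*)=D(T)\oplus\textup{span}\{\mathfrak{q}(0),\mathfrak{p}(0)\}$ — is left at the level of "one recognizes from the series".

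The fix is exactly the paper's (short) computation, and it uses tools you already set up. Write $\mathfrak{p}(\lambda)=r+c_0\mathfrak{q}(0)+c_1\mathfrak{p}(0)$ with $r\in D(T)$; then $\lambda\mathfrak{p}(\lambda)=T^*\mathfrak{p}(\lambda)=Tr+c_0e_0$. Pairing with $\mathfrak{p}(0)$ and using $(Tr,\mathfrak{p}(0))=(r,T^*\mathfrak{p}(0))=0$ gives $c_0=\lambda\sum_n P_n(0)P_n(\lambda)=d(\lambda)$; pairing with $\mathfrak{q}(0)$ and with $e_0$ gives $c_1=1-\lambda\sum_n Q_n(0)P_n(\lambda)=-b(\lambda)$, hence $\Gamma_0\mathfrak{p}(\lambda)=d(\lambda)$, $\Gamma_1\mathfrak{p}(\lambda)=b(\lambda)$ and $M=b/d$. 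Equivalently, the symplectic pairings you computed ($[\mathfrak{q}(0),\mathfrak{p}(0)]_c=1$, the others $0$) extract the coefficients directly: $c_0=[\mathfrak{p}(\lambda),\mathfrak{p}(0)]_c$ and $-c_1=[\mathfrak{p}(\lambda),\mathfrak{q}(0)]_c$. Two further remarks: the paper does not reprove the triplet part at all — it cites Prop.~16.26 of \cite{schmudgen2012unbounded} and only proves the Weyl-function formula — so your from-scratch verification is extra (and fine), but your argument for independence modulo $D(T)$ ("smaller deficiency index") should be replaced by something concrete, e.g.\ $\mathfrak{p}(0)\in D(T)$ would make $0$ an eigenvalue of the regular operator $T$, and $\mathfrak{q}(0)+t\,\mathfrak{p}(0)\in D(T)$ would put $e_0\in \textup{Ran}\,T\subset(\textup{ker}\,T^*)^{\bot}$, contradicting $(e_0,\mathfrak{p}(0))_{l^2}=P_0(0)=1$.
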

 \begin{proof}That $(\mathbb{C}, \Gamma_0, \Gamma_1)$ is a boundary triplet is actually Prop.~16.26 in \cite{schmudgen2012unbounded}. We only prove the claim on the Weyl function. Let $\mathfrak{p}(\lambda)=r+c_0 \mathfrak{q}(0)+c_1 \mathfrak{p}(0)$ where $r\in D(T)$. Then by definition,
 \[\lambda \mathfrak{p}(\lambda)=T r+c_0 e_0. \]
 Thus
\[\lambda (\mathfrak{p}(\lambda), \mathfrak{p}(0))_{l^2}=(Tr, \mathfrak{p}(0))_{l^2}+c_0=(r, T^*\mathfrak{p}(0))_{l^2}+c_0=c_0.\]
This is precisely $c_0=d(\lambda)$. On the other side,
\[1=(\mathfrak{p}(\lambda),e_0)_{l^2}=(r,e_0)_{l^2}+c_1 \]
and
\[\lambda (\mathfrak{p}(\lambda), \mathfrak{q}(0))_{l^2}=(Tr, \mathfrak{q}(0))_{l^2}=(r, T^*\mathfrak{q}(0))_{l^2}=(r, e_0)_{l^2}.\]
Hence,
\[c_1=1-(r,e_0)_{l^2}=1-\lambda (\mathfrak{p}(\lambda), \mathfrak{q}(0))_{l^2},\]
which is precisely $c_1=-b(\lambda)$. Then $M(\lambda)=b(\lambda)/d(\lambda)$ by definition.
\end{proof}
\emph{Remark}. The corresponding contractive Weyl function is thus
\[B(\lambda)=\frac{M(\lambda)-\mathrm{i}}{M(\lambda)+\mathrm{i}}=\frac{b(\lambda)-\mathrm{i}d(\lambda)}{b(\lambda)+\mathrm{i}d(\lambda)}.\]
Due to the fact $\det N(\lambda)\equiv 1$, $b(\lambda)$ and $d(\lambda)$ have no common zeros. Then $\sigma(T_1)$ (resp. $\sigma(T_{-1})$) is precisely the zero set of $d(\lambda)$ (resp. $b(\lambda)$). Additionally by definition $b(\lambda)=b^\sharp(\lambda)$ and $d(\lambda)=d^\sharp(\lambda)$. Thus $e(\lambda):=[b(\lambda)+\mathrm{i}d(\lambda)]/\sqrt{2}$ is a de Branges function for $B(\lambda)$ and $\mathfrak{p}(\lambda)$ in this context is actually $\zeta(\lambda)$ in \S\S~\ref{deBranges}. Note also that
$(e_n, \mathfrak{p}(\bar{\lambda}))_{l^2}=P_n(\lambda)$. Thus we can see $H=\mathcal{H}_e$. In particular, $\widehat{e_0}\in \mathfrak{H}$ is a holomorphic non-vanishing section of $F$ over $\mathbb{C}$.

An easy computation shows
\[|\widehat{e_0}(\lambda)|^2=\frac{1}{(\mathfrak{p}(\bar{\lambda}), \mathfrak{p}(\bar{\lambda}))_{l^2}}=\frac{1}{\sum_{n=0}^\infty |P_n(\lambda)|^2}.\]
Denote $(\sum_{n=0}^\infty |P_n(\lambda)|^2)^{1/2}$ by $P(\lambda)$. Then we immediately have
\begin{theorem}\label{moment1}
The first Chern form $c_1(F)$ for the characteristic line bundle $F$ of the Jacobi operator $T$ is $c_1(F)=2dd^c\ln P(\lambda)$. In particular,
\[\mathrm{T}_F(r)=\frac{1}{\pi}\int_0^\pi\ln P(re^{\mathrm{i}\phi})d\phi-\ln P(0).\]
\end{theorem}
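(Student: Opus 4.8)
The plan is to read off both assertions from the distinguished non-vanishing section $\widehat{e_0}$ of $F$ isolated in the remark after Lemma~\ref{bt}, using the Poincar\'e--Lelong formula already exploited in the proof of Prop.~\ref{first}. First I would record the two facts in hand: $\widehat{e_0}\in\mathfrak{H}$ is a holomorphic section of $F$ over all of $\mathbb{C}$ that vanishes nowhere, and its pointwise length satisfies $|\widehat{e_0}(\lambda)|^2=1\big/\sum_{n\ge 0}|P_n(\lambda)|^2=P(\lambda)^{-2}$, so $\ln|\widehat{e_0}(\lambda)|=-\ln P(\lambda)$. Since $\widehat{e_0}$ is a smooth (indeed holomorphic) section of the Hermitian line bundle $F$ that never vanishes, $P(\lambda)=|\widehat{e_0}(\lambda)|^{-1}$ is a strictly positive real-analytic function on $\mathbb{C}$; hence $\ln P$ is $C^\infty$ and every differential expression below is legitimate.

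Next I would apply the Poincar\'e--Lelong formula $dd^c[\ln|s|^2]=-c_1(F)+[(s)]$ with $s=\widehat{e_0}$. As $\widehat{e_0}$ has empty zero divisor, $[(\widehat{e_0})]=0$, so
\[
c_1(F)=-dd^c\ln|\widehat{e_0}|^2=dd^c\ln P^2=2\,dd^c\ln P,
\]
which is the first claim. For the second, I would apply Prop.~\ref{first} to $s=\widehat{e_0}$: since $\widehat{e_0}(0)\neq 0$ we have $n_{\widehat{e_0}}(0)=0$ and $N_{\widehat{e_0}}\equiv 0$, and re-running the Green--Jensen step of the proof of Prop.~\ref{first} (rather than quoting its conclusion, so as to retain the additive constant, which is exactly $\ln|s(0)|$ when $s(0)\neq 0$) yields
\[
\mathrm{T}_F(r)=m_{\widehat{e_0}}(r)+\ln|\widehat{e_0}(0)|=\frac{1}{2\pi}\int_0^{2\pi}\ln P(re^{\mathrm{i}\phi})\,d\phi-\ln P(0).
\]
Equivalently, one may integrate $c_1(F)=2\,dd^c\ln P$ directly and apply the Green--Jensen formula \cite[Thm.~A2.2.3]{ru2021nevanlinna} to the smooth function $\ln P^2$, with the same result.

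Finally I would invoke the symmetry of $P$: each $P_n$ has real coefficients, hence $|P_n(\bar\lambda)|=|P_n(\lambda)|$ and so $P(\bar\lambda)=P(\lambda)$; substituting $\phi\mapsto 2\pi-\phi$ on $[\pi,2\pi]$ gives $\int_0^{2\pi}\ln P(re^{\mathrm{i}\phi})\,d\phi=2\int_0^{\pi}\ln P(re^{\mathrm{i}\phi})\,d\phi$, and the stated formula drops out. There is no genuine obstacle here; the only points needing care are the bookkeeping of the additive constant ($-\ln P(0)$, not a mere $O(1)$) and the harmless observation that Green--Jensen applies to the smooth function $\ln|\widehat{e_0}|^2$ even though it is not of the form $\ln|f|$ for a holomorphic $f$ — the divisor term is simply absent.
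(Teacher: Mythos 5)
Your proposal is correct and follows essentially the same route as the paper: the paper's (terse) proof likewise rests on the non-vanishing section $\widehat{e_0}$ with $|\widehat{e_0}(\lambda)|=1/P(\lambda)$, the Poincar\'e--Lelong/Green--Jensen computation from the proof of Prop.~\ref{first}, and the symmetry $P(\lambda)=P(\bar{\lambda})$. Your extra care with the additive constant $-\ln P(0)$ and with applying Green--Jensen to the smooth function $\ln|\widehat{e_0}|^2$ is exactly the bookkeeping the paper leaves implicit.
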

\begin{proof}The first claim is clear from our general theory, and the second follows from the proof of Prop.~\ref{first} and the fact that $P(\lambda)=P(\bar{\lambda})$.
\end{proof}
Even without our geometric formalism, the importance of $P(\lambda)$ in measuring the growth of an indeterminate Hamburger moment problem was already known: M. Riesz proved that $P(\lambda)$ is of at most minimal exponential type, i.e., for arbitrary $\varepsilon>0$, there is a constant $C_\varepsilon>0$ such that
\[P(\lambda)\leq C_\varepsilon e^{\varepsilon|\lambda|}, \quad \forall \lambda\in \mathbb{C}.\] See \cite[Thm.~2.4.3]{akhiezer2020classical}. The following corollary is immediate from the above theorem.
\begin{corollary}The Jacobi operator $T$ associated to an indeterminate Hamburger moment problem is of at most minimal exponential type, i.e., $\rho_T\leq 1$ and if $\rho_T=1$, then $\tau_T=0$.
\end{corollary}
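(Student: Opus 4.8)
The plan is to feed M.~Riesz's growth bound on $P(\lambda)$ into the integral representation of $\mathrm{T}_F(r)$ furnished by Theorem~\ref{moment1}, and then transfer the resulting estimate from $\mathrm{T}_F(r)$ to $h_T(r)$ via Theorem~\ref{equal}.

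First I would fix an arbitrary $\varepsilon>0$ and invoke Riesz's theorem to obtain a constant $C_\varepsilon>0$ with $P(\lambda)\le C_\varepsilon e^{\varepsilon|\lambda|}$ for all $\lambda\in\mathbb{C}$; in particular $\ln P(re^{\mathrm{i}\phi})\le \ln C_\varepsilon+\varepsilon r$ for every $\phi\in[0,\pi]$. Substituting this into the formula $\mathrm{T}_F(r)=\frac{1}{\pi}\int_0^\pi\ln P(re^{\mathrm{i}\phi})\,d\phi-\ln P(0)$ of Theorem~\ref{moment1} and integrating gives $\mathrm{T}_F(r)\le \varepsilon r+\bigl(\ln C_\varepsilon-\ln P(0)\bigr)$, i.e.\ $\mathrm{T}_F(r)\le \varepsilon r+O(1)$ as $r\to+\infty$ (with the $O(1)$ term depending on $\varepsilon$; note $P(0)\ge|P_0(0)|=1$, so $\ln P(0)$ is finite).

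Next I would apply Theorem~\ref{equal}, which says $h_T(r)=\mathrm{T}_F(r)+O(\ln r)$, to conclude $h_T(r)\le \varepsilon r+O(\ln r)$. Since $\varepsilon>0$ was arbitrary, this forces $\limsup_{r\to+\infty}\frac{\ln h_T(r)}{\ln r}\le 1$, so $\rho_T\le1$. Moreover, in the case $\rho_T=1$ the Weyl type is $\tau_T=\limsup_{r\to+\infty}\frac{h_T(r)}{r}$, and the bound $h_T(r)\le \varepsilon r+O(\ln r)$ yields $\limsup_{r\to+\infty}\frac{h_T(r)}{r}\le \varepsilon$ for every $\varepsilon>0$; hence $\tau_T=0$.

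There is essentially no genuine obstacle here: the corollary is an almost immediate consequence of Theorem~\ref{moment1} once one has the classical minimal-exponential-type bound on $P(\lambda)$. The only mild points of care are to keep track that the $O(1)$ constant from the integral representation and the $O(\ln r)$ error from Theorem~\ref{equal} are both negligible compared with the linear term $\varepsilon r$, and to recall that the assertion ``$\tau_T=0$ when $\rho_T=1$'' is precisely what is meant by ``$T$ is of at most minimal exponential type'' in the terminology fixed in \S\S~\ref{bvp}.
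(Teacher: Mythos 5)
Your proposal is correct and follows essentially the same route as the paper, which declares the corollary ``immediate'' from Theorem~\ref{moment1} together with M.~Riesz's bound $P(\lambda)\le C_\varepsilon e^{\varepsilon|\lambda|}$; you have simply spelled out the substitution into the integral formula and the transfer from $\mathrm{T}_F(r)$ to $h_T(r)$ via Theorem~\ref{equal}. The bookkeeping of the $O(1)$ and $O(\ln r)$ terms and the conclusion $\rho_T\le 1$, $\tau_T=0$ when $\rho_T=1$ are all handled correctly.
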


$P(\lambda)$ also played an essential part in \cite{berg1994order, berg2014order}. In \cite{berg1994order} it was also proved that $\ln P(\lambda)$ is a subharmonic function. For us this actually comes from the general result that $F$ is positive-definite, i.e., $c_1(F)$ is positive-definite. Our main new observation here is that the precise quantity measuring the growth is actually $\frac{1}{\pi}\int_0^\pi\ln P(re^{\mathrm{i}\phi})d\phi$ and $\mathrm{T}_F(r)$ is essentially its geometric interpretation. It's thus reasonable to call $\mathrm{T}_F(r)$ the \emph{characteristic function of the indeterminate Hamburger moment problem}. It should be noted that $\mathrm{T}_F(r)$ only depends on the unitary equivalence class of $T$.

 Now it is natural to compare $\mathrm{T}_F(r)$ for the characteristic line bundle $F$ with the Nevanlinna characteristic functions of the four entire functions $a(\lambda), b(\lambda), c(\lambda)$ and $d(\lambda)$. Recall that an entire function version of First Main Theorem states that
\begin{equation}T_f(r)=\int_0^{2\pi}\ln^+|\frac{1}{f(re^{\mathrm{i}\phi})}|\frac{d\phi}{2\pi}+N_f(r,0)+O(1),\label{fun}\end{equation}
where $N_f(r,0)$ is the zero counting function of $f$. See for example \cite[Chap.~VI]{lang2013introduction}.
\begin{theorem}\label{moment2}Let $f(\lambda)$ be any one of $a(\lambda), b(\lambda), c(\lambda)$ and $d(\lambda)$. Then
\[\mathrm{T}_F(r)=T_f(r)+O(\ln r).\]
\end{theorem}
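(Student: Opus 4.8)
The plan is to establish $T_f(r)=\mathrm{T}_F(r)+O(\ln r)$ first for $f\in\{b,d\}$ and then bootstrap to $f\in\{a,c\}$. Throughout I write $\mathfrak p(\lambda)=(P_n(\lambda))_{n\ge 0}$ and $\mathfrak q(\lambda)=(Q_n(\lambda))_{n\ge 0}$ for the two $l^2$-valued entire functions attached to the polynomials of the first and second kind, and I use Thm \ref{moment1} in the form $\mathrm{T}_F(r)=\frac1\pi\int_0^\pi\ln P(re^{\mathrm i\phi})\,d\phi+O(1)$, where $P(\lambda)=\|\mathfrak p(\lambda)\|_{l^2}\ge 1$ and $P^\sharp=P$.

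For $f=d$ (the argument for $f=b$ is identical) I would obtain the upper bound from $d(\lambda)=\lambda(\mathfrak p(\lambda),\mathfrak p(0))_{l^2}$ and $b(\lambda)=-1+\lambda(\mathfrak p(\lambda),\mathfrak q(0))_{l^2}$: Cauchy--Schwarz gives $|d(\lambda)|\le O(|\lambda|)\,P(\lambda)$, so after taking $\ln^+$, integrating over $|\lambda|=r$, and applying Thm \ref{moment1} one finds $T_d(r)\le \mathrm{T}_F(r)+O(\ln r)$. The matching lower bound comes from the zero set: the zeros of $d$ are exactly $\sigma(T_1)$, the spectrum of a self-adjoint extension, so $N_d(r,0)=N_T(r,1)+O(\ln r)$, and Eq. (\ref{se}) together with Thm \ref{equal} identifies this with $h_T(r)+O(\ln r)=\mathrm{T}_F(r)+O(\ln r)$; the entire-function First Main Theorem (\ref{fun}) then gives $T_d(r)=N_d(r,0)+m_d(r,0)+O(1)\ge \mathrm{T}_F(r)+O(\ln r)$. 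Combining the two bounds settles $f=b,d$.

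To deal with $a$ and $c$ I would compare them with $d$ and $b$ through the boundary triplet $(\mathbb{C},\Gamma_0,\Gamma_1)$ of Lemma \ref{bt}. A computation parallel to the proof of that lemma, using the Green's formula (\ref{green1}), yields $\Gamma_0\mathfrak q(\lambda)=c(\lambda)$, $\Gamma_1\mathfrak q(\lambda)=a(\lambda)$ (alongside $\Gamma_0\mathfrak p(\lambda)=d(\lambda)$, $\Gamma_1\mathfrak p(\lambda)=b(\lambda)$). Since the associated $(\Gamma_+,\Gamma_-)$-triplet has $T_1=\{\Gamma_0=0\}$ and $T_{-1}=\{\Gamma_1=0\}$, and $(T^*-\lambda)\mathfrak q(\lambda)=e_0$ while $\mathfrak p(\lambda)\in\ker(T^*-\lambda)$, one reads off $(T_1-\lambda)^{-1}e_0=\mathfrak q(\lambda)-\frac{c(\lambda)}{d(\lambda)}\mathfrak p(\lambda)$ and $(T_{-1}-\lambda)^{-1}e_0=\mathfrak q(\lambda)-\frac{a(\lambda)}{b(\lambda)}\mathfrak p(\lambda)$. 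Clearing denominators, the $l^2$-valued functions $\mathfrak r(\lambda):=d(\lambda)\mathfrak q(\lambda)-c(\lambda)\mathfrak p(\lambda)$ and $\mathfrak r'(\lambda):=b(\lambda)\mathfrak q(\lambda)-a(\lambda)\mathfrak p(\lambda)$ are entire (the simple zeros of $d$, $b$ cancel the simple poles of the resolvents), and the self-adjoint resolvent estimate $\|(T_{\pm1}-\lambda)^{-1}\|\le|\Im\lambda|^{-1}$ gives $\|\mathfrak r(\lambda)\|\le|d(\lambda)|/|\Im\lambda|$ and $\|\mathfrak r'(\lambda)\|\le|b(\lambda)|/|\Im\lambda|$ off $\mathbb{R}$. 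Because $Q_0\equiv 0$ and $P_0\equiv 1$, the zeroth coordinates are $\mathfrak r(\lambda)_0=-c(\lambda)$ and $\mathfrak r'(\lambda)_0=-a(\lambda)$, so $|c(\lambda)|\le|d(\lambda)|/|\Im\lambda|$ and $|a(\lambda)|\le|b(\lambda)|/|\Im\lambda|$; taking $\ln^+$ and integrating over $|\lambda|=r$ (using $c^\sharp=c$, $a^\sharp=a$ and $\int_0^\pi\ln^+\frac1{r\sin\phi}\,d\phi=O(1)$ for $r\ge 1$) gives $T_c(r)\le T_d(r)+O(1)$, $T_a(r)\le T_b(r)+O(1)$, hence $T_a(r),T_c(r)\le\mathrm{T}_F(r)+O(\ln r)$. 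For the reverse inequalities I would use that $-c/d$ and $-a/b$ are the Stieltjes transforms of the two von Neumann (N-extremal) solution measures, hence Herglotz functions whose pole sets coincide with the zero sets of $d$ and $b$ (no common zeros, as $\det N(\lambda)\equiv 1$); consequently the zeros of $c$ interlace those of $d$ and the zeros of $a$ interlace those of $b$, so $N_c(r,0)=N_d(r,0)+O(\ln r)$ and $N_a(r,0)=N_b(r,0)+O(\ln r)$, both $\mathrm{T}_F(r)+O(\ln r)$, and the First Main Theorem again gives $T_c(r),T_a(r)\ge \mathrm{T}_F(r)+O(\ln r)$.

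The step I expect to be the real obstacle is the upper bound for $a$ and $c$: a direct Cauchy--Schwarz estimate only bounds $|a(\lambda)|,|c(\lambda)|$ by $O(|\lambda|)\,\|\mathfrak q(\lambda)\|$, which is the $P$-function of the once-shifted Jacobi operator rather than $P(\lambda)$ itself, and closing the loop that way would require separately proving that $\mathrm{T}_F$ is invariant (up to $O(\ln r)$) under that shift. The device above---passing to the entire $l^2$-valued functions $\mathfrak r,\mathfrak r'$ obtained by multiplying $(T_{\pm1}-\lambda)^{-1}e_0$ by the cofactors $d(\lambda)$, $b(\lambda)$ and extracting the zeroth coordinate---is what lets one reduce $a,c$ directly to $b,d$; everything else is First-Main-Theorem bookkeeping of the $O(\ln r)$ contributions coming from Eq. (\ref{se}), Thm \ref{equal}, a possible zero at the origin, and the differing normalizations of the counting functions $N_T(\cdot,m)$ and $N_f(\cdot,0)$.
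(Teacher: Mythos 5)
Your proof is correct, and its skeleton coincides with the paper's: for $b$ and $d$ you use exactly the same two ingredients (the Cauchy--Schwarz bound $|d(\lambda)|\le |\lambda|P(0)P(\lambda)$, resp. $|b(\lambda)|\le 1+k|\lambda|P(\lambda)$, integrated against Thm.~\ref{moment1} for the upper bound; the identification of the zero set with the spectrum of a self-adjoint extension plus Eq.~(\ref{se}), Thm.~\ref{equal} and the First Main Theorem (\ref{fun}) for the lower bound). The genuine divergence is in the passage to $a$ and $c$. The paper simply imports the two-sided pointwise inequalities $|a|\le|b|/|v|$, $|c|\le|d|/|v|$ and $|b|\le(\tfrac{a_0^2}{|v|}+|\lambda-b_0|)|a|$, $|d|\le(\tfrac{a_0^2}{|v|}+|\lambda-b_0|)|c|$ from Berg--Pedersen and integrates $\ln^+$ of both directions to get $T_a=T_b+O(\ln r)$, $T_c=T_d+O(\ln r)$. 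You instead rederive the forward inequalities $|c(\lambda)|\le|d(\lambda)|/|\Im\lambda|$, $|a(\lambda)|\le|b(\lambda)|/|\Im\lambda|$ inside the boundary-triplet formalism, via $\Gamma_0\mathfrak q(\lambda)=c(\lambda)$, $\Gamma_1\mathfrak q(\lambda)=a(\lambda)$, the explicit resolvents $(T_{\pm1}-\lambda)^{-1}e_0$ and the self-adjoint resolvent bound, extracting the zeroth coordinate (I checked the boundary-value computation; it is right), and you replace the reverse pointwise inequalities by the observation that $-c/d$ and $-a/b$ are Herglotz (Stieltjes transforms of the von Neumann solutions $\mu_\infty$, $\mu_0$, with no common zeros because $\det N\equiv1$), so the zeros interlace, $N_c(r,0)=N_d(r,0)+O(\ln r)$, $N_a(r,0)=N_b(r,0)+O(\ln r)$, and the First Main Theorem gives the lower bounds. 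Both routes are sound; yours buys self-containedness and stays entirely within the operator-theoretic framework of the paper (the resolvent derivation is arguably a more conceptual explanation of the Berg--Pedersen inequalities), while the paper's citation-based route is shorter and yields the two-sided comparison of the characteristic functions of $a,c$ with those of $b,d$ directly. A minor further variation: for $f=b$ you invoke Eq.~(\ref{se}) for $\sigma(T_{-1})$ directly instead of the paper's interlacing of the zeros of $b$ with those of $d$; this is equally valid.
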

\begin{proof}Let $f(\lambda)$ be $d(\lambda)$. Since the zero set of $d(\lambda)$ is actually the spectrum of a self-adjoint extension of $T$, by the formula (\ref{se}) we see that due to Thm.~\ref{equal},
\[\mathrm{T}_F(r)=h_T(r)+O(\ln r)=N_d(r,0)+O(\ln r)\leq T_d(r)+O(\ln r),\]
where Eq.~(\ref{fun}) is used. On the other side, it is clear that by the Cauchy-Schwarz inequality
\[|d(\lambda)|\leq |\lambda|\times P(0)\times P(\lambda).\]
Thus
\[\ln^+|d(\lambda)|\leq \ln^+(|\lambda|\times P(0)\times P(\lambda))\leq \ln^+|\lambda|+\ln P(0)+\ln P(\lambda).\]
Note that in the last inequality we have used the fact $P_0(\lambda)\equiv 1$ and thus $P(\lambda)\geq 1$ for any $\lambda$.
Since $d(\lambda)$ is real, we see that
\[T_d(r)=\frac{1}{\pi}\int_0^\pi \ln^+|d(re^{\mathrm{i}\phi})|d\phi\leq \frac{1}{\pi}\int_0^\pi \ln P(re^{\mathrm{i}\phi})d\phi+O(\ln r).\]
Combining the above argument with Prop.~\ref{moment1} leads to the claim for $d(\lambda)$.

 Let $f(\lambda)$ be $b(\lambda)$. On one side, since the zeros of $d(\lambda)$ and $b(\lambda)$ are pairwise interlaced, we must have $N_b(r,0)=N_d(r,0)+O(\ln r)$. On the other side, we also have
\[|b(\lambda)|\leq 1+k\times |\lambda| \times P(\lambda)\]
where $k=(\Sigma_{n=0}^\infty |Q_n(0)|^2)^{1/2}$ is a constant. Then we can prove the claim for $b(\lambda)$ similarly.

The proof for $a(\lambda)$ and $c(\lambda)$ is slightly more subtle. From \cite[Prop.~2.2, Prop.~2.3]{berg1994order}, we know that for any $\lambda=u+\mathrm{i}v$ where $u, v\in \mathbb{R}$ and $v\neq 0$,
\[|a(\lambda)|\leq \frac{1}{|v|}|b(\lambda)|,\quad |c(\lambda)|\leq \frac{1}{|v|}|d(\lambda)|, \]
and
\[|b(\lambda)|\leq(\frac{a_0^2}{|v|}+|\lambda-b_0|)|a(\lambda)|,\quad |d(\lambda)|\leq(\frac{a_0^2}{|v|}+|\lambda-b_0|)|c(\lambda)|.\]
From those inequalities concerning $c(\lambda)$ we have
\[\ln ^+|c(\lambda)|\leq \ln^+ |d(\lambda)|+\ln^+\frac{1}{|v|}\]
and
\[\ln^+(\frac{a_0^2}{|v|}+|\lambda-b_0|)+\ln ^+|c(\lambda)|\geq \ln^+|d(\lambda)|.\]
Note also that
\[\ln^+(\frac{a_0^2}{|v|}+|\lambda-b_0|)\leq \ln^+\frac{1}{|v|}+\ln^+|\lambda-b_0|+\ln^+a_0^2+\ln 2\]
and for $\lambda=re^{\mathrm{i}\phi}$ where $\phi\in (0,\pi)$,
\[\ln^+\frac{1}{|v|}\leq \ln^+\frac{1}{r}-\ln\sin \phi.\]
 From these we can conclude that $T_c(r)=T_d(r)+O(\ln r)$ and our claim on $c(\lambda)$ follows from that on $d(\lambda)$. The result on $a(\lambda)$ follows in a similar way.
 \end{proof}
 Thus via this theorem, we have provided an explanation for why $a(\lambda), b(\lambda), c(\lambda)$ and $d(\lambda)$ all share the same growth property: Their Nevanlinna characteristic functions are equal to the same $\mathrm{T}_F(r)$ up to the term $O(\ln r)$ and the latter provides a canonical quantity measuring the growth of indeterminate moment problems. In particular, this demonstrates clearly how the distribution of eigenvalues of a self-adjoint extension is related to the growth of the moment problem.

There is another way to interpret polynomials of the second kind \cite[\S~2]{berg1994order}. Replacing the sequences $\{a_n\}$ and $\{b_n\}$ by the shifted sequences
\[\tilde{a}_n=a_{n+1},\quad \tilde{b}_n=b_{n+1},\]
one can obtain the new polynomials $\tilde{P}_n(\lambda)$ and $\tilde{Q}_n(\lambda)$. In particular, $\tilde{P}_n(\lambda)=a_0Q_{n+1}(\lambda)$. There is a new underlying indeterminate Hamburger moment problem. Let $\tilde{a}(\lambda), \tilde{b}(\lambda), \tilde{c}(\lambda), \tilde{d}(\lambda)$ be the corresponding Nevanlinna functions. Then a result of Pedersen \cite[Prop.~2.1]{pedersen1994nevanlinna} \cite{berg1994order} shows
\[a(\lambda)=\frac{1}{a_0^2}\tilde{d}(\lambda),\quad c(\lambda)=-\frac{b_0}{a_0^2}\tilde{d}(\lambda)-\tilde{b}(\lambda).\]
Then our result certainly implies that the shifted Hamburger moment problem has essentially the same characteristic function $\mathrm{T}_F(r)$.

Another question we are interested in is whether the Jacobi operator $T$ is determined by its Weyl class, in other words, whether $T$ is the only Jacobi operator in the unitary equivalence class of $T$. In this respect, we have

\begin{theorem}\label{unique}Let $T$ be the Jacobi operator associated to a given indeterminate Hamburger moment sequence $s$. Then in the unitary equivalence class of $T$, $T$ is the only Jacobi operator associated to a certain moment sequence.
\end{theorem}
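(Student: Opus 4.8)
The plan is to recover the moment sequence $s$ intrinsically from the canonical model of the Jacobi operator $T$ and then to show that the distinguished cyclic vector $e_0$ is rigid. First, since $s_n=(T^ne_0,e_0)_H$ and $e_0\in\bigcap_nD(T^n)$, the model of \S\S~\ref{model} gives $s_n=(\lambda^n\widehat{e_0},\widehat{e_0})_{\mathfrak H}$, using $\widehat{T^ne_0}=\lambda^n\widehat{e_0}$. By the Remark following Lemma~\ref{bt}, trivializing $F$ by the nowhere-vanishing section $\widehat{e_0}$ realizes $\mathfrak H$ as the de Branges space $\mathcal H_e$ with $e=(b+\mathrm{i}d)/\sqrt2$, in which $\widehat{e_0}$ is the constant function $1$. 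Now let $T'$ be the Jacobi operator of another indeterminate sequence $s'$ and $U\colon H'\to H$ a unitary with $UT'U^{-1}=T$; put $\sigma:=\widehat{Ue_0'}\in\mathfrak H$. Then $s'_n=(T'^{\,n}e_0',e_0')_{H'}=(T^nUe_0',Ue_0')_H=(\lambda^n\sigma,\sigma)_{\mathfrak H}$, so it suffices to prove that $\sigma$ is a unimodular scalar multiple of $\widehat{e_0}$, since then $s'_n=s_n$ for all $n$ and hence $T'=T$.

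Step one: $\sigma$ is a unit, zero-free section of $F$, all of whose polynomial multiples $\lambda^n\sigma$ lie in $\mathfrak H$. Indeed $Ue_0'\in\bigcap_nD(T^n)$, and $\{T^nUe_0'\}_{n\ge0}=U\{T'^{\,n}e_0'\}_{n\ge0}$ is total in $H$ because $\{T'^{\,n}e_0'\}$ spans the polynomials, which are dense (the problem being indeterminate); a common zero $\lambda_0$ of all $\widehat{T^nUe_0'}=\lambda^n\sigma$ would, by continuity of evaluation, be a zero of every element of $\mathfrak H=\widehat H$, contradicting positive-definiteness of the reproducing kernel $K(\lambda_0,\lambda_0)$. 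In the realization $\mathcal H_e$ (with $\widehat{e_0}\leftrightarrow1$) the section $\sigma$ becomes a zero-free entire function $\phi$, so $\phi=e^h$ for an entire $h$, and $\lambda^n e^h\in\mathcal H_e$ for all $n\ge0$.

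Step two: identify $h$. Since $1\in\mathcal H_e$, the argument in the proof of Prop.~\ref{0-entire} shows that $e$ and $e^\sharp$ are of bounded type on $\mathbb C_+$; hence so are $e^h=(e^h/e)e$ and $(e^h)^\sharp$, and Krein's theorem (as quoted in that proof) forces $e^h$ to be of finite exponential type, so $h(\lambda)=p+q\lambda$. It remains to show $q=0$, which is the main obstacle. One uses that $b,d$ --- hence $e=(b+\mathrm{i}d)/\sqrt2$ --- are of at most minimal exponential type (classical; see also the corollary following Thm.~\ref{moment1}): thus $|e(u)|\le C_\varepsilon e^{\varepsilon|u|}$ on $\mathbb R$ for every $\varepsilon>0$, and $e$ has mean type $0$ on $\mathbb C_+$. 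The non-positive mean-type requirements on $e^h/e$ and $(e^h)^\sharp/e$ (part of $e^h\in\mathcal H_e$) then give $-\Im q\le0$ and $\Im q\le0$, so $q\in\mathbb R$; and from $\lambda^ne^h\in\mathcal H_e$ one gets $\int_{\mathbb R}|u|^{2n}e^{2qu}/|e(u)|^2\,du<\infty$ for all $n$, which, if $q\ne0$, contradicts the growth bound $|e(u)|\le C_{|q|/2}\,e^{|q||u|/2}$ as $u\to+\infty$ (for $q>0$) or $u\to-\infty$ (for $q<0$). Hence $q=0$, $\sigma=e^p\widehat{e_0}$, and $\|\sigma\|=\|\widehat{e_0}\|=1$ gives $|e^p|=1$.

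The essential point --- and the place where indeterminacy cannot be dispensed with --- is the vanishing of the exponential factor $e^{q\lambda}$: for a determinate problem $T$ would be essentially self-adjoint and carry many mutually inequivalent cyclic vectors, so the statement would fail, whereas indeterminacy forces $T$ to be a regular (entire) operator of at most minimal exponential type, whose de Branges realization is a space of entire functions with growth just tight enough to pin $e^h$ down. One subsidiary verification is that the de Branges model attached to the boundary triplet of Lemma~\ref{bt} is precisely the one in which $\widehat{e_0}\leftrightarrow1$, which reduces to the identity $f_{e_0}(\lambda)=(e_0,\zeta(\bar\lambda))_H=(e_0,\mathfrak p(\bar\lambda))_{l^2}=P_0(\lambda)=1$ in the notation of \S\S~\ref{deBranges} and Lemma~\ref{bt}.
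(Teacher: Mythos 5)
Your argument is correct, and it takes a genuinely different route from the paper. The paper compares the \emph{two} Jacobi operators through their common Weyl class: it writes the M\"obius relation between the two canonical contractive Weyl functions of Lemma~\ref{bt}, uses the normalization $B_1(0)=B_2(0)=1$ to fix the unimodular factor, deduces that the associated de Branges kernels are proportional with factor $\frac{1-|c|^2}{|1-c|^2}$, and then uses $(1,1)_{\mathcal H_{e_j}}=s_0=1$ to force that factor to be $1$, whence the moments $s_i=((M_\lambda)^i1,1)$ agree. You instead fix the canonical model of one operator, transport the other's cyclic vector $e_0'$ by the intertwining unitary, and prove a rigidity statement: a unit element of $\mathfrak H$ that is zero-free and whose polynomial multiples stay in $\mathfrak H$ must be a unimodular multiple of $\widehat{e_0}$. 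The key steps (zero-freeness via totality of $\{T^nUe_0'\}$ and nondegeneracy of the kernel; boundedness of type and Krein's theorem forcing $\phi=e^{p+q\lambda}$; mean-type bookkeeping giving $q\in\mathbb R$; and the M.~Riesz bound $|e(u)|\le C_\varepsilon e^{\varepsilon|u|}$ killing $e^{qu}$ in the $L^2(|e(u)|^{-2}du)$ condition) are all sound and rely only on facts the paper itself quotes (the $\mathcal H_e$ membership criterion, Krein's theorem, minimal exponential type of the Nevanlinna entries). What your approach buys is a clear localization of where indeterminacy enters (the growth bound that eliminates the exponential factor), and it avoids having to identify explicitly how the second operator's canonical de Branges function $e_2=(b_2+\mathrm i d_2)/\sqrt2$ transforms under the change of Weyl function, a step the paper's proof handles via the normalization at $\lambda=0$; the paper's proof, in turn, is shorter and stays entirely at the level of Weyl classes and reproducing kernels. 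One cosmetic remark: your phrase ``trivializing $F$ by $\widehat{e_0}$'' is justified precisely by your final verification that $\widehat{e_0}$ corresponds to the constant $1$, i.e.\ $\widehat{e_0}$ coincides with the frame $\xi$ of \S\S~\ref{deBranges}; and in the mean-type step the two inequalities should be read as $-\Im q-\mathrm{mt}(e)\le 0$ and $\Im q-\mathrm{mt}(e)\le 0$ with $\mathrm{mt}(e)\le 0$, which still yields $\Im q=0$ as you claim.
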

\begin{proof}If two Jacobi operators $T_1$ and $T_2$ are unitarily equivalent, then they share the same Weyl class. Let $B_1(\lambda)$ and $B_2(\lambda)$ be the contractive Weyl functions w.r.t. the boundary triplets in Lemma \ref{bt} respectively. Then
\[B_2(\lambda)=e^{\mathrm{i}\gamma}\times \frac{B_1(\lambda)-c}{1-\bar{c}B(\lambda)}\]
for some constants $e^{\mathrm{i}\gamma}$ and $c$ ($|c|<1$). Since $B_1(0)=B_2(0)=1$, we must have $e^{\mathrm{i}\gamma}=\frac{1-\bar{c}}{1-c}$. Let $e_1(\lambda)=(b(\lambda)+\mathrm{i}d(\lambda))/\sqrt{2}$ be the de Branges function of $B_1(\lambda)$. Then it is easy to see that the de Branges function of $B_2(\lambda)$ is\footnote{Note that by construction we should have $e_1(0)=e_2(0)=-\frac{1}{\sqrt{2}}$.}
\[e_2(\lambda)=\frac{e_1(\lambda)-\bar{c}e_1^\sharp(\lambda)}{1-\bar{c}}.\]
As a result, the corresponding reproducing kernels $\mathrm{K}_1(\lambda, \mu)$ of $\mathcal{H}_{e_1}$ and $\mathrm{K}_2(\lambda, \mu)$ of $\mathcal{H}_{e_2}$ are related by (see \S\S~\ref{deBranges})
\[\mathrm{K}_2(\lambda, \mu)=\frac{1-|c|^2}{|1-c|^2}\mathrm{K}_1(\lambda, \mu),\]
implying that $\mathcal{H}_{e_1}=\mathcal{H}_{e_2}$ as function spaces and that the map $U:\mathcal{H}_{e_1}\rightarrow \mathcal{H}_{e_2}$, $f\mapsto \frac{\sqrt{1-|c|^2}}{|1-c|}f$ is unitary. In particular,
\[1=(1,1)_{\mathcal{H}_{e_1}}=\frac{1-|c|^2}{|1-c|^2}\times(1,1)_{\mathcal{H}_{e_2}}=\frac{1-|c|^2}{|1-c|^2}.\]
Since $U$ commutes with the multiplication operator $M_\lambda$ and $s_i=((M_\lambda)^i1,1)_{\mathcal{H}_{e_1}}$, we see immediately that $T_1$ and $T_2$ share the same moment sequence $s$ and consequently $T_1=T_2$ as required.
\end{proof}

In the rest of this subsection, we demonstrate how the curvature function $\omega(u)$ of the Jacobi operator $T$ is related to the polynomials $P_n(\lambda)$ and construct two linearly independent  solutions to the equation $y''+3\omega(u)y=0$.
\begin{lemma}The function $|M'(\lambda)|/\Im{M(\lambda)}$ on $\mathbb{C}_+$ is a complete unitary invariant of the Jacobi operator $T$. In terms of polynomials of the first kind
\[\frac{|M'(\lambda)|}{\Im M(\lambda)/\Im \lambda}=\frac{|\sum_{n=0}^\infty P_n^2(\lambda)|}{P^2(\lambda)}.\]
\end{lemma}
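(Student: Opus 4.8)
The plan is to establish the two assertions separately, the first being almost immediate from the material already developed and the second requiring an explicit computation with the Herglotz representation. For the first assertion, recall from Eq.~(\ref{cur2}) that the curvature function is $\omega(\lambda)=\frac{1}{4(\Im \lambda)^2}\bigl(1-(\frac{|M'(\lambda)|}{\Im M(\lambda)/\Im \lambda})^2\bigr)$, and by the Remark following the curvature lemma $\omega(\lambda)$ is a complete unitary invariant of $T$. Since for the Jacobi operator the boundary triplet of Lemma~\ref{bt} is canonically attached to $T$ (no artificial choices enter), the quantity $\frac{|M'(\lambda)|}{\Im M(\lambda)/\Im \lambda}$ is determined by $T$ up to unitary equivalence; conversely, knowing this function on $\mathbb{C}_+$ recovers $\omega(\lambda)$ on $\mathbb{C}_+$, hence on all of $\mathbb{C}\setminus\mathbb{R}$ by $\omega(\bar\lambda)=\omega(\lambda)$, and then $\omega(u)$ on $\mathbb{R}$ by continuity (real-analyticity). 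By Thm.~\ref{complete} the restriction $\omega(u)$ is already a complete invariant, so a fortiori $\frac{|M'(\lambda)|}{\Im M(\lambda)/\Im \lambda}$ is one. I would also remark that the factor $\frac{1}{\Im\lambda}$ in the denominator is exactly what makes the expression a ratio of two positive quantities that transform compatibly, so no choice of $e(\lambda)$ or trivialization of $F$ intervenes.

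For the identity itself, the key input is the Herglotz-type formula for $M(\lambda)$ coming from the three-term recurrence. First I would use the well-known resolvent/Christoffel--Darboux expression: with $\gamma(\lambda)$ the frame of $E$ over $\mathbb{C}\setminus\mathbb{R}$ normalized by $\Gamma_0\hat\gamma(\lambda)=1$ (the frame of the Herglotz model, see the subsection on the Herglotz model), one has from Eq.~(\ref{herg}) that $(\gamma(\lambda),\gamma(\mu))_H=\frac{M(\lambda)-M(\bar\mu)}{\lambda-\bar\mu}$. Taking $\mu=\lambda$ gives $\|\gamma(\lambda)\|_H^2=\frac{\Im M(\lambda)}{\Im\lambda}$, and differentiating in $\lambda$ (the section $\gamma(\lambda)$ is holomorphic, so $\partial_\lambda$ acts only on the first slot) yields $\|\partial_\lambda\gamma(\lambda)\|_H^2$-type expressions whose modulus is $|M'(\lambda)|$ after one checks that the ``cross term'' $(\partial_\lambda\gamma(\lambda),\gamma(\lambda))_H$ contributes $M'(\lambda)$. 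The bridge to polynomials is the expansion $\gamma(\lambda)=\sum_{n=0}^\infty Q_n'(\lambda)$-coefficients — more precisely, in the canonical model the frame $\gamma(\bar\lambda)$ corresponds to $\frac{1}{\Im\lambda}$-weighted data, and using $(e_n,\mathfrak p(\bar\lambda))_{l^2}=P_n(\lambda)$ together with the relation between $\gamma(\lambda)$ and $\mathfrak p(\lambda)$ (they differ by the de Branges function factor $\frac{e(\lambda)-e^\sharp(\lambda)}{\mathrm{i}\sqrt2}$ recorded at the end of the Herglotz-model subsection) one converts $\|\gamma(\lambda)\|_H^2$ into $\frac{1}{|\,\cdot\,|^2}\sum_n|P_n(\lambda)|^2 = \frac{P^2(\lambda)}{|\cdots|^2}$ and $|M'(\lambda)|$ into $\frac{|\sum_n P_n^2(\lambda)|}{|\cdots|^2}$, the same denominator cancelling in the ratio. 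I would organize the computation so that everything is expressed through the single holomorphic section $\mathfrak p(\lambda)=\zeta(\lambda)$, for which $(\mathfrak p(\bar\lambda),\mathfrak p(\bar\lambda))_{l^2}=\sum_n|P_n(\lambda)|^2=P^2(\lambda)$ and the holomorphic derivative gives $\sum_n P_n(\lambda)^2$ in the pairing $[\mathfrak p,\mathfrak p]$-computation.

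The cleanest route avoids $\gamma(\lambda)$ altogether: from $M(\lambda)=b(\lambda)/d(\lambda)$ and $e(\lambda)=(b(\lambda)+\mathrm id(\lambda))/\sqrt2$ one has $\Im M(\lambda)/\Im\lambda = \frac{|e(\lambda)|^2-|e^\sharp(\lambda)|^2}{2\Im\lambda\,|d(\lambda)|^2}$ and, using $\det N\equiv1$, $M'(\lambda)=\frac{b'd-bd'}{d^2}=\frac{-1}{d(\lambda)^2}\cdot(\text{Wronskian-type term})$; meanwhile the Christoffel--Darboux identity gives $b'd-bd' = -\sum_n P_n(\lambda)^2$ — this is exactly the classical formula $\frac{d}{d\lambda}\frac{b}{d}=\frac{1}{d^2}\sum_n P_n^2$ that appears in the Hamburger-moment literature — and $|e(\lambda)|^2-|e^\sharp(\lambda)|^2$ is $2\Im\lambda$ times $\sum_n|P_n(\lambda)|^2$ by the reproducing-kernel identity $\mathrm K_1(\lambda,\lambda)=\sum_n|P_n(\lambda)|^2$ together with $\mathrm K_1(\lambda,\lambda)=\frac{|e(\lambda)|^2-|e^\sharp(\lambda)|^2}{2\Im\lambda}\cdot|d(\lambda)|^2/|d(\lambda)|^2$. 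Dividing, the $|d(\lambda)|^2$ factors and the $2\Im\lambda$ factors cancel and one is left with $\frac{|M'(\lambda)|}{\Im M(\lambda)/\Im\lambda}=\frac{|\sum_n P_n^2(\lambda)|}{\sum_n|P_n(\lambda)|^2}=\frac{|\sum_n P_n^2(\lambda)|}{P^2(\lambda)}$. The main obstacle is purely bookkeeping: making sure that the Christoffel--Darboux summation formula for $b'd-bd'$ (equivalently, for $\frac{d}{d\lambda}M(\lambda)$) is invoked correctly with the right sign and that the absolute-convergence of $\sum_n P_n(\lambda)^2$ and $\sum_n|P_n(\lambda)|^2$ — guaranteed by indeterminacy — is in place so that termwise differentiation is legitimate. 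I would cite \cite{akhiezer2020classical} for the convergence and for the Christoffel--Darboux formula and keep the algebra to a minimum.
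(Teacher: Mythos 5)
Your proposal, in its final ``cleanest route'' form, is correct, and your treatment of the first claim (read the ratio off from Eq.~(\ref{cur2}), use that $\omega$ is a complete invariant, extend from $\mathbb{C}_+$ to $\mathbb{R}$ by symmetry and real-analyticity) matches the paper's one-line appeal to Eq.~(\ref{cur2}). For the displayed identity the paper is shorter and routes differently: it works with the Herglotz frame $\gamma(\lambda)=\mathfrak{p}(\lambda)/d(\lambda)$ attached to the boundary triplet of Lemma~\ref{bt} and reads off from Eq.~(\ref{herg}) the two facts $M'(\lambda)=(\gamma(\lambda),\gamma(\bar\lambda))_{l^2}$ (let $\bar\mu\to\lambda$ in the difference quotient; no derivative of $\gamma$ is needed) and $\Im M(\lambda)/\Im\lambda=(\gamma(\lambda),\gamma(\lambda))_{l^2}$; expanding $\mathfrak{p}$ in $\{e_n\}$ then gives $\sum_n P_n(\lambda)^2/d(\lambda)^2$ and $P^2(\lambda)/|d(\lambda)|^2$, and the $d$-factors cancel. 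You instead compute $M'=(b'd-bd')/d^2$ and import the classical Wronskian/Christoffel--Darboux identity together with $\mathrm{K}_1(\lambda,\lambda)=\frac{|e(\lambda)|^2-|e^\sharp(\lambda)|^2}{2\Im\lambda}=P^2(\lambda)$; this is the same computation phrased through the Nevanlinna matrix, citing \cite{akhiezer2020classical} for the summation identity rather than deriving everything from the abstract Green's formula. Note that the identity you cite also follows internally: from $\mathrm{K}_1(\lambda,\bar\mu)=\sum_n P_n(\lambda)P_n(\mu)$ and $\mathrm{K}_1(\lambda,\mu)=\frac{b(\lambda)d(\bar\mu)-d(\lambda)b(\bar\mu)}{\lambda-\bar\mu}$ one gets, letting $\mu\to\bar\lambda$, that $b'(\lambda)d(\lambda)-b(\lambda)d'(\lambda)=+\sum_n P_n(\lambda)^2$; so your stated sign ($b'd-bd'=-\sum_n P_n^2$) is off, but harmlessly so since only $|M'|$ enters. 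One genuine error in your first (abandoned) sketch should be flagged: the claim that the cross term $(\partial_\lambda\gamma(\lambda),\gamma(\lambda))_H$ ``contributes $M'(\lambda)$'' is false --- differentiating Eq.~(\ref{herg}) and setting $\mu=\lambda$ gives $\frac{M'(\lambda)(\lambda-\bar\lambda)-(M(\lambda)-M(\bar\lambda))}{(\lambda-\bar\lambda)^2}$, not $M'(\lambda)$; the correct, derivative-free statement is $M'(\lambda)=(\gamma(\lambda),\gamma(\bar\lambda))_{l^2}$, which is exactly the step the paper uses. Since you discard that route, your proof as finally organized stands.
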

\begin{proof}The first claim is clear from Eq.~(\ref{cur2}). From Eq.~(\ref{herg}) we find
\[M'(\lambda)=(\gamma(\lambda),\gamma(\bar{\lambda}))_{l^2},\quad \frac{\Im{M(\lambda)}}{\Im \lambda}=(\gamma(\lambda), \gamma(\lambda))_{l^2}\]
where $\gamma(\lambda)=\mathfrak{p}(\lambda)/d(\lambda)$ by definition. The second claim then follows.
\end{proof}
\begin{proposition}In terms of polynomials of the first kind, the curvature function
\begin{eqnarray*}\omega(u)&=&\frac{\sum_{m,n=0}^\infty[P_n(u)P_m'(u)-P_n'(u)P_m(u)]^2}{2P^4(u)}\\
&=&\frac{P^2(u)\cdot \sum_{n=0}^\infty P_n'^2(u)-(\sum_{n=0}^\infty P_n(u)P_n'(u))^2}{P^4(u)},\quad \forall u\in \mathbb{R}.\end{eqnarray*}
In particular, \[\omega(u)\leq \frac{\sum_{n=0}^\infty P_n'^2(u)}{P^2(u)}.\]
\end{proposition}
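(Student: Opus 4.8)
The plan is to feed the expression for the curvature function coming from the Weyl function, Eq.~(\ref{cur2}), into the formula recorded in the lemma immediately preceding the proposition. The key auxiliary object is the entire function of two variables
\[H(z,w):=\sum_{n=0}^\infty P_n(z)P_n(w).\]
First I would check that this double series converges locally uniformly on $\mathbb{C}^2$: on a polydisc $\{|z|,|w|\le R\}$ the Cauchy--Schwarz inequality bounds its tail by $\bigl(\sum_{n>N}|P_n(z)|^2\bigr)^{1/2}\bigl(\sum_{n>N}|P_n(w)|^2\bigr)^{1/2}$, and both factors tend to $0$ uniformly since $\sum_n|P_n(\lambda)|^2$ converges locally uniformly on $\mathbb{C}$ (the classical fact, due to M.~Riesz, already invoked above). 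Hence $H$ is holomorphic, symmetric, with real Taylor coefficients (so $\overline{H(\bar z,\bar w)}=H(z,w)$), and by Weierstrass's theorem every partial derivative of $H$ is obtained by termwise differentiation; in particular the series $\sum_n P_n^{(j)}(u)P_n^{(k)}(u)$ all converge. Using $\overline{P_n(\lambda)}=P_n(\bar\lambda)$ we rewrite $\sum_n|P_n(\lambda)|^2=H(\lambda,\bar\lambda)$ (which is real and positive) and $\bigl|\sum_nP_n(\lambda)^2\bigr|^2=H(\lambda,\lambda)\,H(\bar\lambda,\bar\lambda)$, so that the lemma together with (\ref{cur2}) gives, for $\lambda\notin\mathbb{R}$,
\[\omega(\lambda)=\frac{-\Psi(\lambda,\bar\lambda)}{(\lambda-\bar\lambda)^2\,H(\lambda,\bar\lambda)^2},\qquad \Psi(z,w):=H(z,w)^2-H(z,z)H(w,w).\]

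Next I would analyze $\Psi$ near the diagonal. Clearly $\Psi(z,z)\equiv0$, and a one-line computation using the symmetry of $H$ (so that $\tfrac{d}{dw}H(w,w)=2(\partial_2H)(w,w)$) shows $\partial_w\Psi(z,w)\big|_{w=z}\equiv0$ as well. Therefore $\Psi(z,w)=(w-z)^2\phi(z,w)$ for an entire function $\phi$, and differentiating twice one finds
\[\phi(z,z)=\tfrac12\,\partial_w^2\Psi(z,w)\big|_{w=z}=(\partial_2H)(z,z)^2-H(z,z)\,(\partial_{12}H)(z,z)\]
(here one also uses $(\partial_{11}H)(z,z)=(\partial_{22}H)(z,z)$ from symmetry). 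Since $(\lambda-\bar\lambda)^2=(\bar\lambda-\lambda)^2$ the vanishing factor cancels, giving $\omega(\lambda)=-\phi(\lambda,\bar\lambda)/H(\lambda,\bar\lambda)^2$, which is manifestly real-analytic across $\mathbb{R}$; letting $\lambda\to u\in\mathbb{R}$ and translating the derivatives of $H$ back into the polynomials via termwise differentiation ($H(u,u)=P^2(u)$, $(\partial_2H)(u,u)=\sum_nP_n(u)P_n'(u)$, $(\partial_{12}H)(u,u)=\sum_nP_n'^2(u)$) yields
\[\omega(u)=\frac{P^2(u)\sum_{n}P_n'^2(u)-\bigl(\sum_{n}P_n(u)P_n'(u)\bigr)^2}{P^4(u)},\]
which is the second displayed formula.

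For the first formula I would invoke the Binet--Cauchy (Lagrange) identity $\bigl(\sum_n a_n^2\bigr)\bigl(\sum_n b_n^2\bigr)-\bigl(\sum_n a_nb_n\bigr)^2=\tfrac12\sum_{m,n}(a_nb_m-a_mb_n)^2$ with $a_n=P_n(u)$, $b_n=P_n'(u)$: it passes from the finite case to these square-summable real sequences because the partial sums of the right-hand double series over $\{m,n\le N\}$ equal, by the finite identity, twice $\bigl(\sum_{n\le N}a_n^2\bigr)\bigl(\sum_{n\le N}b_n^2\bigr)-\bigl(\sum_{n\le N}a_nb_n\bigr)^2$, which converges, and the terms are nonnegative. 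Finally the pointwise inequality $\omega(u)\le\sum_nP_n'^2(u)/P^2(u)$ is immediate from the second formula, since $P^2(u)>0$ and $\bigl(\sum_nP_n(u)P_n'(u)\bigr)^2\ge0$.

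The only genuinely delicate points are the convergence and interchange issues in the first paragraph --- the locally uniform convergence on $\mathbb{C}^2$ of $H$ and of its termwise-differentiated series (both resting on the classical local-uniform convergence of $\sum_n|P_n(\lambda)|^2$) and the corresponding validity of the infinite Binet--Cauchy identity; everything afterward is the short diagonal-vanishing computation for $\Psi$ and routine bookkeeping. I expect this convergence bookkeeping, rather than any conceptual step, to be the main thing to get exactly right.
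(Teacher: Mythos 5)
Your proposal is correct, and at bottom it follows the same route as the paper: start from Eq.~(\ref{cur2}) together with the preceding lemma, and reduce everything to the behaviour near the diagonal of the kernel difference $\Psi(z,w)=H(z,w)^2-H(z,z)H(w,w)$ with $H(z,w)=\sum_n P_n(z)P_n(w)$ (the paper's $R(\lambda)$ is exactly $\Psi(\lambda,\bar\lambda)$, and both arguments rest on the same locally uniform convergence of the kernel series and its termwise derivatives). The execution differs in two respects, both in your favour as regards rigour. First, where the paper writes $\omega(u)=\lim_{\varepsilon\to0+}R(u+\mathrm{i}\varepsilon)/(4\varepsilon^2P^4(u+\mathrm{i}\varepsilon))$ and extracts ``the second-order term of $\varepsilon$'' from $R(u+\mathrm{i}\varepsilon)$ to obtain the double-sum formula, you make this cancellation structural: $\Psi$ vanishes to second order on the diagonal, so $\Psi=(w-z)^2\phi$ with $\phi$ entire, and evaluating $\phi$ on the diagonal via $\phi(z,z)=\tfrac12\partial_w^2\Psi|_{w=z}=(\partial_2H)(z,z)^2-H(z,z)(\partial_{12}H)(z,z)$ yields the second displayed identity at once, with real-analyticity across $\mathbb{R}$ built in. Second, you derive the two displayed formulas in the opposite order: the paper gets the double sum from the $\varepsilon$-expansion and then the second formula from kernel derivatives, whereas you get the second formula first and recover the double sum from Lagrange's identity applied to the square-summable sequences $\{P_n(u)\}$ and $\{P_n'(u)\}$, with the passage to infinitely many terms justified by nonnegativity of the terms and convergence of the square partial sums. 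Your convergence bookkeeping (Cauchy--Schwarz for $H$ on polydiscs, Weierstrass for termwise differentiation, $P^2\geq1$ since $P_0\equiv1$) is exactly what the paper's terser proof implicitly uses, and the final inequality is handled identically. No gap.
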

\begin{proof}Let
\begin{eqnarray*}R(\lambda):&=&(\sum_{n=0}^\infty P_n(\lambda)P_n(\bar{\lambda}))^2-(\sum_{n=0}^\infty P_n^2(\lambda))\cdot (\sum_{n=0}^\infty P_n^2(\bar{\lambda}))\\
&=&\sum_{m,n=0}^\infty P_n(\lambda)P_m(\bar{\lambda})[P_n(\bar{\lambda})P_m(\lambda)-P_n(\lambda)P_m(\bar{\lambda})]\\
&=&2\mathrm{i}\sum_{m,n=0}^\infty P_n(\lambda)P_m(\bar{\lambda})\Im[P_n(\bar{\lambda})P_m(\lambda)].\end{eqnarray*}
Then clearly
\[\omega(u)=\lim_{\varepsilon\rightarrow 0+}\frac{R(u+\mathrm{i}\varepsilon)}{4\varepsilon^2 P^4(u+\mathrm{i}\varepsilon)}.\]
The first equality follows when one takes the second-order term of $\varepsilon$ out of $R(u+\mathrm{i}\varepsilon)$. As for the second, we note that in terms of the reproducing kernel $\mathrm{K}_1$ in the de Branges model
\[\mathrm{K}_1(\lambda, \bar{\mu})=\sum_{n=0}^\infty P_n(\lambda)P_n(\mu),\]
which converges absolutely and uniformly on any compact subset of $\mathbb{C}^2$. Thus
\[\frac{\partial \mathrm{K}_1(\lambda, \bar{\mu})}{\partial \mu}=\sum_{n=0}^\infty P_n(\lambda)P_n'(\mu),\quad \frac{\partial^2 \mathrm{K}_1(\lambda, \bar{\mu})}{\partial \lambda\partial \mu}=\sum_{n=0}^\infty P_n'(\lambda)P_n'(\mu),\]
from which the second equality follows. The last inequality is obvious.
\end{proof}
\emph{Remark}. Let $\mathfrak{r}(u):=\{P_n'(u)\}_{n=0}^\infty\in l^2(\mathbb{N}_0)$ and $\alpha(u)$ be the angle between $\mathfrak{p}(u)$ and $\mathfrak{r}(u)$. Then
\[\omega(u)=(\frac{\|\mathfrak{r}(u)\|_{l^2}}{\|\mathfrak{p}(u)\|_{l^2}})^2(1-\cos^2 \alpha(u))=(\frac{\|\mathfrak{r}(u)\|_{l^2}}{\|\mathfrak{p}(u)\|_{l^2}}\sin \alpha(u))^2.\]
Thus $\omega(u)$ measures how different the two vectors $\mathfrak{p}(u)$ and $\mathfrak{r}(u)$ are. Since $\omega(u)$ determines the unitary equivalence class of $T$, it's interesting to know how the moment sequence $s$ can be recovered from $\omega(u)$. We have no answer to this question up to now.

\begin{proposition}The equation $y''+3\omega(u)y=0$ has the following two real-valued linearly independent solutions:
\[y_1(u)=\frac{b(u)}{[\det\left(
                           \begin{array}{cc}
                             d(u) & b(u) \\
                             d'(u) & b'(u) \\
                           \end{array}
                         \right)]^{1/2}
},\quad y_2(u)=\frac{d(u)}{[\det\left(
                           \begin{array}{cc}
                             d(u) & b(u) \\
                             d'(u) & b'(u) \\
                           \end{array}
                         \right)]^{1/2}
}.\]
\begin{proof}We just look for $y_1$ and $y_2$ such that $M(u)=y_1(u)/y_2(u)$. Since the zeros of $y_1(u)$ (resp.~$y_2(u)$) are just zeros of $b(u)$ (resp.~$d(u)$), we can assume that $y_1(u)=f(u)b(u)$ (resp.~$y_2(u)=f(u)d(u)$), where $f(u)$ is a real-valued function without zeros on $\mathbb{R}$. Due to the famous Liouville's formula, we know that
\[\det \left(
    \begin{array}{cc}
      f(u)d(u) & f(u)b(u) \\
       f'(u)d(u)+f(u)d'(u)& f'(u)b(u)+f(u)b'(u) \\
    \end{array}
  \right)
\]
must be a constant. This means
\[f^2(u)\det\left(
                           \begin{array}{cc}
                             d(u) & b(u) \\
                             d'(u) & b'(u) \\
                           \end{array}
                         \right)\]
should be a constant. By setting $u=0$, we can see this constant must be positive and by rescaling we can assume it to be 1. The claim then follows.
\end{proof}
\end{proposition}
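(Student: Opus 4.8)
The plan is to leverage two facts already in hand. By Prop.~\ref{Schw} combined with Lemma~\ref{bt}, the curvature function of the Jacobi operator satisfies $(\mathcal{S}M)(u)=6\omega(u)$ with $M(u)=b(u)/d(u)$. Moreover, as recalled in the paragraph leading to Eq.~(\ref{SL}), every $g$ with $(\mathcal{S}g)(u)=6\omega(u)$ is the ratio of two linearly independent solutions of Eq.~(\ref{SL}); since $\mathcal{S}M=6\omega$, there therefore exists a pair of real-valued linearly independent solutions $y_1,y_2$ of $y''+3\omega(u)y=0$ with $y_1(u)/y_2(u)=M(u)$. What remains is to identify this pair explicitly, and the claim will follow once we show it is forced to be the one stated (up to an irrelevant overall constant).

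First I would analyse the zero structure. The zeros of $M(u)$ are precisely the zeros of $b(u)$ (the eigenvalues of $T_{-1}$) and the poles of $M(u)$ are precisely the zeros of $d(u)$ (the eigenvalues of $T_1$); by Lemma~\ref{bt} and the remark following it these zeros are simple, interlaced, hence disjoint, using $b=b^\sharp$, $d=d^\sharp$ and $\det N\equiv 1$ to see that $b$ and $d$ share no zero. On the other side, any nontrivial solution of $y''+3\omega y=0$ has only simple zeros (by uniqueness for the initial value problem), and the zeros of two linearly independent solutions interlace by the Sturm separation theorem. Matching these pictures, $y_1$ vanishes exactly and simply at the zeros of $b$ and $y_2$ exactly and simply at the zeros of $d$, so that $y_1/b=y_2/d=:f$ is a single real-analytic function on $\mathbb{R}$ with neither zeros nor poles; i.e.\ $y_1=fb$, $y_2=fd$.

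Next I would pin down $f$ by Abel's identity. The Wronskian of two solutions of $y''+3\omega y=0$ is constant, and
\[y_1'y_2-y_1y_2'=f^2\bigl(db'-bd'\bigr)=f^2\det\left(\begin{array}{cc} d & b \\ d' & b' \end{array}\right).\]
Evaluating at $u=0$, where $b(0)=-1$, $d(0)=0$ and $d'(0)=\sum_n P_n(0)^2=P(0)^2>0$, this constant equals $f(0)^2P(0)^2>0$; rescaling $f$ by a positive constant normalizes it to $1$. Finally $db'-bd'$ is nowhere zero on $\mathbb{R}$: away from the zeros of $d$ it equals $d^2M'(u)>0$ because $M$ is a meromorphic Herglotz function (Eq.~(\ref{herg})), while at a zero $u_0$ of $d$ it equals $-b(u_0)d'(u_0)\neq 0$; being continuous and positive at $0$, it stays positive. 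Hence $f=\big[\det\left(\begin{smallmatrix} d & b \\ d' & b'\end{smallmatrix}\right)\big]^{-1/2}$, and $y_1=fb$, $y_2=fd$ are exactly the asserted solutions, which are linearly independent since their (constant) Wronskian is $1$.

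The step I expect to be the main obstacle is the structural first part: guaranteeing that a pair of solutions with ratio $M$ genuinely exists and that it factors as $(fb,fd)$ with one globally defined, nowhere-vanishing $f$. This rests on the simplicity and interlacing of the zeros of $b$ and $d$ (equivalently, of the zeros and poles of $M$), on $b,d$ having no common zero, and on the Sturm separation theorem; once those are secured, the Abel-identity computation and the sign/positivity check are entirely routine.
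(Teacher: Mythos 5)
Your proposal is correct and follows essentially the same route as the paper: obtain a solution pair with ratio $M(u)=b(u)/d(u)$ from the Schwarzian identity $(\mathcal{S}M)=6\omega$, factor it as $y_1=f\,b$, $y_2=f\,d$ via the zero structure, and pin down $f$ through the constancy of the Wronskian (Liouville/Abel), evaluation at $u=0$, and rescaling. You additionally verify that $\det\left(\begin{smallmatrix} d & b \\ d' & b' \end{smallmatrix}\right)$ is everywhere positive on $\mathbb{R}$, a point the paper leaves implicit but which is needed to take the square root in the stated formulas.
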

\subsection{Non-self-adjoint extensions}
In the operator-theoretic approach to an indeterminate Hamburger moment problem, non-self-adjoint extensions of the underlying Jacobi operator are seldom considered and little is known about them. Here we just mention a few facts on this topic that can be derived from our investigation.

Among all solutions to an indeterminate Hamburger moment problem, those provided by self-adjoint extensions of the Jacobi operator $T$ are called the von Neumann solutions: For each $t\in \mathbb{R}\cup \{\infty\}$, there is a discrete measure $\mu_t$ on $\mathbb{R}$ such that
\begin{equation}
I_t(\lambda):=\int_\mathbb{R}\frac{1}{u-\lambda}d\mu_t(u)=-\frac{a(\lambda)+tc(\lambda)}{b(\lambda)+td(\lambda)}.\label{Borel}
\end{equation}
The measure $\mu_t$ corresponds to a self-adjoint extension $T_{(t)}$ such that $I_t(\lambda)=((T_{(t)}-\lambda)^{-1}e_0,e_0)_{l^2}$.
\begin{lemma}In terms of the boundary triplet in Lemma \ref{bt}, the above self-adjoint extension $T_{(t)}$ is $T_c$ where $c=\frac{t+\mathrm{i}}{t-\mathrm{i}}$.
\end{lemma}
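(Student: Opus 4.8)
The plan is to identify $T_{(t)}$ among the self-adjoint extensions $T_c$, $|c|=1$, of \S\S~\ref{bvp} by matching spectra. I would start from the Remark following Lemma~\ref{bt}: the boundary triplet $(\mathbb{C},\Gamma_+,\Gamma_-)$ Cayley-conjugate to $(\mathbb{C},\Gamma_0,\Gamma_1)$ has contractive Weyl function $B(\lambda)=\frac{b(\lambda)-\mathrm{i}d(\lambda)}{b(\lambda)+\mathrm{i}d(\lambda)}$, where $b,d$ are real entire functions with no common zeros (a consequence of $\det N(\lambda)\equiv1$). By \S\S~\ref{bvp}, $\lambda\in\sigma(T_c)$ if and only if $B(\lambda)=c$, which upon substituting the explicit form of $B$ is equivalent to $(1-c)b(\lambda)-\mathrm{i}(1+c)d(\lambda)=0$. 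Hence for $c\neq1$ the set $\sigma(T_c)$ is exactly the zero set of $b(\lambda)+td(\lambda)$ with $t:=-\mathrm{i}\frac{1+c}{1-c}$, while $\sigma(T_1)$ is the zero set of $d(\lambda)$.

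Next I would compute $\sigma(T_{(t)})$ on the moment side. An easy induction from the three-term recurrence (\ref{recurrence}) gives $P_n(T)e_0=e_n$ for all $n$, so $e_0$ is cyclic for $T$ and hence for every self-adjoint extension; therefore $\mu_t$ is the scalar spectral measure of $T_{(t)}$ at $e_0$ and its support is precisely $\sigma(T_{(t)})$. Consequently $\sigma(T_{(t)})$ is the pole set of $I_t(\lambda)=-\frac{a(\lambda)+tc(\lambda)}{b(\lambda)+td(\lambda)}$; since $\det N\equiv1$ forces $a+tc$ and $b+td$ to share no zero, this pole set equals the zero set of $b(\lambda)+td(\lambda)$ (the zero set of $d(\lambda)$ when $t=\infty$).

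Comparing the two computations, $\sigma(T_c)=\sigma(T_{(t)})$ exactly when $t=-\mathrm{i}\frac{1+c}{1-c}$, that is, $c=\frac{t+\mathrm{i}}{t-\mathrm{i}}$, the limiting case $t=\infty$ corresponding to $c=1$ (both give the zero set of $d$). Since $T_{(t)}$ and $T_c$ are self-adjoint extensions of $T\in\mathfrak{E}_1(H)$, both with infinite spectrum, Proposition~\ref{par} (distinct generic extensions have disjoint spectra) upgrades this equality of spectra to $T_{(t)}=T_c$. For $t\in\mathbb{R}$ one has $|c|=|t+\mathrm{i}|/|t-\mathrm{i}|=1$, confirming that $T_c$ is indeed self-adjoint as it must be.

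The only real work is bookkeeping conventions — the sign in the Cayley transform relating $(\Gamma_+,\Gamma_-)$ to $(\Gamma_0,\Gamma_1)$, and which of $T_\infty$, $T_0$ in \S\S~\ref{bvp} is cut out by $\Gamma_+a=0$ — but these are pinned down by low-order checks: $t=0$ forces $c=-1$ and $\sigma=\{b=0\}=\sigma(T_{-1})$, while $t=\infty$ forces $c=1$ and $\sigma=\{d=0\}=\sigma(T_1)$, in agreement with the Remark after Lemma~\ref{bt}. An alternative would be to expand the Krein resolvent formula for $(\mathbb{C},\Gamma_0,\Gamma_1)$ against $e_0$ and match it with $I_t$ term by term, but the spectral route above is shorter and sidesteps computing the defect element $\gamma(\lambda)$ explicitly.
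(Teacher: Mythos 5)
Your proposal is correct and takes essentially the same route as the paper: both identify $\sigma(T_{(t)})$ and $\sigma(T_c)$ with the zero set of $b(\lambda)+td(\lambda)$ (via $B(\lambda)=c$ on one side and the poles of $I_t$ on the other) and then conclude $T_{(t)}=T_c$ from the fact that distinct extensions have disjoint spectra. You simply make explicit some steps the paper leaves implicit, namely the cyclicity of $e_0$, the no-common-zero consequence of $\det N(\lambda)\equiv 1$, and the appeal to Prop.~\ref{par}.
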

\begin{proof}It suffices to prove that $\sigma(T_{(t)})=\sigma(T_c)$. $\sigma(T_{(t)})$ is precisely the zero set of $b(\lambda)+td(\lambda)$ while $\sigma(T_c)$ is given by the roots of
\[B(\lambda)-c=\frac{b(\lambda)-\mathrm{i}d(\lambda)}{b(\lambda)+\mathrm{i}d(\lambda)}-\frac{t+\mathrm{i}}{t-\mathrm{i}}=0.\]
It is easy to see the latter equation is equivalent to $b(\lambda)+td(\lambda)=0$.
\end{proof}
We note that the RHS of Eq.~(\ref{Borel}) still makes sense for $t\in \mathbb{C}\backslash\mathbb{R}$, but we cannot expect a new measure $\mu_t$ such that Eq.~(\ref{Borel}) continues to hold in this extended case. However, as one may hope from the above lemma, for $t\in \mathbb{C}\backslash\mathbb{R}$ the RHS of Eq.~(\ref{Borel}) really comes from a non-self-adjoint extension.
\begin{proposition}\label{I}In terms of the boundary triplet in Lemma \ref{bt}, for $c=\frac{t+\mathrm{i}}{t-\mathrm{i}}$ where $t\in \mathbb{C}\backslash\mathbb{R}$
\[I_t(\lambda):=((T_{c}-\lambda)^{-1}e_0,e_0)_{l^2}=-\frac{a(\lambda)+tc(\lambda)}{b(\lambda)+td(\lambda)}.\]
\end{proposition}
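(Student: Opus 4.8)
The plan is to reduce everything to the boundary triplet $(\mathbb{C},\Gamma_0,\Gamma_1)$ of Lemma~\ref{bt} and to run the standard variation-of-parameters derivation of Krein's resolvent formula applied to $e_0$, keeping the computation at the level of meromorphic functions so that the non-self-adjoint range $t\in\mathbb{C}\setminus\mathbb{R}$ is handled at once. First I would record the dictionary between the two triplets: since $(\mathbb{C},\Gamma_0,\Gamma_1)$ comes from $(\mathbb{C},\Gamma_+,\Gamma_-)$ by the Cayley transform, $\sqrt2\,\Gamma_\pm=\Gamma_1\pm\mathrm{i}\Gamma_0$, so the defining relation $\Gamma_-a=c\,\Gamma_+a$ of $T_c$ becomes $(1-c)\Gamma_1a=\mathrm{i}(1+c)\Gamma_0a$; substituting $c=\frac{t+\mathrm{i}}{t-\mathrm{i}}$ gives $\frac{1+c}{1-c}=\mathrm{i}t$, hence $T_c=\ker(\Gamma_1+t\Gamma_0)$. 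In particular the reference extension $T_1:=\ker\Gamma_0$ corresponds to $c=1$, i.e. to $t=\infty$, which is $T_{(\infty)}$, so by the $t=\infty$ instance of (\ref{Borel}) one has $((T_1-\lambda)^{-1}e_0,e_0)_{l^2}=I_\infty(\lambda)=-c(\lambda)/d(\lambda)$.

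Next, fix $\lambda$ in the open half-plane disjoint from $\sigma(T_c)$ (so that $(T_c-\lambda)^{-1}$ and $(T_1-\lambda)^{-1}$ both exist and $\lambda\notin\sigma(T_1)$), and set $f:=(T_c-\lambda)^{-1}e_0$, $g:=(T_1-\lambda)^{-1}e_0$. Both lie in $D(T^*)$ and solve $(T^*-\lambda)(\cdot)=e_0$, so $f-g$ lies in the one-dimensional space $\ker(T^*-\lambda)=\mathbb{C}\,\gamma(\lambda)$, where by the remark after Lemma~\ref{bt} the $\gamma$-field is $\gamma(\lambda)=\mathfrak{p}(\lambda)/d(\lambda)$, with $\Gamma_0\gamma(\lambda)=1$ and $\Gamma_1\gamma(\lambda)=M(\lambda)=b(\lambda)/d(\lambda)$. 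Writing $f=g+s\,\gamma(\lambda)$ and imposing $\Gamma_1f+t\Gamma_0f=0$ together with $\Gamma_0g=0$ yields $s=-\Gamma_1g/(M(\lambda)+t)$. The only ingredient that is not immediate is $\Gamma_1g$: feeding the graph elements $\hat g=(g,\lambda g+e_0)$ and $\hat\gamma(\bar\lambda)=(\gamma(\bar\lambda),\bar\lambda\gamma(\bar\lambda))$ into the abstract Green's formula (\ref{green1}) and using $\Gamma_0g=0$, $\Gamma_0\hat\gamma(\bar\lambda)=1$, the two $(g,\gamma(\bar\lambda))_H$-terms cancel and one is left with $\Gamma_1g=(e_0,\gamma(\bar\lambda))_{l^2}$; since $(e_0,\mathfrak{p}(\bar\lambda))_{l^2}=P_0(\lambda)=1$ and $d$ has real coefficients, this equals $1/d(\lambda)$, whence $s=-1/(b(\lambda)+t\,d(\lambda))$.

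Finally I would pair $f=g+s\,\gamma(\lambda)$ with $e_0$, using $(\gamma(\lambda),e_0)_{l^2}=P_0(\lambda)/d(\lambda)=1/d(\lambda)$ and the value of $(g,e_0)_{l^2}$ recorded above, to obtain
\[I_t(\lambda)=-\frac{c(\lambda)}{d(\lambda)}-\frac{1}{d(\lambda)\bigl(b(\lambda)+t\,d(\lambda)\bigr)}=-\frac{c(\lambda)b(\lambda)+t\,c(\lambda)d(\lambda)+1}{d(\lambda)\bigl(b(\lambda)+t\,d(\lambda)\bigr)}.\]
Since $\det N(\lambda)=a(\lambda)d(\lambda)-b(\lambda)c(\lambda)\equiv1$, the numerator equals $d(\lambda)\bigl(a(\lambda)+t\,c(\lambda)\bigr)$; cancelling $d(\lambda)$ gives exactly $-\dfrac{a(\lambda)+t\,c(\lambda)}{b(\lambda)+t\,d(\lambda)}$, and the identity propagates to all $\lambda$ by meromorphy. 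I expect the main obstacle to be organizational rather than conceptual: pinning down the Cayley dictionary between the two boundary triplets (with its factors $\mathrm{i}$ and $\sqrt2$) and the conjugations forced by the sesquilinear inner product, together with the one small extra step of extracting $\Gamma_1g$ from (\ref{green1}). Once these are in place the $\det N=1$ simplification is purely formal, and because the whole computation is an identity of meromorphic functions of $\lambda$ (equivalently a rational identity in $t$ for fixed $\lambda$), the passage from $t\in\mathbb{R}$ to $t\in\mathbb{C}\setminus\mathbb{R}$ requires nothing extra.
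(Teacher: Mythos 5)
Your proposal is correct, but it reaches the formula by a different route than the paper. The paper simply quotes the Krein-type resolvent formula (8.3) of \cite{wang2024complex} in the $(\mathbb{C},\Gamma_+,\Gamma_-)$ picture, takes the difference of its instances at $c$ and at $c=1$ to bring in $((T_1-\lambda)^{-1}e_0,e_0)_{l^2}=-c(\lambda)/d(\lambda)$, and then evaluates the correction term by computing $\gamma_\pm(\lambda)=\mathfrak{p}(\lambda)/e(\lambda)$ with $e=(b+\mathrm{i}d)/\sqrt2$, so the whole burden is carried by the contractive Weyl function $B(\lambda)$ and the de Branges frame. You instead rederive that resolvent difference from scratch in the $(\mathbb{C},\Gamma_0,\Gamma_1)$ triplet: the Cayley dictionary $T_c=\ker(\Gamma_1+t\Gamma_0)$ (your computation $\frac{1+c}{1-c}=\mathrm{i}t$ checks out), variation of parameters $f=g+s\,\gamma(\lambda)$ in $\ker(T^*-\lambda)=\mathbb{C}\,\mathfrak{p}(\lambda)/d(\lambda)$, extraction of $\Gamma_1 g=(e_0,\gamma(\bar\lambda))_{l^2}=1/d(\lambda)$ from the Green identity (\ref{green1}), and the final $\det N\equiv1$ simplification — which is exactly the algebra the paper leaves implicit in "combining all these together," since its correction term also reduces to $-\bigl(d(\lambda)(b(\lambda)+td(\lambda))\bigr)^{-1}$. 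What your approach buys is self-containedness (only standard boundary-triplet facts plus Lemma~\ref{bt}, no citation of the earlier paper's formula (8.3)) and it makes transparent why the non-self-adjoint range $t\in\mathbb{C}\setminus\mathbb{R}$ causes no trouble; what the paper's approach buys is brevity and consistency with the $B$-centric formalism used throughout. The only points worth tightening in your write-up are small: note explicitly that $M(\lambda)+t\neq0$ at the chosen $\lambda$ (otherwise $\gamma(\lambda)$ would be an eigenvector of $T_c$ at $\lambda$, contradicting $\lambda\in\rho(T_c)$), and that the identification $\gamma(\lambda)=\mathfrak{p}(\lambda)/d(\lambda)$ is stated in the lemma on $|M'|/\Im M$ rather than in the remark following Lemma~\ref{bt}; neither affects correctness.
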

\begin{proof}We can use the Krein type formula (8.3) in \cite{wang2024complex}. In the present setting, it leads to
\[((T_c-\lambda)^{-1}e_0,e_0)_{l^2}=((T_\infty-\lambda)^{-1}e_0,e_0)_{l^2}+\mathrm{i}(c-B(\lambda))^{-1}\times(e_0, \gamma_-(\bar{\lambda}))_{l^2}\times (\gamma_+(\lambda),e_0)_{l^2}.\]
Therefore, by setting $c=1$ and taking difference we obtain
\[((T_c-\lambda)^{-1}e_0,e_0)_{l^2}=((T_1-\lambda)^{-1}e_0,e_0)_{l^2}+\mathrm{i}\frac{1-c}{(c-B(\lambda))(1-B(\lambda))}\times(e_0, \gamma_-(\bar{\lambda}))_{l^2}\times (\gamma_+(\lambda),e_0)_{l^2}.\]
It can be obtained easily that
\[\gamma_+(\lambda)=\frac{\mathfrak{p}(\lambda)}{e(\lambda)},\quad \gamma_-(\bar{\lambda})=\frac{\mathfrak{p}(\bar{\lambda})}{\overline{e(\lambda)}},\]
where $e(\lambda)=[b(\lambda)+\mathrm{i}d(\lambda)]/\sqrt{2}$ and consequently that
\[(e_0, \gamma_-(\bar{\lambda}))_{l^2}\times (\gamma_+(\lambda),e_0)_{l^2}=\frac{1}{(e(\lambda))^2}.\]
We also know that ($t=\infty$)
\[((T_1-\lambda)^{-1}e_0,e_0)_{l^2}=-\frac{c(\lambda)}{d(\lambda)}.\]
Combining all these together will result in the conclusion.
\end{proof}
\emph{Remark}. The above proposition actually says $((T_c-\lambda)^{-1}e_0,e_0)_{l^2}$ is an anlytic family of meromorphic functions parameterized by the projective line $\mathcal{M}$. When $|c|=1$, from the spectral resolution of $T_c$ we know that the RHS of Eq.~(\ref{Borel}) allows a partial fraction expansion. We don't know if this still holds for the case $|c|\neq 1$.

The following proposition is a generalization of a classical result relating $I_t(\lambda)$ to the moment sequence $s$ \cite[Prop.~16.33 (i)]{schmudgen2012unbounded}.
\begin{proposition}In terms of the boundary triplet in Lemma \ref{bt}, for each $c\in \mathbb{C}$ such that $|c|>1$ ($c=\infty$ is allowed) and each $n\in \mathbb{N}_0$,
\[\lim_{y\rightarrow +\infty}y^{n+1}[((T_c-\mathrm{i}y)^{-1}e_0,e_0)_{l^2}+\sum_{j=0}^n\frac{s_j}{(\mathrm{i}y)^{j+1}}]=0.\]
Similarly, for each $c\in \mathbb{C}$ such that $|c|<1$ and each $n\in \mathbb{N}_0$,
\[\lim_{y\rightarrow -\infty}y^{n+1}[((T_c-\mathrm{i}y)^{-1}e_0,e_0)_{l^2}+\sum_{j=0}^n\frac{s_j}{(\mathrm{i}y)^{j+1}}]=0.\]
\end{proposition}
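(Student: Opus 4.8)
The plan is to reduce the statement to a telescoping resolvent identity together with a coarse bound on the resolvent of $T_c$ along the imaginary axis. I treat the strictly accumulative case $|c|>1$ (including $c=\infty$) with $y\to+\infty$; the strictly dissipative case $|c|<1$ with $y\to-\infty$ is entirely parallel after one sign is reversed.

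First I would record that $e_0$ lies in the domain of every power of $T_c$ and that these powers reproduce the moments. Since $T=M_\lambda$ is the closure of multiplication by $\lambda$ on the $M_\lambda$-invariant subspace $P$ of polynomials and $e_0$ corresponds (under $U$) to the constant polynomial $1\in P$, we have $e_0\in\bigcap_k D(T^k)$ with $T^k e_0$ corresponding to $\lambda^k$; because $A_T\subset T_c$, the extension $T_c$ agrees with $T$ on $D(T)$, so $e_0\in D(T_c^{n+1})$ and $T_c^{\,j}e_0=T^{\,j}e_0$ for $0\le j\le n+1$. Hence $(T_c^{\,j}e_0,e_0)_{l^2}=(\lambda^j,1)_H=s_j$, a number independent of $c$ — this is exactly the mechanism by which the same moment coefficients show up for every boundary condition. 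Next, for $\lambda\in\rho(T_c)$ the algebraic identity
\[(T_c-\lambda)^{-1}e_0=-\sum_{j=0}^{n}\frac{T_c^{\,j}e_0}{\lambda^{\,j+1}}+\frac{1}{\lambda^{\,n+1}}\,(T_c-\lambda)^{-1}T_c^{\,n+1}e_0\]
holds (verify by applying $(T_c-\lambda)$ to the right-hand side and telescoping). Pairing with $e_0$ and inserting the previous step turns the bracketed expression in the proposition into the single remainder term $\lambda^{-(n+1)}\bigl((T_c-\lambda)^{-1}T_c^{\,n+1}e_0,e_0\bigr)_{l^2}$.

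The heart of the matter is then the estimate $\bigl\|(T_c-\mathrm{i}y)^{-1}\bigr\|\le 1/y$ for $y>0$. Here I would invoke the abstract Green's formula (\ref{green}): writing $x\in D(T_c)$ as a graph vector and using $\Gamma_-x=c\,\Gamma_+x$ one gets $\Im(T_cx,x)_H=\tfrac12(1-|c|^2)\,|\Gamma_+x|^2\le 0$ when $|c|\ge 1$, and $\Im(T_\infty x,x)_H=-\tfrac12|\Gamma_-x|^2\le 0$ in the limiting case $c=\infty$. Consequently $\Im\bigl((T_c-\mathrm{i}y)x,x\bigr)_H\le -y\|x\|^2$, so $\|(T_c-\mathrm{i}y)x\|\ge y\|x\|$; since $\sigma(T_c)\subset\mathbb{C}_-$ gives $\mathrm{i}y\in\rho(T_c)$, the bound follows. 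Therefore, with $\lambda=\mathrm{i}y$ and $|\lambda|=y$,
\[\Bigl|\,y^{\,n+1}\Bigl[\bigl((T_c-\mathrm{i}y)^{-1}e_0,e_0\bigr)_{l^2}+\sum_{j=0}^{n}\frac{s_j}{(\mathrm{i}y)^{j+1}}\Bigr]\Bigr|=\Bigl|\bigl((T_c-\mathrm{i}y)^{-1}T_c^{\,n+1}e_0,e_0\bigr)_{l^2}\Bigr|\le\frac{\|T^{\,n+1}e_0\|\,\|e_0\|}{y}\longrightarrow 0 .\]
For $|c|<1$ the sign of $1-|c|^2$ flips, so $\Im(T_cx,x)_H\ge 0$ and $\|(T_c-\mathrm{i}y)^{-1}\|\le 1/|y|$ for $y<0$ (consistent with $\sigma(T_c)\subset\mathbb{C}_+$), and the identical computation gives the limit as $y\to-\infty$.

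I do not anticipate a serious obstacle; the argument is short once the pieces are assembled. The two points requiring care are (i) the clean justification that $e_0\in D(T_c^{n+1})$ with $T_c^{\,j}e_0=T^{\,j}e_0$, which rests on $P$ being an $M_\lambda$-invariant core inside $D(T)$, and (ii) — the more delicate one — extracting the numerical-range inequality $\Im(T_cx,x)_H\le 0$ (with the opposite sign when $|c|<1$) from (\ref{green}) in the paper's sign conventions, and handling the limiting case $c=\infty$ separately. Everything else — the telescoping identity, which is of the same flavour as the Neumann-type manipulation in the proof of Prop.~\ref{I}, and the final passage to the limit — is routine.
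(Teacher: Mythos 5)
Your argument is correct, and it takes a genuinely different route from the paper. The paper proves the case $c=\infty$ by reducing to the self-adjoint extension $T_1$: it quotes the classical asymptotic expansion for von Neumann solutions (Schm\"udgen, Prop.~16.33(i)), uses the resolvent-difference formula already computed in the proof of Prop.~\ref{I}, namely that the two matrix elements differ by $\tfrac{1}{\sqrt{2}\,d(\lambda)e(\lambda)}$, and then shows $y^{n+1}/(|d(\mathrm{i}y)||e(\mathrm{i}y)|)\to 0$ because $d$ and $e$ are of at most minimal exponential type and are canonical products over their zeros (real, resp.\ in $\mathbb{C}_-$), hence outgrow every polynomial along the imaginary axis. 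You instead prove the statement directly and uniformly in $c$: the telescoping identity with remainder $\lambda^{-(n+1)}(T_c-\lambda)^{-1}T_c^{n+1}e_0$ is valid since $P$ is $T$-invariant inside $D(T)\subset D(T_c)$, so $T_c^{\,j}e_0=T^{\,j}e_0$ and $(T_c^{\,j}e_0,e_0)=(\lambda^j,1)_H=s_j$; and the sign bookkeeping in your use of (\ref{green}) is right in the paper's conventions, since for a graph vector $\hat x$ of $T_c$ one has $2\mathrm{i}\,\Im(T_cx,x)=[\hat x,\hat x]_c=\mathrm{i}(|\Gamma_+\hat x|^2-|\Gamma_-\hat x|^2)=\mathrm{i}(1-|c|^2)|\Gamma_+\hat x|^2$, which together with $\sigma(T_c)\subset\mathbb{C}_-$ (resp.\ $\mathbb{C}_+$) gives $\|(T_c-\mathrm{i}y)^{-1}\|\le 1/|y|$ on the correct half of the imaginary axis. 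Your route is more elementary and self-contained (no citation for the self-adjoint base case, no growth facts about the Nevanlinna entries), handles $c=\infty$ and all $|c|\ne 1$ at once, and even yields a quantitative $O(1/y)$ remainder; the paper's route, by contrast, exhibits the deviation from the self-adjoint case explicitly through $d(\lambda)e(\lambda)$, which fits its theme of tying spectral asymptotics to the Nevanlinna matrix and recycles the computation of Prop.~\ref{I}.
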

\begin{proof}We only prove the case $c=\infty$ and other cases can be proved along the same line. Due to \cite[Prop.~16.33 (i)]{schmudgen2012unbounded}, we surely have
\[\lim_{y\rightarrow +\infty}y^{n+1}[((T_1-\mathrm{i}y)^{-1}e_0,e_0)_{l^2}+\sum_{j=0}^n\frac{s_j}{(\mathrm{i}y)^{j+1}}]=0.\]
From the proof of Prop.~\ref{I}, we know
\[((T_1-\lambda)^{-1}e_0,e_0)_{l^2}-((T_\infty-\lambda)^{-1}e_0,e_0)_{l^2}=\frac{\mathrm{i}}{1-B(\lambda)}\times \frac{1}{(e(\lambda))^2}=\frac{\mathrm{1}}{\sqrt{2}d(\lambda)}\times \frac{1}{e(\lambda)}.\]
Thus we only have to prove
\[\lim_{y\rightarrow +\infty}\frac{y^{n+1}}{|d(\mathrm{i}y)||e(\mathrm{i}y)|}=0.\]
Since both $d(\lambda)$ and $e(\lambda)$ are of at most minimal exponential type, they are determined by their zeros up to a constant factor, i.e.,
\[d(\lambda)=c_1\lambda \prod_{j=1}^\infty(1-\frac{\lambda}{\lambda_j}),\quad e(\lambda)=c_2 \prod_{j=1}^\infty(1-\frac{\lambda}{\mu_j}),\]
where $\lambda_j\in \mathbb{R}$, $\mu_j\in \mathbb{C}_-$ and $c_1$, $c_2$ are constants. We can see both $|d(\mathrm{i}y)|$ and $|e(\mathrm{i}y)|$ grow faster than any polynomial in $y$ as $y\rightarrow+\infty$. The claim then follows.
\end{proof}

\begin{proposition}The non-self-adjoint extension $T_c$ with $|c|\neq 1$ is complete.
\end{proposition}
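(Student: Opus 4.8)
The plan is to reduce the claim to Theorem~\ref{mean} by exploiting the fact, already recorded in the Corollary following Theorem~\ref{moment1}, that the Jacobi operator $T$ is of at most minimal exponential type. By the remark in \S~\ref{com} that strictly dissipative and strictly accumulative extensions occur in pairs and that one member of a pair is complete exactly when the other is, it suffices to treat a strictly accumulative $T_c$ with $|c|>1$. First I would choose a boundary triplet $(\mathbb{C},\Gamma_+,\Gamma_-)$ adapted to $c$ so that $T_\infty$ coincides with $T_c$ and $T_0$ is the corresponding strictly dissipative extension, and let $B(\lambda)$ be the associated contractive Weyl function. By construction the zero set of $B(\lambda)$ is $\sigma(T_0)$, so the mean type of $T_\infty$ — in the sense of the definitions preceding Proposition~\ref{order}, extended to $T_{m^{\bot_s}}$ — equals the mean type $\beta\geq 0$ of $B(\lambda)$. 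Thus, by Theorem~\ref{mean}, completeness of $T_\infty=T_c$ is equivalent to $\beta=0$.

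The next step is to rule out $\beta>0$. The proof of Proposition~\ref{order} shows that a strictly dissipative extension of mean type $\beta$ forces $h_T(r)\geq \beta r/\pi+O(1)$; hence $\beta>0$ would imply either $\rho_T>1$, or $\rho_T=1$ together with $\tau_T\geq \beta/\pi>0$. But the Corollary following Theorem~\ref{moment1} asserts precisely that $\rho_T\leq 1$ for the Jacobi operator, and $\tau_T=0$ in the case $\rho_T=1$. Either alternative is therefore excluded, so $\beta=0$, and Theorem~\ref{mean} gives that $T_c$ is complete for every $|c|>1$.

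Finally, for $|c|<1$ the symplectically orthogonal extension is strictly accumulative, hence complete by the previous step, and the pairing remark then yields completeness of $T_c$ itself. The only delicate point — and the sole place where "indeterminate Hamburger moment problem" is used rather than merely "$T\in\mathfrak{E}_1(H)$" — is the borderline case $\rho_T=1$: the Proposition following Theorem~\ref{mean} already handles $\rho_T<1$, and the extra input needed here is that M.~Riesz's minimal-exponential-type bound, packaged in the Corollary to Theorem~\ref{moment1} as $\tau_T=0$ whenever $\rho_T=1$, still forces $\beta=0$ via the quantitative lower estimate in Proposition~\ref{order}.
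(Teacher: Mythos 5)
Your argument is correct and is essentially the paper's own proof, merely spelled out in more detail: the paper likewise invokes the minimal-exponential-type bound for the Jacobi operator (Corollary to Thm.~\ref{moment1}) to force the mean type of the extension to vanish, and then concludes by Thm.~\ref{mean}. The extra steps you include (the pairing reduction to the strictly accumulative case and the quantitative estimate $h_T(r)\geq \beta r/\pi+O(1)$ from Prop.~\ref{order}) are exactly what the paper leaves implicit.
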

\begin{proof}Since $T$ is of at most minimal exponential type, for each $|c|<1$ the mean type of $T_c$ has to vanish. The result follows from Thm.~\ref{mean} immediately.
\end{proof}
Since $\rho_T\leq 1$, the distribution of eigenvalues of $T_c$ is sparse, but we don't know if $T_c$ is Riesz complete.

\end{document}